\theoremstyle{plain}
\newtheorem{theorem}{Theorem}[section]
\newtheorem{lemma}[theorem]{Lemma}
\newtheorem{proposition}[theorem]{Proposition}
\newtheorem{corollary}[theorem]{Corollary}
\theoremstyle{definition}
\newtheorem{definition}[theorem]{Definition}
\newtheorem{problem}[theorem]{Problem}
\newtheorem{problems}[theorem]{Problems}
\newtheorem{example}[theorem]{Example}
\newtheorem{remark}[theorem]{Remark}
\newtheorem{remarks}[theorem]{Remarks}
\numberwithin{equation}{theorem}
\newtheorem{claim}[theorem]{Claim}
\begin{document}

\title[Automorphisms of canonically polarized surfaces in char.  $P>0$.]{Automorphisms of smooth canonically polarized surfaces in positive characteristic.}
\author{Nikolaos Tziolas}
\address{Department of Mathematics, University of Cyprus, P.O. Box 20537, Nicosia, 1678, Cyprus}
\email{tziolas@ucy.ac.cy}
\curraddr{Department of Mathematics, Princeton University, Fine Hall, Washington Road, Princeton, NJ 08544} 
\email{ntziolas@princeton.edu}
\thanks{The author is supported by a Marie Curie International Outgoing  Fellowship ,  grant no. PIOF-GA-2013-624345.}

\subjclass[2000]{Primary 14J50, 14DJ29, 14J10; Secondary 14D23, 14D22.}

\dedicatory{This paper is dedicated to my wife Afroditi and my son Marko.}


\begin{abstract}
This paper investigates the geometry of a smooth canonically polarized surface $X$ defined over an algebraically closed field of characteristic $p>0$ in the case when the automorphism scheme of $X$ is not smooth. This is a situation that appears only in positive characteristic and it is closely related to the structure of the moduli stack of canonically polarized surfaces. Restrictions on certain numerical invariants of $X$ are obtained in order for 
$\mathrm{Aut}(X)$ to be smooth or not and information is provided about the structure of the component of $\mathrm{Aut}(X)$ containing the identity. In particular, it is shown that a smooth canonically polarized surface $X$ with 
$1\leq K_X^2\leq 2$ and non smooth automorphism scheme tends to be uniruled and simply connected. Moreover, $X$ is the purely inseparable quotient of a ruled or rational surface by a rational vector field.
\end{abstract}

\maketitle


\section{Introduction}
One of the most important problems in algebraic geometry is the classification up to isomorphism of algebraic varieties defined over an algebraically closed field $k$. In order to deal with this problem, the class of varieties is divided into smaller classes where a reasonable answer is expected to exist. Quite generally, once such a class $\mathcal{C}$ is chosen, the corresponding moduli functor $\mathcal{M}_{\mathcal{C}}\colon Sch(k) \rightarrow (Sets)$ is defined by setting for any $k$-scheme $S$, $\mathcal{M}_{\mathcal{C}}(S)$ to be the set of isomorphism classes of flat morphisms $X\rightarrow S$ whose fibers are in $\mathcal{C}$. Sometimes, depending on the case, some extra structure on the family morphisms are required to create a ``reasonable'' moduli functor. The best that one could hope for is that the functor is representable, which means hat 
there is a universal family $\mathcal{X} \rightarrow \mathrm{M}_{\mathcal{C}}$ such that any other family is obtained from it by base change. In this case $\mathrm{M}_{\mathcal{C}}$ is called a fine moduli space for $\mathcal{M}_{\mathcal{C}}$. Unfortunately fine moduli spaces rarely exist. The reason for this failure is the presence of nontrivial automorphisms of the objects that one wants to parametrize.  

One way of dealing with this difficulty is instead of looking for a universal family, to settle for less and search for a variety $\mathrm{M}_{\mathcal{C}}$ whose $k$-points are in one to one correspondence with the varieties of the moduli problem and which satisfies some uniqueness property. Such a variety is called a coarse 
moduli space. However, the biggest disadvantage of this approach is that usually the coarse moduli space does not support a family and therefore it gives very little 
information about families of varieties in the moduli problem. In order to study families as well, the universality condition is relaxed and one looks for a so called modular family. Loosely speaking a modular family is a family $X \rightarrow S$ such that up to \'etale base change, any other family is obtained from it by base change and that for any closed point $s \in S$, the completion $\hat{\mathcal{O}}_{S,s}$ prorepresents the local deformation functor $Def(X_s)$. In some sense a modular family is a connection between the local moduli functor (which behaves well) and the global one. In modern language one says that the moduli stack associated to the moduli problem is Deligne-Mumford~\cite{DM69}.
  
In dimension 1 curves are separated by their genus $g$. The moduli functor $\mathcal{M}_g$ of smooth curves of genus $g$ defined over an algebraically closed field has a coarse moduli space $\mathrm{M}_g$. For $g=0$ it is a reduced point, for $g=1$ it is the $j$-line and for $g\geq 2$ it is irreducible of dimension $3g-3$ and it admits a compactification $\bar{\mathrm{M}}_g$ whose boundary points correspond to stable curves, i.e, to reduced curves $C$ with at worst nodes as singularities and $\omega_C$ ample~\cite{DM69}. In all cases the corresponding moduli stack is Deligne-Mumford. 
   
In dimension 2, surfaces are divided according to their kodaira dimension $\kappa$ which takes the values $-\infty, 0, 1$ and $ 2$. Surfaces with kodaira dimension 2 are the corresponding cases to the case of curves of genus $\geq 2$ and are called surfaces of general type. Early on in the theory of moduli of surfaces of general type in characteristic zero, it was realized that the correct objects to parametrize are not the surfaces of general type themselves but their canonical models.  For compactification reasons, the moduli functor is extended to include the so called stable surfaces. These are reduced two dimensional schemes $X$ with semi-log-canonical (slc) singularities such that there is an integer $N$ such that $\omega^{[N]}_X$ is ample. They are the higher dimensional analog of stable curves. Then for any fixed integer valued function $H(m)$  the moduli functor $\mathcal{M}^s_{H} \colon Sch(k) \rightarrow (Sets)$ of stable surfaces with fixed Hilbert polynomial is defined~\cite{KSB88}~\cite{Ko10}. 
It is known 
that $\mathcal{M}^s_{H}$ has a separated coarse moduli space $\mathrm{M}^s_{H}$ which is of finite type over the base field $k$. Moreover, the corresponding moduli stack is a separated, proper Deligne-Mumford stack of finite type~\cite{KSB88}~\cite{Ko97}~\cite{Ko10}.  

This paper is supposed to be a small contribution into the study of the moduli of canonically polarized surfaces in positive characteristic. It also inspires to bring attention to the positive characteristic case and motivate people to work on it. In particular, it is a first attempt to study the following general problem.

\begin{problem}
Study the moduli stack of stable varieties with fixed Hilbert polynomial defined over an algebraically closed field of characteristic $p>0$. In particular, is it proper, Deligne-Mumford or of finite type? In the case when one of these properties fails, why does it fail and how can the moduli problem be modified in order for the corresponding stack to satisfy them?
\end{problem}

As mentioned earlier, there has been tremendous progress in the characteristic zero case lately. In particular, the case of surfaces has been completely settled. However, the positive characteristic case is to the best of my knowledge a wide open area. The main reason is probably the many complications and pathologies that appear in positive characteristic. For example, Kodaira vanishing fails and the minimal model program and semistable reduction, two ingredients essential in the compactification of the moduli in the characteristic zero case, are not known, at the time of this writing, to work in positive characteristic. Moreover, the moduli stack of smooth canonically polarized surfaces is not Deligne-Mumford. The reason for this failure in positive characteristic is the existense of smooth canonically polarized surfaces with non smooth automorphism scheme~\cite{La83},~\cite{SB96},~\cite{Li08}.  This does not happen in characteristic zero simply because every group scheme in characteristic zero is smooth. 
Therefore the non smoothness of the automorphism scheme is an essential obstruction for the moduli stack of canonically polarized surfaces to be Deligne-Mumford. 

However, in the case of surfaces there is reason to believe that it would be possible to obtain a good moduli theory even in positive characteristic. Resolution of singularities exists for surfaces. Moreover, canonical models of surfaces are classically known to exist and the semistable minimal model program for semistable threefolds holds~\cite{Kaw94} . In addition, the definition of stable surfaces is characteristic free and therefore the definition of the functor of stable surfaces applies in positive characteristic too. Finally, canonically polarized surfaces with fixed Hilbert polynomial are bounded~\cite{Ko84} and Koll\'ar's quotient results show that there exist a separated coarse moduli space of finite type for the moduli functor of canonically polarized surfaces over $\mathrm{Spec}\mathbb{Z}$ with a fixed Hilbert polynomial~\cite{Ko84}. 

One way that one could put some order in positive characteristic is initially to distinguish the largest class of canonically polarized surfaces with smooth automorphism scheme which is closed under deformations and specialization. Then the methods of characteristic zero show that the corresponding stack is Deligne-Mumford. Moreover, if semistable-reduction works as well as the minimal model program for semistable log canonical threefolds, the stack is also proper. The class of all surfaces with smooth automorphism scheme does not work because it is not closed under specialization, as shown by example~\ref{ex2}. However, according to Theorem~\ref{th1}, nonsmoothness of the automorphism scheme of a stable surface $X$ with hilbert function $H(n)$ happens for small values of the characteristic of the base field compared with the coefficients of $H(n)$. Hence the surfaces that are the best candidates to have smooth automorphism scheme are those with small invariants compared to the characteristic. 

Based on these observations, the first step in the study of the moduli stack of canonically polarized surfaces in positive characteristic is to investigate the structure of the automorphism scheme of a canonically polarized surface and study why and when it is not smooth. In particular to find numerical relations (preferably deformation invariant) between the characteristic of the base field and certain numerical invariants of the surface that hold if the automorphism scheme is not smooth. According to the previous discussion, this investigation will provide important information on how the moduli functor can be modified in order to obtain proper Deligne-Mumford stacks and how to deal with surfaces with non smooth automorphism schemes. In any case, it is essential to study surfaces with non smooth automorphism scheme. 

The main result of this paper is the following.
\newpage

\begin{theorem}\label{main-theorem}
Let $X$ be a smooth canonically polarized surface defined over an algebraically closed field of characteristic $p>0$ such that $1\leq K_X^2 \leq 2$. Suppose that $\mathrm{Aut}(X)$ is not smooth or equivalently that $X$ 
possesses a non trivial global vector field. Then,
\begin{enumerate}
\item If $K_X^2=2$ and $p\not= 3,5$, then $X$ is uniruled. If in addition $\chi(\mathcal{O}_X)\geq 2$, then $X$ is unirational and $\pi_1^{et}(X)=\{1\}$.
\item If $K_X^2=1$ and $p\not= 7$, then $X$ is unirational and $\pi_1^{et}(X)=\{1\}$,  except possibly if $p\in\{3,5\}$ and $X$ is a simply connected supersingular Godeaux surface. Moreover, if in addition $p \not= 3,5,7$, then  $p_g(X) \leq 1$.
\end{enumerate}
In all cases, $X$ is the quotient of a ruled  surface  by a rational vector field.
\end{theorem}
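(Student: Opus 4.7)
The plan is to leverage the non-smoothness of $\mathrm{Aut}(X)$ to produce a global vector field on $X$ and then analyze the associated inseparable quotient. Since $\mathrm{Aut}(X)$ is not smooth, the tangent space $H^0(X, T_X)$ is nonzero, so there exists a nonzero derivation $D$ on $X$; after passing to an appropriate linear combination and normalizing, we may assume $D$ is $p$-closed, so that it generates a height-$1$ foliation of either additive or multiplicative type. Let $\pi \colon X \to Y$ be the corresponding purely inseparable quotient of degree $p$. Then $Y$ is a normal projective surface whose singularities are concentrated at the isolated zeros of $D$, and the Rudakov--Shafarevich formula gives
\[
K_X \sim \pi^{\ast} K_Y + (p-1)\Delta,
\]
where $\Delta$ is the divisorial part of the zero scheme of $D$. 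Intersecting with $K_X$, applying the projection formula, and combining with the analogous identities for $c_2$ (Mumford-type inequality) yields a numerical identity relating $K_X^2$, $K_{\tilde{Y}}^2$, and the contributions of $\Delta$ and of the singularities of $Y$.

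The next step is to exploit the hypothesis $K_X^2 \in \{1,2\}$ to force $\kappa(\tilde{Y}) = -\infty$ on the minimal resolution $\tilde{Y}$ of $Y$. The excluded primes in each case---$p \neq 3,5$ for $K_X^2 = 2$ and $p \neq 7$ for $K_X^2 = 1$---are precisely those at which the above numerical identity admits a solution with $K_{\tilde{Y}}$ nef, i.e.\ those for which $p(p-1)$ is too small relative to $K_X^2$ plus the singular contribution. Outside the excluded primes one concludes $\tilde{Y}$ is ruled. Since $\pi$ is a universal homeomorphism, uniruledness transfers to $X$ and $\pi_1^{\mathrm{et}}(X) = \pi_1^{\mathrm{et}}(\tilde{Y})$ (using that the surface singularities of $Y$ appearing here are rational).

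To upgrade uniruledness to unirationality and to deduce $\pi_1^{\mathrm{et}}(X) = \{1\}$, I would show that $\tilde{Y}$ is actually rational under the stated additional hypotheses. If the ruling $\tilde{Y} \to C$ had $g(C) \geq 1$, pulling back to $X$ would yield a fibration of $X$ over a curve of positive genus, contradicting either $\chi(\mathcal{O}_X) \geq 2$ (via the Leray spectral sequence, which would force $q(X) \geq 1$ and squeeze $\chi$) in the $K_X^2 = 2$ case, or the vanishing $q(X) = 0$ implied by canonical polarization with $K_X^2 = 1$. Rationality of $\tilde{Y}$ then makes $X$ unirational (being a purely inseparable cover of a rational surface) and forces $\pi_1^{\mathrm{et}}(X) = \{1\}$. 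The bound $p_g(X) \leq 1$ when $K_X^2 = 1$ and $p \neq 3,5,7$ follows by inserting $p_g(\tilde{Y}) = 0$ into the identity comparing $\chi(\mathcal{O}_X)$ with $\chi(\mathcal{O}_{\tilde{Y}})$. The final assertion that $X$ is the quotient of a ruled surface by a rational vector field is Rudakov--Shafarevich duality for $p$-foliations: from $\pi \colon X \to Y$ one extracts on $\tilde{Y}$ a rational derivation whose inseparable quotient recovers $X$ birationally.

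The hard part will be the precise bookkeeping of the isolated zeros of $D$ and their contribution to the invariants of $\tilde{Y}$ through the resolution of the singularities of $Y$. This is exactly where the exceptional primes and the supersingular Godeaux loophole enter: for small $p$ the fixed scheme of $D$ can absorb enough of the numerical discrepancy to allow $\tilde{Y}$ to have $\kappa \geq 0$, and at $p \in \{3,5\}$ with $K_X^2 = 1$ the only escape the inequality permits is that $X$ be a supersingular Godeaux surface. Secondary technical difficulties include the uniform treatment of additive versus multiplicative foliations---whose fixed schemes are geometrically different---and the effective use of a $c_2$-type bound to control the number and types of singularities of $Y$.
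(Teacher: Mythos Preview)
Your overall architecture matches the paper's: reduce to a $p$-closed vector field, pass to the quotient $Y$, resolve to $Y'$, and do a case analysis on $\kappa(Y')$ to force $\kappa(Y')=-\infty$. However, several of your bridging claims are incorrect or far too optimistic. First, the assertion that the singularities of $Y$ are rational is false in general: for $\alpha_p$ (additive) quotients the singularities of $Y$ need not be rational or even log terminal, and this is one of the central technical obstacles the paper must confront (see Proposition~\ref{prop4} and the discussion before Theorem~\ref{additive-type}). You therefore cannot pass $\pi_1^{et}$ or cohomological invariants from $Y'$ back to $Y$ by a blanket rationality statement; the paper instead routes through the diagram~(\ref{sec-all-p-diagram-1}) and uses that $\pi$ is a universal homeomorphism together with birational invariance on the smooth models. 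Second, the exclusion of $\kappa(Y')\geq 0$ is not a single clean inequality in $p(p-1)$ versus $K_X^2$; the paper splits into $\Delta\neq 0$ versus $\Delta=0$, and in the latter case (Theorem~\ref{sec-all-p-prop-2}) must analyze elliptic/quasi-elliptic fibrations on $Y'$, the precise configuration of $g$-exceptional curves, and even the torsion in $\mathrm{Pic}(Y')$ to rule out cases or pin down the supersingular Godeaux loophole.

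Two further gaps: your derivation of $p_g(X)\leq 1$ from ``$p_g(\tilde Y)=0$ plugged into a $\chi$-comparison'' does not work, because $\pi_\ast\mathcal O_X$ for an $\alpha_p$-torsor only admits a filtration (not a splitting) and there is no direct comparison of $p_g$; the paper's proof (Theorem~\ref{sec-all-p-prop-3}) instead studies the base locus of $|K_X|$, shows its unique base point interacts with the fixed locus of $D$, and runs a genus-change argument on an induced fibration. Finally, you make no mention of characteristic $2$, but the paper devotes two full sections to it: the $p\geq 3$ arguments break down there (e.g.\ equation~(\ref{sec-all-p-eq-4}) gives no information, quasi-elliptic fibrations appear), and the $p=2$ case requires a separate analysis exploiting that $\pi$ is a torsor in codimension~$2$ and that the isolated singularities of $D$ can be resolved by ordinary blow-ups.
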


\begin{corollary}\label{main-corollary}
Let $X$ be a smooth canonically polarized surface defined over an algebraically closed field of characteristic $p>0$. Suppose that either $K_X^2=2$, $p \not\in\{3,5\}$  and $X$ is not uniruled, or $K_X^2=1$, $p\not= 7$  and in addition one of the following happens
\begin{enumerate}
\item $p_g(X)=2$ and $p\not= 3,5$.
\item $\chi(\mathcal{O}_X) =3$ and $p\not= 3,5$.
\item $\pi_1^{et}(X) \not= \{1\}$.
\item $X$ is not unirational.
\end{enumerate}
Then $\mathrm{Aut}(X)$ is smooth.
\end{corollary}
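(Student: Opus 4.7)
The plan is to obtain the corollary as the contrapositive of Theorem~\ref{main-theorem}. I would assume $\mathrm{Aut}(X)$ is not smooth (equivalently, that $X$ carries a nontrivial global vector field) and, in each case of the hypothesis, extract a contradiction from the conclusions of the main theorem.

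For the regime $K_X^2 = 2$ with $p \notin \{3,5\}$, part (1) of Theorem~\ref{main-theorem} immediately forces $X$ to be uniruled, contradicting the hypothesis. For the regime $K_X^2 = 1$, $p \neq 7$, I would treat the four subcases in turn. Subcases (1) and (2) are handled by the final clause of Theorem~\ref{main-theorem}(2), which gives $p_g(X) \leq 1$ whenever $p \notin \{3,5,7\}$: in (1) this directly contradicts $p_g(X) = 2$, and in (2) Noether's formula $\chi(\mathcal{O}_X) = 1 - q(X) + p_g(X)$ together with $q(X) \geq 0$ forces $p_g(X) \geq 2$, producing the same contradiction. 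Subcase (3) follows because both the main conclusion of Theorem~\ref{main-theorem}(2) and its supersingular Godeaux exception require $\pi_1^{et}(X) = \{1\}$.

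Subcase (4) is the only step where I expect genuine care to be needed. Here the hypothesis is only that $X$ is not unirational, and the Godeaux exception in characteristic $3$ or $5$ is not ruled out on numerical grounds alone. To eliminate it I would invoke the final sentence of Theorem~\ref{main-theorem}: any such $X$ is the purely inseparable quotient of a ruled surface $Y$ by a rational vector field, so the quotient map $Y \to X$ is finite and purely inseparable, hence a universal homeomorphism, which gives $\pi_1^{et}(Y) = \pi_1^{et}(X) = \{1\}$ (the exception surface being simply connected by assumption). Since $\pi_1^{et}$ is a birational invariant of smooth projective surfaces, a simply connected birationally ruled surface must be birational to $\mathbb{P}^2$; composing a dominant rational map $\mathbb{P}^2 \dashrightarrow Y$ with $Y \to X$ then exhibits $X$ as unirational, contradicting the hypothesis of subcase (4) and completing the argument.
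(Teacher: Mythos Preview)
Your contrapositive approach is precisely how the paper derives the corollary, and your handling of the $K_X^2=2$ case and of subcases (1)--(3) for $K_X^2=1$ is correct. (A small terminological slip: the identity $\chi(\mathcal{O}_X)=1-q(X)+p_g(X)$ you use in (2) is just the definition of $\chi$, not Noether's formula, but the conclusion $p_g(X)\geq 2$ is right.)

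The genuine problem is your treatment of subcase (4). You correctly spot that the supersingular Godeaux exception in characteristics $3$ and $5$ is the only obstacle, but the remedy you propose does not apply there. The final sentence of Theorem~\ref{main-theorem} (``$X$ is the quotient of a ruled surface by a rational vector field'') is not a free-standing conclusion valid across the board; it is the outcome of the analysis in the case $\kappa(Y')=-\infty$, which is exactly the case that already yields unirationality. In the exceptional Godeaux situations --- Cases~1.2.2 and~2.1.3 in the proof of Theorem~\ref{sec-all-p-prop-2}, with $\Delta=0$ and $p\in\{3,5\}$ --- the minimal resolution $Y'$ of the quotient has $\kappa(Y')=1$ (it admits an elliptic fibration over $\mathbb{P}^1$) or $\kappa(Y')=0$ (its minimal model is a K3). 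So the quotient surface is \emph{not} ruled in those cases, and your chain ``$Y$ ruled and simply connected $\Rightarrow$ $Y$ rational $\Rightarrow$ $X$ unirational'' never gets off the ground. The phrase ``in all cases'' in the theorem has to be read modulo the same Godeaux exception that qualifies part~(2).

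As a consequence, the theorem as proved does not rule out a non-unirational, simply connected supersingular Godeaux surface in characteristic $3$ or $5$ carrying a nontrivial vector field; subcase~(4) of the corollary inherits exactly this caveat, and neither your argument nor any separate step in the paper closes it. Your instinct that (4) is the delicate point is right, but the fix you outline does not resolve it.
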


The conditions of Corollary~\ref{main-corollary} on the euler characteristic and the \'etale fundamental group are deformation invariant and therefore they are good conditions for the moduli problem Hence we get the following.

\begin{corollary}
Let  $\mathcal{M}_{1,ns}$ and $\mathcal{M}_{1,3}$  be the moduli stacks of canonically polarized surfaces $X$ with $K_X^2=1$ and  $\pi_1^{et}(X)\not= \{1\}$, or $K_X^2=1$ and $\chi(\mathcal{O}_X)=3$, $p\not=3,5$, respectively. 
Then $\mathcal{M}_{1,3}$ and $\mathcal{M}_{1,ns}$ are Deligne-Mumford stacks.
\end{corollary}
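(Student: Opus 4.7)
The plan is to reduce the Deligne--Mumford property to a pointwise statement about automorphism schemes and then invoke Corollary \ref{main-corollary}. First I would recall that by the work of Koll\'ar cited in the introduction, the moduli functor of canonically polarized surfaces with fixed Hilbert polynomial is already known to be an algebraic stack of finite type, with separated coarse moduli space. So the only missing ingredient for the Deligne--Mumford property is that the diagonal be unramified, or equivalently (since we are over a field) that the automorphism group scheme $\mathrm{Aut}(X)$ of every geometric object $X$ in the stack be reduced; for group schemes of finite type over an algebraically closed field, reduced is the same as smooth.

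Next I would argue that the two substacks are well defined, i.e.\ that the conditions cut out genuine open-and-closed subloci of the full moduli stack of canonically polarized surfaces with $K_X^2=1$. The Euler characteristic $\chi(\mathcal{O}_X)$ is locally constant in flat families of proper schemes (it is computed from the Hilbert polynomial), so $\chi(\mathcal{O}_X)=3$ is open and closed; this takes care of $\mathcal{M}_{1,3}$. For $\mathcal{M}_{1,ns}$, I would invoke the specialization/deformation invariance of the \'etale fundamental group for smooth proper morphisms with geometrically connected fibers (SGA 1): over a connected base, $\pi_1^{et}$ of the fibers is constant, so the condition $\pi_1^{et}(X)\neq\{1\}$ also defines a union of connected components. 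Hence each of $\mathcal{M}_{1,3}$ and $\mathcal{M}_{1,ns}$ is an algebraic substack of finite type, with the Hilbert polynomial bounded as above.

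Having secured that the substacks exist as algebraic stacks, I would then apply Corollary \ref{main-corollary} fibrewise. For $\mathcal{M}_{1,ns}$ one uses case (3) of the corollary (the nontrivial \'etale fundamental group hypothesis), and for $\mathcal{M}_{1,3}$ one uses case (2) (the $\chi(\mathcal{O}_X)=3$ hypothesis, with $p\neq 3,5$). In both situations the conclusion is that $\mathrm{Aut}(X)$ is smooth, so the stabilizer group schemes at all geometric points of our stacks are smooth and, being also finite (because $X$ is canonically polarized of general type, its automorphism group is finite), they are in fact \'etale. This means the inertia is unramified, hence the diagonal is unramified, which is the standard criterion for an algebraic stack to be Deligne--Mumford.

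The main technical obstacle, as far as I can see, is not in applying Corollary \ref{main-corollary}—that part is essentially a fibrewise check—but in justifying that the two loci are legitimate algebraic substacks to begin with. In particular, one must be careful that the condition $\pi_1^{et}(X)\neq\{1\}$ really does cut out an open-and-closed substack of the stack of smooth canonically polarized surfaces, which requires the specialization theorem for the fundamental group in a family and (implicitly) the smoothness of the family, together with the boundedness results of Koll\'ar to control the Hilbert polynomial. Once this bookkeeping is in place, the Deligne--Mumford property follows immediately from the smoothness-of-$\mathrm{Aut}$ conclusion of Corollary \ref{main-corollary} and the finiteness of automorphism groups of canonically polarized varieties.
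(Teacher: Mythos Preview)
Your proposal is correct and follows essentially the same approach as the paper: the paper gives no detailed proof of this corollary at all, merely remarking before the statement that the conditions on $\chi(\mathcal{O}_X)$ and $\pi_1^{et}(X)$ in Corollary~\ref{main-corollary} are deformation invariant and hence ``good conditions for the moduli problem.'' Your write-up simply fills in the standard details (existence of the algebraic stack via Koll\'ar's boundedness, deformation invariance cutting out open-and-closed substacks, and the passage from smooth finite $\mathrm{Aut}$ to the Deligne--Mumford property) that the paper leaves implicit.
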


In particular, Theorem~\ref{main-theorem} applies to the case of Godeaux surfaces. Considering their significance I find it appropriate to write a statement for this case.
\begin{corollary}\label{godeaux}
Let $X$ be a canonically polarized Godeaux surface defined over an algebraically closed field $k$ of characteristic $p>0$, $p\not= 7$.  Suppose that  $\mathrm{Aut}(X)$ is not smooth. Then $X$ is unirational and simply connected,  except possibly if $p\in\{3,5\}$ and $X$ is a simply connected supersingular Godeaux surface. Moreover,
\[
\mathrm{Pic}^{\tau}(X)=
\begin{cases}
\mathrm{Spec}k & \text{if} \;\;\; p > 7\\
\mathrm{Spec}k \;\;\;\; \text{or}\;\;\;\; \mathbb{Z}/5\mathbb{Z} \;\;\;\; \text{or} \;\;\;\; \alpha_5 & \text{if} \;\;\; p=5 \\
\left(\oplus_{i=1}^m \mathbb{Z}/p^{n_i}\mathbb{Z}\right) \oplus N & \text{if} \;\;\; p =2,3.
\end{cases}
\]
where $\mathrm{Pic}^{\tau}(X)$ be the torsion subgroup scheme of $\mathrm{Pic}(X)$, $N$ is a finite commutative group scheme which is either trivial or is obtained by successive extensions by $\alpha_p$.
\end{corollary}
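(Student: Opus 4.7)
The plan is to reduce the statement to Theorem~\ref{main-theorem} and combine it with the structure theory of finite commutative group schemes in characteristic $p$. A canonically polarized Godeaux surface has $K_X^2=1$ and $\chi(\mathcal{O}_X)=1$, so the hypothesis of Theorem~\ref{main-theorem}(2) is automatically fulfilled whenever $p\neq 7$; this yields both the unirationality of $X$ and $\pi_1^{et}(X)=\{1\}$, apart from the listed supersingular Godeaux exception at $p\in\{3,5\}$, which already accounts for the first two assertions of the corollary.

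For the Picard-scheme claim I would proceed as follows. For a Godeaux surface $\mathrm{Pic}^0(X)$ is zero-dimensional as a scheme (possibly non-reduced), so $\mathrm{Pic}^\tau(X)$ is a finite commutative $k$-group scheme. Over the algebraically closed field $k$ it splits canonically as the direct sum of its maximal \'etale quotient and its connected infinitesimal part. The prime-to-$p$ component of the \'etale part is in bijection with \'etale cyclic covers of $X$, since for $\gcd(n,p)=1$ one has $\mu_n\simeq\mathbb{Z}/n\mathbb{Z}$ over $k$ and a torsion line bundle $L$ of order $n$ produces an \'etale torsor $\mathbf{Spec}_X(\bigoplus_{i=0}^{n-1}L^{-i})\to X$. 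The vanishing $\pi_1^{et}(X)=\{1\}$ established in the first step therefore forces this prime-to-$p$ component to be trivial, so $\mathrm{Pic}^\tau(X)$ is a finite $p$-group scheme whose composition factors may only be $\mathbb{Z}/p\mathbb{Z}$, $\mu_p$, or $\alpha_p$.

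The final step is to prune this list as a function of $p$. I would combine the classical bound $|\mathrm{Pic}^\tau(X)_{\mathrm{red}}(k)|\leq 5$ for a Godeaux surface with an analysis of the flat degree-$p$ torsor $\pi\colon Y\to X$ produced by any nonzero order-$p$ subgroup scheme $G\subset\mathrm{Pic}^\tau(X)$. Since $K_Y=\pi^\ast K_X$, $Y$ is a minimal surface of general type with $K_Y^2=p$ and $\chi(\mathcal{O}_Y)=p$, and its numerical invariants are further constrained by the type of $G$ (\'etale, $\mu_p$, or $\alpha_p$). Case-checking against the resulting surface-of-general-type inequalities applied to $Y$, together with Cartier-duality considerations pairing $\mu_p$-summands of $\mathrm{Pic}^\tau(X)$ with hypothetical \'etale $\mathbb{Z}/p\mathbb{Z}$-covers of $X$ that simple connectedness forbids, eliminates every nonzero subgroup scheme once $p>7$; it leaves only $\mathbb{Z}/5\mathbb{Z}$ and $\alpha_5$ when $p=5$; and it leaves the stated direct-sum shape (an \'etale $p$-group direct summed with an iterated $\alpha_p$-extension $N$) when $p\in\{2,3\}$. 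The principal obstacle is precisely this last numerical analysis: ruling out the infinitesimal subgroups $\mu_p$ and $\alpha_p$ at $p>7$, and $\mu_5$ at $p=5$, requires careful cohomological control over the corresponding purely inseparable torsors, and the necessary computations are genuinely characteristic-dependent.
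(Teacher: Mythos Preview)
Your reduction to Theorem~\ref{main-theorem} for unirationality and simple connectedness, together with the Cartier-duality argument eliminating $\mu_p$ from $\mathrm{Pic}^\tau(X)$ via the isomorphism $\mathrm{Hom}(G,\mathrm{Pic}^\tau(X))\cong H^1_{fl}(X,G^\ast)$, is exactly what the paper does. The paper's proof is just the one-line remark after the corollary: it follows from Theorem~\ref{main-theorem}, the discussion of $\mathrm{Pic}^\tau$ and torsors in Section~\ref{sec-0}, the structure theory of finite commutative group schemes, and the classification of Godeaux surfaces in characteristic~$5$ due to Lang and Liedtke.

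The difference is in how the remaining composition factors $\mathbb{Z}/p\mathbb{Z}$ and $\alpha_p$ are handled. The paper does not redo any torsor numerics: it simply quotes Liedtke's theorem \cite{Li09} that nonclassical Godeaux surfaces (those with non-reduced $\mathrm{Pic}^\tau$) exist only for $p\leq 5$, which immediately kills the $\alpha_p$ factor for $p>5$; and it quotes the explicit characteristic-$5$ classification \cite{La83,Li09} to pin down the three possibilities when $p=5$. Your proposal instead sketches a direct argument via the invariants of the degree-$p$ torsor $Y\to X$, which is essentially how those cited results are proved, so it is correct in spirit but more laborious.

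There is a small muddle in your last paragraph. You say the obstacle is ``ruling out the infinitesimal subgroups $\mu_p$ and $\alpha_p$ at $p>7$, and $\mu_5$ at $p=5$'', but $\mu_p$ (including $\mu_5$) has already been eliminated by your Cartier-duality step once $\pi_1^{et}(X)=\{1\}$. What actually remains for $p>7$ is to rule out both $\alpha_p$ (infinitesimal) and the \'etale factor $\mathbb{Z}/p\mathbb{Z}$; the latter does \emph{not} give an \'etale cover, since the associated torsor is a purely inseparable $\mu_p$-torsor, so simple connectedness alone does not dispose of it. The paper sidesteps this by invoking the cited classification; if you want a self-contained argument you must carry out the Noether-type analysis of the inseparable torsor for both factors, which is precisely the content of \cite{Li09}.
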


This corollary follows directly from Theorem~\ref{main-theorem},  the discussion about the possible structure of it in section~\ref{sec-0},  basic properties about the structure of finite commutative groups schemes and the classification of Godeaux surfaces in characteristic 5~\cite{La83},\cite{Li09}. 

 From the characteristics zero and five cases that have been extensively studied~\cite{Re78},~\cite{La83},\cite{Li09}, one might expect that if $p =2$, then $\mathrm{Pic}^{\tau}(X)$ is either $\mathbb{Z}/2\mathbb{Z}$, $\mathbb{Z}/4\mathbb{Z}$ or $\alpha_2$, and if $p=3$ it is either $\mathbb{Z}/3\mathbb{Z}$ or $\alpha_3$. However, there is no classification in the characteristics 2 and 3 cases yet and since many pathologies appear in these characteristics, one must be careful. 
 
 A final comment that I would like to make about Godeaux surfaces is that I believe it will be interesting to know if there are examples of canonically polarized surfaces with nonreduced automorphism scheme but reduced Picard scheme. 

Next I would like to discuss the significance and how restrictive or effective the conditions of Theorem~\ref{main-theorem} are. 

According to Theorem~\ref{th1}, nonsmoothness of the automorphism scheme happens for small values of $K^2$ compared to the characteristic of the base field. However Theorem~\ref{th1} does not give any effective lower bound for the characteristic, relative to $K^2$, after which the automorphism scheme will become smooth. From this point of view, the automorphism scheme of surfaces with $K^2=1$ or $2$ should not be very complicated.  If it is not smooth then these surfaces should be rather pathological. The case when it is expected to be the most complicated and where most pathologies should appear must be the case when $p=2$. All these are exhibited by the results of Theorem~\ref{main-theorem}. In particular any surface with $K^2\leq 2$ and non reduced automorphism scheme is uniruled, something that is not possible in characteristic zero. However there are examples of non uniruled canonically polarized surfaces with high $K^2$ in characteristic 2~\cite{SB96}.

The exclusion in some cases of the characteristics $p=3,5,7$ in Theorem~\ref{main-theorem} is not because of any fundamental reason. As it can be seen from the proof of the theorem, it is due to certain difficulties with numerical calculations and the existence of fibrations with singular general fiber of arithmetic genus 1 or 2 in characteristics 3 and 5. However, these problems are more evident in characteristic 2, a case which is fully studied in 
sections~\ref{sec-4},~\ref{sec-5}. I believe that the method that was used in the characteristic 2 case can in principle be used in order to resolve the problematic characteristic cases. However at the moment I am having some technical difficulties and the papers is already too long.

One cannot expect that it will be possible to get results similar to those of Theorem~\ref{main-theorem} for any surface with nonreduced automorphism scheme. This is because there are examples in characteristic 2 of surfaces of general type with arbitrary large $K^2$ and non reduced automorphism scheme~\cite{La83},~\cite{Li09}, many of them not uniruled and not simply connected~\cite{SB96}. However, due to the lack of examples, I do not know if surfaces with $K^2\leq 2$ is the maximal class of surfaces that Theorem~\ref{main-theorem} holds. 

Suppose that $X$ is a canonically polarized surface $X$ with $K_X^2\leq 2$. If $K_X^2=2$, then by~\cite{Ek87}, it follows that $0 \leq \chi(\mathcal{O}_X)\leq 4$. Moreover, if $\chi(\mathcal{O}_X) \geq 3$, then by Lemma~\ref{b1}, $\pi_1^{et}(X)=\{1\}$ and hence the condition that $X$ is simply connected in Theorem~\ref{main-theorem}.2 has value only for $\chi(\mathcal{O}_X) =2$.

Suppose that $K_X^2=1$. Then it is well known~\cite{Li09} that  $\mathrm{p}_g(X)\leq 2$, $1 \leq \chi(\mathcal{O}_X) \leq 3$ and $|\pi_1^{et}(X)| \leq 6$. In characteristic zero there are examples for all cases and similar examples are expected to exist in any characteristic. Hence many surfaces with $K_X^2=1$ are excluded in the theorem and hence by the Corollary~\ref{main-corollary} have smooth automorphism scheme. For 
example, Godeaux surfaces that are quotients of a smooth quintic in $\mathbb{P}^3_k$ by a free action of $\mathbb{Z}/5\mathbb{Z}$, since they are not simply connected. 

At this point I would like to mention that it is hard to find examples of smooth canonically polarized surfaces with low $K^2$ and non smooth automorphism scheme. In fact, I believe that the biggest disadvatage of this paper is the lack of examples of surfaces with norreduced automorphism scheme and low $K^2$ which will show how effective the results of Theorem~\ref{main-theorem} are. N. I. Shepherd-Barron~\cite{SB96} has constructed an example of a smooth canonically polarized surface $X$ in characteristic 2  with non smooth automorphism scheme and $K_X^2=8$. This is the example with the lowest $K^2$ that I know. Simply connected Godeaux surfaces exist in all characteristics~\cite{LN12},~\cite{La81} but it is not known if their automorphism scheme is smooth or not. Singular examples are much easier to find. Two such examples are presented in section~\ref{examples}

Based on the previous discussion, I believe it would be an interesting problem to search for examples of canonically polarized surfaces with non smooth automorphism scheme and low $K^2$. In particular, Corollary~\ref{godeaux} motivates the following problem.
\begin{problem}
Are there any simply connected Godeaux surfaces with non smooth automorphism scheme? Equivalently, are there any with non trivial global vector fields? If there are then what is the torsion part of their picard scheme? Can it be reduced?
\end{problem}

The main steps in the proof of Theorem~\ref{main-theorem} are the following. 

In Section~\ref{sec-2} it is shown that $\mathrm{Aut}(X)$ is not smooth if and only if $X$ admits a nonzero global vector field $D$ such that either $D^p=0$ or $D^p=D$. This is equivalent to the property that $X$ admits a nontrivial $\alpha_p$ or $\mu_p$ action or that $\mathrm{Aut}(X)$ has a subgroup scheme isomorphic to either $\alpha_p$ or $\mu_p$. Corollary~\ref{lifts-to-zero} shows that this is impossible if $X$ lifts to characteristic zero.

In Section~\ref{sec-3} quotients of a smooth surface $X$ by an $\alpha_p$ or $\mu_p$ action are studied. In particular the singularities of the quotient $Y$ are described. Proposition~\ref{prop4} shows that there is an essential difference between $\alpha_p$ quotients and $\mu_p$ quotients. In particular, if $p=2$ and $\mu_2$ acts on $X$ then $Y$ has only canonical singularities of type $A_1$. However, in the $\alpha_2$ case, $Y$ may have even non rational singularities. 

In Section~\ref{preparation} the general strategy for the proof of Theorem~\ref{main-theorem} is explained. Quite generally it is the following. Suppose that $\mathrm{Aut}(X)$ is not smooth. Then $X$ has a nontrivial global vector field $D$ such that either $D^p=D$ or $D^p=0$. This vector field induces a nontrivial $\mu_o$ or $\alpha_p$ action on $X$. Let $\pi \colon X \rightarrow Y$ be the quotient and $g \colon Y^{\prime} \rightarrow Y$ its minimal resolution. The results are obtained by considering cases with respect to the Kodaira dimension $\kappa(Y^{\prime})$ of $Y^{\prime}$ and comparing invariants and the geometry of $X$ and $Y^{\prime}$. The basic idea of this method was first used by Rudakov and Shafarevich~\cite{R-S76} in order to show that a smooth K3 surface has no global vector fields.

In Section~\ref{p>2}, Theorem~\ref{main-theorem} is proved in the case $p\geq 3$. The case when $p=2$ has certain complications which makes it necessary to treat it separately. These complications are explained during the proofs of Theorems~\ref{sec-all-p-prop-1},~\ref{sec-all-p-prop-2} and~\ref{sec-all-p-prop-3}.

In Sections~\ref{sec-4},~\ref{sec-5} the characteristic 2 case is treated. In particular:

In Section~\ref{sec-4} it is investigated which smooth canonically polarized surfaces $X$ defined over an algebraically closed field of characteristic 2  admit vector fields of multiplicative type, or equivalently $\mathrm{Aut}(X)$ contain a subgroup scheme isomorphic to $\mu_2$. The results are presented in Theorem~\ref{mult-type}. 

In Section~\ref{sec-5} it is investigated which smooth canonically polarized surfaces $X$ admit vector fields of additive type, or equivalently $\mathrm{Aut}(X)$ contain a subgroup scheme isomorphic to $\alpha_2$. The results are presented in Theorem~\ref{additive-type}. 

Theorem~\ref{main-theorem} is the combination of the results of Theorems~\ref{sec-all-p-prop-1},~\ref{sec-all-p-prop-2},~\ref{sec-all-p-prop-3},~\ref{mult-type} and~\ref{additive-type}. 

At this point I would like to say that if $\mathrm{Aut}(X)$ is not smooth then it would be interesting to know its group scheme structure, in particular that of its connected component containing the identity. This is a hard problem. However, finding its subgroups is easier and gives a lot of information about it. If $\mathrm{Aut}(X)$ is not smooth, then it contains either $\mu_p$ or $\alpha_p$, or both. In characteristic 2, Theorems~\ref{mult-type} and~\ref{additive-type} describe surfaces $X$ such that $\mathrm{Aut}(X)$ contains $\mu_2$ or $\alpha_2$, respectively.
 
In Section~\ref{examples} an overview of known examples is given. Moreover two examples in characteristic 2 are given of singular surfaces $X$ and $Y$ with non smooth automorphism scheme and  $K_X^2=1$, $K_X^2=5$. $X$ has singularities of index 2 and $Y$ has canonical singularities of type $A_n$. The significance of these examples are twofold. First singular surfaces should be studied because they are important in the compactification of the moduli problem. The first example shows that if there are no restrictions on the singularities then $K^2$ can be as low as possible. The second has canonical singularities and is therefore a stable surface, a surface that is in the moduli problem, with low $K^2$. Moreover, $Y$ is smoothable to a smooth canonically polarized surface with smooth automorphism scheme. This shows that the property ``smooth automorphism scheme'' is not deformation invariant and does not produce a proper moduli stack.


\section{Preliminaries.}\label{sec-0}
Let $X$ be a scheme of finite type over a field $k$ of characteristic $p>0$.

$X$ is called a smooth canonically polarised surface if and only if $X$ is a smooth surface and $\omega_X$ is ample.

$Der_k(X)$ denotes the space of global $k$-derivations of $X$ (or equivalently of global vector fields). It is canonically identified with $\mathrm{Hom}_X(\Omega_X,\mathcal{O}_X)$.

A nonzero global vector field  $D$ on $X$ is called of additive or multiplicative type if and only if $D^p=0$ or $D^p=D$, respectively. A prime divisor $Z$ of $X$ is called an integral divisor of $D$ if and only if locally there is a derivation $D^{\prime}$ of $X$ such that $D=fD^{\prime}$, $f \in K(X)$,  $D^{\prime}(I_Z)\subset I_Z$ and $D^{\prime}(\mathcal{O}_X) \not\subset I_Z$ ~\cite{R-S76}. 

Let $\mathcal{F}$ be a coherent sheaf on $X$. By $\mathcal{F}^{[n]}$ we denote the double dual $(\mathcal{F}^{\otimes n})^{\ast\ast}$. 

For any prime number $l\not= p$, the cohomology groups $H_{et}^i(X,\mathbb{Q}_l)$ are independent of $l$, they are finite dimensional of $\mathbb{Q}_l$ and are called the $l$-adic cohomology groups of $X$. The $i$-Betti number $b_i(X)$ of $X$ is defined to be the dimension of $H_{et}^i(X,\mathbb{Q}_l)$. It is well known that $b_i(X)=0$ for any $i>2n$, where $n=\dim X$~\cite{Mi80}. 

The \'etale Euler characteristic of $X$ is defined by \[
\chi_{et}(X)=\sum_{i}(-1)^i\dim_{\mathbb{Q}_l}H^i(X,\mathbb{Q}_l)=\sum_i(-1)^ib_i(X).
\]
If $X$ is a smooth surface then $c_2(X)=\chi_{et}(X)$~\cite{Mi80}. Both the Betti numbers and the \'etale euler characteristic are invariant under \'etale equivalence. In particular, if $f \colon X \rightarrow Y$ is a purely inseparable morphism of varieties, then $f$ induces an equivalence of the \'etale sites of $X$ and $Y$ and hence $b_i(X)=b_i(Y)$ and $\chi_{et}(X)=\chi_{et}(Y)$~\cite{Mi80}.

$X$ is called  simply connected if $\pi_1^{et}(X)=\{1\}$, where $\pi_1^{et}(X)$ is the \'etale fundamental group of $X$.

The symbol $\equiv$ denotes numerical equivalence of divisors.

We will use the terminology of terminal, canonical, log terminal and log canonical singularities as in~\cite{KM98}. Their definition and basic properties, in particular~\cite[Corollary4.2, Corollary 4.3, Theorem 4.5]{KM98} are independent of the characteristic of the base field and therefore their theory applies in positive characteristic too. The contraction theorems~\cite{Art62},~\cite{Art66} are also independent of the characteristic and will be used frequently in this paper.

Let $P\in X$ be a normal surface singularity and $f \colon Y \rightarrow X$ its minimal resolution. If $P\in X$ is canonical, then $K_Y=f^{\ast}K_X$. By~\cite{KM98} canonical surface singularities are classified according to the Dynkin diagrams of their minimal resolution and they are called accordingly of type $A_n$, $D_n$, $E_6$, $E_7$ and $E_8$. In characteristic zero these are exactly the DuVal singularities and their dynkin diagrams correspond to explicit equations. However in positive characteristic I am not aware of a classification with respect to local equations. In this paper, canonical surface singularities will be distinguished according to their dynkin diagrams.   

A Godeaux surface is a surface of general type with the lowest possible numerical invariants and it is a classical object of study (at least in characteristic zero). More precisely,
\begin{definition}\cite{Re78}
A numerical Godeaux surface is a minimal surface $X$ of general type such that $K_X^2=1$ and $\chi(\mathcal{O}_X)=1$. 
\end{definition}
Numerical Godeaux surfaces are divided in two disjoint classes. \textit{Classical} and \textit{Nonclassical}. A Godeaux surface $X$ is called classical if and only if the torsion part $\mathrm{Pic}^{\tau}(X)$ of $\mathrm{Pic}(X)$ is reduced, and nonclassical if and only if $\mathrm{Pic}^{\tau}(X)$ is not reduced. Nonclassical are further divided into two disjoint classes. \textit{Singular} if $\mathrm{Pic}^{\tau}(X)$ contains $\mu_p$ and \textit{Supersingular} if $\mathrm{Pic}^{\tau}(X)$ contains $\alpha_p$ (since $\mathrm{Pic}^{\tau}(X)$ is a finite commutative group scheme with one dimensional tangent space, this cases are mutually exclusive).

Since all group schemes are reduced in characteristic zero, nonclassical Godeaux surfaces appear only in positive characteristic $p>0$. It is well known that in characteristic zero  
$\mathrm{Pic}^{\tau}(X)$ can be $\{1\}$, $\mathbb{Z}/2\mathbb{Z}$,  $\mathbb{Z}/3\mathbb{Z}$,$\mathbb{Z}/4\mathbb{Z}$ and  $\mathbb{Z}/5\mathbb{Z}$~\cite{Re78}. In positive characteristic however, $\mathrm{Pic}^{\tau}(X)$ may have a non-reduced part and one gets either singular or supersingular Godeaux surfaces too. However, non classical Godeaux surfaces exist only for $p\leq 5$~\cite{Li09}.

Classical Godeaux surfaces have been classified in characteristic zero by M. Reid~\cite{Re78}, and in characteristic $p=5$ by W. Lang~\cite{La83}. Nonclassical in characteristic 5 have been classified by C. Liedtke~\cite{Li09}. There is no classification yet in characteristics 2 or 3  but it is expected that all the characteristic zero cases appear also  together with cases where $\mathrm{Pic}^{\tau}(X)=\alpha_p$ or $\mu_p$, $p=2, 3$. 

There is a correspondence between the structure of $\mathrm{Pic}^{\tau}(X)$ and the existence of torsors over $X$.  The correspondence is given by the isomorphism $\mathrm{Hom}(G, \mathrm{Pic}^{\tau}(X))\cong H^1_{fl}(X,G^{\ast})$~\cite{Ray70}, where $G$ is any commutative subgroup scheme of $\mathrm{Pic}^{\tau}(X)$ and $G^{\ast}$ is its Cartier dual. According to this, a Godeaux surface is singular if there is a $\mathbb{Z}/p\mathbb{Z}$ torsor and supersingular if there is an $\alpha_p$ torsor over $X$. Equivalently, if the induced map of the Frobehious on $H^1(\mathcal{O}_X)$ is either bijective or zero.

\section{Nonreducedness happens for small p.}\label{sec-1}

The purpose of this section is to show that nonreducedness of the automorphism scheme of a canonically polarised normal surface defined over an algebraically closed field of characteristic $p>0$ is a property that happens for relatively small values of $p$. Moreover, the length of the automorphism schemes of canonically polarised surfaces with a fixed Hilbert polynomial is bounded by a number that depends only on the Hilbert polynomial and not on the characteristic of the base field. In particular I will show the following.

\begin{theorem}\label{th1}
Let $f(x)\in \mathbb{Q}[x]$ be a numerical polynomial. Then there are positive integers $m_0$ and $M_0$ depending only on $f(x)$ such that for any Gorenstein canonically polarised surface $X$ defined over an algebraically closed field $k$ of characteristic $p>0$ and with Hilbert polynomial $f(x)$,
\begin{enumerate} 
\item \[
\mathrm{length}(\mathrm{Aut}(X)) \leq M
\]
\item $\mathrm{Aut}(X)$ is smooth for all $p>m_0$. 
\end{enumerate}
\end{theorem}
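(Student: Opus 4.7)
The plan is to reduce both claims to the boundedness of the family of Gorenstein canonically polarised surfaces with Hilbert polynomial $f(x)$, and then, for part (2), to combine the resulting length bound with the group scheme structure of $\mathrm{Aut}(X)$. I would first apply Matsusaka's big theorem, valid in any characteristic, to produce an integer $n_0 = n_0(f)$ such that $\omega_X^{n_0}$ is very ample for every Gorenstein canonically polarised $X$ with Hilbert polynomial $f(x)$. This embeds $X \hookrightarrow \mathbb{P}^N$ with $N = f(n_0)-1$, and realises $[X]$ as a point of a component $H$ of a Hilbert scheme determined by $f$; in particular, the collection of all such $X$ is parametrised by a scheme of finite type over the base field.

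For part (1), since $\omega_X$ is ample, $X$ is of general type and $\mathrm{Aut}(X)$ is a finite group scheme. Under the embedding above, $\mathrm{Aut}(X)$ is identified with the scheme-theoretic stabiliser of $[X]$ under the natural action of $\mathrm{PGL}(N+1)$ on $H$, hence is cut out of $\mathrm{PGL}(N+1)$ by the polynomial equations obtained by applying a general $g \in \mathrm{PGL}(N+1)$ to the defining equations of $X \subset \mathbb{P}^N$. The Castelnuovo--Mumford regularity of the embedded $X$ is bounded in terms of $f(x)$, so these equations have bounded degree, and a Bezout-type estimate yields a universal bound $\mathrm{length}(\mathrm{Aut}(X)) \leq M_0 = M_0(f)$. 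Alternatively, the universal automorphism scheme $\underline{\mathrm{Aut}} \to H$ is a closed subgroup scheme of $\mathrm{PGL}(N+1)_H$ of finite type with finite fibres, and constructibility plus upper semicontinuity of fibre length over the Noetherian scheme $H$ deliver the same conclusion.

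For part (2), I would combine the bound from (1) with the fact established in Section~\ref{sec-2} that $\mathrm{Aut}(X)$ is non-smooth if and only if it contains a subgroup scheme isomorphic to $\alpha_p$ or $\mu_p$. Since both of these group schemes have length $p$, the non-smooth case forces $p \leq \mathrm{length}(\mathrm{Aut}(X)) \leq M_0$; one may therefore take $m_0 = M_0$.

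The main obstacle is making the length bound in (1) genuinely uniform in the family rather than only pointwise, since a priori the infinitesimal thickenings of the stabiliser can jump in families, and one must control not merely the reduced automorphism group but the whole scheme structure. The cleanest way to handle this is to work directly with the universal automorphism scheme $\underline{\mathrm{Aut}} \to H$ as above. An effective version of the bound would instead require explicit estimates on the regularity of $X \subset \mathbb{P}^N$ and on the degrees of the resulting equations in $\mathrm{PGL}(N+1)$, which is possible in principle but tedious and not needed for the qualitative statement.
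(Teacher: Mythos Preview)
Your overall strategy matches the paper's: boundedness followed by a fibrewise length bound on the universal automorphism scheme. Two points of comparison are worth noting.

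First, your ``alternative'' route via upper semicontinuity of fibre length for $\underline{\mathrm{Aut}}\to H$ has a gap as stated. Fibre length of a quasi-finite morphism of finite type is \emph{not} upper semicontinuous without properness; a preimage can escape to infinity in $\mathrm{PGL}(N+1)$ and the length can drop on a closed subset. The paper's key step, which you omit, is precisely to prove that $\mathrm{Aut}(\mathcal{X}/S)\to S$ is \emph{proper} (hence finite) via the valuative criterion, using that automorphisms of the generic fibre of a family of canonically polarised surfaces over a DVR extend over the special fibre. Once finiteness is established, the length bound is immediate from coherence of the pushforward. Your B\'ezout alternative, bounding the degree of the defining equations of the stabiliser via a uniform regularity bound, is a legitimate substitute that the paper does not pursue; it trades the geometric extension argument for explicit degree control. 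Note also that in either approach one must work over $\mathrm{Spec}\,\mathbb{Z}$ (as the paper does) rather than over a fixed field, since the bound must be uniform across all characteristics.

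Second, for part (2) the paper uses a slightly cleaner fact: any finite group scheme over a field of characteristic $p$ whose length is less than $p$ is automatically smooth (its connected component at the identity has $p$-power length, hence is trivial). Your argument via $\alpha_p$ or $\mu_p$ subgroup schemes is correct and yields the same $m_0=M_0$, but it invokes the later Section~\ref{sec-2}; the paper's direct argument keeps Theorem~\ref{th1} self-contained.
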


\begin{proof}
Let $\Omega$ be the set of all Gorenstein canonically polarised surfaces with fixed Hilbert polynomial $f(n)$ defined over any field of any characteristic. Then this set is bounded~\cite{Ko84}~\cite{M70}~\cite{M-M64}~\cite{M86}. This means that there is a flat morphism $f \colon \mathcal{X} \rightarrow S$, where $S$ is of finite type over $\mathbb{Z}$ whose geometric fibers are Gorenstein canonically polarised surfaces and such that for any Gorenstein canonically polarised surface $X$ with Hilbert polynomial $f(n)$ defined over an algebraically closed field $k$, there is a morphism $\mathrm{Spec}k \rightarrow S$ such that $X \cong \mathrm{Spec}k \times_S \mathcal{X}$. Let
\[
\Phi \colon \mathrm{Aut}(\mathcal{X}/S) \rightarrow S
\]
be the induced morphism on relative automorphism schemes. I will show that this map is finite. For this it suffices to show that $\Phi$ is proper with finite fibers. It is well known that for any canonically polarised surface $X$, $\mathrm{Aut}(X)$ is a finite group scheme. Therefore $\Phi$ has finite fibers. Properness of $\Phi$ follows from the valuative criterion of properness. This is equivalent to the following property. Let $R$ be a discrete valuation ring with function field $K$ and residue field $k$ (perhaps of mixed characteristic). Let $X \rightarrow \mathrm{Spec}R$ be a projective flat morphism such that $\omega_{X/R}$ is ample. Let $X_K$ and $X_k$ be the generic and special fibers. Then any automorphism of $X_K$ lifts to an automorphism of $X$ over $\mathrm{Spec}R$. The proof of this statement is identical with the one for characteristic zero~\cite[Proposition 3]{Ko10} and I omit its proof. It essentially depends on the existence of resolutions of singularities which exist also in 
any characteristic in dimension two.

Now since $\Phi$ is a finite morphism the lengths of its fibers are bounded by some number $M$ that depends only on $f(x)$. Hence if $X$ is any Gorenstein canonically polarised surface defined over an algebraically closed field $k$ of characteristic $p>0$, $\mathrm{length}(\mathrm{Aut}(X))\leq M$. Therefore if $p>M$, $\mathrm{Aut}(X)$ is smooth over $k$. This follows from the fact that $\mathrm{Aut}(X)$ is a finite group scheme defined over a field of characteristic $p>0$ and any such group scheme is smooth if $p$ is bigger than its length~\cite{Mu70}~\cite{Mi12}. This concludes the proof of the theorem.
\end{proof}
The previous theorem motivates the following problem.

\begin{problems}
Let $X$ be a canonically polarised surface defined over an algebraically closed field of characteristic $p>0$.
\begin{enumerate}
\item Find effective bounds for the length of $\mathrm{Aut}(X)$, or its component containing the identity, depending only on its Hilbert polynomial. In characteristic zero it is known that $|\mathrm{Aut}(X)| \leq 42K_X^2$~\cite{Xi94},~\cite{Xi95}.
\item Find explicit relations between the characteristic $p$ of the base field and the coefficients of the Hilbert polynomial of a canonically polarised surface which guarantee the smoothness of $\mathrm{Aut}(X)$.
\end{enumerate}
\end{problems}

\begin{remarks}
\begin{enumerate}
\item The previous theorem shows that the structure of the automorphism scheme is expected to be more complicated in small characteristics. It also suggests that if $1\leq K_X^2 \leq 2$, then the case $p=2$, may be the case where most pathologies appear.
\item The proof of the theorem depends on boundedness of canonically polarised surfaces with a given Hilbert polynomial. At the time of this writing this is not known in higher dimensions.  
\end{enumerate}

\end{remarks}

\section{Nonreducedness of the automorphism scheme, derivations and group scheme actions.}\label{sec-2}
Let $X$ be a canonically polarized $\mathbb{Q}$-Gorenstein surface defined over an algebraically closed field $k$ of characteristic $p>0$. The purpose of this section is essentially  to show that $\mathrm{Aut}(X)$ is not smooth over $k$ if and only if $X$ admits a nontrivial global vector field or equivalently if it admits a nontrivial $\mu_p$ or $\alpha_p$ action. These results are easy and probably known but I include them here for lack of reference and the convenience of the reader.

\begin{proposition}\label{prop1}
Let $X$ be a canonically polarized $\mathbb{Q}$-Gorenstein surface defined over an algebraically closed field $k$ of characteristic $p>0$. Then $\mathrm{Aut}(X)$ is not smooth over $k$ if and only if $X$ admits a nontrivial global vector field of either additive or multiplicative type.
\end{proposition}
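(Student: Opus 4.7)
The plan is to read off (non)smoothness of $\mathrm{Aut}(X)$ from its tangent space at the identity and then to exploit the restricted Lie algebra structure on global vector fields to refine a nonzero derivation into one of pure additive or multiplicative type. Since some power of $\omega_X$ is ample, $\mathrm{Aut}(X)$ is representable by a group scheme of finite type over $k$, and because $X$ is of general type it is in fact zero-dimensional and hence finite by Matsumura's theorem (whose proof carries over to the $\mathbb{Q}$-Gorenstein case). Its Lie algebra, i.e.\ its Zariski tangent space at the identity, is canonically identified with $H^0(X,T_X)=\mathrm{Der}_k(X)$. Since $\mathrm{Aut}(X)$ has dimension zero and $k$ is algebraically closed, smoothness is equivalent to \'etaleness, hence to $\mathrm{Der}_k(X)=0$.

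The implication ``$\Leftarrow$'' is then immediate: a nonzero derivation of any kind produces a nonzero vector in $\mathrm{Lie}(\mathrm{Aut}(X))$ and so obstructs \'etaleness.

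For ``$\Rightarrow$'', suppose $\mathrm{Aut}(X)$ is not smooth, so that $V=\mathrm{Der}_k(X)\neq 0$. The key observation is that $V$ is a finite-dimensional restricted Lie algebra under the $p$-th power operation $D\mapsto D^p$, with $D^p$ interpreted as the $p$-fold composition of $D$ in the ring of $k$-linear endomorphisms of $\mathcal{O}_X$ (which remains a derivation by the freshman's dream in characteristic $p$). Pick any $D_0\in V\setminus\{0\}$ and let $W\subseteq V$ be the $k$-span of the iterates $D_0,D_0^p,D_0^{p^2},\dots$. This is a finite-dimensional restricted Lie subalgebra, and it is abelian because any two such iterates commute as $k$-linear endomorphisms of $\mathcal{O}_X$. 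On an abelian restricted Lie algebra the map $\phi=(\cdot)^p\colon W\to W$ is $p$-semilinear, i.e.\ additive with $\phi(\lambda w)=\lambda^p\phi(w)$. If $\ker\phi\neq 0$, any $D\in\ker\phi\setminus\{0\}$ satisfies $D^p=0$ and is of additive type. Otherwise $\phi$ is bijective on $W$, and the standard descent statement for $p$-semilinear bijections over an algebraically closed field (the analogue of Hilbert~90 / Lang--Steinberg for the Frobenius twist, giving $W^{\phi}\otimes_{\mathbb{F}_p}k\cong W$) yields a nonzero $D\in W$ with $\phi(D)=D$, i.e.\ $D^p=D$, a derivation of multiplicative type.

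The main obstacle is precisely this upgrade step: an arbitrary nonzero derivation need not itself be of additive or multiplicative type, and one must pass to the abelian restricted subalgebra $W$ generated by a single element in order to access the $p$-semilinear structure of the $p$-operation. The remaining ingredients---representability and finiteness of $\mathrm{Aut}(X)$ for canonically polarized $X$, and the identification of $\mathrm{Lie}(\mathrm{Aut}(X))$ with global derivations---are standard facts in deformation theory of group schemes.
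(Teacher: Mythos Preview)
Your proof is correct and follows essentially the same line as the paper's: both reduce non-smoothness of the finite group scheme $\mathrm{Aut}(X)$ to nonvanishing of $\mathrm{Der}_k(X)$, and both then use the $p$-power operation together with a Fitting-type splitting into a nilpotent part and a part on which the $p$-operation is bijective. The paper invokes \cite[Lemma~4.13]{Mi80} directly on the whole space $V=\mathrm{Hom}_X(\Omega_X,\mathcal{O}_X)$, asserting that $D\mapsto D^p$ is $p$-linear there; your version is slightly more careful, since additivity of the $p$-operation is only guaranteed on an \emph{abelian} restricted Lie subalgebra, which is exactly why you first pass to the subspace $W$ spanned by the iterates $D_0,D_0^p,D_0^{p^2},\dots$ of a single element. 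This extra step makes the application of the $p$-semilinear Fitting decomposition (and the subsequent descent to find a $\phi$-fixed vector when $\phi$ is bijective) fully rigorous, but it does not change the underlying strategy.
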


\begin{proof}
It is well known that for any canonically polarized $\mathbb{Q}$-Gorenstein surface, $\mathrm{Aut}(X)$ is finite. Therefore it is not smooth if and only if its tangent space at the identity is not trivial. Now the tangent space of $\mathrm{Aut}(X)$ at the identity is $\mathrm{Hom}_X(\Omega_X,\mathcal{O}_X)$ which is the space of global derivations of $X$. Therefore $\mathrm{Aut}(X)$ is not smooth if and only if $X$ has a nontrivial global vector field. 

Now let \[
\Phi \colon \mathrm{Hom}_X(\Omega_X,\mathcal{O}_X) \rightarrow \mathrm{Hom}_X(\Omega_X,\mathcal{O}_X)
\]
be the map defined by $\Phi(D)=D^p$. This is a $p$-linear map. Therefore by~\cite[Lemma 4.13]{Mi80}, there is a decomposition $\mathrm{Hom}_X(\Omega_X,\mathcal{O}_X)=V_n\oplus V_s$ where $V_n$, $V_s$ are $\Phi$-invariant subspaces such that the restriction of $\Phi$ to $V_n$ is nilpotent and to $V_s$ bijective. If $V_n \not= \emptyset$ then $X$ admits a vector field of additive type and if $V_s\not= \emptyset$ then it admits a vector field of multiplicative type. 
\end{proof}
\begin{corollary}\label{lifts-to-zero}
Let $X$ be a smooth canonically polarized surface which lifts to characteristic zero, or to $W_2(k)$, the ring of 2-Witt vectors. Then $\mathrm{Aut}(X)$ is smooth.
\end{corollary}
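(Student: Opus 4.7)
The plan is to apply Proposition~\ref{prop1} to reduce smoothness of $\mathrm{Aut}(X)$ to the vanishing $H^0(X,T_X)=0$, and then to establish this vanishing by Serre duality combined with the positive characteristic Kodaira-Akizuki-Nakano vanishing theorem of Deligne-Illusie.

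By Proposition~\ref{prop1}, $\mathrm{Aut}(X)$ is a finite $k$-group scheme whose tangent space at the identity is $\mathrm{Hom}_X(\Omega_X,\mathcal{O}_X)=H^0(X,T_X)$, so smoothness is equivalent to $H^0(X,T_X)=0$. Since $X$ is a smooth surface with $\omega_X$ ample, we have $T_X\cong \Omega_X^1\otimes\omega_X^{-1}$, and Serre duality yields
\[
H^0(X,T_X)\cong H^2(X,\Omega_X^1\otimes\omega_X)^{\vee}.
\]
A lift of $X$ to characteristic zero produces, by reduction of the base modulo the square of a uniformizer (in the unramified case; in general after first replacing the base by an unramified one), a lift of $X$ over $W_2(k)$, so both hypotheses of the corollary reduce to the $W_2(k)$ case. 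Under this hypothesis the Deligne-Illusie theorem gives the Kodaira-Akizuki-Nakano vanishing $H^j(X,\Omega_X^i\otimes L)=0$ whenever $i+j>\dim X$ and $L$ is ample. Applied with $i=1$, $j=2$, and $L=\omega_X$, this yields $H^2(X,\Omega_X^1\otimes\omega_X)=0$, and hence $H^0(X,T_X)=0$, so $\mathrm{Aut}(X)$ is smooth.

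The main obstacle is ensuring that the Deligne-Illusie vanishing applies uniformly in the characteristic. In the common formulation the theorem requires the strict bound $\dim X<p$, which excludes the characteristic two case that is of central interest elsewhere in the paper. If only that strict version is available, $p=2$ has to be handled separately. A natural alternative for the characteristic zero lift that also covers $p=2$ is to analyze the relative automorphism scheme $\mathrm{Aut}(\mathcal{X}/R)$ directly: it is a finite $R$-group scheme by the argument used in the proof of Theorem~\ref{th1}, its generic fiber $\mathrm{Aut}(X_K)$ is étale over $K$ because $\mathrm{char}\,K=0$, and a rigidity argument along the lines of the proof of Theorem~\ref{th1} should force the special fiber $\mathrm{Aut}(X)$ to be reduced as well.
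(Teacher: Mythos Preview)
Your main argument is correct and is essentially the paper's own proof: reduce to $H^0(T_X)=0$ via Proposition~\ref{prop1}, identify $T_X\cong\Omega_X^1\otimes\omega_X^{-1}$, and apply the Deligne--Illusie Kodaira--Akizuki--Nakano vanishing. The paper writes this slightly more directly as $H^0(\Omega_X\otimes\omega_X^{-1})=0$ (the ``negative'' form of KAN with $i+j<\dim X$), whereas you pass through Serre duality to the ``positive'' form; these are equivalent.

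Your concern about $p=2$ is unnecessary. The vanishing in \cite{DI87} (Corollaire~2.8) and \cite{EV92} (Corollary~11.3) holds under the hypothesis $\dim X\le p$, not $\dim X<p$, so for surfaces it applies in every positive characteristic, including $p=2$. No separate treatment is needed.

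Your proposed alternative for the characteristic-zero lift, however, does not work. Finiteness of $\mathrm{Aut}(\mathcal{X}/R)\to\mathrm{Spec}\,R$ together with \'etaleness of the generic fiber does \emph{not} force the special fiber to be reduced: already $\mu_p$ over $\mathbb{Z}_p$ is finite flat with \'etale generic fiber and non-reduced special fiber. The paper's own Example~\ref{ex2} exhibits the same phenomenon for automorphism schemes in a family of canonically polarized surfaces. So there is no rigidity of the sort you invoke; the vanishing argument is genuinely needed.
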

\begin{proof}
Since $X$ lifts to characteristic zero or $W_2(k)$, then Kodaira-Nakano vanishing holds for $X$~\cite{DI87},~\cite{EV92} and hence
\[
\mathrm{Hom}_X(\Omega_X,\mathcal{O}_X)=H^0(T_X)=H^0(\Omega_X\otimes \omega_X^{-1})=0,
\]
since $\omega_X$ is ample. Hence $X$ has no nontrivial global vector fields and therefore by Proposition~\ref{prop1}, $\mathrm{Aut}(X)$ is smooth.
\end{proof}
This result is a kind of accident since the smoothness of the automorphism scheme is not the consequence of any vanishing theorems but rather the fact that any group scheme is smooth in characteristic zero. Moreover, this result does not hold for singular surfaces as shown by the examples in Section~\ref{examples}.

\begin{proposition}[Proposition 3.1~\cite{Tz14}]\label{prop2}
Let $X$ be a canonically polarized $\mathbb{Q}$-Gorenstein surface defined over an algebraically closed field $k$ of characteristic $p>0$. Then $\mathrm{Aut}(X)$ is not smooth over $k$ if and only if $X$ admits a nontrivial $\alpha_p$ or $\mu_p$ action.
\end{proposition}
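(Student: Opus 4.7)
The plan is to reduce to Proposition~\ref{prop1} and then invoke the standard dictionary, due to Demazure--Gabriel, between actions of height-one infinitesimal group schemes on $X$ and $p$-closed derivations of $\mathcal{O}_X$. In brief: a nontrivial $\alpha_p$-action corresponds to a nonzero derivation $D$ with $D^p=0$, and a nontrivial $\mu_p$-action to a nonzero derivation with $D^p=D$. Given this, Proposition~\ref{prop1} immediately gives the equivalence.

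For the forward direction, I would assume $\mathrm{Aut}(X)$ is not smooth. By Proposition~\ref{prop1}, $X$ carries a nonzero global vector field $D$ with either $D^p=0$ or $D^p=D$. I would then construct the corresponding group scheme action explicitly. In the additive case the coaction
\[
\mathcal{O}_X \;\longrightarrow\; \mathcal{O}_X \otimes_k k[t]/(t^p), \qquad f \longmapsto \sum_{i=0}^{p-1}\frac{D^i(f)}{i!}\,t^i,
\]
is well-defined (the sum terminates because $D^p=0$) and visibly satisfies the coassociativity and counit axioms for the Hopf algebra of $\alpha_p$. In the multiplicative case, after decomposing $\mathcal{O}_X = \bigoplus_{i\in\mathbb{Z}/p\mathbb{Z}} \mathcal{O}_X^{(i)}$ into eigensheaves of $D$ (which makes sense since $D^p=D$ diagonalizes with eigenvalues in $\mathbb{F}_p$), the $\mathbb{Z}/p$-grading is exactly a $\mu_p$-coaction. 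Nontriviality of $D$ ensures the action is nontrivial.

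Conversely, given a nontrivial action of $G\in\{\alpha_p,\mu_p\}$ on $X$, I would produce the associated derivation as the image of the canonical generator of $\mathrm{Lie}(G)$ under the Lie algebra map $\mathrm{Lie}(G)\to H^0(X,T_X)$ induced by the action; $p$-closedness of this map forces $D^p=0$ or $D^p=D$ respectively, matching the group. Alternatively, one can argue more directly: since $\alpha_p$ and $\mu_p$ are simple (no proper nontrivial subgroup schemes) and $X$ is a scheme, a nontrivial action is automatically faithful, so it induces a closed immersion $G\hookrightarrow\mathrm{Aut}(X)$. Because $G$ is non-reduced, so is $\mathrm{Aut}(X)$; being a finite group scheme over an algebraically closed field of characteristic $p>0$, $\mathrm{Aut}(X)$ is smooth iff reduced, so it fails to be smooth.

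The only genuinely delicate point I anticipate is the verification that the coaction formula actually defines a scheme action (as opposed to only a formal one), which relies on $D^p=0$ or $D^p=D$ to guarantee that the truncated exponential/eigenspace decomposition descends from formal schemes to the honest height-one group scheme $\alpha_p$ or $\mu_p$; the hypothesis that $D$ is $p$-closed of the right form is precisely what makes this work. Once that is checked, both implications fit on a few lines.
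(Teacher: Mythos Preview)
Your argument is correct and is the standard dictionary between $p$-closed derivations and height-one group scheme actions, combined with Proposition~\ref{prop1}. Note that the paper does not actually prove this statement in-line: it is quoted as Proposition~3.1 of~\cite{Tz14}, so there is no proof in the present text to compare against; your write-up is precisely the kind of argument one expects the cited reference to contain.
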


An immediate corollary of this is the following.

\begin{corollary}\label{subgroup-of-aut}
Let $X$ be a canonically polarized $\mathbb{Q}$-Gorenstein surface defined over an algebraically closed field $k$ of characteristic $p>0$. Then $\mathrm{Aut}(X)$ is not smooth if and only if it has a subgroup scheme isomorphic to either $\alpha_p$ or $\mu_p$.
\end{corollary}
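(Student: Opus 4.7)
The plan is to derive this corollary essentially as a translation of Proposition~\ref{prop2} through the functorial correspondence between group scheme actions and group scheme homomorphisms to $\mathrm{Aut}(X)$. By Proposition~\ref{prop2}, $\mathrm{Aut}(X)$ is non-smooth if and only if $X$ admits a nontrivial $\alpha_p$ or $\mu_p$ action, so I only need to show that such a nontrivial action is the same data as a closed subgroup scheme of $\mathrm{Aut}(X)$ isomorphic to $\alpha_p$ or $\mu_p$.

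First, I would recall that since $X$ is $\mathbb{Q}$-Gorenstein and canonically polarized, $\mathrm{Aut}(X)$ is representable by a group scheme of finite type over $k$, and a $G$-action on $X$ (for $G$ a finite group scheme over $k$) is equivalent to a group scheme homomorphism $\varphi\colon G\to \mathrm{Aut}(X)$. The action is nontrivial exactly when $\varphi$ is nontrivial, i.e.\ when $\ker\varphi\neq G$.

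The key observation is that $\alpha_p$ and $\mu_p$ are simple group schemes: each has order $p$ (a prime), so any closed subgroup scheme has order dividing $p$, hence is either trivial or the whole group. Consequently, any nontrivial homomorphism $\varphi\colon\alpha_p\to\mathrm{Aut}(X)$ (respectively from $\mu_p$) has $\ker\varphi = \{1\}$ and is therefore a closed immersion, realizing $\alpha_p$ (resp.\ $\mu_p$) as a closed subgroup scheme of $\mathrm{Aut}(X)$. Conversely, any such closed subgroup scheme gives, via the tautological action of $\mathrm{Aut}(X)$ on $X$, a faithful $\alpha_p$ or $\mu_p$ action on $X$, which is in particular nontrivial.

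Combining these two directions with Proposition~\ref{prop2} yields the corollary. The only mild subtlety worth checking carefully is the simplicity statement for $\alpha_p$ and $\mu_p$: since both are finite commutative group schemes of order $p$ over $k$, the order of any closed subgroup scheme divides $p$, so it is $1$ or $p$; this is the one step I would spell out (or cite) explicitly, as it is the mechanism that upgrades ``nontrivial action'' to ``injective homomorphism''. Everything else is formal.
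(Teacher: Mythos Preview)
Your proposal is correct and matches the paper's approach: the paper states this as an immediate corollary of Proposition~\ref{prop2} with no further argument, and what you have written is exactly the routine unpacking of that word ``immediate'' via the equivalence between $G$-actions and homomorphisms $G\to\mathrm{Aut}(X)$ together with the simplicity of $\alpha_p$ and $\mu_p$.
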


\section{Quotients by $\alpha_p$ or $\mu_p$ actions.}\label{sec-3}
In this section, unless otherwise stated,  $X$ denotes a scheme of finite type over an algebraically closed field $k$ of characteristic $p>0$ which admits a nontrivial $\alpha_p$ or $\mu_p$ action. As explained in the previous section, such an action is induced  by a global nontrivial vector field $D$ of $X$ of either additive or multiplicative type. Let $\pi \colon X \rightarrow Y$ be the quotient of $X$ by the $\alpha_p$ or $\mu_p$ action (which exists as an algebraic scheme by~\cite{Mu70}). Locally, if $X=\mathrm{Spec}A$, then $Y=\mathrm{Spec}B$, where $B=A^D=\{a\in A , \; Da=0\}$.

The purpose of this section is to describe the structure of the map $\pi$ and the singularities of $Y$.

\begin{definition}\cite{Sch07}
\begin{enumerate}
\item The fixed locus of the action of $\alpha_p$ or $\mu_p$ (or of $D$) on $X$ is the closed subscheme of $X$ defined by the ideal sheaf generated by $D(\mathcal{O}_X)$.
\item A point $P \in X$ is called an isolated singularity of $D$ if there is an embeded component $Z$ of the fixed locus of $D$ such that $P \in Z$. The vector field $D$ is said to have only divisorial singularities if the ideal $D(\mathcal{O}_X)$ has no embeded components.
\end{enumerate}
\end{definition}

The next proposition gives some general information about the singularities of $Y$. 
\begin{proposition}\label{prop3}
Let $X$ be an integral scheme of finite type over an algebraically closed field of characteristic $p>0$. Suppose $X$ has an $\alpha_p$ or $\mu_p$ action induced by a vector field $D$ of either additive or multiplicative type. Let $\pi \colon X \rightarrow Y$ be the quotient. Then
\begin{enumerate}
\item If $X$ is normal then $Y$ is normal.
\item If $X$ is $S_2$ then $Y$ is $S_2$ as well.
\item If $X$ is smooth then the singularities of $Y$ are exactly the image of the embedded part of the fixed locus of the action.
\item If $X$ is normal and $\mathbb{Q}$-Gorenstein, then $Y$ is also $\mathbb{Q}$-Gorenstein. In particular, let $D$ be a divisor in $Y$ and $\tilde{D}$ be the divisorial part of $\pi^{-1}(D)$. Then if $n\tilde{D}$ is Cartier, $pnD$ is Cartier too.
\end{enumerate}
\end{proposition}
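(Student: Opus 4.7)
The plan rests on two observations: $\pi\colon X\to Y$ is a finite purely inseparable morphism of degree $p$ that is a homeomorphism on underlying topological spaces, and affine-locally, writing $X=\mathrm{Spec}A$ and $Y=\mathrm{Spec}B$ with $B=A^D$, one has $A^p\subseteq B\subseteq A$. The inclusion $A^p\subseteq B$ is the key computational input: $D(a^p)=p a^{p-1}D(a)=0$ in characteristic $p$. I would also extend $D$ to a derivation of $K(A)$ whose fixed field is precisely $K(B)$.

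For (1), given $x\in K(B)$ integral over $B$, $x$ is integral over $A$ and so lies in $A$ by normality; since $D(x)=0$ in $K(A)$, one concludes $x\in A\cap K(B)=A^D=B$, so $B$ is integrally closed in $K(B)$. For (2), I would use local cohomology: because $\pi$ is a homeomorphism, for a closed $Z\subseteq Y$ of codimension $\geq 2$ with preimage $W=\pi^{-1}(Z)$, the inclusion $\mathcal O_Y\hookrightarrow\pi_\ast\mathcal O_X$ produces an injection $H^i_Z(\mathcal O_Y)\hookrightarrow H^i_W(\mathcal O_X)$ for $i=0,1$. The right-hand side vanishes because $X$ is $S_2$ and $\pi$ is finite, which forces the depth-$\geq 2$ condition on $\mathcal O_Y$.

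For (3), the plan is to straighten $D$ away from embedded components of its fixed scheme. At a point where $D$ is nonvanishing, Seidenberg-type straightening produces a regular system of parameters with $D(x_1)=1$ (additive) or $D(x_1)=x_1$ (multiplicative); then $B$ identifies with $k[[x_1^p,x_2,\ldots,x_n]]$ or $k[[x_2,\ldots,x_n]][x_1^p]$ and is regular. Along a divisorial component of the fixed locus one factors $D=f D'$ with $D'$ no longer vanishing along that divisor, reducing to the previous case. Therefore the singularities of $Y$ are exactly the images of the embedded (isolated) components of the fixed locus.

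For (4), the strategy is descent of $p$-th powers of line bundles using $A^p\subseteq B$: any invertible $\mathcal O_X$-module with transition cocycle $\{u_{ij}\}\subseteq A^\ast$ has $\{u_{ij}^p\}\subseteq B^\ast$, so its $p$-th power descends to a line bundle on $Y$. In parallel, if $D$ on $Y$ is Cartier with local equation $f\in B$, then $f\in A$ cuts out $\pi^\ast D$ with the reduced preimage $\tilde D$ appearing with multiplicity $p$, so $\pi^\ast D=p\tilde D$ as Cartier divisors. Combining the two: if $n\tilde D$ is Cartier on $X$, then $\mathcal O_X(pn\tilde D)$ descends to a reflexive rank-one sheaf $L$ on $Y$ that agrees with $\mathcal O_Y(pnD)$ on the regular locus of $Y$, and the $S_2$ property from (2) extends the isomorphism over the singular locus, so $pnD$ is Cartier. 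The main obstacle here is the divisorial bookkeeping: verifying $\pi^\ast D=p\tilde D$ and the compatibility of descent with the Weil-divisor formalism requires a careful comparison of scheme-theoretic and divisorial preimages, in particular along the isolated component of the fixed locus where $\pi$ fails to be flat.
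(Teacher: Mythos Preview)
Your treatment of (1)--(3) is sound and close in spirit to the paper, which for (2) and (3) simply cites \cite{Sch07} and \cite{AA86} rather than giving direct arguments. One soft spot: in (2) the inclusion $\mathcal O_Y\hookrightarrow\pi_\ast\mathcal O_X$ gives an injection on $H^0_Z$ by left-exactness, but not automatically on $H^1_Z$; you would need a $B$-linear retraction of $B\hookrightarrow A$ (available in the $\mu_p$ case via the grading, but not obviously in the $\alpha_p$ case) or a separate argument controlling $H^0_Z$ of the cokernel.

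Part (4) contains a genuine error. Your assertion ``$\pi^\ast D=p\tilde D$'' with $\tilde D$ the reduced preimage is false: for a prime divisor $D\subset Y$ the ramification index of $\pi$ along the unique prime $\tilde D$ above it is $1$ when $\tilde D$ is an integral curve of the derivation and $p$ otherwise. Concretely, take $A=k[x,y]$, $D=\partial/\partial y$, $B=k[x,y^p]$: the divisor $(x=0)$ on $Y$ has local equation $x\in B$, which in $A$ cuts out $(x=0)$ with multiplicity $1$, not $p$. This breaks your identification of the descended bundle with $\mathcal O_Y(pnD)$: on the regular locus the two sheaves have pullbacks $\mathcal O_X(pn\tilde D)$ and $\mathcal O_X(pn\cdot e\tilde D)$, which disagree whenever $e=1$; and even when they agree, concluding that the sheaves themselves coincide presupposes injectivity of $\pi^\ast$ on class groups, which is precisely the point at issue. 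The local-equation version of your idea (if $g\in A$ cuts out $n\tilde D$ then $g^p\in B$) runs into the same problem: $g^p$ cuts out $\sum (pna_i/e_i)D_i$ on $Y$, which is $pnD$ only when all $e_i=1$.

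The paper's route for (4) avoids this bookkeeping entirely. It quotes a result of Fossum \cite{Fo73}: for the natural map $\phi\colon\mathrm{Cl}(B)\to\mathrm{Cl}(A)$ one has $\mathrm{Ker}\,\phi\subset H^1(G,A^\ast+K(A)t)$ with $G=\langle D\rangle\cong\mathbb Z/p\mathbb Z$, and group cohomology of a cyclic $p$-group is $p$-torsion. Thus if the pullback of $nD$ is trivial in $\mathrm{Cl}(A)$ (which follows once $n\tilde D$ is Cartier), then $nD\in\mathrm{Ker}\,\phi$ and $pnD=0$ in $\mathrm{Cl}(B)$. An even more elementary fix in the same spirit: use the norm/pushforward $\pi_\ast$ on Weil divisors and the identity $\pi_\ast\pi^\ast=\deg(\pi)\cdot\mathrm{id}=p\cdot\mathrm{id}$ on $\mathrm{Cl}(Y)$, which immediately gives that $\mathrm{Ker}(\pi^\ast)$ is $p$-torsion without any cocycle descent.
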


\begin{proof}
Normality of $Y$ is a local property so we may assume that $X$ and $Y$ are affine. Let $X=\mathrm{Spec}A$ and $Y=\mathrm{Spec}B$, where $B=\{a\in A, \; Da=0\} \subset A$. Let $\bar{B} \subset K(B)$ be the integral closure of $B$ in its function field $K(B)$. Let $z=b_1/b_2 \in \bar{B}$. Then since $K(B)\subset K(A)$ and $A$ is normal, $z\in A$. But $Dz=D(b_1/b_2)=(b_2Db_1-b_1Db_2)/b_2^2=0$. Therefore $z\in B$ and hence $B$ is integrally closed.

The fact that if $X$ is $S_2$, so is $Y$ is proved in ~\cite{Sch07} and the statement that if $X$ is smooth then the singularities of $Y$ are exactly the image of the embeded part of the fixed locus of $D$ is in~\cite{AA86}.

Suppose that $X$ is $\mathbb{Q}$-Gorenstein. Let $D$ be a divisor on $Y$. The property that $D$ is $\mathbb{Q}$-Cartier is local so we may assume that $X$ and $Y$ are affine, say $X=\mathrm{Spec}A$ and $Y=\mathrm{Spec}B$. Then $D$ is $\mathbb{Q}$-Cartier if and only if $nD=0$ in $\mathrm{Cl}(B)$, for some $n \in \mathbb{N}$. Consider the natural map \[
\phi\colon \mathrm{Cl} (B) \rightarrow \mathrm{Cl} (A)
\]
Then according to~\cite{Fo73}, \[
\mathrm{Ker}\phi \subset  H^1(G, A^{\ast}+K(A)t)
\]
where $G$ is the additive subgroup of $Der_k(A)$ generated by $D$, $A^{\ast}+K(A)t \subset K(A)[t]/(t^2)$ is the multiplicative subgroup and $G$ acts on it by the usual automorphisms induced by $D$. Then since $k$ has characteristic $p>0$, $G \cong \mathbb{Z}/p\mathbb{Z}$  and therefore $H^1(G, A^{\ast}+K(A)t)$ is $p$-torsion. Therefore $\mathrm{Ker}\phi$ is $p$-torsion as well. Hence if $n\tilde{D}$ is $\mathbb{Q}$-Cartier, then $nD\in \mathrm{Ker}\phi$ and hence $pnD=0$ in $\mathrm{Cl} (B)$. Therefore $pnD$ is Cartier as claimed. 
\end{proof}

Even if $X$ is smooth, it is very hard to give more detailed information about the singularities of $Y$ , even more to classify them. The difficulty arises mainly from the complex structure of $\alpha_p$ actions and quotients. In this case the quotient may not even have rational singularities (such examples can be found in~\cite{Li08}). Quotients by $\mu_p$ are much easier to describe. The characteristic 2 case is also simpler (as is probably expected from the characteristic zero case of quotients by $\mathbb{Z}/2\mathbb{Z}$). 

\begin{definition}
Let $P \in X$ be a surface singularity defined over an algebraically closed field $k$. Let $f \colon Y \rightarrow X$ be its minimal resolution.  $P \in X$ is called a toric singularity of type $\frac{1}{p}(1,m)$, $1\leq m\leq p-1$, where $p$ is a prime number,  if the exceptional set of $f$ is a chain of smooth rational curves
\[
\underset{E_1}{\bullet}-\underset{E_2}{\bullet}-\cdots -\underset{E_m}{\bullet}
\]
such that the intersection numbers $-b_i=E_i^2$, are obtained from the continuous fraction decomposition of $p/m$. If the characteristic of the base field is not $p$, then $P\in X$ is locally analytically isomorphic to the cyclic quotient singularity $\mathbb{A}^2_k/\mu_p$, where the group $\mu_p$ of $p$-roots of unity act on $\mathbb{A}^2_k$ by $\zeta \cdot x =\zeta x$, $\zeta \cdot y =\zeta^m y$, where $\zeta$ is a primitive $p$-root of unity and $x$, $y$ the coordinates of $\mathbb{A}^2_k$. However, if $p$ is the characteristic of the ground field $k$, then $P \in X$ is locally analytically isomorphic to the singularity $\mathbb{A}^2_k/\mu_p$, where the action of the multiplicative group scheme  $\mu_p$ on $\mathbb{A}^2_k$ is induced by the multiplicative closed derivation $D=x\partial/\partial x +m y \partial /\partial y$~\cite{Hi99}.
\end{definition}

The previous class of singularities is exactly the class of singularities that appear as $\mu_p$ quotients.

\begin{proposition}\cite{Hi99}\label{class-of-mult-quot}
Let $X$ be a smooth surface with a nontrivial $\mu_p$ action. Let $\pi \colon X \rightarrow Y$ be the quotient, Then the singularities of $Y$ are toric singularities of type $\frac{1}{p}(1,m)$, $m=1,2,\ldots, p-1$
\end{proposition}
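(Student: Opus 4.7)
The plan is to reduce the problem to a local analytic statement at each singular point of $Y$, then use the linearizability of a $\mu_p$-action at a fixed point on a smooth formal surface to put $D$ in a standard diagonal form, and finally recognize the resulting invariant ring as the toric singularity $\frac{1}{p}(1,m)$.

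First, by Proposition \ref{prop3}(3), the singular points of $Y$ are exactly the images under $\pi$ of the isolated singularities of $D$, i.e.\ the embedded components of the fixed locus $V(D(\mathcal{O}_X)) \subset X$. So fix such a point $P \in X$ and pass to the completion $\hat{\mathcal{O}}_{X,P} \cong k[[x,y]]$. The derivation $D$ extends to this completion, and since $D^p = D$, it integrates to a $\mu_p$-action on $\mathrm{Spec}\, k[[x,y]]$ which fixes the closed point $P$.

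Second, I would linearize this action. Because $\mu_p$ is diagonalizable (even in characteristic $p$), any finite-dimensional representation of $\mu_p$ over $k$ splits as a sum of characters $\mathbb{Z}/p\mathbb{Z}$. Apply this to the cotangent representation on $\mathfrak{m}_P/\mathfrak{m}_P^2$: choose a basis of eigenvectors $\bar u, \bar v$ with weights $a, b \in \mathbb{Z}/p\mathbb{Z}$, lift them to $\mu_p$-eigenvectors $u, v \in \mathfrak{m}_P \subset \hat{\mathcal{O}}_{X,P}$ (the obstruction to lifting vanishes by the splitting of each graded piece of $\mathfrak{m}_P^n / \mathfrak{m}_P^{n+1}$ as a $\mu_p$-module), and observe by Nakayama that $u, v$ generate $\hat{\mathcal{O}}_{X,P}$ as a complete local $k$-algebra. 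Smoothness then forces $\hat{\mathcal{O}}_{X,P} = k[[u,v]]$, and in these coordinates $D = au\,\partial_u + bv\,\partial_v$.

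Third, analyze the weights. If $a \equiv b \equiv 0 \pmod p$ then $D$ is identically zero near $P$, contradicting the nontriviality of the action. If exactly one weight, say $b$, is zero, then the fixed locus contains the entire divisor $\{u=0\}$, so $P$ is not an isolated (embedded) singularity of $D$; hence both $a$ and $b$ lie in $(\mathbb{Z}/p\mathbb{Z})^{\ast}$. After rescaling $u$ we may take $a = 1$, and then $b = m$ with $1 \le m \le p-1$. The invariant subring $k[[u,v]]^{\mu_p}$ is generated (topologically) by the monomials $u^i v^j$ with $i + mj \equiv 0 \pmod p$, which is the completed coordinate ring of the toric singularity $\frac{1}{p}(1,m)$; the shape of its minimal resolution, a chain of smooth rational curves whose self-intersections $-b_i$ are obtained from the Hirzebruch–Jung continued fraction of $p/m$, is standard.

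The main obstacle is the equivariant Cohen structure step: producing genuine $\mu_p$-eigenvector lifts of $\bar u, \bar v$ in $\hat{\mathcal{O}}_{X,P}$. This is where the diagonalizability of $\mu_p$ (i.e.\ the fact that its category of representations is semisimple in every characteristic) is essential; in the $\alpha_p$ case the analogous argument fails, which is precisely why $\alpha_p$-quotients are drastically more complicated, as already flagged in the discussion preceding the proposition.
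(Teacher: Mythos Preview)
The paper does not give its own proof of this proposition; it simply cites Hirokado~\cite{Hi99}. Your argument is correct and is essentially the standard one: it is the same linearization that the paper itself invokes a few lines later in the proof of Proposition~\ref{prop4}, where it quotes from~\cite{R-S76} that a multiplicative vector field at a fixed point can be written as $D = ax\,\partial/\partial x + by\,\partial/\partial y$ with $a,b \in \mathbb{F}_p$. Your derivation of this normal form via the linear reductivity of $\mu_p$ (semisimplicity of its representation category, equivariant lifting of cotangent eigenvectors, Nakayama) is a clean way to reprove the Rudakov--Shafarevich statement, and the identification of the invariant ring with the affine toric singularity $\frac{1}{p}(1,m)$ is then immediate. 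One small point worth making explicit: when you rule out $a\equiv b\equiv 0$, you are using that $D$ vanishing on the formal neighborhood of $P$ forces $D=0$ globally, since $X$ is integral and $D$ is a section of a locally free sheaf; this is implicit in your phrase ``contradicting the nontriviality of the action'' but deserves one sentence.
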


The next proposition provides some more information about the singularities of the quotient that will be useful in this paper. 

\begin{proposition}\label{prop4}
Let $X$ be a smooth surface with a nontrivial $\alpha_p$ or $\mu_p$ action induced by a global vector field $D$ or either additive or multiplicative type. Let $\pi \colon X \rightarrow Y$ be the quotient. Then there exists a commutative diagram 
\begin{gather}\label{prop4-diagram}
\xymatrix{
X^{\prime}\ar[r]^f \ar[d]^{\pi^{\prime}} & X \ar[d]^{\pi} \\
Y^{\prime} \ar[r]^g & Y \\
}
\end{gather}
such that 
\begin{enumerate}
\item $g \colon Y^{\prime} \rightarrow Y$ is the minimal resolution of $Y$, $X^{\prime}$ is normal and $f$ is birational.
\item The vector field $D$ on $X$ lifts to a global vector field $D^{\prime}$ on $X^{\prime}$ and $\pi^{\prime} \colon X^{\prime} \rightarrow Y^{\prime}$ is the quotient of $X^{\prime}$ by $D^{\prime}$.
\end{enumerate}
In particular, if $p=2$ then in addition to the above,
\begin{enumerate}\setcounter{enumi}{2}
\item $Y$ is Gorenstein. Moreover, if $Y$ has canonical singularities, then $Y$ has singularities of type either $A_1$, $D_{2n}$, $E_7$ or $E_8$.
\item Every $f$-exceptional curve is contained in the divisorial part of $D^{\prime}$.
\item Suppose that $D$ is of multiplicative type. Then,
\begin{enumerate}
\item $X^{\prime}$ is smooth, $D^{\prime}$ has no isolated singular points and $f\colon X^{\prime} \rightarrow X$ is a sequence of blow ups of the isolated singular points of $D$.
\item The divisorial part of $D$ is smooth, disjoint from the isolated singular points of $D$ and is not an integral curve of $D$ (this holds in all characteristics).
\end{enumerate}
\item If $D$ is of additive type then a diagram like (\ref{prop4-diagram}) exists where both $X^{\prime}$ and $Y^{\prime}$ are smooth, but $Y^{\prime}$ is not necessarily the minimal resolution of $Y$ (\cite{Hi99}).
\end{enumerate}
\end{proposition}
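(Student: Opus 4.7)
My plan is to construct the commutative square by taking $X'$ to be the normalization of the fiber product $X \times_Y Y'$ and then verify the listed properties case by case, treating the additive and multiplicative characteristic-$2$ situations separately where the statements diverge.

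For parts (1) and (2), which hold in any characteristic, I would argue as follows. Set $X' := (X \times_Y Y')^{\nu}$. Because $\pi$ is purely inseparable of degree $p$ (it is the quotient by the infinitesimal group scheme $\alpha_p$ or $\mu_p$), so is $\pi'$; normality of $X'$ is automatic and $f\colon X' \to X$ is birational since $g$ is. The derivation $D$ lifts to a derivation $D'$ because $D$ is characterised as the generator of the $\mathcal{O}_Y$-linear kernel in $\mathcal{O}_X$, and this subring structure pulls back along $g$ to the inclusion $\mathcal{O}_{Y'} \hookrightarrow \mathcal{O}_{X'}$; the identity $D^p = D$ or $D^p = 0$ is preserved by this pullback, so $D'$ is of the same type.

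For parts (3) and (4), assuming $p=2$, the key input is the Rudakov–Shafarevich canonical bundle formula $K_X = \pi^* K_Y + (p-1)\Delta_D$, where $\Delta_D$ is the reduced divisorial part of $D$. With $p=2$ this becomes $K_X = \pi^* K_Y + \Delta_D$; since $\pi^* C = 2\Delta_D$ for $C = \pi(\Delta_D)$, we deduce $\pi^*(2K_Y + C) = 2K_X$, and descent along the purely inseparable degree-$2$ map together with the $S_2$ property of $Y$ forces $K_Y$ itself to be Cartier. The restricted list $A_1, D_{2n}, E_7, E_8$ would come from Artin's classification of rational double points in characteristic $2$: these are exactly the types admitting a purely inseparable degree-$2$ smooth cover, and the complementary types $A_{2n}, A_{2n+1}, D_{2n+1}, E_6$ are ruled out by parity of the resolution graph or by incompatibility of the local $\alpha_2$/$\mu_2$ action with the resolution configuration. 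For part (4), any $f$-exceptional curve $E \subset X'$ is contracted by $f$ to an isolated singular point $P$ of $D$, so commutativity of the square forces $\pi'(E) \subset g^{-1}(\pi(P))$; since $\pi'$ is finite, $\pi'(E)$ is a curve, and any curve on $X'$ whose image under the purely inseparable quotient $\pi'$ is still a curve must by definition be an integral divisor of $D'$, hence lies in its divisorial part.

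For parts (5) and (6), I would re-describe the construction via iterated blow-ups at isolated singular points of $D$. In the multiplicative case, near an isolated zero of a $\mu_p$-derivation one diagonalises $D = x\partial_x + m y\partial_y$ (Hirokado), so a single blow-up lifts $D$ to a derivation for which the exceptional curve is divisorial; iterating the prescribed number of blow-ups at each zero, dictated by the continued fraction expansion of $p/m$, produces an $X'$ on which $D'$ has only divisorial singularities, hence smooth quotient $Y' \to Y$ which is the minimal resolution of the toric singularities $\tfrac{1}{p}(1,m)$ of Proposition~\ref{class-of-mult-quot}; smoothness, disjointness from isolated zeros, and non-integrality of the divisorial part (part 5(b)) can be read off the local normal form $D = fD''$ with $f$ a unit along the divisor. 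In the additive case, the analogous iterated blow-up construction of Hirokado yields smooth $X'$ and $Y'$ with the desired diagram, but some of the blow-ups may lie over smooth points of $Y$ already, so $Y'$ need not coincide with the minimal resolution — exactly the content of (6). The main obstacle I anticipate is part (3): pinning down the exact list $A_1, D_{2n}, E_7, E_8$ requires cross-referencing Artin's classification and ruling out each remaining type by an explicit local analysis of its $\alpha_2$- or $\mu_2$-equivariant resolution, which is delicate in characteristic $2$.
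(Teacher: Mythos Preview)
Your construction of the square in parts (1)--(2) is essentially the paper's (normalization of $Y'$ in $K(X)$), but your lifting argument for $D'$ is incomplete: knowing $\mathcal{O}_{Y'}\subset\mathcal{O}_{X'}$ only gives $D'$ as a \emph{rational} vector field on $X'$; you must still check it has no poles along the $f$-exceptional locus. The paper does this by computing, via the adjunction formulas $K_X=\pi^*K_Y+(p-1)\Delta$ and $K_{Y'}=g^*K_Y-F$ with $F$ effective, that the divisor of $D'$ equals $(\pi')^*F+(p-1)f^*\Delta+E$, which is effective.

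Your argument for part (4) contains a genuine conceptual error: you conclude that because $\pi'(E)$ is a curve, $E$ is an integral divisor of $D'$, ``hence lies in its divisorial part.'' But an \emph{integral} curve of a derivation (one with $D'(I_E)\subset I_E$) is not the same as a component of the \emph{divisorial fixed locus} (where $D'$ itself vanishes to order $\geq 1$). Integral curves are generically the curves on which $D'$ is tangent and nonzero; the divisorial part is where $D'$ is identically zero. The paper's proof is a local computation: at an isolated fixed point $P$ of additive $D$ in characteristic $2$, one checks from $D^2=0$ that $D(m_P)\subset m_P^2$, so the induced graded derivation on $E=\mathrm{Proj}\bigoplus m_P^d/m_P^{d+1}$ is zero, forcing $E$ into the divisorial part of $D'$.

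For part (3) you are working much too hard. The Gorenstein claim does not come from descent along $\pi$ but from the factorization $X\to Y\to X^{(2)}$ through the geometric Frobenius: since $X^{(2)}$ is smooth and $Y$ normal, $Y\to X^{(2)}$ is an $\alpha_L$-torsor, so $Y$ has hypersurface singularities. The restricted list of canonical types follows immediately from Proposition~\ref{prop3}.4: the local class groups of the singular points of $Y$ are $p$-torsion, hence $2$-torsion, and among the ADE types this singles out $A_1$, $D_{2n}$, $E_7$, $E_8$. No equivariant resolution analysis or Artin classification is needed, so what you flagged as the main obstacle dissolves.
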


\begin{proof}
As mentioned in the statement of the proposition,~\ref{prop4}.6 was proved by M. Hirokado~\cite{Hi99}. It remains to prove parts 1-5.

Let $g \colon Y^{\prime} \rightarrow Y$ be the minimal resolution of $Y$. Let $\pi^{\prime} \colon X^{\prime}\rightarrow Y^{\prime}$ be the normalization of $Y^{\prime}$ in the field of fractions $K(X)$ of $X$. Then $\pi^{\prime}$ is a purely inseparable map of degree $p$. I will show that there exists a map $f \colon X^{\prime} \rightarrow X$ such that $\pi f =g\pi^{\prime}$ giving rise to the diagram~\ref{prop4-diagram}. The rational map $X\dasharrow Y^{\prime}$ defined by $\pi$ and $g$ is resolved after a sequence of blow ups of $X$. Therefore there exists a commutative diagram
 \[
\xymatrix{
 X^{\prime}\ar[dr]_{\pi^{\prime}} &   Z\ar[l]_{\psi}\ar[r]^{\phi} \ar[d]^{\delta} & X \ar[d]^{\pi} \\
       &Y^{\prime} \ar[r]^g & Y \\
}
\]
 where $\phi$ is a sequence of blow ups resolving $X\dasharrow Y^{\prime}$ and $\psi \colon Z \rightarrow X^{\prime}$ is the factorization of $\delta$ through $X^{\prime}$ which exists  since $Z$ is normal in $K(X)$. Since 
 $\pi^{\prime}$ and $\pi$ are finite morphisms, it follows that every $\psi$-exceptional curve is also a $\phi$-exceptional curve. Therefore the rational map $f=\phi\psi^{-1}$ is in fact a morphism and hence there exists a commutative diagram as claimed in~\ref{prop4}.1.

Next I will show that the vector field $D$ of $X$ lifts to a vector field $D^{\prime}$ of $X^{\prime}$. Since $X$ and $X^{\prime}$ are birational, $D$ gives a rational vector field $D^{\prime}$ of $X^{\prime}$. It is not hard to see that $Y^{\prime}=X^{\prime}/D^{\prime}$. Then in order to show that $D^{\prime}$ is regular it suffices to show that it has no poles. Let $\Delta$ be the divisorial part of $D$. Then~\cite{R-S76}
 \[
K_X=\pi^{\ast}K_Y+(p-1)\Delta.
\]
Moreover, since $Y^{\prime}$ is the minimal resolution of $Y$,
\[
K_{Y^{\prime}}=g^{\ast}K_Y-F,
\]
where $F$ is an effective $g$-exceptional $\mathbb{Q}$-divisor. Therefore from the commutative diagram~\ref{prop4-diagram} it follows that
\begin{gather*}
K_{X^{\prime}}=f^{\ast}K_X+E=f^{\ast}(\pi^{\ast}K_Y+(p-1)\Delta)+E=(\pi^{\prime})^{\ast}g^{\ast}K_Y+(p-1)f^{\ast}\Delta+E= \\
(\pi^{\prime})^{\ast}K_{Y^{\prime}}+(\pi^{\prime})^{\ast}F+(p-1)f^{\ast}\Delta+E,
\end{gather*}
where $E$ is an $f$-exceptional divisor. But the last adjunction formula shows that the divisor of $D^{\prime}$ is $(\pi^{\prime})^{\ast}F+(p-1)f^{\ast}\Delta+E$, which is effective. 
Hence $D^{\prime}$ has no poles and therefore it is regular. This concludes the proof of~\ref{prop4}.1 ~\ref{prop4}.2.

Suppose that $p=2$. Then $\pi$ factors through the geometric Frobenious $F \colon X \rightarrow X^{(2)}$. In fact there is a commutative diagram
\[
\xymatrix{
     & Y \ar[dr]^{\nu} \\
X \ar[ur]^{\pi}\ar[rr]^F & & X^{(2)}
}
\]
Since $X^{(2)}$ is smooth and $Y$ is normal, then $\nu$ is an $\alpha_L$-torsor over $X^{(2)}$, for some line bundle $L$ on $X^{(2)}$~\cite{Ek87}. Then since $X$ is smooth, $Y$ has hypersurface singularities and therefore it is Gorenstein.
Suppose that $Y$ has canonical singularities. Then the dynking diagram of any singular point of $Y$ is of type either $A_n$, $D_n$, $E_6$, $E_7$ or $E_8$~\cite{KM98}. By Proposition~\ref{prop3}.4, the local Picard groups of the singular points of $Y$ are 2-torsion. Therefore these can be only $A_1$, $D_{2n+1}$, $E_7$ or $E_8$. This shows~\ref{prop4}.3.

Suppose that $D$ is of multiplicative type. Let $\Delta$ be its divisorial part. Then in suitable local coordinates of a fixed point $P \in X$ of $D$, $D$ is given by $D=ax\partial /\partial x +by \partial /\partial y$, where $a, b \in \mathbb{F}_p$~\cite{R-S76}. This shows immediately that the divisorial part $\Delta$ of $D$ is smooth, it is disconnected from the isolated singular points of $D$ and is not an integral curve of $D$. Therefore, if $\Delta^{\prime}$ is the image of $\Delta$ in $Y$ with reduced structure, then $\pi^{\ast}\Delta^{\prime}=2\Delta$. Moreover, it is a straightforward calculation to find the lifting $D^{\prime}$ of $D$ on the blow up $X^{\prime}$ of $X$ at $P$ and see that indeed the exceptional curve is contained in the divisorial part of the fixed locus of $D^{\prime}$. 

Next I will show that $X^{\prime}$ is smooth. Since $D$ is multiplicative, it follows from Proposition~\ref{class-of-mult-quot} that $Y$ has canonical singularities of type $A_1$. The $g$-exceptional curves are exactly the images under $\pi^{\prime}$ of the $f$-exceptional curves. Let $E$ be an $f$-exceptional curve and $F$ its image. Then $F^2=-2$ and $(\pi^{\prime})^{\ast}F=2E$. Then $2F^2=4E^2$ and hence $E^2=-1$. Hence all $f$-exceptional curves are smooth rational curves of self intersection -1. Therefore $f\colon X^{\prime}\rightarrow X$ is a sequence of blow ups and in particular $X^{\prime} $ is smooth. Since the singularities of $Y$ are under the isolated fixed points of $D$, $f$ is obtained by successively blowing up isolated singular points of $D$. This shows~\ref{prop4}.5

It remains to show that if $D$ is of additive type then every $f$-exceptional curve is contained in the divisorial part of $D^{\prime}$. By~\ref{prop4}.6, there exists a diagram like~\ref{prop4-diagram} such that both $X^{\prime}$ and $Y^{\prime}$ are smooth, but unlike the multiplicative case, $Y^{\prime}$ is not necessarily the minimal resolution of $Y$. Hence it suffices to consider this case only. Then the map $f$ is obtained by successively blowing up the isolated singular points of $D$. In order then to show that every $f$-exceptional curve is contained in the divisorial part of the fixed locus of $D^{\prime}$, it suffices to assume that $f$ is a single blow up and show that $D^{\prime}$ induces the zero derivation on the exceptional curve $E$. 

Suppose $P$ is on the divisorial part of the fixed locus. Then in suitable local coordinates, $D=h(f \partial /\partial x +f \partial / \partial y)$ such that $f,g,h \in m_P$, where $m_P$ is the maximal ideal of $\mathcal{O}_{X,P}$ and $f,g$ have no common factor. Then $Dx =fh \in m_P^2$ and $Dy=hg\in m_P^2$. Let $E$ be the $f$-exceptional curve. Then $E= \mathrm{Proj} R$, where 
\[
R= \oplus_{d\geq 0} m_P^d/m_p^{d+1}
\]
and $D$ induces a graded derivation of $R$. But since $D(m_P) \subset m_P^2$, it follows that in fact the induced derivation is the zero derivation.

Suppose now that $P$ is an isolated singular point of $D$ that does not belong on the divisorial part of the fixed locus. Then again in suitable local coordinates, $D=f \partial /\partial x +g \partial / \partial y$ such that $f,g\in m_P$ have no common factor. I will show that $f, g \in m_P^2$, and hence $D(\mathcal{O}_X)\subset m_p^2$. Indeed, since we are in characteristic 2, $\partial^2/\partial x^2=\partial^2/\partial y^2 =0$. Then an easy calculation shows that
\[
D^2=\left( f\frac{\partial f}{\partial x}+g\frac{\partial f}{\partial y}\right) \frac{\partial}{\partial x} +\left( f\frac{\partial g}{\partial x}+g\frac{\partial g}{\partial y}\right) \frac{\partial}{\partial y}.
\]
Now the relation $D^2=0$ implies that 
\begin{eqnarray*}
f\frac{\partial f}{\partial x}=g\frac{\partial f}{\partial y} & \text{and} & f\frac{\partial g}{\partial x}=g\frac{\partial g}{\partial y}.
\end{eqnarray*}

Suppose that at least one of $f,g$ is not in $m_P^2$. Suppose $f \in m_P-m_P^2$. Then $\frac{\partial f}{\partial x}\not= 0$ and $\frac{\partial f}{\partial y}\not= 0$ because if $\frac{\partial f}{\partial x}=\frac{\partial f}{\partial y}= 0$, then $f(x,y)=f_1(x^2,y^2)$ and hence $f\in m_P^2$. Now considering that $f$ and $g$ have no common factor, it follows that there is $\phi \in \mathcal{O}_X$ such that 
\begin{eqnarray*} 
\frac{\partial f}{\partial x} = g \phi & \text{and} & \frac{\partial f}{\partial y} = f \phi. \\
\end{eqnarray*}
Now since $f \in m_P-m_P^2$, it follows that
\[
f(x,y)=ax+by +f_{\geq 2}(x,y),
\]
where $f_{\geq 2}(x,y)\in m_P^2$, $a,b, \in k$, not both zero. Then either $\partial f/\partial x \in \mathcal{O}_X^{\ast}$ or $\partial f/\partial y \in \mathcal{O}_X^{\ast}$. But then either $g \in \mathcal{O}_X^{\ast}$ or $f \in \mathcal{O}_X^{\ast}$, which is impossible. Hence $D(m_P)\subset m_P^2$. Now arguing exactly as in the case when the singular point $P$ is on the divisorial part of the fixed locus of $D$ we get that the lifting $D^{\prime}$ of $D$ on $X^{\prime}$ restricts to zero on the exceptional curve.
\end{proof}

\begin{remark}
\item Proposition~\ref{prop4}.5 essentially says that if $p=2$ then the isolated singularities of the vector field $D$ can be resolved by a sequence of blow ups. If $p>2$ then this is not possible in general. Take for example  $X=\mathbb{A}^2_k$, $p=5$ and $D=x \partial/\partial x+2y\partial/\partial y$. This is a vector field of multiplicative type. Suppose that a diagram like in Proposition~\ref{prop4} exists with both $X^{\prime}$ and $Y^{\prime}$ smooth. Then $f$ is obtained by successively blowing up the isolated singularities of $D$. Let $X_1\rightarrow X$ be the first blow up, i.e., the blow up of the singular point of $D$. Then a straightforward calculation shows that the lifting $D_1$ of $D$ on $X_1$ has exactly two isolated singular points, say $P$ and $Q$. Moreover, locally at $P$, $D_1=x   \partial/\partial x+y\partial/\partial y$ and locally at $Q$, $D_1=2(x   \partial/\partial x+2y\partial/\partial y)$. 
Hence at $Q$, $D_1$ has exactly the same form as $D$. Hence every time a singular point is blown up at which the vector field has the form $\lambda (x   \partial/\partial x+2y\partial/\partial y)$, $\lambda \in \mathbb{Z}_p$, another singular point will appear in the blow up where the lifted vector field will have the same form. Hence the process of blowing up the isolated singular points of the vector field does not lead to a vector field without isolated singular points and hence a diagram like in Proposition~\ref{prop4}  does not exist in this case.

However, even though there is no resolution in general  of the isolated singularities of $D$ by usual blow ups as in the case $p=2$, Proposition~\ref{prop4}.1,2 says that there exists a partial resolution by weighted blow ups instead, hence the singularities on $X^{\prime}$.
\end{remark}
\begin{remark}
If $D$ is of additive type it might happen that, unlike in the multiplicative case, the divisorial part of it is an integral divisor of $D$. For example, let $p=2$, $X=\mathbb{A}^2_k$ and $D=x^2\partial /\partial x +xy \partial /\partial y$. Then $D^2=0$, its divisorial part is given by $x=0$ and it is an integral curve of $D$. 
\end{remark}

The next proposition describes the structure of the quotient map $\pi$.

\begin{proposition}\cite{Tz14}\label{structure-of-pi}
Let $X$ be a normal Cohen Macauley integral scheme of finite type over an algebraically closed field $k$ of characteristic $p>0$ with a $\mu_p$ or $\alpha_p$ action induced by a vector field $D$ or either multiplicative or additive. Let $\pi \colon X \rightarrow Y$ be the quotient. Then:
\begin{enumerate}
\item Suppose that $X$ admits a $\mu_p$ action. Then there is a direct sum decomposition of $\mathcal{O}_Y$-modules
 \[
\pi_{\ast}\mathcal{O}_X=\oplus_{k=1}^{p-1}L_k(D),
\]
where $L_k(D)=\{a \in \mathcal{O}_X / \; Da=ka\}$ is a rank one torsion free sheaf of $\mathcal{O}_Y$-modules. 
\item Suppose that $X$ admits a $\alpha_p$ action. Then there exists a filtration of $\mathcal{O}_Y$-modules
\[
\mathcal{O}_Y=E_0\subsetneqq E_1 \subsetneqq E_2 \subsetneqq \cdots      \subsetneqq E_{k}  \subsetneqq E_{k+1}   \subsetneqq \cdots   \subsetneqq E_{p-2}   \subsetneqq E_{p-1} =\pi_{\ast}\mathcal{O}_X,
\]
where $E_k=\{a \in \mathcal{O}_X |\; D^{k+1}a=0\}$ is a reflexive sheaf of $\mathcal{O}_Y$-modules of rank $k+1$ and the quotients $L_k=E_k/E_{k-1}$ are torsion free sheaves of rank 1. Moreover, $E_k$ and $L_k$ are locally free outside the fixed locus of the $\alpha_p$ action, for all $k$. 
\item Suppose that $p=2$. Then $\pi \colon X \rightarrow Y$ is a $\mu_2$ or $\alpha_2$ torsor over a codimension 2 open subset of $Y$. In particular, there exists an exact sequence 
\[
0 \rightarrow \mathcal{O}_Y \rightarrow \pi_{\ast}\mathcal{O}_X \rightarrow L \rightarrow 0,
\]
where $L$ is a rank one reflexive sheaf on $Y$ and $\omega_X=\left(\pi^{\ast}(\omega_Y \otimes L^{-1})\right)^{[1]}$.
\end{enumerate}

\end{proposition}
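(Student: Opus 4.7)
The plan is to establish all three assertions by first describing $\pi_*\mathcal{O}_X$ locally at points where the group scheme action is free, and then extending the description globally using the normality and $S_2$-property of $Y$ from Proposition~\ref{prop3} together with the Cohen--Macaulay hypothesis on $X$.

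For part (1), a $\mu_p$-action on $\mathcal{O}_X$ is equivalent to a $\mathbb{Z}/p\mathbb{Z}$-grading with graded pieces $L_k(D) = \{a : Da = ka\}$, since $D$ is the infinitesimal generator of the action and the characters of $\mu_p$ take values in $\mathbb{F}_p$. This yields the decomposition $\pi_*\mathcal{O}_X = \bigoplus_k L_k(D)$ as $\mathcal{O}_Y$-modules with $L_0(D) = \mathcal{O}_Y$. Each $L_k(D)$ is torsion-free as a submodule of the torsion-free sheaf $\pi_*\mathcal{O}_X$, and the rank-one claim follows by localizing at a point where the action is free: there $\pi$ becomes a $\mu_p$-torsor, $\mathcal{O}_X$ is locally $\mathcal{O}_Y[t]/(t^p-u)$ with $u\in\mathcal{O}_Y^\times$ and $Dt = t$, so each $L_k$ is generated by $t^k$.

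For part (2), the subsheaves $E_k = \ker(D^{k+1}|_{\pi_*\mathcal{O}_X})$ are $\mathcal{O}_Y$-submodules since $\mathcal{O}_Y = \ker D$, and the strict inclusions follow from the nontriviality of the action together with $D^p=0$. At a point where the $\alpha_p$-action is free, $\pi$ is an $\alpha_p$-torsor and $\pi_*\mathcal{O}_X \cong \mathcal{O}_Y[t]/(t^p)$ with $D=\partial/\partial t$, giving $E_k = \bigoplus_{i=0}^k \mathcal{O}_Y \cdot t^i$ locally free of rank $k+1$ and $L_k = E_k/E_{k-1}$ locally free of rank one. This establishes the ranks generically; reflexivity of $E_k$ on $Y$ is obtained by a Hartogs-type argument, using that $E_k$ is locally free outside the codimension-two fixed locus and inherits $S_2$ from its inclusion into $\pi_*\mathcal{O}_X$.

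For part (3), when $p=2$ the above constructions each produce a single short exact sequence $0 \to \mathcal{O}_Y \to \pi_*\mathcal{O}_X \to L \to 0$ with $L = L_1(D)$ in the multiplicative case and $L = E_1/E_0$ in the additive case, reflexive of rank one. The torsor claim over a codimension-two open subset of $Y$ is the local computation above, applied on the complement of the fixed locus. For the canonical sheaf, relative Grothendieck duality for the finite flat torsor $\pi$ gives $\omega_{X/Y} \cong \pi^*L^{-1}$, obtained by identifying $\mathcal{H}om_{\mathcal{O}_Y}(\pi_*\mathcal{O}_X, \mathcal{O}_Y)$ via the short exact sequence; combining with $\omega_X = \omega_{X/Y}\otimes \pi^*\omega_Y$ and extending across the codimension-two complement via the double dual then yields $\omega_X = (\pi^*(\omega_Y\otimes L^{-1}))^{[1]}$. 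The main obstacle is this last step: pinning down the identification $\omega_{X/Y} \cong \pi^*L^{-1}$ requires keeping track of the torsor structure and the exact $\mathcal{O}_Y$-algebra multiplication on $\mathcal{O}_Y \oplus L$, and the extension of the formula across the codimension-two locus where $\pi$ may fail to be a torsor must be justified by the reflexivity of both sides; the $\mu_2$- and $\alpha_2$-cases follow the same pattern once the short exact sequence is in hand, although their local algebra models differ slightly.
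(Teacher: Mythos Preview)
The paper does not prove this proposition; it is quoted from the author's companion preprint~\cite{Tz14}, and the remark immediately following it attributes part~(3) to Ekedahl~\cite{Ek86}. There is therefore no in-paper proof to compare against, and I simply assess your argument.

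Your outline is the standard one and is essentially correct. Two places need tightening. In part~(2), the local model of a nontrivial $\alpha_p$-torsor is $\mathcal{O}_Y[t]/(t^p - s)$ for some $s\in\mathcal{O}_Y$, not $\mathcal{O}_Y[t]/(t^p)$ (the latter is the trivial torsor); this does not affect your computation of $E_k$, since $D(t^i)=it^{i-1}$ regardless of $s$. More importantly, you do not justify that the successive quotients $L_k$ are torsion-free. The clean argument is that $D^k$ descends to an \emph{injection} $L_k \hookrightarrow \mathcal{O}_Y$ (if $D^k a = 0$ then $a\in E_{k-1}$), identifying $L_k$ with an ideal sheaf of $\mathcal{O}_Y$; this is exactly how the proposition is invoked later in the paper (see the proof of Theorem~\ref{sec-all-p-prop-3}). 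Your reflexivity argument for $E_k$ is fine once you note that $E_k$ is the kernel of $D^{k+1}$ on $\pi_*\mathcal{O}_X$, hence saturated in a sheaf that is Cohen--Macaulay (so $S_2$) on the normal scheme $Y$.

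In part~(3), be careful with the literal torsor assertion. If the fixed locus of $D$ has a nonzero divisorial part $\Delta$, the action is not free along $\Delta$, and $\pi$ is not a $\mu_2$- or $\alpha_2$-torsor over the image of $\Delta$ in $Y$; the phrase ``codimension-$2$ open subset'' is only literally accurate when $\Delta = 0$. However, the exact sequence follows directly and globally from specializing part~(1) or~(2) to $p=2$, independent of any torsor interpretation, and your duality-plus-reflexivity argument for the formula $\omega_X=(\pi^*(\omega_Y\otimes L^{-1}))^{[1]}$ then goes through on the locus where $\pi$ is flat and extends by $S_2$. Since the exact sequence and the canonical-bundle formula are the only consequences the paper ever uses, the substance of your proof is unaffected.
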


\begin{remark}
The statement in Proposition~\ref{structure-of-pi}.3 has been proved by T. Ekedhal ~\cite{Ek86}. 
\end{remark}

\begin{remark}
Propositions~\ref{prop4},~\ref{structure-of-pi} show that the characteristic 2 case, despite all its pathologies, has certain advantages over higher characteristics. The quotient $X\rightarrow Y$ is a  torsor and the isolated singularities of the vector field $D$ of $X$ can be resolved by a sequence of blow ups.
\end{remark}

The next lemma and proposition relate the size of the singular locus of $Y$ with certain numerical invariants of $X$ in the case when $X$ is a smooth surface.

\begin{lemma}\label{ex-seq-1}
Let $D$ be a global vector field on a smooth surface $X$ defined over an algebraically closed field $k$ of characteristic $p>0$. Let $I_Z $ be the ideal sheaf of the embeded part $Z$ or the fixed locus of $D$ and let $\Delta$ be its divisorial part. Then 
\begin{enumerate}
\item There exists an exact sequence
\[
0 \rightarrow \mathcal{O}_X(\Delta) \rightarrow T_X \rightarrow L \otimes I_Z \rightarrow 0,
\]
where $L$ is an invertible sheaf on $X$.
\item Let $ P \in X$ be an isolated fixed point of $D$. Then locally in the \'etale topology  $\mathcal{O}_X=k[x,y]$,  $D=h(f\partial/\partial x +g \partial/\partial y)$, where $h,f,g \in k[x,y]$ and $f,g$ are relatively prime. Moreover, \[
\mathcal{O}_Z=\frac{\mathcal{O}_X}{(f,g)}.
\] 
In particular, if $D$ is of multiplicative type then $Z$ is reduced and its length is equal to the number of isolated fixed points of $D$.
\end{enumerate}
\end{lemma}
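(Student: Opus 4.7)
For part (1), I would realize $D$ as a section of the rank $2$ bundle $T_X$ and factor out its divisorial zero locus. The canonical exact sequence
\[
0 \to T_X(-\Delta) \to T_X \to T_X|_\Delta \to 0
\]
identifies $T_X(-\Delta)$ with the subsheaf of sections of $T_X$ vanishing along $\Delta$; since $\Delta$ is by definition the maximal divisor on which $D$ vanishes, $D$ lies in this subsheaf and thus defines a section $D' \in H^0(T_X(-\Delta))$ whose zero scheme has pure codimension $2$ and is exactly $Z$. I would then invoke the standard Koszul-type resolution for a section $\sigma$ of a rank $2$ vector bundle $E$ on a smooth variety whose zero scheme $W$ has codimension $2$,
\[
0 \to \mathcal{O}_X \xrightarrow{\sigma} E \xrightarrow{\wedge \sigma} I_W \otimes \det E \to 0,
\]
applied to $E = T_X(-\Delta)$ and $\sigma = D'$. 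Twisting by $\mathcal{O}_X(\Delta)$ produces
\[
0 \to \mathcal{O}_X(\Delta) \to T_X \to I_Z \otimes \omega_X^{-1}(-\Delta) \to 0,
\]
so the desired line bundle is $L = \omega_X^{-1}(-\Delta)$.

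For part (2), I would argue by a direct local computation in \'etale coordinates at $P$. Pick $x, y \in \mathcal{O}_{X,P}$ trivializing $T_X$, so $D = a\,\partial/\partial x + b\,\partial/\partial y$. Since $\mathcal{O}_{X,P}$ is regular of dimension $2$, hence a UFD, put $h = \gcd(a, b)$ and write $a = hf$, $b = hg$ with $f, g$ coprime. Because $D$ is a derivation, $D(\varphi) = (\partial \varphi/\partial x)\, a + (\partial \varphi/\partial y)\, b$ for every $\varphi$, so the ideal $D(\mathcal{O}_X)$ equals $(a, b) = h\cdot (f, g)$. The factor $h$ accounts for the local part of the divisorial component $\Delta$, and after removing it the embedded part $Z$ is carved out at $P$ by $(f, g)$. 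Since $f, g$ are coprime in a $2$-dimensional regular ring they form a regular sequence, $(f, g)$ is $m_P$-primary, and $\mathcal{O}_{Z,P} = \mathcal{O}_{X,P}/(f, g)$ as claimed.

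For the multiplicative-type case I would then invoke the Rudakov--Shafarevich local normal form already used in the proof of Proposition~\ref{prop4}: at an isolated singularity of a vector field of multiplicative type there exist \'etale coordinates in which $D = x\,\partial/\partial x + \lambda y\,\partial/\partial y$ for some $\lambda \in \mathbb{F}_p^\times$. In those coordinates $h$ is a unit and $(f, g) = (x, y) = m_P$, so $\mathcal{O}_Z$ has length $1$ and is reduced at each isolated fixed point; summing over all such points gives $\mathrm{length}(\mathcal{O}_Z)$ equal to the number of isolated fixed points of $D$.

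The only genuine technical point is the codimension-$2$ Koszul identification used in (1): one must verify that at each point of $Z$ the two coefficients of $D'$ in a local trivialization of $T_X(-\Delta)$ form a regular sequence, but this is automatic because they are coprime by construction of $\Delta$ and the ambient local ring is regular of dimension $2$. Apart from this and the Rudakov--Shafarevich normal form cited above, the argument is a straightforward unwinding of definitions.
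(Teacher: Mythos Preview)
Your proof is correct and follows essentially the same approach as the paper. For part (1), the paper argues via the double dual: it writes the cokernel of $\mathcal{O}_X(\Delta)\hookrightarrow T_X$ as a torsion-free sheaf $\mathcal{F}$, sets $Q=\mathcal{F}^{\ast\ast}/\mathcal{F}$, and shows $\mathcal{F}\cong\mathcal{F}^{\ast\ast}\otimes I_Q$ with $\mathcal{F}^{\ast\ast}$ invertible. Your Koszul-complex formulation is an equivalent (and slightly cleaner) packaging of the same idea, with the bonus that it identifies $L=\omega_X^{-1}(-\Delta)$ immediately rather than later. For part (2) and the multiplicative case, your local computation and appeal to the Rudakov--Shafarevich normal form match the paper's argument essentially verbatim.
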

\begin{proof}
The vector field $D$ induces a short exact sequence
\[
0 \rightarrow \mathcal{O}_X(\Delta) \rightarrow T_X \rightarrow \mathcal{F} \rightarrow 0, \label{ex-seq-2}
\]
where $\mathcal{F}$ is torsion free and moreover the singular locus of the quotient $Y$ of $X$ by the action induced by $D$ is the image of the subset of $X$ where $\mathcal{F}$ is not free~\cite{Ek87}. Let $Q=\mathcal{F}^{\ast\ast}/\mathcal{F}$. Then $Q$ has finite support and its support is exactly the isolated fixed points of $D$ since by Proposition~\ref{prop3} the singular locus of $Y$ is the set theoretic image of the isolated fixed points of $D$. Let $I_Q$ be the ideal sheaf of $Q$ in $X$. Tensoring the exact sequence
\[
0 \rightarrow I_Q \rightarrow \mathcal{O}_X \rightarrow Q \rightarrow 0 
\]
with $\mathcal{F}^{\ast\ast}$ we get the exact sequence
\[
0 \rightarrow \mathcal{F}^{\ast\ast}\otimes I_Q \rightarrow \mathcal{F}^{\ast\ast} \rightarrow Q \rightarrow 0.
\]
Therefore \[
\mathcal{F}=\mathcal{F}^{\ast\ast}\otimes I_Q.
\]
Since $X$ is smooth, $\mathcal{F}^{\ast\ast}$ is invertible and then the first part of the proposition follows.

Next we will show the second part of the lemma. It will moreover imply that the scheme structure of $Q$ is the same as the scheme structure of the embeded part and therefore they are the same as schemes and not only as sets.

Let $P\in X$ be an isolated singularity of $X$. Then locally in the \'etale topology $\mathcal{O}_X=k[x,y]$. Hence there are $f,g,h\in k[x,y]$ with $f,g$ relatively prime such that 
$D=h(f\partial/\partial x +g \partial/\partial y)$. Then the embeded part of $D$ is given by the ideal $(f,g)$ and the divisorial by $(h)$. Then map $\mathcal{O}_X(\Delta) \rightarrow T_X$ is given by
\[
\Phi \colon \mathcal{O}_X \rightarrow \mathcal{O}_X\frac{\partial}{\partial x} \oplus \mathcal{O}_X \frac{\partial}{\partial y}
\]
defined by $\Phi(1)=f\partial /\partial x +g \partial/ \partial y$. It is now easy to see that the map 
\[
\Psi \colon \mathcal{O}_X\frac{\partial}{\partial x} \oplus \mathcal{O}_X \frac{\partial}{\partial y} \rightarrow (f,g)
\]
given by $\Psi(F\partial/\partial x +G \partial /\partial y)= Gf-Fg$ induces an isomorphism between the cokernel $\mathrm{CoKer} (\Phi)$ and the ideal $(f,g)$. Therefore $I_Z= I_Q$. 

Finally, suppose that $D$ is of multiplicative type, i.e., $D^p=D$. Then by~\cite{R-S76}, in suitable choice of the local parameters $x$ and $y$, $D=a x\partial /\partial x + b y \partial /\partial y$, where $a,b \in \mathbb{F}_p$, the finite field of order $p$. Therefore at an isolated singular point of $D$, $I_Z=(x,y)$. Therefore if $D$ is of multiplicative type $Z$ is reduced and its length is equal to the number of isolated singular points of $D$. 
\end{proof}
\begin{remark}
The exact sequence in~\ref{ex-seq-1}.1 is not new. However I am not aware of any reference of it and also an explicit description of the relation between $Z$ and the embeded part of the fixed locus of $D$. This is the reason that I have included its proof here. 
\end{remark}
\begin{remark}
If $D$ is not of multiplicative type then the embeded part $Z$ of the fixed locus of $D$ may be nonreduced and its length strictly bigger than the number of isolated singular point of $D$, and hence of the number of singular points of the quotient $Y$. For example, let $A=k[x,y]$ and $D=x^2\partial/\partial x +y^2 \partial /\partial y$. If $p=2$ then $D^2=0$. $D$ has exactly one isolated singular point but the embeded part of the fixed locus is given by the ideal $(x^2,y^2)$ and therefore has length $4$.  
\end{remark}
\begin{proposition}\label{size-of-sing}
Let $X$ be a smooth surface defined over an algebraically closed field $k$ of characteristic $p>0$ . Let $D$ be a nontrivial global vector field on $X$ and let $I_Z $ be the ideal sheaf of the embeded part $Z$ or the fixed locus of $D$ and let $\Delta$ be its divisorial part. Then
\[
\mathrm{length}(\mathcal{O}_Z)=K_X \cdot \Delta +\Delta^2 +c_2(X).
\] 
Moreover, if $D$ is of multiplicative type ,i.e.,  $D^p=D$, $Z$ is reduced and then the number of isolated fixed points of $D$ is $K_X \cdot \Delta +\Delta^2 +c_2(X)$.
\end{proposition}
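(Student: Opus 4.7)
The plan is to extract the formula directly from the exact sequence
\[
0 \to \mathcal{O}_X(\Delta) \to T_X \to L \otimes I_Z \to 0
\]
provided by Lemma~\ref{ex-seq-1}, by matching Chern characters on both sides. Since $L$ is invertible, the tensor product $L \otimes I_Z$ is exact (no higher Tor), so Chern characters are additive across this sequence and multiplicative in the tensor product $L \otimes I_Z$.

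First I would compute $\mathrm{ch}(I_Z)$. Because $Z$ is a zero-dimensional scheme of length $\ell := \mathrm{length}(\mathcal{O}_Z)$, the short exact sequence $0 \to I_Z \to \mathcal{O}_X \to \mathcal{O}_Z \to 0$ gives $\mathrm{ch}(I_Z) = 1 - \ell\,[\mathrm{pt}]$. Multiplying by $\mathrm{ch}(L) = 1 + c_1(L) + \tfrac{1}{2}c_1(L)^2$ yields $\mathrm{ch}(L \otimes I_Z)$, and adding $\mathrm{ch}(\mathcal{O}_X(\Delta)) = 1 + \Delta + \tfrac{1}{2}\Delta^2$ gives a closed expression for $\mathrm{ch}(T_X)$ in terms of $\Delta$, $c_1(L)$ and $\ell$.

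Next I would match this with the intrinsic expression $\mathrm{ch}(T_X) = 2 - K_X + \tfrac{1}{2}(K_X^2 - 2c_2(X))$. The degree-one component yields $c_1(L) = -K_X - \Delta$, which I then substitute into the degree-two component. After canceling the $K_X^2/2$ terms, the identity simplifies to
\[
\ell = K_X \cdot \Delta + \Delta^2 + c_2(X),
\]
which is precisely the desired formula. For the final assertion, if $D^p = D$ then Lemma~\ref{ex-seq-1}.2 already tells us that $Z$ is reduced and that its length equals the number of isolated fixed points of $D$, so the count follows at once.

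I do not expect a genuine obstacle here: the only delicate point is ensuring that the Chern character identity $\mathrm{ch}(L \otimes I_Z) = \mathrm{ch}(L)\,\mathrm{ch}(I_Z)$ is legitimate, and this is guaranteed by $L$ being locally free so that $\mathrm{Tor}_i(L, I_Z) = 0$ for $i \geq 1$. The rest is the standard bookkeeping of Chern characters on a smooth projective surface.
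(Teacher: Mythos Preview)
Your proof is correct and follows essentially the same approach as the paper: both start from the exact sequence of Lemma~\ref{ex-seq-1}, identify $c_1(L)=-K_X-\Delta$ from the determinant, and extract $\mathrm{length}(\mathcal{O}_Z)$ by a numerical computation on the surface. The only difference is packaging---the paper computes $\chi(\mathcal{O}_X(\Delta))$, $\chi(L)$ and $\chi(T_X)$ separately via Riemann--Roch and Serre duality and then combines them, whereas you match Chern characters degree by degree; your route is slightly slicker (it bypasses Serre duality and the Todd class entirely) but it is the same calculation in substance.
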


\begin{proof}

From Proposition~\ref{ex-seq-1} there is an exact sequence
\begin{gather}
0 \rightarrow \mathcal{O}_X(\Delta) \rightarrow T_X \rightarrow L \otimes I_Z \rightarrow 0.  \label{ex-seq-3}
\end{gather}
Since $Z$ has codimension 2 it follows that $L\cong \omega_X^{-1}\otimes \mathcal{O}_X(-\Delta)$. Moreover, from the  exact sequence
\[
0 \rightarrow L\otimes I_Z \rightarrow L \rightarrow \mathcal{O}_Z \rightarrow 0
\]
and the exact sequence~(\ref{ex-seq-3}) it follows that
\begin{gather*}
\mathrm{length}(\mathcal{O}_Z)=\chi(L)-\chi(L\otimes I_Z)=\chi(L)-\chi(T_X)+\chi(\mathcal{O}_X(\Delta))=\\
\chi(\mathcal{O}_X(\Delta))+\chi(\omega_X^{-1}\otimes \mathcal{O}_X(-\Delta))-\chi(T_X).
\end{gather*}
Be Riemann-Roch and Serre duality we get the following equalities.
\begin{gather*}
\chi(T_X)=2\chi(\mathcal{O}_X)+K_X^2-c_2(X),\\
\chi(\mathcal{O}_X(\Delta))=\chi(\mathcal{O}_X)+1/2(\Delta^2-\Delta \cdot K_X),\\
\chi(\omega_X^{-1}\otimes \mathcal{O}_X(-\Delta))= \chi(\omega_X^{2}\otimes \mathcal{O}_X(\Delta))=\\
\chi(\mathcal{O}_X)+1/2\left((\Delta+2K_X)^2-(\Delta+2K_X)\cdot K_X\right)=\\
\chi(\mathcal{O}_X)+1/2(2K_X^2+3K_X\cdot \Delta+\Delta^2).
\end{gather*}
Therefore from the above equations it follows that
\begin{gather*}
\mathrm{length}(\mathcal{O}_Z)=\chi(\mathcal{O}_X(\Delta))+\chi(\omega_X^{-1}\otimes \mathcal{O}_X(-\Delta))-\chi(T_X)=\\
2\chi(\mathcal{O}_X)+K_X^2+K_X\cdot \Delta +\Delta^2-\chi(T_X)=K_X\cdot \Delta +\Delta^2 +c_2(X),
\end{gather*}
as claimed. Suppose $D$ is of multiplicative type. Then by Lemma~\ref{ex-seq-1}, the embeded part $Z$ of the fixed locus of $D$ is reduced and its length is the same as the number of isolated singular points of $D$. Therefore the number of isolated singular points of $D$ is $K_X\cdot \Delta +\Delta^2 +c_2(X)$.
\end{proof}

\begin{corollary}\label{no-of-sing-of-quot}
Let $X$ be a smooth surface defined over an algebraically closed field $k$ of characteristic $p>0$ with a $\mu_p$ action. Let $\Delta$ be the divisorial part of the fixed locus of the action. Let $\pi \colon X \rightarrow Y$ be the quotient. Then $Y$ has exactly $K_X \cdot \Delta +\Delta^2 +c_2(X)$ singular points.
\end{corollary}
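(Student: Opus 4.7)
The plan is to recognize this as an essentially immediate consequence of Proposition~\ref{size-of-sing} together with the basic properties of $\mu_p$-quotients established earlier. First I would translate the $\mu_p$-action into the language of vector fields: by the discussion in Section~\ref{sec-2}, a nontrivial $\mu_p$-action on $X$ is induced by a global vector field $D$ of multiplicative type, i.e.\ satisfying $D^p=D$, and the divisorial part of the fixed locus of the action coincides with $\Delta$ in the sense used in Proposition~\ref{size-of-sing}.

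Next I would identify the singular locus of $Y$ with a set of points on $X$. By Proposition~\ref{prop3}.3, the singularities of $Y$ are exactly the image under $\pi$ of the embedded part $Z$ of the fixed locus of $D$. Since $\pi\colon X\to Y$ is a purely inseparable finite morphism of degree $p$, it is a homeomorphism on underlying topological spaces, so the number of singular points of $Y$ equals the number of points in the support of $Z$, i.e.\ the number of isolated fixed points of $D$.

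Finally I would invoke Lemma~\ref{ex-seq-1}.2, which guarantees that in the multiplicative case $Z$ is reduced, so $\mathrm{length}(\mathcal{O}_Z)$ equals the number of isolated fixed points of $D$. Combining this with the formula of Proposition~\ref{size-of-sing}, namely $\mathrm{length}(\mathcal{O}_Z)=K_X\cdot\Delta+\Delta^2+c_2(X)$, yields the claimed count of singular points of $Y$.

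There is no real obstacle here; the corollary is a bookkeeping consequence of (i) reducedness of $Z$ in the multiplicative case, (ii) the bijectivity of $\pi$ on points, and (iii) the numerical formula for $\mathrm{length}(\mathcal{O}_Z)$. The only point that deserves an explicit sentence is the bijectivity of $\pi$ on points, since otherwise one might worry about whether several isolated fixed points could collapse to a single singular point of $Y$; this cannot happen because $\pi$ is a purely inseparable quotient of degree $p$.
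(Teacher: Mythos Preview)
Your proof is correct and follows essentially the same route as the paper: invoke Proposition~\ref{prop3}.3 to identify the singular locus of $Y$ with the image of the isolated fixed points of $D$, then use the multiplicative case of Proposition~\ref{size-of-sing} (equivalently Lemma~\ref{ex-seq-1}.2) to count these. Your explicit remark that $\pi$ is a homeomorphism, so no collapsing of isolated fixed points can occur, is a useful clarification that the paper leaves implicit.
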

\begin{proof}
This follows immediately from Proposition~\ref{no-of-sing-of-quot}. Indeed, by Proposition~\ref{prop3}, the singular locus of $Y$ is exactly the set theoretic image of the embeded part of the fixed locus of the $\mu_p$ action. However, by Proposition~\ref{no-of-sing-of-quot} the number of isolated fixed points of $D$ is $K_X \cdot \Delta +\Delta^2 +c_2(X)$ and therefore this is the number of singular points of $Y$.
\end{proof}

Proposition~\ref{size-of-sing} suggests that $K_X \cdot \Delta$ is closely related with the size of the isolated singularities of $D$ and hence of the singular locus of the quotient $Y$. The next proposition shows that it decreases after blowing up a singular point of $D$.

\begin{proposition}\label{K-decreases}
Let $X$ be a smooth surface defined over a field of characteristic $p>0$. Let $D$ be a nonzero global vector field on $X$. Let $\Delta$ be its divisorial part. Let $f \colon X^{\prime} \rightarrow X$ be the blow up of an isolated singular point of $D$, $D^{\prime}$ the lifting of $D$ in $X^{\prime}$ and $\Delta^{\prime}$ its divisorial part. Then
\[
K_{X^{\prime}}\cdot \Delta^{\prime} \leq K_X \cdot \Delta.
\]
\end{proposition}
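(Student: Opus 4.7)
The plan is to reduce the desired inequality to a local multiplicity estimate at $P$. Let $m=\mathrm{mult}_P(\Delta)\geq 0$, let $\tilde{\Delta}$ denote the strict transform of $\Delta$, and let $e'=\mathrm{mult}_E(\Delta')\geq 0$. Then $\Delta'=\tilde{\Delta}+e'E$ and $f^{\ast}\Delta=\tilde{\Delta}+mE$, so
\[
\Delta'=f^{\ast}\Delta+(e'-m)E.
\]
Combining $K_{X'}=f^{\ast}K_X+E$ with the standard identities $f^{\ast}K_X\cdot E=0$, $f^{\ast}\Delta\cdot E=0$ and $E^{2}=-1$, one computes
\[
K_{X'}\cdot\Delta'=K_X\cdot\Delta-(e'-m).
\]
Hence the proposition is equivalent to the local assertion $e'\geq m$, i.e., that the exceptional divisor $E$ contributes to the divisorial part of $D'$ with multiplicity at least $\mathrm{mult}_P(\Delta)$.

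To prove this I pass to \'etale-local coordinates at $P$. Choose $(x,y)$ with $m_P=(x,y)$ and write
\[
D=h\Bigl(f_0\frac{\partial}{\partial x}+g_0\frac{\partial}{\partial y}\Bigr),
\]
where $h$ is a local equation of $\Delta$ and $f_0,g_0\in\mathcal{O}_{X,P}$ are coprime. By Lemma~\ref{ex-seq-1} the embedded part of the fixed locus of $D$ is cut out at $P$ by $(f_0,g_0)$, so the hypothesis that $P$ is an isolated singular point of $D$ forces $f_0(P)=g_0(P)=0$. Set $a=\mathrm{mult}_P(f_0)\geq 1$ and $b=\mathrm{mult}_P(g_0)\geq 1$; by definition $\mathrm{mult}_P(h)=m$.

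On the standard blowup chart $x=u$, $y=uv$ the lift of $D$ becomes
\[
D'=f(u,uv)\,\frac{\partial}{\partial u}+\frac{g(u,uv)-v\,f(u,uv)}{u}\,\frac{\partial}{\partial v},
\]
with $f=hf_0$ and $g=hg_0$. Substituting the divisibilities $u^m\mid h(u,uv)$, $u^a\mid f_0(u,uv)$ and $u^b\mid g_0(u,uv)$ gives $u^{m+a}\mid f(u,uv)$ and, since $v$ does not affect the $u$-order, $u^{m+\min(a,b)}\mid g(u,uv)-v\,f(u,uv)$. Thus $u^{m+a}$ divides the coefficient of $\partial/\partial u$ and $u^{m+\min(a,b)-1}$ divides the coefficient of $\partial/\partial v$, and since $a\geq 1$ and $b\geq 1$ both are divisible by $u^m$. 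The divisorial part of $D'$ is locally cut out by the greatest common divisor of the two coefficients, so $\mathrm{mult}_E(\Delta')\geq m$. The other chart $x=uv$, $y=v$ is handled by the identical computation.

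The crux of the argument is the multiplicity count in the final paragraph. What makes it go through is precisely the isolated-singularity hypothesis: it provides $a,b\geq 1$, which supplies the extra factor of $u$ needed to compensate for the division by $u$ in the $\partial/\partial v$-coefficient. Without this input the lift could lose divisorial vanishing along $E$ relative to $\Delta$ and the inequality would fail.
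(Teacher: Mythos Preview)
Your proof is correct and follows essentially the same route as the paper's: a local coordinate computation at $P$ showing that $\Delta'=f^{\ast}\Delta+aE$ with $a\geq 0$, from which the inequality follows by $K_{X'}\cdot\Delta'=K_X\cdot\Delta-a$. The paper's version is terser---it simply factors out $h(s,st)$ from the lifted coefficients and declares the conclusion ``clear''---whereas you make the multiplicity bookkeeping $e'\geq m$ explicit and correctly isolate the role of the hypothesis $a,b\geq 1$ in keeping the $\partial/\partial v$-coefficient regular after the division by $u$.
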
 
\begin{proof}
Let $E$ be the $f$-exceptional curve. I will show that $\Delta^{\prime}=f^{\ast}\Delta +aE$, $a\geq 0$. Then \[
K_{X^{\prime}}\cdot \Delta^{\prime}=K_X\cdot \Delta -a \leq K_X\cdot \Delta.
\]
The proof of the previous claim will be by a direct local calculation of $D^{\prime}$. In suitable local coordinates at an isolated singular point of $D$, $\mathcal{O}_X=k[x,y]$ and $D$ is given by $D=h \left( f\partial/\partial x +g \partial /\partial y\right)$, where $f,g$ have no common factor. Locally at the standard open affine covers, the blow up is given by $\phi \colon k[x,y] \rightarrow k[s,t]$, $\phi(x)=s$, $\phi(y)=st$. Then it is easy to see that 
\[
D^{\prime}=h(s,st)\left(f(s,st)\frac{\partial}{\partial x}+\frac{1}{s}\left(tf(s,st)+g(s,st)\right)\frac{\partial}{\partial y}\right).
\]
It is now clear that $\Delta^{\prime}=f^{\ast}\Delta +aE$, $a\geq 0$.
\end{proof}

\begin{corollary}\label{sec3-cor-1}
Let $D$ be a nonzero global vector field of either multiplicative or additive type on a smooth surface $X$ defined over a field of characteristic 2. Let 
\[
\xymatrix{
X^{\prime}\ar[r]^f \ar[d]^{\pi^{\prime}} & X \ar[d]^{\pi} \\
Y^{\prime} \ar[r]^g & Y \\
}
\]
be the resolution of singularities of $D$ as in Proposition~\ref{prop4}. Let $\Delta$ be the divisorial part of $D$ and $\Delta^{\prime}$ the divisorial part of the lifting $D^{\prime}$ of $D$ on $X^{\prime}$. Then 
\begin{enumerate}
\item \[
K_{X^{\prime}}\cdot \Delta^{\prime} \leq K_X \cdot \Delta.
\]
\item
\[
K_{X^{\prime}}\cdot \Delta^{\prime}=4\left(\chi(\mathcal{O}_{X^{\prime}})-2\chi(\mathcal{O}_{Y^{\prime}})\right).
\]
\end{enumerate}
\begin{proof}
The proof of the first statement follows immediately from Proposition~\ref{K-decreases} since $f$ is a composition of blow ups of isolated singular points of $D$.

For the proof of the second statement, recall From Proposition~\ref{prop4} that $Y^{\prime}$ is the quotient of $X^{\prime}$ by the lifting $D^{\prime}$ of $D$ on $X^{\prime}$. Then by adjunction for purely inseperable maps~\cite{R-S76}, 
\begin{gather}\label{sec3-eq-10}
K_{X^{\prime}}=(\pi^{\prime})^{\ast}K_{Y^{\prime}}+\Delta^{\prime}.
\end{gather} 
Moreover, from Proposition~\ref{structure-of-pi} it follows (since $Y^{\prime}$ is smooth) that $\pi^{\prime} \colon X^{\prime} \rightarrow Y^{\prime}$ is a torsor. In particular $\pi_{\ast}\mathcal{O}_{X^{\prime}}$ fits in an exact sequence
\begin{gather}\label{sec3-eq-11}
0 \rightarrow \mathcal{O}_{Y^{\prime}} \rightarrow \pi_{\ast}\mathcal{O}_{X^{\prime}} \rightarrow M^{-1} \rightarrow 0,
\end{gather}
where $M=\mathcal{O}_{Y^{\prime}}(C^{\prime})$ is an invertible sheaf on $Y^{\prime}$. If the sequence splits then $D^2=D$ and if it doesn't split then $D^2=0$. Moreover, $K_{X^{\prime}}=(\pi^{\prime})^{\ast}(K_{Y^{\prime}}+C^{\prime})$. Therefore from~\ref{sec3-eq-10} we get that and $\Delta^{\prime}=(\pi^{\prime})^{\ast}C^{\prime}$. Then from~\ref{sec3-eq-11} we get that
\begin{gather}\label{sec3-eq-12}
\chi(M^{-1})=\chi(\pi^{\prime}_{\ast}\mathcal{O}_{X^{\prime}})-\chi(\mathcal{O}_{Y^{\prime}})=\chi(\mathcal{O}_{X^{\prime}})-\chi(\mathcal{O}_{Y^{\prime}}).
\end{gather}
From Riemann-Roch it follows that
\begin{gather}\label{sec3-eq-13}
\chi(M^{-1})=\chi(\mathcal{O}_{Y^{\prime}})+\frac{1}{2}((C^{\prime})^2+K_{Y^{\prime}}\cdot C^{\prime})=\\
\chi(\mathcal{O}_{Y^{\prime}})+\frac{1}{2}C^{\prime}\cdot (K_{Y^{\prime}}+C^{\prime})=\chi(\mathcal{O}_{Y^{\prime}})+\frac{1}{4}K_{X^{\prime}}\cdot \Delta^{\prime}
\end{gather}
Finally from~\ref{sec3-eq-12} amd ~\ref{sec3-eq-13} it follows that
\[
K_{X^{\prime}}\cdot \Delta^{\prime}=4\left(\chi(\mathcal{O}_{X^{\prime}})-2\chi(\mathcal{O}_{Y^{\prime}})\right),
\]
as claimed.
\end{proof}

\end{corollary}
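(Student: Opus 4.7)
The plan is to handle the two assertions separately, each by a short argument that leans on the structural results already established for characteristic $2$ quotients.

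For part (1), the key observation is that in both the multiplicative case (Proposition~\ref{prop4}.5(a)) and the additive case (Proposition~\ref{prop4}.6), the birational morphism $f\colon X^{\prime}\to X$ factors as a finite sequence of blow-ups of (successively lifted) isolated singular points of the vector field. Since Proposition~\ref{K-decreases} shows that each such blow-up can only decrease $K\cdot(\text{divisorial part})$, the inequality $K_{X^{\prime}}\cdot\Delta^{\prime}\le K_X\cdot\Delta$ follows by induction on the length of the factorization.

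For part (2), I would exploit the torsor structure of $\pi^{\prime}\colon X^{\prime}\to Y^{\prime}$. Since $Y^{\prime}$ is smooth, Proposition~\ref{structure-of-pi}.3 produces an invertible sheaf $M=\mathcal{O}_{Y^{\prime}}(C^{\prime})$ on $Y^{\prime}$ together with an exact sequence
\[
0\to\mathcal{O}_{Y^{\prime}}\to\pi^{\prime}_{\ast}\mathcal{O}_{X^{\prime}}\to M^{-1}\to 0
\]
and the identification $\omega_{X^{\prime}}=(\pi^{\prime})^{\ast}(\omega_{Y^{\prime}}\otimes M)$. Comparing this with the Rudakov--Shafarevich adjunction $K_{X^{\prime}}=(\pi^{\prime})^{\ast}K_{Y^{\prime}}+\Delta^{\prime}$ gives $\Delta^{\prime}=(\pi^{\prime})^{\ast}C^{\prime}$, and hence $K_{X^{\prime}}\cdot\Delta^{\prime}=2(K_{Y^{\prime}}+C^{\prime})\cdot C^{\prime}$ by the projection formula for the degree-$2$ purely inseparable map $\pi^{\prime}$.

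The identity now falls out of two computations of $\chi(M^{-1})$. From the exact sequence and the finiteness of $\pi^{\prime}$, one has $\chi(M^{-1})=\chi(\mathcal{O}_{X^{\prime}})-\chi(\mathcal{O}_{Y^{\prime}})$. On the other hand, Riemann--Roch on the smooth surface $Y^{\prime}$ gives $\chi(M^{-1})=\chi(\mathcal{O}_{Y^{\prime}})+\tfrac{1}{2}C^{\prime}\cdot(C^{\prime}+K_{Y^{\prime}})$. Equating the two and substituting the intersection formula for $K_{X^{\prime}}\cdot\Delta^{\prime}$ yields the claimed relation. The only subtle point is ensuring that the torsor sequence holds globally (not just in codimension $1$) and that $M$ is genuinely invertible rather than merely reflexive; both are automatic once $Y^{\prime}$ is smooth, so no serious obstacle arises and the proof reduces to bookkeeping.
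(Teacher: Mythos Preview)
Your proposal is correct and follows essentially the same route as the paper: part (1) via Proposition~\ref{K-decreases} applied along the sequence of blow-ups, and part (2) via the torsor exact sequence on the smooth $Y^{\prime}$, the identification $\Delta^{\prime}=(\pi^{\prime})^{\ast}C^{\prime}$ from comparing the two adjunction formulas, and the two computations of $\chi(M^{-1})$ by additivity and by Riemann--Roch. Your remark that smoothness of $Y^{\prime}$ upgrades the reflexive sheaf $L$ of Proposition~\ref{structure-of-pi}.3 to an invertible $M$ is exactly the point the paper is using when it says the map is a torsor.
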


\section{Strategy of the proof of Theorem~\ref{main-theorem}.}\label{preparation}

Suppose that $\mathrm{Aut}(X)$ is not smooth. Then by Proposition~\ref{prop1}, $X$ admits a nontrivial global vector field $D$ of either additive or multiplicative type which by Proposition~\ref{prop2} induces a nontrivial $\alpha_p$ or $\mu_p$ action on $X$. Let $\pi \colon X \rightarrow Y$ be the quotient. By Proposition~\ref{prop4}, $Y$ is normal, $K_Y$ is $\mathbb{Q}$-Cartier and the local class groups of its singular points are p-torsion. Moreover, there is a  commutative diagram
\begin{equation}\label{sec-all-p-diagram-1}
\xymatrix{
 & X^{\prime}\ar[r]^f \ar[d]^{\pi^{\prime}} & X \ar[d]^{\pi} \\
Z & Y^{\prime} \ar[l]_{\phi}\ar[r]^g & Y \\
}
\end{equation}
such that $g \colon Y^{\prime} \rightarrow Y$ is the minimal resolution of $Y$,  $\phi \colon Y^{\prime} \rightarrow  Z$ its minimal model, $f$ is birational, $D$ lifts to a vector field $D^{\prime}$ on $X^{\prime}$ and $Y^{\prime} $ is the quotient of $X^{\prime}$ by the corresponding $\alpha_p$ or $\mu_p$ action. 

Let $E_i$, $i=1,\ldots, n$ be the $f$-exceptional curves and $F_i=\pi^{\prime}(E_i)$. Then $F_i$, $i=1,\ldots, n$, are exactly the $g$-exceptional curves. Let also $B_j$, $j=1,\ldots,m$ be the $\phi$-exceptional curves. Taking into consideration  that $K_Y$ has index either 1 or $p$ and $g \colon Y^{\prime}\rightarrow Y$ is the minimal resolution of $Y$, we get the following adjunction formulas
\begin{gather}\label{sec-all-p-eq-2}
K_{Y^{\prime}}=g^{\ast}K_Y-\frac{1}{p}F,\\
K_{Y^{\prime}}=\phi^{\ast}K_Z+B,\nonumber
\end{gather}
where $F=\sum_{i=1}^na_iF_i$, $a_i \in\mathbb{Z}_{\geq 0}$, and $B=\sum_{j=1}^mb_jB_j$, $b_j>0$, $j=1,\ldots m$. Moreover since both $Y^{\prime}$ and $Z$ are smooth, $\phi$ is the composition of $m$ blow ups.

Let $\Delta$ be the divisorial part of $D$. Then by adjunction for purely inseparable maps~\cite{R-S76}, 
\begin{gather}\label{sec-all-p-eq-1}
K_X=\pi^{\ast}K_Y+(p-1)\Delta.
\end{gather}

Note that it is possible that $\Delta=0$. For example, if $p\not= 2$  then  the homogeneous vector field $D=(y+z)\partial/\partial x+(x+z)\partial/\partial y+(x+y)\partial/\partial z$ of $k[x,y,z]$ gives a vector field on $\mathbb{P}^2$ with no divisorial part.

As a general strategy, cases with respect to the Kodaira dimension $\kappa(Y^{\prime})$  of $Y^{\prime}$ will be considered. Then the classification of surfaces in positive characteristic~\cite{BM76},~\cite{BM77},~\cite{SB91} will be heavily used in order to get information about $Y^{\prime}$ and then for $X$ by means of the diagram~\ref{sec-all-p-diagram-1}. Moreover, since $\pi$ is a purely inseparable map, it induces an equivalence between the \'etale sites of $X$ and $Y$. Therefore $X$ and $Y$ have the same algebraic fundamental group, $l$-adic betti numbers and \'etale euler characteristic. Then by using the fact that $g$ and $\phi$ are  birational it will be possible to calculate the algebraic fundamental group, $l$-adic betti numbers and \'etale euler characteristic of $X$ from those of $Z$.

The cases $p=2$ and $p\not= 2$ will be treated separately. The case $p=2$ has certain peculiarities and it requires special attention. In some sense this is expected since 2 is the smallest nonzero characteristic and many special situations appear in this case (as for example the existence of quasi-elliptic fibrations). The difficulties of this case will become evident during the proof of the case $p \geq 3$ in Section~\ref{p>2}. The case when $p\geq 3$ will be treated in section~\ref{p>2} and the case $p =2$ will be treated in sections~\ref{sec-4} and~\ref{sec-5}.

\section{Vector fields on surfaces in characteristic $p\geq 3$.}\label{p>2}
The purpose of this section is to prove Theorem~\ref{main-theorem} and Corollary~\ref{main-corollary} in characteristic $p\geq 3$.  Their statements are a direct consequence of  Theorems~\ref{sec-all-p-prop-1},~\ref{sec-all-p-prop-2} and~\ref{sec-all-p-prop-3} of this section. 

I will only do the case where $K_X^2=1$ in detail. The proof of the case $K_X^2=2$ is similar. I will sketch its main points, remark on any differences with the case $K_X^2=1$ but I will leave the details to the reader. The method is exactly the same but certain calculations are lengthier and I see no reason to make an already long paper longer.

For the remaining part of this section, fix notation as in section~\ref{preparation}.

As was mentioned in Section~\ref{preparation}, the divisorial part $\Delta$ of $D$ may or may not be zero.  These two cases  behave quite differently and for this reason they will be considered separately. 

\textbf{Case 1:  $\Delta \not= 0$.}

In this case the following holds.

\begin{theorem}\label{sec-all-p-prop-1}
Let $X$ be a smooth canonically polarized surface defined over an algebraically closed field of characteristic $p\geq 3$. Suppose that $K_X^2<p$ and $X$ admits a nontrivial global vector field $D$ such that $\Delta \not= 0$. Then $X$ is uniruled. Moreover, if $5c_1^2<c_2$, then $X$ is unirational and $\pi^{et}_1(X)=\{1\}$ (in particular, this happens if  $K_X^2=1$ or $K_X^2=2$ and $\chi(\mathcal{O}_X)\geq 2$).
\end{theorem}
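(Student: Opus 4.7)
Following the general strategy of Section~\ref{preparation}, I form the diagram~(\ref{sec-all-p-diagram-1}) by taking the quotient $\pi\colon X\to Y$ by the $\mu_p$- or $\alpha_p$-action induced by $D$, its minimal resolution $g\colon Y'\to Y$, and the minimal model $\phi\colon Y'\to Z$. Since $\pi$ is a finite purely inseparable morphism it induces an equivalence of \'etale sites, so $b_i(X)=b_i(Y)$, $c_2(X)=\chi_{et}(Y)$ and $\pi_1^{et}(X)=\pi_1^{et}(Y)=\pi_1^{et}(Y')$; moreover $X$ is uniruled (resp.\ unirational) precisely when $Y$ is, since $K(X)/K(Y)$ is purely inseparable of degree $p$ and so $K(X)\subset K(Y)^{1/p}$. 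Thus the theorem reduces to proving $Y'$ is ruled, and under the additional hypothesis $5c_1^2<c_2$, that $Y'$ is rational.

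\textbf{Step 1: $\kappa(Y')=-\infty$.} Suppose for contradiction that $\kappa(Y')\geq 0$, so $K_{Y'}$ is pseudoeffective. Pulling back via the commutative diagram, $f^*\pi^*K_Y=\pi'^{*}g^*K_Y=\pi'^{*}K_{Y'}+\tfrac{1}{p}\pi'^{*}F$ is a sum of pseudoeffective and effective classes; pushing down by $f_*$ shows $\pi^*K_Y$ is pseudoeffective on $X$, hence $K_X\cdot\pi^*K_Y\geq 0$ by ampleness of $K_X$. Intersecting the adjunction~(\ref{sec-all-p-eq-1}) with $K_X$ gives
\begin{equation*}
(p-1)K_X\cdot\Delta = K_X^2 - K_X\cdot\pi^*K_Y \leq K_X^2 < p.
\end{equation*}
Combined with $K_X\cdot\Delta\geq 1$ (ample intersected with a nonzero effective divisor), this forces $K_X\cdot\Delta=1$, $K_X^2=p-1$, and $K_X\cdot\pi^*K_Y=0$. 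The Hodge Index Theorem applied to the ample class $K_X$ and the pseudoeffective class $\pi^*K_Y$ gives $\pi^*K_Y\equiv 0$, i.e.\ $K_Y\equiv 0$. Then $K_{Y'}\equiv -\tfrac{1}{p}F$ is simultaneously pseudoeffective and anti-effective, forcing $F=0$; so $g$ is an isomorphism, $Y=Y'$ is smooth, and $K_{Y'}\equiv 0$. But now $K_X\equiv (p-1)\Delta$ yields $(p-1)\Delta^2=K_X\cdot\Delta=1$, i.e.\ $\Delta^2=\tfrac{1}{p-1}\notin\mathbb{Z}$ for $p\geq 3$, a contradiction. Hence $\kappa(Y')=-\infty$, $Y'$ is ruled and $X$ is uniruled.

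\textbf{Step 2: Rationality under $5c_1^2<c_2$.} By Step 1, $Y'$ is birational to a minimal ruled surface over a smooth curve $C$ of some genus $g$, and I must rule out $g\geq 1$. If $g\geq 1$, composing $\pi$ with the ruling produces a dominant rational map $X\dashrightarrow C$, forcing $q(X)\geq g\geq 1$. On the other hand, writing $c_2(Y')=4(1-g)+m'$ and $K_{Y'}^2=8(1-g)-m'$ where $m'$ is the number of blowups from the minimal ruled model $Z$ to $Y'$, one has $c_2(Y')\geq c_2(X)$ (purely inseparable invariance plus the fact that blowing down rational exceptional chains of $g$ only decreases $\chi_{et}$). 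Converting the hypothesis via Noether to $K_X^2<2\chi(\mathcal{O}_X)$, and combining with $c_2(Y')\geq c_2(X)=12\chi(\mathcal{O}_X)-K_X^2$, the bounds on $K_X\cdot\Delta$ and $\Delta^2$ from intersecting~(\ref{sec-all-p-eq-1}) with $\Delta$ and $K_X$, and the adjunction relations~(\ref{sec-all-p-eq-2}) computing $K_Y^2$ in terms of $K_{Y'}^2$ and $F^2$, one derives a numerical contradiction for each $g\geq 1$. Thus $Y'$ is rational, $Y$ is rational, $X$ is a purely inseparable cover of a rational surface (hence unirational), and $\pi_1^{et}(X)=\pi_1^{et}(Y')=\{1\}$.

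\textbf{Main obstacle.} The crux is Step 2, specifically controlling $c_2(Y')$ and $\chi(\mathcal{O}_{Y'})$ in terms of $c_2(X)$ and $\chi(\mathcal{O}_X)$. For $\mu_p$-quotients the singularities of $Y$ are toric (Proposition~\ref{class-of-mult-quot}) hence rational, and Propositions~\ref{size-of-sing},~\ref{no-of-sing-of-quot} give clean formulas for the correction terms. For $\alpha_p$-quotients the singularities of $Y$ need not even be rational, so the passage from invariants of $X$ to invariants of $Y'$ must be done through the filtration of $\pi_*\mathcal{O}_X$ provided by Proposition~\ref{structure-of-pi}; this is a source of genuine technical difficulty and is the reason the characteristics $p=3,5,7$ receive special treatment in Theorem~\ref{main-theorem}, as well as why $p=2$ is deferred to Sections~\ref{sec-4},~\ref{sec-5} where the cleaner torsor structure of Proposition~\ref{structure-of-pi}.3 is available.
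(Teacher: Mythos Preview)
Your Step 1 is essentially correct and in fact streamlines the paper's argument. The paper proceeds by an explicit case analysis on the sign of $K_X\cdot\pi^*K_Y$ and then on $\kappa(Z)$; you unify these by observing that $\kappa(Y')\geq 0$ forces $\pi^*K_Y$ to be pseudoeffective, and then the single constraint $(p-1)K_X\cdot\Delta<p$ together with Hodge index does all the work. One small slip: $F=0$ only means $g$ is crepant (so $Y$ has canonical singularities), not that $g$ is an isomorphism. This is harmless, since you already have $\pi^*K_Y\equiv 0$ at that point, whence $K_X\equiv(p-1)\Delta$ and the contradiction $\Delta^2=1/(p-1)\notin\mathbb{Z}$ follows directly without ever using smoothness of $Y$.

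Your Step 2, however, is where the real gap lies, and your diagnosis in the ``Main obstacle'' paragraph is off target. You attempt a numerical comparison of $c_2(Y')$, $K_{Y'}^2$, $\chi(\mathcal{O}_X)$, and so on, which you leave as ``one derives a numerical contradiction for each $g\geq 1$'' without supplying the derivation. No such delicate comparison is needed here. The paper's argument is one line: under $5c_1^2<c_2$, Lemma~\ref{b1} (proved by applying Noether's inequality to \'etale covers of $X$) gives $b_1(X)=0$; purely inseparable and birational invariance of $b_1$ then give $b_1(Z)=0$, so the base curve of the ruling is $\mathbb{P}^1$ and $Z$ is rational. Your own first sentence of Step 2 (the map to $C$ forcing $q(X)\geq g$) is essentially the contrapositive of this, so you were one citation away from finishing.

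The difficulties you describe in the ``Main obstacle'' paragraph---controlling invariants of $Y'$ through the singularities of $Y$, the filtration of $\pi_*\mathcal{O}_X$ for $\alpha_p$-quotients, the special role of $p\in\{3,5,7\}$---are genuine, but they belong to Theorem~\ref{sec-all-p-prop-2} (the case $\Delta=0$) and Theorem~\ref{sec-all-p-prop-3}, not to the present theorem. When $\Delta\neq 0$ the proof requires nothing beyond Lemma~\ref{b1}, and there is no restriction on $p$ other than $p\geq 3$.
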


\begin{proof}

From equation~\ref{sec-all-p-eq-1} it follows that
\begin{gather}\label{sec-all-p-eq-3}
K_X^2=K_X \cdot \pi^{\ast}K_Y+(p-1)\Delta\cdot K_X.
\end{gather}

\textit{Case 1.} Suppose that $K_X \cdot \pi^{\ast}K_Y <0$. Then $\kappa(Z)=-\infty$ because if not then $|nK_Z|\not=\emptyset$ for $n>>0$ and hence $|nK_{Y^{\prime}}|\not= \emptyset$, and also  $|nK_Y|\not=\emptyset$, for $n>>0$. But then, since $K_X$ is ample, $K_X\cdot \pi^{\ast}K_Y \geq 0$, which is impossible. Therefore $Z$ is uniruled and hence $Y$ is also uniruled. Let $F_X\colon X \rightarrow X^{(p)}$ be the relative Frobenious. Then there is a factorization
\[
\xymatrix{
  &   Y \ar[dr]^{\delta} & \\
  X \ar[ur]^{\pi}\ar[rr]^{F_X} &   &X^{(p)}
  }
  \]  
Hence $X^{(p)}$ is purely inseparably uniruled and so $X$ is too.

\textit{Case 2.} Suppose that $K_X \cdot \pi^{\ast}K_Y >0 $. Then since $K_X$ is ample and $\Delta$ an effective divisor, it follows from equation~\ref{sec-all-p-eq-3} that $K_X^2 \geq p$.

\textit{Case 3.} Suppose that $K_X \cdot \pi^{\ast}K_Y =0 $. In this case equation ~\ref{sec-all-p-eq-3} only says that $K_X^2 \geq p-1$. In particular for $p=2$ it provides no information at all.  

Consider now cases with respect to the Kodaira dimension $\kappa(Z)$ of $Z$. 

\textit{Case 3.1.} Suppose that $\kappa(Z)=-\infty$. Then $Z$ is uniruled and exactly as in case 1, it follows that $X$ is uniruled as well.

\textit{Case 3.2.} Suppose that $\kappa(Z)=0$. In this case I will show that if $p\geq 3$, then $K_X^2\geq p$.

If $Y^{\prime}=Z$, i.e, $Y^{\prime}$ is a minimal surface itself, then $K_{Y^{\prime}}\equiv 0$~\cite{BM76},~\cite{BM77} and therefore $K_Y\equiv 0$. Hence $K_X\equiv (p-1)\Delta$ and consequently
\begin{gather}\label{sec-all-p-eq-4}
K_X^2=(p-1)^2\Delta^2\geq (p-1)^2.
\end{gather}
This is $\geq p$ if $p\geq 3$. 

Suppose now that $Y^{\prime}$ is not a minimal surface. Then $\phi$ is not trivial and is a composition of blow ups. Moreover the usual adjuction formula gives that
\begin{gather}\label{sec-all-p-eq-5}
K_{Y^{\prime}}=\phi^{\ast}K_Z+\sum_{i=1}^mb_iB_i,
\end{gather}
where $B_i$, $i=1,\ldots,m$ are the $\phi$-exceptional curves and $b_i>0$ for all $i$. Now since $\kappa(Z)=0$ it follows that $K_Z\equiv 0$~\cite{BM76},~\cite{BM77}. Hence $K_{Y^{\prime}}\equiv \sum_{i=1}^mb_iB_i$. Since $\phi$ is a composition of blow ups, it follows that $B=\sum_{i=1}^mb_iB_i$ contains $\phi$-exceptional curves with self intersection -1. However, since $Y^{\prime}$ is the minimal resolution of $Y$, $g$ does not contract any curve with self intersection -1. Therefore $g_{\ast}\left(\sum_{i=1}^mb_iB_i\right) \not= 0$. Then from equations~\ref{sec-all-p-eq-2},~\ref{sec-all-p-eq-5} it follows that
\[
K_Y\equiv g_{\ast}K_{Y^{\prime}}\equiv g_{\ast}\left(\sum_{i=1}^mb_iB_i\right)
\]
Hence $K_Y$ is numerically equivalent to an effective divisor. Hence $K_X\cdot \pi^{\ast}K_Y>0$ and hence $K_X^2\geq p$. This together with equation~\ref{sec-all-p-eq-4} show that if $\kappa(Z)=0$ and $p\geq 3$, then $K_X^2\geq p$. 

\textit{Case 4.} Suppose that $\kappa(Z)=1$ or $\kappa(Z)=2$. I will show that in both cases $K_{Y^{\prime}}$ is linearly equivalent to an effective divisor $B$  with rational coefficients  which has at least one irreducible component that is not contracted by $g$. Then $K_Y$ is linearly equivalent to a nonzero effective divisor. Hence $K_X\cdot \pi^{\ast}K_Y >0$ and therefore it follows from equation~\ref{sec-all-p-eq-3} that $K_X^2\geq p$.

Suppose that $\kappa(Z)=2$. Then $nK_Z$ is nef and big for $n>>0$~\cite{BM76},~\cite{BM77}. hence $|nK_Z|$ contains an element $W\not \subset \phi_{\ast}E$.  This means that $\phi^{\ast}W $ is not contained in the $g$-exceptional set and hence it has at least one irreducible component which is not contracted by $g$.  By adjunction for $\phi$,
\[
K_{Y^{\prime}}\equiv\phi^{\ast}K_Z+F\equiv \frac{1}{n}\phi^{\ast} W + F,
\]
where $F$ is a $\phi$-exceptional divisor. Hence $K_{Y^{\prime}}\equiv B$, where  $B=\frac{1}{n}\phi^{\ast}W+F$ is an effective divisor such that it has at least one irreducible component which is not contracted by $g$.  
Then from equation~\ref{sec-all-p-eq-2} and considering that $g_{\ast}\phi^{\ast}W\not= 0$, it follows that $K_Y$ is linearly equivalent to an effective divisor with rational coefficients, as claimed.

Suppose that $\kappa(Z)=1$. Then $Z$ admits an elliptic or quasi-elliptic fibration $h \colon Z \rightarrow C$ to a smooth curve $C$~\cite{BM76},~\cite{BM77}. Moreover, for $n>>0$, $nK_Z=h^{\ast}W$, where $W$ is an effective divisor on $C$~\cite{BM76},~\cite{BM77}. Moreover, $W$ can be chosen so that $h^{\ast}W \not\subset \phi_{\ast}E$. Now the argument used in the case when $\kappa(Z)=2$ shows that $K_{Y^{\prime}}$ and hence $K_Y$ are numerically equivalent to a nonzero effective divisor with rational coefficients. Therefore in this case too $K_X\cdot \pi^{\ast}K_Y>0$ and hence $K_X^2\geq p$.

Hence I have shown that if $X$ has a nontrivial global vector field, then either $\kappa(Z)=-\infty$ and hence $X$ is uniruled, or $K_X^2\geq p$.  Suppose that $5c_1^2<c_2$. Then according to Lemma~\ref{b1} below, $b_1(X)=0$. Then $b_1(X^{\prime})=0$ and hence since $\pi^{\prime}$ is an \'etale equivalence, $b_1(Z)=b_1(Y^{\prime})=0$. Therefore in this case, either $K_X^2\geq p$ or $Z$ is unirational and $\pi_1^{et}(Z)=\{1\}$. Since the algebraic fundamental group is a birational invariant and also invariant under \'etale equivalence, it follows from diagram~\ref{sec-all-p-diagram-1} that $\pi_1^{et}(X)=\{1\}$.  Suppose in particular that  $K_X^2=1$ or $K_X^2=2$ and 
$\chi(\mathcal{O}_X)\geq 2$. Then from~\cite[Corollary 1.8]{Ek87} it follows that $1\leq \chi(\mathcal{O}_X) \leq 3$ in the first case, and $2\leq \chi(\mathcal{O}_X) \leq 4$ in the second case. Then from Lemma~\ref{b1} it follows that $b_1(X)=0$ and hence the previous argument shows that $X$ is unirational and simply conncted. This concludes the proof of the theorem.
\end{proof}

\begin{remark}
The proof of Theorem~\ref{sec-all-p-prop-1} shows some of the reasons why the case $p=2$ has to be excluded. As will be seen in the remaining part of this section, the existence of quasi-elliptic fibrations in characteristic 2 is another reason. This is to be expected in some sense since $p=2$ is the smallest possible characteristic where most pathologies appear.
\end{remark}

\begin{lemma}\label{b1}
Let $X$ be a smooth surface of general type defined over an algebraically closed field $k$. Suppose that $5c_1^2<c_2$. Then $b_1(X)=0$ and 
\[
|\pi_1^{et}(X)|\leq \frac{6}{2\chi(\mathcal{O}_X)-K_X^2}=\frac{36}{c_2-5c_1^2}.
\]
\end{lemma}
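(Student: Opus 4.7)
My plan is to reduce the hypothesis to $K_X^2 < 2\chi(\mathcal{O}_X)$ and apply Noether's inequality simultaneously to every finite \'etale cover of $X$, then deduce $b_1(X)=0$ by passing to the universal cover.

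Noether's formula $12\chi(\mathcal{O}_X) = c_1^2 + c_2$ gives
\[
c_2 - 5c_1^2 = 6\bigl(2\chi(\mathcal{O}_X) - K_X^2\bigr),
\]
so the hypothesis $5c_1^2 < c_2$ is equivalent to $K_X^2 < 2\chi(\mathcal{O}_X)$, and the two fractions $6/(2\chi(\mathcal{O}_X)-K_X^2)$ and $36/(c_2 - 5c_1^2)$ in the statement coincide. Now let $\pi\colon Y\to X$ be any connected finite \'etale cover of degree $d$. Since $K_Y = \pi^\ast K_X$ is ample, $Y$ is a smooth minimal surface of general type, and Chern numbers are multiplicative under \'etale covers, so $K_Y^2 = dK_X^2$ and $\chi(\mathcal{O}_Y) = d\chi(\mathcal{O}_X)$. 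Noether's inequality $K_Y^2 \geq 2p_g(Y) - 4$ holds in arbitrary characteristic by Ekedahl, and clearly $p_g(Y) \geq \chi(\mathcal{O}_Y) - 1$; combining these gives
\[
dK_X^2 \geq 2d\chi(\mathcal{O}_X) - 6, \qquad \text{i.e.,}\qquad d\bigl(2\chi(\mathcal{O}_X) - K_X^2\bigr) \leq 6.
\]
As $2\chi(\mathcal{O}_X) - K_X^2 > 0$, this gives $d \leq 6/(2\chi(\mathcal{O}_X) - K_X^2)$. Since the same bound holds for every connected finite \'etale cover, $\pi_1^{et}(X)$ is itself finite of order at most $6/(2\chi(\mathcal{O}_X) - K_X^2)$, which is the second assertion.

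For the first assertion, let $\tilde X \to X$ be the (now finite) universal \'etale cover. Since $\tilde X$ is simply connected, $b_1(\tilde X) = 0$. For any finite \'etale morphism $\pi\colon Y\to X$ of degree $d$, the identity $\pi_\ast \pi^\ast = d\cdot\mathrm{id}$ on $l$-adic cohomology shows that $\pi^\ast\colon H^1_{et}(X,\mathbb{Q}_l) \to H^1_{et}(Y,\mathbb{Q}_l)$ is injective (as $d$ is invertible in $\mathbb{Q}_l$), and applied to $\tilde X \to X$ it yields $b_1(X) \leq b_1(\tilde X) = 0$. The only substantive ingredient is Noether's inequality in positive characteristic; everything else is formal, and I expect no obstacle beyond invoking the correct reference for that input.
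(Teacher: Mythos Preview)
Your proof is correct and follows essentially the same route as the paper: bound the degree of any finite \'etale cover by applying Noether's inequality on the cover together with multiplicativity of $K^2$ and $\chi(\mathcal{O})$, then deduce $b_1(X)=0$ from the finiteness of $\pi_1^{et}(X)$. The only cosmetic difference is that the paper derives $b_1(X)=0$ via $H^1_{et}(X,\mathbb{Z}/l\mathbb{Z})\cong(\mathbb{Z}/l\mathbb{Z})^{b_1(X)}$ for almost all $l$, whereas you pass to the universal \'etale cover and use injectivity of $\pi^\ast$ on $l$-adic $H^1$; both are standard ways of extracting the same conclusion.
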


\begin{proof}
In the case when $K_X^2=1$ the proof  can be found in~\cite{Li09}. I proceed to prove the general case.

It is well known that for all but finitely many primes $l$,
\begin{gather}\label{sec4-eq9}
H^1_{et}(X,\mathbb{Z}/l\mathbb{Z})\cong\left(\mathbb{Z}/l\mathbb{Z}\right)^{b_1(X)}.
\end{gather}
The claim then that $b_1(X)=0$ will follow from the finiteness of $\pi_1^{et}(X)$. Let $f \colon Y \rightarrow X$ be an \'etale cover of degree $n$. Then $K_Y^2=nK_X^2$ and $\chi(\mathcal{O}_Y)=n\chi(\mathcal{O}_X)$. From Noether's inequality we get that
\[
nK_X^2=K_Y^2\geq 2\mathrm{p}_g(Y)-4=2(\chi(\mathcal{O}_Y)-1+h^1(\mathcal{O}_Y))-4\geq 2n\chi(\mathcal{O}_X)-6.
\]
Therefore,
\[
n(2\chi(\mathcal{O}_X)-K_X^2)\leq 6.
\]
Now by Noether's formula, $2\chi(\mathcal{O}_X)-K_X^2=\frac{1}{6}(c_2-c_1^2)>0$, by assumption. Hence
\begin{gather}\label{sec4-eq10}
n \leq \frac{6}{2\chi(\mathcal{O}_X)-K_X^2},
\end{gather}
and therefore, \[
|\pi_1^{et}(X)|\leq \frac{6}{2\chi(\mathcal{O}_X)-K_X^2}=\frac{36}{c_2-5c_1^2}.
\]
as claimed.

\end{proof} 

\textbf{Case 2:  $\Delta = 0$.}  In this case the following holds.

\begin{theorem}\label{sec-all-p-prop-2}
Let $X$ be a smooth canonically polarized surface defined over an algebraically closed field of characteristic $p\geq 3$. Suppose that $X$ admits a nontrivial global vector field $D$ with only isolated singularities. Then,
\begin{enumerate}
\item If $K_X^2=2$ and $p\not= 3,5$, then $X$ is uniruled. If in addition $\chi(\mathcal{O}_X)\geq 2$, then $X$ is unirational and simply connected.
\item Suppose that $K_X^2=1$ and $p\not= 7$. Then $X$ is unirational and simply connected except possibly if $p\in\{3,5\}$ and $X$ is a simply connected supersingular Godeaux surface.
\end{enumerate}
\end{theorem}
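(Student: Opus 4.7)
The hypothesis $\Delta=0$ reduces the purely inseparable adjunction~\eqref{sec-all-p-eq-1} to $K_X = \pi^{\ast}K_Y$, so $K_X^2 = p\,K_Y^2$ and $K_Y$ is $\mathbb{Q}$-ample. Combining the two formulas of~\eqref{sec-all-p-eq-2}, using $g^{\ast}K_Y \cdot F_i = 0$, and the equality $K_{Y'}^2 = K_Z^2 - m$ ($\phi$ being a composition of $m$ blowups of smooth points), one obtains the key identity
\[
K_X^2 \;=\; p(K_Z^2 - m) \;+\; \frac{-F^2}{p},
\]
in which $-F^2 \geq 0$ is controlled, via Proposition~\ref{prop4}, by the types of singularities of $Y$. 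The plan is to split on $\kappa(Z)$, use this identity together with the constraint $K_X^2 \in \{1,2\}$ to eliminate $\kappa(Z) \geq 0$ outside the listed exceptional primes, and then upgrade the resulting uniruledness of $X$ to unirationality and simple-connectedness via Lemma~\ref{b1}.

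\textbf{Case analysis on $\kappa(Z)$.} If $\kappa(Z) = 2$, then $K_Z^2 \geq 1$; the identity forces either $K_X^2 \geq p \geq 3$ (when $m \leq K_Z^2 - 1$) or $-F^2 \geq p K_X^2 + p^2(m - K_Z^2)$ (when $m \geq K_Z^2$), and the latter is ruled out by combining the bounds on $-F^2$ coming from the singularity classification with Noether's formula on $X$ and the equality $\chi_{et}(X) = \chi_{et}(Z) + m - N$ (with $N$ the total number of $g$-exceptional curves). If $\kappa(Z) = 1$, then $K_Z^2 = 0$ and $Z$ admits an (quasi)elliptic fibration; pulling back a fibre through $\phi$ and pushing it down through $g$ produces an effective divisor on $Y$ with positive intersection against $K_Y$, yielding a second numerical constraint that is incompatible with the identity above for $p$ outside the excluded primes. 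If $\kappa(Z) = 0$, then $K_Z \equiv 0$ and the identity pins $-F^2 = pK_X^2 + p^2 m$; inspection of the toric (for $\mu_p$, via Proposition~\ref{class-of-mult-quot}) and exotic (for $\alpha_p$) singularity configurations of $Y$ that can realise this defect rules out all possibilities except the simply connected supersingular Godeaux exception in characteristic $3$ or $5$. If $\kappa(Z) = -\infty$, then $Z$, hence $Y$, is uniruled, and the Frobenius factorisation $F_X \colon X \xrightarrow{\pi} Y \xrightarrow{\delta} X^{(p)}$ exhibits $X$ as purely inseparably uniruled.

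\textbf{From uniruled to unirational and simply connected.} Under the hypotheses of the theorem one has $5c_1^2 < c_2$: for $K_X^2 = 1$, $c_2 - 5c_1^2 = 12\chi(\mathcal{O}_X) - 6 \geq 6$ using $\chi(\mathcal{O}_X) \geq 1$, while for $K_X^2 = 2$ and $\chi(\mathcal{O}_X) \geq 2$, $c_2 - 5c_1^2 = 12\chi(\mathcal{O}_X) - 12 \geq 12$. Lemma~\ref{b1} then gives $b_1(X) = 0$ and a small bound on $|\pi_1^{et}(X)|$. Since $\pi$ induces an equivalence of \'etale sites and $g, \phi$ are birational, $b_1(Z) = 0$; the uniruled $Z$ is therefore rational, and diagram~\eqref{sec-all-p-diagram-1} realises $X$ as a purely inseparable cover (up to birational modification) of $Z$, so $X$ is unirational and $\pi_1^{et}(X) = \pi_1^{et}(Z) = \{1\}$.

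\textbf{Main obstacle.} The genuine difficulty lies in the $\kappa(Z) = 0$ case: the coarse positivity $K_X^2 > 0$ does not by itself constrain $-F^2$ or $m$ sharply enough, and the exclusion of $p \in \{3, 5, 7\}$ together with the supersingular Godeaux possibility must be extracted from a detailed inspection of the singularity structure via the $\mu_p/\alpha_p$ dichotomy of Proposition~\ref{prop4}. In the residual Godeaux case this also appeals to the classification results of Lang and Liedtke in small characteristic.
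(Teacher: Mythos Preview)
Your setup and the identity $K_X^2 = p(K_Z^2 - m) + (-F^2)/p$ are correct, and the overall architecture (split on the Kodaira dimension of the minimal model) matches the paper's. However, the proposal as written has genuine gaps: the hard content of the theorem lies precisely in the case eliminations you wave through.

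\textbf{The effectivity argument you are missing.} For $\kappa(Z)\geq 1$ the paper does \emph{not} try to bound $-F^2$. Instead it first splits on whether $Y'$ is minimal. When $Y'$ is not minimal and $\kappa(Z)\geq 1$, one writes $K_Y \equiv g_{\ast}\phi^{\ast}K_Z + g_{\ast}B$ with both summands represented by nonzero effective $\mathbb{Q}$-divisors on $Y$; intersecting with the ample $K_X=\pi^{\ast}K_Y$ gives $K_X^2 \geq 1+m \geq 2$, which already disposes of $K_X^2=1$. Your identity alone cannot do this, because $-F^2$ can be arbitrarily large (many $g$-exceptional curves) and you have supplied no mechanism to control it; the claimed ``bounds on $-F^2$ coming from the singularity classification'' do not exist in the form you need. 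When $Y'$ \emph{is} minimal with $\kappa(Y')=1$, the paper carries out a delicate analysis of the (quasi)elliptic fibration $\psi\colon Y'\to\mathbb{P}^1$: solving the discrepancy equations for the $g$-exceptional configuration forces $p=3$ and a single $\tfrac{1}{3}(1,1)$ point plus $A_2$'s, and then a case split on $p_g(Y')\in\{0,1,2\}$, using the multiple-fibre structure and torsion-freeness of $\mathrm{Pic}(Y')$, pins down the supersingular Godeaux outcome and its simple-connectedness directly. None of this is captured by ``pulling back a fibre and pushing it down''.

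\textbf{The $\kappa(Z)=0$ case and the role of $p=7$.} Here the paper (non-minimal $Y'$, $m=1$) reduces via $c_2(Z)\in\{0,12,24\}$ and the \'etale Euler characteristic comparison to exactly two $g$-exceptional curves (Enriques/K3 for $Z$), and then solves the discrepancy system~(7.14) explicitly over all pairs $(d_1,d_2)$ with $d_i=-F_i^2$. The unique configuration compatible with $K_X^2=1$ and the $p$-torsion constraint on local class groups is $(d_1,d_2)=(4,2)$, a $\tfrac{1}{7}(1,2)$ point, forcing $p=7$. This is where the excluded prime $7$ comes from, and it is not recoverable from your identity without that explicit enumeration. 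The $14$-curve K3 case (giving the second route to the supersingular Godeaux exception for $p\in\{3,5\}$) likewise requires the computation, not an appeal to Lang--Liedtke; the paper proves simple-connectedness of $X$ there by showing $\pi_1^{et}(Y')=\{1\}$ intrinsically.

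In short: your final paragraph correctly identifies $\kappa(Z)=0$ as the crux, but the same level of concrete singularity/discrepancy computation is also what makes the $\kappa(Z)=1$ minimal case go through, and your proposed substitutes (``bounds on $-F^2$'', ``inspection of configurations'', ``classification results'') are not arguments.
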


\begin{proof}
I will only do the case $K_X^2=1$ in detail. The case $K_X^2=2$ is exactly similar and its details are left to the reader.

For the remainder of the proof, fix notation as in diagram~\ref{sec-all-p-diagram-1}. Since $\Delta=0$, it follows from equation~\ref{sec-all-p-eq-1} that $K_X=\pi^{\ast}K_Y$ and therefore $K_X^2=pK_Y^2$. 

Consider now cases with respect to whether or not $Y^{\prime}$ is a minimal surface or not.

\textbf{Case 1.} Suppose that $Y^{\prime}=Z$, i.e., $Y^{\prime}$ is a minimal surface.

\textbf{Case 1.1} Suppose that $\kappa(Y^{\prime})=2$. In this case I will show that $K_X^2\geq p$.

From equation~\ref{sec-all-p-eq-2} it follows that
\[
K_X^2=pK_Y^2=pK_{Y^{\prime}}^2+K_{Y^{\prime}}\cdot F\geq p,
\] 
since $K_{Y^{\prime}}^2>0$ and $K_{Y^{\prime}}\cdot F\geq 0$.

\textbf{Case 1.2} Suppose that  $\kappa(Y^{\prime})=1$. In this case I will show the following.
\begin{enumerate}
\item If $K_X^2=1$ then $p=3$ and $X$ is a simply connected supersingular Godeaux surface. 
\item If $K_X^2=2$ then $p=3,5$. 
\end{enumerate}

Since $\kappa(Y^{\prime})=1$, $Y^{\prime}$ admits an elliptic or quasi elliptic (this only for $p=2,3$) fibration $\psi\colon Y^{\prime}\rightarrow B$~\cite{BM76},~\cite{BM77}.  In particular $K_{Y^{\prime}}^2=0$.

Let $F_i$, $i=1,\ldots,n$, be the $g$-exceptional curves. Suppose that $F_i^2=-2$, for all $i$. Then $g$ is crepant and therefore $K_Y$ is Cartier. Then 
\[
K_X^2=pK_Y^2\geq p.
\]
Suppose then that there exists at least one $g$-exceptional curve with self intersection $\leq -3$. After renumbering the exceptional curves, we may assume that $F_1^2=-d$, $d\geq 3$. Then from equation~\ref{sec-all-p-eq-1} it follows that
\[
K_{Y^{\prime}}+\frac{1}{p}\left(a_1F_1+\sum_{i=2}^na_iF_i\right)=g^{\ast}K_Y,
\]
where $a_i\in \mathbb{Z}_{\geq 0}$, for all $i=1,\ldots,n$, and $a_1>0$. Intersecting this equation with $E_1$ and $K_{Y^{\prime}}$ and considering that $K_{Y^{\prime}}^2=0$, we get that
\begin{gather}\label{sec-all-p-eq-5}
d-2+\frac{1}{p}\left(-a_1d+\sum_{i=2}^na_i(F_i\cdot F_1)\right)=0 \\
K_X^2=pK_Y^2=(d-2)a_1+\sum_{i=2}^na_i(F_i \cdot K_{Y^{\prime}})\nonumber
\end{gather}

Suppose that $K_X^2=1$. Then from equations~\ref{sec-all-p-eq-5} it follows that
\[
(d-2)a_1+\sum_{i=2}^na_i(F_i \cdot K_{Y^{\prime}})=1.
\]
Therefore, since $K_{Y^{\prime}}\cdot F_i \geq 0$, for all $i$, it follows that $d=3$, $a_1=1$ and $K_{Y^{\prime}}\cdot F_i = 0$, for $i=2,\ldots, n$, and hence $F_i^2=-2$, for $i=2,\ldots,n$. Then again from equations~\ref{sec-all-p-eq-5} it follows that
\[
1+\frac{1}{p}\left(-3+\sum_{i=2}^na_i(F_i\cdot F_1)\right)=0.
\]
Since $p \geq 3$ it now follows that $p=3$ and $F_i\cdot F_1=0$, for $i=2,\ldots,n$. Hence the only possibility if $K_X^2=1$ is that $p=3$ and the $g$-exceptional set consists of an isolated curve with self intersection $-3$ and chains of $(-2)$ curves. Hence $Y$ has a singularity of type $\frac{1}{3}(1,1)$ and isolated DuVal singularities. 

Suppose that $K_X^2=2$. Then arguing as in the case when $K_X^2=1$ it is not difficult to see that if $p\not=3,5$, the equations~\ref{sec-all-p-eq-5} have no solutions. Otherwise we get restrictions on the singularities of $Y$ as in the case when $K_X^2=1$. However the calculations with them are rather messy and at the moment I do not see how to deal with them. In principle one should be able to deal with them by following the method that I will use next for the case $p=3$ and $K_X^2=1$. In any case, if $p\geq 7$, then $K_X^2 \geq 3$, as claimed.

Suppose now that $K_X^2=1$. Then then according to the previous discussion, $p=3$ and  the singularities of $Y$ are one $\frac{1}{3}(1,1)$ singularity and isolated DuVal singularities. Note that the DuVal singularities must be of type $A_2$ because these are precisely the DuVal singularities whose local class groups are 3-torsion. 

As was mentioned earlier, $Y^{\prime}$ admits an elliptic or quasi-elliptic fibration $\psi \colon Y^{\prime} \rightarrow B$. Since $K_X^2=1$, it follows from Lemma~\ref{b1} that $b_1(X)=0$ and therefore $B=\mathbb{P}^1$. Then
\begin{gather}\label{sec-all-p-eq-6}
R^1\psi_{\ast}\mathcal{O}_{Y^{\prime}}=\mathcal{O}_{\mathbb{P}^1}(-d) \oplus T,
\end{gather}
where $d \in \mathbb{Z}$ and $T$ is a torsion sheaf on $\mathbb{P}^1$. By using the Grothendieck spectral sequence and Serre duality, we get that
\begin{gather}\label{sec-all-p-eq-7}
\chi(\mathcal{O}_{Y^{\prime}})=1+h^0(\omega_{Y^{\prime}})-h^0(R^1\psi_{\ast}\mathcal{O}_{Y^{\prime}}) \\
p_g(Y^{\prime})=h^0(\omega_{Y^{\prime}})=h^2(\mathcal{O}_{Y^{\prime}})=h^1(R^1\psi_{\ast}\mathcal{O}_{Y^{\prime}}).\nonumber
\end{gather}

Since $K_X=\pi^{\ast}K_Y$, it follows that 
\begin{gather}\label{eq-pg}
p_g(Y^{\prime})=p_g(Y)\leq p_g(X).
\end{gather} 
Moreover, since $K_X^2=1$, it follows from~\cite[Corollary 1.8]{Ek87} that $p_g(X)\leq 2$. Hence $0\leq p_g(Y^{\prime})\leq 2$.

\textbf{Case 1.2.1. } Suppose that $p_g(Y^{\prime})=0$. Then $\chi(\mathcal{O}_{Y^{\prime}})\leq 1$ and hence from Noether's formula it follows that $c_2(Y^{\prime})\leq 12$. But also 
\begin{gather}\label{sec-all-p-eq-8}
c_2(Y^{\prime}) =    \chi_{et}(Y^{\prime})        =     \chi_{et}(Y)+k=\chi_{et}(X)+k=c_2(X)+k,
\end{gather}
where $k$ is the number of $g$-exceptional curves. Now since $K_X^2=1$ it follows~\cite[Corollary 1.8]{Ek87} that $1\leq \chi(\mathcal{O}_X)\leq 3$ and therefore from Noether's formula it follows that $c_2(X)\geq 11$. Then from equation~\ref{sec-all-p-eq-8} and since $c_2(Y^{\prime})\leq 12$, it follows that $c_2(X)=11$, $k=1$ and $c_2(Y^{\prime})=12$. Hence $Y$ has exactly one singular point, which is necessarily of type $1/3(1,1)$. Let then $F$ be the unique $g$-exceptional curve. Then $F^2=-3$ and by adjunction  we get that
\[
K_{Y^{\prime}}=g^{\ast}K_Y-\frac{1}{3}F.
\]
Let $E$ be the unique $f$-exceptional curve. Then $\pi^{\prime}E=F$. Then again by adjunction, $K_{X^{\prime}}=f^{\ast}K_X+aE$, $a\in\mathbb{Z}_{>0}$. Then since $\Delta=0$, it follows that
\[
K_{X^{\prime}}=f^{\ast}K_X+aE=f^{\ast}\pi^{\ast}K_Y+aE=(\pi^{\prime})^{\ast}K_{Y^{\prime}}+aE+\frac{1}{3}(\pi^{\prime})^{\ast}F.
\]
Now $(\pi^{\prime})^{\ast}F=kE$, where $k=1$ if $E$ is not an integral curve for $D^{\prime}$ and $k=3$ if $E$ is an integral curve. Suppose that $k=1$. Then
\begin{gather}\label{sec-all-p-eq-9}
K_{X^{\prime}}=(\pi^{\prime})^{\ast}K_{Y^{\prime}}+(a+\frac{1}{3})E.
\end{gather}
But this is impossible because from the adjunction formula for purely inseparable maps~\ref{sec-all-p-eq-1}, $a+\frac{1}{3}=p-1=2$, which is an integer.  Hence $k=3$ , $a=1$ and $E$ is an integral curve for $D^{\prime}$. Then since 
$(\pi^{\prime})^{\ast}F=3E$ it follows that $9E^2=3F^2=-9$ and hence $E^2=-1$. It is now easy to see that $f$ is nothing but the blow up of a point of $X$. In particular $X^{\prime}$ is smooth. Recall also that $Y^{\prime}$ is the quotient of $X^{\prime}$ by $D^{\prime}$. Moreover from equation~\ref{sec-all-p-eq-9} it follows that the divisorial part $\Delta^{\prime}$ of $D^{\prime}$ is $E$. Then by Proposition~\ref{size-of-sing} ,
\[
\mathrm{length}(\mathcal{O}_Z)=K_{X^{\prime}} \cdot  E +E^2 +c_2(X^{\prime})=-2+c_2(X)+1=-2+11+1=10>0,
\] 
where $Z$ is the embeded part of the fixed locus of $D^{\prime}$. However, since both $X^{\prime}$ and $Y^{\prime}$ are smooth, from Proposition~\ref{prop3} it follows that the fixed locus of $D^{\prime}$ does not have an embeded part. Hence we get a contradiction. Hence the case $p_g(Y^{\prime})=0$ is impossible.

\textbf{Case 1.2.2. } Suppose that $p_g(Y^{\prime})=1$.  I will show that in this case, $X$ must be a simply connected supersingular Godeaux surface.

From~\ref{sec-all-p-eq-7}  it follows that $h^1(R^1\psi_{\ast}\mathcal{O}_{Y^{\prime}})=1$. Hence $R^1\psi_{\ast}\mathcal{O}_{Y^{\prime}}=\mathcal{O}_{\mathbb{P}^1}(-2)\oplus T$, where $T$ is a torsion sheaf on $\mathbb{P}^1$. Suppose that $T \not= 0$. Then from equations~\ref{sec-all-p-eq-7} it follows that $\chi(\mathcal{O}_{Y^{\prime}})\leq 1$. In this case the argument that was used in the case $p_g(X)=0$ applies giving again a contradiction. Hence $T=0$ and therefore $\chi(\mathcal{O}_{Y^{\prime}})=2$ and $H^1(\mathcal{O}_{Y^{\prime}})=0$.  Moreover, from equation~\ref{eq-pg} and the discussion immediately after it, it follows that  $1\leq p_g(X)\leq 2$. 

Suppose that $p_g(X)=2$. Then also from~\cite[Corollary 1.8]{Ek87} it follows that $h^1(\mathcal{O}_X)\leq 1$ and hence $2\leq \chi(\mathcal{O}_X)\leq 3$. Then 
\begin{gather}\label{sec-all-p-eq-10}
c_2(Y^{\prime})=\chi_{et}(Y)+k=c_2(X)+k=12\chi(\mathcal{O}_X)-1+k,
\end{gather}
where $k$ is the number of $g$-exceptional curves. But since $\chi(\mathcal{O}_{Y^{\prime}})=2$, it follows that $c_2(Y^{\prime})=24$. Hence $12\chi(\mathcal{O}_X)-1+k=24$. Suppose that $\chi(\mathcal{O}_X)=3$. Then $24=35+k$, which is impossible. Suppose that $\chi(\mathcal{O}_X)=2$. Then $24-1+k=24$ and therefore $k=1$. Hence there exists only one $g$-exceptional curve. However I have shown during the study of the case $p_g(Y^{\prime})=0$ that this case is also impossible.

Suppose that $p_g(X)=1$. Then from~\cite[Corollary 1.8]{Ek87} it follows that $h^1(\mathcal{O}_X)\leq 1$.  Suppose that $h^1(\mathcal{O}_X)=0$. Then $\chi(\mathcal{O}_X)=2$. Then $c_2(X)=23$. Hence, since $c_2(Y^{\prime})=24$, it follows from the equation~\ref{sec-all-p-eq-10} that $g$ has exactly one exceptional curve and hence $Y$ has exactly one singularity which is necessarily of type $1/3(1,1)$. But in the study of the case when $p_g(Y^{\prime})=0$, I have shown that this case is impossible. Hence the only possibility is that $h^1(\mathcal{O}_X)=1$ and hence $\chi(\mathcal{O}_X)=1$. Therefore $X$ is a Godeaux surface. I will show that it is a supersingular and simply connected Godeaux. Since $h^1(\mathcal{O}_X)=1$, $X$ can be either a singular or supersingular Godeaux surface. Let $F^{\ast} \colon H^1(\mathcal{O}_X)\rightarrow H^1(\mathcal{O}_X)$ be the map induced by the Frobenious. From the discussion in section~\ref{sec-0}, $X$ is singular if $F^{\ast}$ is bijective and supersingular if it is zero. Suppose that it is bijective. Then from the exact sequence in the \'etale topology
\begin{gather}\label{torsor-seq}
0 \rightarrow \mathbb{Z}/3\mathbb{Z} \rightarrow \mathcal{O}_X \stackrel{F^{\ast}-id}{\rightarrow} \mathcal{O}_X \rightarrow 0,
\end{gather}
it follows that $H^1_{et}(X,\mathbb{Z}/3\mathbb{Z})\not=0$ and therefore $X$ has nontrivial \'etale $\mathbb{Z}/3\mathbb{Z}$ covers. But since $\pi$ is an \'etale equivalence, it follows that $ H^1_{et}(X,\mathbb{Z}/3\mathbb{Z})=
H^1_{et}(Y,\mathbb{Z}/3\mathbb{Z})$. But since $Y$ has rational singularities and $H^1(\mathcal{O}_{Y^{\prime}})=0$, it follows that $H^1(\mathcal{O}_Y)=0$. But then the corresponding sequence~\ref{torsor-seq} for $Y$ shows that $H^1_{et}(Y,\mathbb{Z}/3\mathbb{Z})=0$, a contradiction. Hence $X$ is a supersingular Godeaux surface. It remains to show that it is simply connected. Since the fundamental group is invariant under \'etale and birational equivalence, it follows that
\[
\pi_1^{et}(X)=\pi_1^{et}(Y)=\pi_1^{et}(Y^{\prime}).
\]
Therefore it suffices to show that $Y^{\prime}$ is simply connected. The first step in order to show this is to study more carefully the structure of the elliptic fibration $\psi \colon Y^{\prime}\rightarrow \mathbb{P}^1$. I will show that 
$\psi$ has exactly one multiple fiber. More precisely, there exists a unique point $t_0 \in \mathbb{P}^1$ such that the fiber $Y^{\prime}_{t_0}$ is not reduced. Moreover $Y^{\prime}_{t_0}=2C$, where $C$ is an indecomposable curve of canonical type. 

From the adjunction formula for elliptic and quasi-elliptic fibrations~\cite[Theorem 7.15]{Ba01} we get that 
\begin{gather}\label{sec-all-p-eq-11}
K_{Y^{\prime}}=\sum_{i=1}^k (m_i-1)P_i,
\end{gather}
where $m_iP_i=Y^{\prime}_{t_i}$ are the multiple fibers of $\psi$, $m_i\geq 2$ (note that since $T=0$, $\psi$ has no exceptional fibers). By assumption, $g$ has exactly one exceptional curve $E$ such that $E^2=-3$. This corresponds to the unique singularity of $Y$ that is of type $1/3(1,1)$. Then since $K_{Y^{\prime}}\cdot E=1$, it follows from equation~\ref{sec-all-p-eq-11} that $\sum_{i=1}^k (m_i-1)P_i\cdot E =1$. Taking into consideration that 
$E$ is not contained in any fiber of $\psi$ (if it did then $K_{Y^{\prime}}\cdot E=0$) it follows that $P_i \cdot E\not=0$, for all $i=1,\ldots, k$. Hence $k=1$ and $m_1=1$. This means that $\psi$ has exactly one multiple fiber $Y^{\prime}_{t_0}$ and moreover, $Y^{\prime}_{t_0}=2C$, where $C$ is an indecomposable curve of canonical type. Then from equation~\ref{sec-all-p-eq-11} it follows that $K_{Y^{\prime}}=C$ and hence $Y^{\prime}_t\sim 2K_{Y^{\prime}}$, for all $t\in \mathbb{P}^1$.

I will next show that $\pi_1^{et}(Y^{\prime})=\{1\}$. In order to do this it suffices to show that $Y^{\prime}$ does not have non trivial \'etale $\mathbb{Z}/n\mathbb{Z}$-covers. For $n\not= 3$, this is equivalent to the property that $Y^{\prime}$ does not have torsion line bundles of order $n$. For $n=3$, this is equivalent to the property  that $\mu_3$ is not a subgroup scheme of $\mathrm{Pic}(Y^{\prime})$.  $\mathbb{Z}/3\mathbb{Z}$ \'etale covers are classified by $H^1_{et}(Y^{\prime},\mathbb{Z}/3\mathbb{Z})$. However, since 
$H^1(\mathcal{O}_{Y^{\prime}})=0$, it follows that $H^1_{et}(Y^{\prime},\mathbb{Z}/3\mathbb{Z})=0$. Therefore $Y^{\prime}$ does not have non trivial \'etale $\mathbb{Z}/3\mathbb{Z}$-covers.

Next I will show that $\mathrm{Pic}(Y^{\prime})$ is torsion free.  Let $L$ be a torsion line bundle on $Y^{\prime}$ of order $n$. Then by the Riemann-Roch theorem, $\chi(L)=\chi(\mathcal{O}_{Y^{\prime}})=2>0$. Hence, either $H^0(L)\not=0$ or $H^2(L)=H^0(L^{-1}\otimes \omega_{Y^{\prime}})\not=0$. Since $L$ is torsion, $H^0(L)=0$. Therefore, $H^0(L^{-1}\otimes \omega_{Y^{\prime}})\not=0$. Then 
\begin{gather}\label{sec-all-p-eq-11-1}
L^{-1}\otimes \omega_{Y^{\prime}}\cong \mathcal{O}_{Y^{\prime}}(W),
\end{gather}
where $W$ is an effective divisor. I will show that all irreducible components of $W$ are contracted to points by $\psi$. Suppose on the contrary that $W$ has a component $W_0$ that dominates $\mathbb{P}^1$. Then since $\omega_{Y^{\prime}}=\mathcal{O}_{Y^{\prime}}(C)$ and $2C=Y^{\prime}_{t_0}\sim Y^{\prime}_t$, for any $t\in \mathbb{P}^1$, it follows that $C^2=0$ and that $C\cdot W_0\not= 0$. But this is impossible since $L^{-1}\otimes \omega_{Y^{\prime}}\cong \mathcal{O}_{Y^{\prime}}(W)$, and $L$ is torsion. Hence every irreducible component of $W$ is contracted by $\psi$. Now decompose $W$ as
\begin{gather}\label{sec-all-p-eq-12}
W=W_1+\cdots+W_k,
\end{gather}
where $W_k \subset Y^{\prime}_{t_k}$, $t_k\in \mathbb{P}^1$ and $t_i\not= t_j$, for $i\not= j$. I will show that $k=1$ and $(W_1)_{red}=(Y^{\prime}_{t_1})_{red}$. Suppose that there exists an $i$ such that $(W_i)_{red}\not=(Y^{\prime}_{t_i})_{red}$. Then it is possible to write $(Y^{\prime}_{t_i})_{red}=(W_i)_{red}+Z$, where $Z$ and $W_i$ do not have any common components. Then since $Y^{\prime}_{t_i}$ is connected it follows that $Z\cdot W_i \not= 0$. But then from equation~\ref{sec-all-p-eq-12} it follows that 
\[
0=Y^{\prime}_{t_0} \cdot Z= 2W \cdot Z=2W_i\cdot Z\not=0,
\]
a contradiction. Hence for all $i$, $(W_i)_{red}=(Y^{\prime}_{t_i})_{red}$. Since $E$ dominates $\mathbb{P}^1$ it follows that $E\cdot W_i \not=0$, for all $i$. But $C\cdot E=1$ and hence $W\cdot E=1$. Then equation~\ref{sec-all-p-eq-12} shows that $k=1$. Hence $W$ is contained in a single fiber $Y^{\prime}_t$ and $W_{red}=(Y^{\prime}_t)_{red}$. 

I will now consider cases with respect to the nature of $Y^{\prime}_t$.

Suppose that $Y^{\prime}_t$ is reduced. Then since $W_{red}=(Y^{\prime}_t)_{red}$, it follows that $W=Y^{\prime}_t$ and hence $C\equiv Y^{\prime}_t$. Therefore 
\[
1=C\cdot E=Y^{\prime}_t\cdot E=2K_{Y^{\prime}} \cdot E=2,
\]
which is impossible.

Suppose that $Y^{\prime}_t$ is not reduced.  

If the fiber $Y^{\prime}_t$ is a multiple fiber, then since $\psi$ has exactly one multiple fiber, it follows that $t=t_0$ and that $W=C$ or $W=2C$. If $W=2C$, then from~\ref{sec-all-p-eq-11-1} it follows that $C\equiv 0$, which is impossible. If $W=C$, then $L\cong \mathcal{O}_{Y^{\prime}}$.

Suppose that $Y^{\prime}_t$ is not a multiple fiber. In this case every irreducible component of $Y^{\prime}_{red}$ is a smooth rational curve of self intersection  $-2$~\cite[Corollary 5.1.1]{CD89}. From the relation 
$E\cdot Y^{\prime}_t=2$, it follows that $E$ meets at most two irreducible components of $Y^{\prime}_t$. Moreover, $2K_{Y^{\prime}}=Y^{\prime}_t$. Then from the adjuction formula for $g$,
\[
K_{Y^{\prime}}+\frac{1}{3}E=g^{\ast}K_Y,
\]
and considering that every irreducible component of $Y^{\prime}_t$ is a smooth rational curve of self intersection $-2$, it follows that every irreducible component of $Y^{\prime}_t$ that does not intersect $E$ must be contracted by $g$. However, the singular locus of $Y$ consists of a single $1/3(1,1)$ singularity and finitely many $A_2$ type canonical singularities. Hence $Y^{\prime}_t$ minus the components that meet $E$ is a disjoint union of chains of rational curves of length 2. By taking now into consideration this, the fact that $Y^{\prime}_t$ is not multiple or reduced and the classification of reducible fibers of an elliptic fibration~\cite[Page 288]{CD89}, we see that there are the following two possibilities for $Y^{\prime}_t$:
\begin{enumerate}
\item Suppose that  $E$ meets exactly one component of $Y^{\prime}_t$.  Then $Y^{\prime}_t$ is a configuration of rational curves of type $\tilde{E}_6$,
\[
\xymatrix{
\overset{1}{\circ} \ar@{-}[r] &\overset{2}{\circ} \ar@{-}[r] &\overset{3}{\bullet} \ar@{-}[r] \ar@{-}[d] & \overset{2}{\circ} \ar@{-}[r]     &\overset{1}{\circ}  \\
                                           &                                           &\overset{2}{\circ}\ar@{-}[r]      &       \overset{1}{\circ}              &                         
 }
\]
where the solid dot corresponds to the curve that meets $E$ and the numbers over each dot indicate the multiplicity with which the corresponding  curve appears in $Y^{\prime}_t$. But from the diagram above we see that $E$ meets a curve which has multiplicity $3$ in $Y^{\prime}_t$. Hence $Y^{\prime}_t \cdot E\geq 3$, which is impossible since $Y^{\prime}_t \cdot E=2$. Hence this case is impossible. 
\item Suppose that  $E$ meets two component of $Y^{\prime}_t$. Then $Y^{\prime}_t$ is a configuration of rational curves of types $\tilde{E}_7$,
\[
\xymatrix{
\overset{1}{\circ} \ar@{-}[r] &\overset{2}{\circ} \ar@{-}[r] &\overset{3}{\bullet} \ar@{-}[r]  & \overset{4}{\circ} \ar@{-}[r]\ar@{-}[d]& \overset{3}{\bullet} \ar@{-}[r]&\overset{2}{\circ} \ar@{-}[r]& \overset{1}{\circ}  \\
                                           &                                           &                                                               &\overset{2}{\circ}    &                                                               &                         
 }
\]
where the solid dots correspond to the curves that meet $E$ and the numbers over each dot indicate the multiplicity with which the corresponding  curve appears in $Y^{\prime}_t$. But from the above  diagram we see that $E$ must meet two curves which appear with multiplicity 2 each one in $Y^{\prime}_t$. Hence $Y^{\prime}_t\cdot E \geq 4$, which is  impossible since $Y^{\prime}_t\cdot E=2$.
\end{enumerate}
Hence the only possibility is that $W=C$ and therefore $L\cong\mathcal{O}_{Y^{\prime}}$. Hence $\mathrm{Pic}(Y^{\prime})$ is torsion free and it does not contain $\mu_3$. Hence $Y^{\prime}$ is simply connected and therefore so is $X$. 

\textbf{Case 1.2.3. } Suppose that $p_g(Y^{\prime})=2$. I will show that this case is impossible. Suppose that this case happens. I will show that $\psi$ has no multiple fibers and that $K_{Y^{\prime}}=Y^{\prime}_t$, for $t\in \mathbb{P}^1$. 

From the equations~\ref{sec-all-p-eq-7} it follows since $p_g(Y^{\prime})=2$ that $h^1(R^1\psi_{\ast}\mathcal{O}_{Y^{\prime}})=2$ and therefore 
$R^1\psi_{\ast}\mathcal{O}_{Y^{\prime}}=\mathcal{O}_{\mathbb{P}^1}(-3)$. Then from the adjunction formula for an elliptic or quasi-elliptic ~\cite[Theorem 7.15]{Ba01} it follows that
\begin{gather}\label{sec-all-p-eq-13}
K_{Y^{\prime}}=Y^{\prime}_t +\sum_ia_i P_i,
\end{gather}
where $m_iP_i=Y^{\prime}_{t_i}$ are the multiple fibers of $\psi$ and $0\leq a_i \leq m_i-1$. But $E\cdot K_{Y^{\prime}}=1$. Hence intersecting the previous equation with $E$ it follows that $a_i=0$, for all $i$ and therefore $K_{Y^{\prime}}=Y^{\prime}_t$, as claimed. Moreover from this it follows that $Y^{\prime}_t \cdot E=1$ and hence $E$ is a section of $\psi$. Hence $\psi$ has no multiple fibers. In particular this shows that $T=0$ (since for any $t\in T$, $Y^{\prime}_t$ is a multiple fiber). Hence from equations~\ref{sec-all-p-eq-7} it follows that $H^1(\mathcal{O}_{Y^{\prime}})=0 $ and $\chi(\mathcal{O}_{Y^{\prime}})=3$. Also from the equation~\ref{eq-pg} it follows that $p_g(X)=2$ and since $h^1(\mathcal{O}_X\leq 1$ it follows that $2\leq \chi(\mathcal{O}_X)\leq 3$.

Suppose that $h^1(\mathcal{O}_X)= 1$ and hence $\chi(\mathcal{O}_X)=2$. Let $F^{\ast}\colon H^1(\mathcal{O}_X) \rightarrow H^1(\mathcal{O}_X)$ be the map on cohomology induced by the Frobenious. 

Suppose that $F^{\ast}$ is not zero. Then $X$ admits an \'etale $\mathbb{Z}/3\mathbb{Z}$ cover. But if such a cover existed, then from Lemma~\ref{b1} it would follow that
\[
3\leq \frac{6}{2\chi(\mathcal{O}_X)-K_X^2}=\frac{6}{4-1}=2,
\]
which is clearly impossible. 

Suppose that $F^{\ast}=0$. Then there exists an $\alpha_3$-torsor $\nu \colon Z\rightarrow X$ over $X$. Then $K_Z=\nu^{\ast}K_X$ and $\chi(\mathcal{O}_Z)=3\chi(\mathcal{O}_X)=6$. Hence $K_Z^2=3K_X^2=3$.  
Then from~\cite[Proposition 2]{Li09} it follows that
\[
3=K_Z^2\geq 2h^0(\omega_Z)-4 =2(\chi(\mathcal{O}_Z)-1+h^1(\mathcal{O}_Z))-4\geq 2\chi(\mathcal{O}_Z) -6=6,
\]
which is impossible.

Therefore $H^1(\mathcal{O}_X)=0$ and hence $\chi(\mathcal{O}_X)=3$. Then Noether's formula gives that $c_2(X)=35$. But also since $\chi(\mathcal{O}_{Y^{\prime}})=3$ it follows that $c_2(Y^{\prime})=36$. But then from equation~\ref{sec-all-p-eq-10} it follows that $g$ has exactly one exceptional curve and therefore the singular locus of $Y$ consists of one singularity of type $1/3(1,1)$. But during the study of the case when $p_g(Y^{\prime})=0$, I showed that this case is impossible. This concludes the study of the $p_g(Y^{\prime})=2$.

\textbf{Case 1.3} Suppose that $\kappa(Y^{\prime})=0$. I will show that this case is impossible.

Indeed. If $\kappa(Y^{\prime})=0$ and $Y^{\prime}$ was minimal at the same time, then $K_{Y^{\prime}}\equiv 0$. Hence any $g$-exceptional curve is a curve with self intersection $-2$. Therefore $Y$ has canonical singularities and $g$ is crepant, i.e., $K_{Y^{\prime}}=g^{\ast}K_Y$. hence $K_X^2=pK_Y^2=pK_{Y^{\prime}}^2=0$, which is impossible since $K_X$ is ample.

\textbf{Case 1.4} Suppose that $\kappa(Y^{\prime})=-\infty$. In this case I will show that
\begin{enumerate}
\item If $K_X^2=1$, then $X$ is unirational and $\pi_1^{et}(X)=\{1\}$.
\item If $K_X^2=2$ then $X$ is uniruled. If in addition $\chi(\mathcal{O}_X)\geq 2$, then $X$ is unirational and $\pi_1^{et}(X)=\{1\}$.
\end{enumerate}

Since $\kappa(Y^{\prime})=-\infty$, it follows that $Y^{\prime}$ is ruled. Hence arguing as in the proof of Proposition~\ref{sec-all-p-prop-1} we get that $X$ is uniruled. In particular, if $K_X^2=1$ (or $K_X^2=2$ and  $\chi(\mathcal{O}_X)\geq 2$) then from Lemma~\ref{b1} it follows that $b_1(X)=0$ and hence from diagram~\ref{sec-all-p-diagram-1} it follows that $b_1(Y^{\prime})=0$ and hence $Y^{\prime}$ is rational. Therefore $X$ is unirational. Moreover, from diagram~\ref{sec-all-p-diagram-1} and the fact that the algebraic fundamental group is invariant under \'etale equivalence and birational maps, we get that
\[
\pi_1^{et}(X)=\pi_1^{et}(X^{\prime})=\pi^{et}_1(Y^{\prime})=\{1\}.
\]

\textbf{Case 2.} $Y^{\prime}$ is not a minimal surface. Then the map $\phi \colon Y^{\prime} \rightarrow Z$ is a composition of $m\geq 1$ blow ups. From the equations~\ref{sec-all-p-eq-2} it follows that
\[
\phi^{\ast}K_Z+B+\frac{1}{p}F=g^{\ast}K_Y.
\]
Then,
\begin{gather}\label{sec-all-p-eq-14}
K_Y=g_{\ast}\phi^{\ast}K_Z+g_{\ast}B.
\end{gather}
Since $Y^{\prime}$ is the minimal resolution of $Y$, $g$ does not contract any curves with self intersection $-1$. But since $\phi$ is a composition of blow ups, $B$ has irreducible components of self intersection $-1$. Then write 
$B=B^{\prime}+B^{\prime\prime}$, where $B^{\prime}=\sum_ib^{\prime}_iB_i$ such that $B_i^2=-1$ and $B^{\prime\prime}=\sum_j b^{\prime\prime}_j B_j$ such  that $B_j^2\leq -2$. Then since $\phi$ is the composition of $m$ blow ups it is not hard to see that $\sum _i b_i^{\prime}\geq m$. Hence 
\[
A=g_{\ast}B=n_1A_1+\cdots n_sA_s,
\]
is an effective divisor such that $\sum_{i=1}^sn_i\geq m$.

Consider now cases with respect to the Kodaira dimension $\kappa(Y^{\prime})$ of $Y^{\prime}$. 

\textbf{Case 2.1.}  Suppose that $1 \leq \kappa(Y^{\prime})\leq 2$. I will show that this case is impossible if $K_X^2=1$ and if $K_X^2=2$ then $p=3$ or $5$.

In this case it follows from the classification of surfaces~\cite{BM76},~\cite{BM77} that $nK_Z\sim W$, where $W$ is a nontrivial effective divisor whose birational transform in $Y^{\prime}$ is not contracted by $g$. Hence 
\[
K_X\cdot \pi^{\ast}g_{\ast}\phi^{\ast}K_Z=\frac{1}{n}K_X\cdot \pi^{\ast}g_{\ast}\phi^{\ast}W>0.
\]
Then since $K_X=\pi^{\ast}K_Y$ and  $K_X$ is ample, it follows from equation~\ref{sec-all-p-eq-14}  that
\[
K_X^2=K_X\cdot \pi^{\ast}K_Y=K_X \cdot \pi^{\ast}( g_{\ast}\phi^{\ast}K_Z) +K_X \cdot \pi^{\ast}A \geq 1+m.
\]
Hence since $m \geq 1$ it follows that $K_X^2 \geq 2$. Hence if $K_X^2=1$, then it is not possible that $1\leq \kappa(Y^{\prime})\leq 2$. If $K_X^2=2$ then $m=1$, which means that $\phi$ is a single blow up. By following  a similar argument as this that will be used in the case when $K_X^2=1$ and $\kappa(Y^{\prime})=0$ it follows that if $K_X^2=2$ and $\kappa(Y^{\prime})\geq 1$, then $p=3,5$. The calculations are similar to the case when $K_X^2=1$ and the details are left to the reader.

\textbf{Case 2.1.}  Suppose that $\kappa(Y^{\prime})=0.$ In this case I will show that  if $K_X^2=1$ then $p=7$ and if $K_X^2=2$ then $p=3$. 

As usual I will do the case when $K_X^2=1$ in detail and leave the other to the reader. The method is identical but there is a fairly larger amount of calculations involved. 

Since $\kappa(Y^{\prime})=0$, it follows that $K_Z\equiv 0$. Therefore from the equation~\ref{sec-all-p-eq-14} it follows that $K_Y\equiv g_{\ast}B=A$. Then 
\[
K_X^2=K_X\cdot \pi^{\ast}K_Y=K_X\cdot \pi^{\ast}A =n_1K_X\cdot A_1 +\cdots +n_s K_X\cdot A_s \geq \sum_{i=1}^s n_i\geq m,
\]
 where $m$ is the number of blow ups that $\phi$ consists of. Hence if $K_X^2=1$ then $m=1$, and if $K_X^2=2$ then $m=1$ or $2$.

Suppose then that $K_X^2=1$ and $\phi$ is the blow up of a single point. Then $B\cong \mathbb{P}^1$ and $K_{Y^{\prime}}^2=B^2=-1$. From the classification of surfaces~\cite{BM76},~\cite{BM77}, it is known that $c_2(Z)\in\{0,12,24\}$. Moreover, since $K_X^2=1$, then $1\leq \chi(\mathcal{O}_X) \leq 3$~\cite{Li09}. Now from the diagram~\ref{sec-all-p-diagram-1} it follows that
\[
c_2(Y^{\prime})=\chi_{et}(Y^{\prime})=\chi_{et}(Y)+k=\chi_{et}(X)+k=c_2(X)+k,
\]
where $k$ is the number of $g$-exceptional curves. Since $\phi$ is a single blow up, $c_2(Y^{\prime})=c_2(Z)+1$ and therefore  
\begin{gather}\label{sec-all-p-eq-15}
c_2(Z)=c_2(X)+k-1.
\end{gather}
Consider now cases with respect to $\chi(\mathcal{O}_X)$. 

\textbf{Case 2.1.1.} Suppose that $\chi(\mathcal{O}_X)=3$. Then $c_2(X)=35$ and hence $c_2(Z)=34+k$. But since $c_2(Z)\leq 24$, this is impossible.

\textbf{Case 2.1.2.} Suppose that $\chi(\mathcal{O}_X)=2$. Then $c_2(X)=23$ and hence from~\ref{sec-all-p-eq-15} it follows that $c_2(Z)=22+k$. Hence $Z$ is a K3 surface and $k=2$, i.e., $g$ has exactly two exceptional curves. Hence $F=a_1F_1+a_2F_2$, $a_i\geq 0$. Hence 
\begin{gather}\label{sec-all-p-eq-16}
K_{Y^{\prime}}+\frac{1}{p}(a_1F_1+a_2F_2)=g^{\ast}K_Y,
\end{gather}
and therefore
\begin{gather}\label{sec-all-p-eq-17}
K_X^2=pK_Y^2=-p+a_1(K_{Y^{\prime}}\cdot F_1)+a_2K_{Y^{\prime}}\cdot F_2.
\end{gather}
Suppose that $F_i^2=-d_i$, $d_i\geq 2$, $i=1,2$. Intersecting the equation~\ref{sec-all-p-eq-16} with $F_1$ and $F_2$ we get that
\begin{gather}\label{sec-all-p-eq-18}
a_1=p\left(1-\frac{2}{d_1}\right)+\frac{a_2}{d_1}(F_1\cdot F_2)\\
a_2=p\left(1-\frac{2}{d_2}\right)+\frac{a_1}{d_2}(F_1\cdot F_2 )\nonumber
\end{gather}

Consider next cases with respect to the values of $d_1$ and $d_2$.

Suppose that $d_i \geq 4$, for $i=1,2$. Then from the equations~\ref{sec-all-p-eq-18} it follows that $a_i \geq p/2$, $i=1,2$.  Taking into consideration also that $K_{Y^{\prime}}\cdot F_i =d_i-2\geq 2$, it follows from the equation~\ref{sec-all-p-eq-17} that $K_X^2\geq p$.

Suppose that at least one of the $d_i$ is less than $4$, say  $d_2\leq 3$.  

Suppose first that $d_2=2$. Then from the equation~\ref{sec-all-p-eq-17} it follows that 
\begin{gather}\label{sec-all-p-eq-19}
K_X^2=-p+a_1(d_1-2).
\end{gather}
 I will show  that $F_1\cdot F_2\not=0$. Indeed, if $F_1\cdot F_2 =0$, then $Y$ would have a singularity of type $A_1$. But by Proposition~\ref{prop3} the local class groups of the singularities of $Y$ are $p$-torsion.  Then since  the local class group of an $A_1$ singularity is 2-torsion  $p$ must be $2$. However we are assuming that 
$p\not=2$. Hence $F_1 \cdot F_2 \not=0$. Then from the equations~\ref{sec-all-p-eq-18} it follows that $a_2=\frac{1}{2}a_1(F_1\cdot F_2)\geq \frac{1}{2}a_1$. Then from the equation~\ref{sec-all-p-eq-18} it follows that 
\begin{gather}\label{sec-all-p-eq-20}
a_1\geq p \frac{2(d_1-2)}{2d_1-1}.
\end{gather}
Then from the equation~\ref{sec-all-p-eq-17} it follows that
\begin{gather}\label{sec-all-p-eq-21}
K_X^2\geq -p+2p \frac{(d_1-2)^2}{2d_1-1}.
\end{gather}
It is not difficult to see that $(d_1-2)^2/(2d_1-1)\geq 1$, if $d_1\geq 5$ and therefore in this case $K_X^2 \geq p$. It remains to examine the cases $d_1\leq 4$. 

Suppose that $d_1=2$. Then since $d_2=2$, $Y$ has a canonical singularity of type $A_2$. But then $g$ would be  crepant, i.e., $K_{Y^{\prime}}=g^{\ast}K_Y$ and hence $a_1=a_2=0$. Then from the equation~\ref{sec-all-p-eq-17} it follows that $K_X^2=-p<0$, which is impossible since $K_X$ is ample.

Suppose that $d_1=3$ and $d_2=2$. In this case $Y$ has a singularity of type $1/5(1,2)$ and hence $p=5$. Moreover, a  straightforward calculation shows that $a_1=2$ and $a_1=1$. Then since $K_{Y^{\prime}}\cdot F_1=1$ and $K_{Y^{\prime}}\cdot F_2=0$, it follows from the equation~\ref{sec-all-p-eq-17} that $K_X^2=-5+2=-3<0$, which is impossible since $K_X$ is ample.

Suppose that $d_1=4$ and $d_2=2$. Then $Y$ has a singularity of type $1/7(1,2)$ and therefore $p=7$. A straightforward calculation shows that $a_1=4$ and $a_2=2$. Then equations~\ref{sec-all-p-eq-17} shows that $K_X^2=1$. Unfortunately I am unable to show that this case is impossible. This is the reason of the assumption $p\not=7$ in~\ref{main-theorem}.

The last cases that need to be considered are when $d_i\geq 3$, $i=1,2$,  and at least one of the $d_i$ is 3. There are the following possibilities. 
\begin{enumerate}
\item $F_1^2=F_2^2=-3$ and $F_1\cdot F_2=1$.
\item $F_1^2=F_2^2=-3$ and $F_1\cdot F_2=0$.
\item $F_1^2=-3$, $F_2^2=-4$ and $F_1\cdot F_2=1$.
\item $F_1^2=-3$, $F_2^2=-4$ and $F_1\cdot F_2=0$.
\end{enumerate}  
All cases are impossible. Indeed. 

A simple calculation shows that in the first case  $Y$ has a singularity of index 8. This is impossible because by Proposition~\ref{prop3}, $Y$ has singularities of prime index. 

In the second case, $Y$ has two singularities of type $1/3(1,1)$ each. Then a straightforward calculation shows that $a_1=a_2=1$ and $p=3$. Then from the equation~\ref{sec-all-p-eq-17} it follows that $K_X^2=-3+1+1=-1<0$, which is impossible. 

In the third case $Y$ has a singularity of type $1/11(1,4)$. Hence $p=11$ and a simple calculation shows that $a_1=6$ and $a_2=7$. Then from the equation~\ref{sec-all-p-eq-17} it follows that $K_X^2=-11+6+7\cdot 2=9 >1$. 

Finally in the fourth case, $F_2$  contracts to a singularity of index 2. But this is again impossible because $p\not= 2$.

\textbf{Case 2.1.3.} Suppose that $\chi(\mathcal{O}_X)=1$.  Then from Noether's formula we get that $c_2(X)=11$. Also from the equation~\ref{sec-all-p-eq-15} it follows that $c_2(Z)=10+k$, where $k$ is the number of $g$-exceptional curves. Hence either $k=2$ and $Z$ is an Enriques surface or $k=14$ and $Z$ is a $K3$ surface. 

Suppose that $k=2$ and $Z$ is an Enriques surface. This situation is similar to the one of case 2.1.2 above with the only difference that $Z$ is now an Enriques surface instead of a K3 surface. However, the only property of a K3 surface that was used in the argument that was used in the case 2.1.2 is that $K_Z\equiv 0$. Hence it applies in this case too. Therefore this case is impossible unless possibly for $p=7$. 

Suppose that $k=14$ and $Z$ is a K3 surface. In this case I will show that $p\in\{3,5\}$ and $X$ is a simply connected supersingular Godeaux surface.

Since $Z$ is a $K3$ surface, it follows that $\pi_1^{et}(Z)=\{1\}$~\cite{BM76},~\cite{BM77}. Then since the algebraic fundamental group is invariant under \'etale and birational equivalence it follows from the 
diagram~\ref{sec-all-p-diagram-1} that $\pi_1^{et}(X)=\{1\}$. Moreover, from equation~\ref{eq-pg} it follows that $p_g(X)=1$. Hence since we are assuming that $\chi(\mathcal{O}_X)=1$, it follows that $H^1(\mathcal{O}_X)=k$ and therefore $X$ is either a singular or a supersingular Godeaux surface and therefore $p\leq 5$~\cite{Li09}. Suppose that $X$ was a singular Godeaux surface. Then $X$ admits a nontrivial  \'etale $p$-cover. But since $\pi_1^{et}(X)=\{1\}$, there is no such cover. Hence $X$ is a simply connected supersingular Godeaux surface. This concludes the proof of Theorem~\ref{sec-all-p-prop-2}.

\end{proof}

The last result of this section gives restrictions on the geometric genus of a smooth canonically polarized surface that has nontrivial global vector fields.

\begin{theorem}\label{sec-all-p-prop-3}
Let $X$ be a smooth canonically polarized surface defined over an algebraically closed field of characteristic $p\not\in\{2,3,5,7\}$ such  that $K_X^2=1$. Suppose that $X$ admits a nontrivial global vector field $D$. Then $p_g(X)\leq 1$.
\end{theorem}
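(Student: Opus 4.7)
The plan is to assume $p_g(X)=2$ and derive a contradiction. By~\cite[Corollary 1.8]{Ek87} applied to a surface of general type with $K_X^2=1$ one has $p_g(X)\le 2$ and $h^1(\mathcal{O}_X)\le 1$, so $\chi(\mathcal{O}_X)=1+p_g(X)-h^1(\mathcal{O}_X)\in\{2,3\}$. I adopt the notation of Section~\ref{preparation} and diagram~\ref{sec-all-p-diagram-1}, so that $X$ admits either a nontrivial $\alpha_p$- or $\mu_p$-action with quotient $\pi\colon X\to Y$ and minimal resolution $g\colon Y^\prime\to Y$.

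The case $\chi(\mathcal{O}_X)=2$ is handled exactly as in Case~1.2.3 of the proof of Theorem~\ref{sec-all-p-prop-2}. Since $h^1(\mathcal{O}_X)=1$, Lemma~\ref{b1} gives $|\pi_1^{et}(X)|\le 6/(2\chi(\mathcal{O}_X)-K_X^2)=2$, strictly less than $p$ under the hypothesis $p\ge 11$. Looking at the Frobenius $F^\ast\colon H^1(\mathcal{O}_X)\to H^1(\mathcal{O}_X)$: if $F^\ast\ne 0$ then $X$ admits a nontrivial étale $\mathbb{Z}/p\mathbb{Z}$-cover, contradicting the bound on $\pi_1^{et}(X)$; if $F^\ast=0$ then $X$ admits an $\alpha_p$-torsor $\nu\colon Z\to X$ with $K_Z=\nu^\ast K_X$, hence $K_Z^2=p$ and $\chi(\mathcal{O}_Z)=2p$, and Noether's inequality~\cite[Proposition 2]{Li09} forces $p\le 2$, a contradiction.

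The case $\chi(\mathcal{O}_X)=3$, with $h^1(\mathcal{O}_X)=0$, $c_2(X)=35$ and $\pi_1^{et}(X)=\{1\}$ (by Lemma~\ref{b1}), is the harder one. The case analyses already carried out in the proofs of Theorems~\ref{sec-all-p-prop-1} and~\ref{sec-all-p-prop-2} rule out every possibility with $\kappa(Y^\prime)\ge 0$ under $p\not\in\{2,3,5,7\}$ and $K_X^2=1$; combined with $\pi_1^{et}(X)=\{1\}$ this forces $Y^\prime$ to be rational, so $p_g(Y^\prime)=0$. When $D$ is of additive type, the Lie derivative $\mathcal{L}_D$ acts on $H^0(X,\omega_X)$ and satisfies $\mathcal{L}_D^p=\mathcal{L}_{D^p}=0$, so $\mathcal{L}_D$ is nilpotent on the two-dimensional space $H^0(X,\omega_X)$; the nonzero invariant sections descend to give elements of $H^0(Y,\omega_Y)$, and together with the control of the discrepancy coming from the singularities of $Y$ (Propositions~\ref{prop3} and~\ref{prop4}) this contradicts $p_g(Y^\prime)=0$.

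When $D$ is of multiplicative type the Lie derivative argument fails, because $\mathcal{L}_D$ is semisimple with eigenvalues in $\mathbb{F}_p$ and may have zero kernel on $H^0(X,\omega_X)$. In this case I would combine the decomposition $\pi_\ast\mathcal{O}_X=\bigoplus_{k=0}^{p-1}L_k$ of Proposition~\ref{structure-of-pi}, which yields $\chi(\mathcal{O}_X)=\sum_k\chi(L_k)$ and $h^0(\omega_X)=\sum_k h^0(\omega_Y\otimes L_k)$, with the enumeration of the toric $\frac{1}{p}(1,m)$ singularities of $Y$ provided by Propositions~\ref{class-of-mult-quot} and~\ref{size-of-sing}, and with the numerical identities $c_2(Y^\prime)=35+k$ and $K_{Y^\prime}^2=-23-k$ coming from $Y^\prime$ rational and the purely inseparable equivalence of étale sites. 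A detailed analysis of the configuration of $g$-exceptional curves on $Y^\prime$, analogous to but more involved than Case~2 of the proof of Theorem~\ref{sec-all-p-prop-2}, should then yield the required contradiction. This multiplicative subcase is the main obstacle of the proof.
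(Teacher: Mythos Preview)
Your treatment of $\chi(\mathcal{O}_X)=2$ is correct and rather clean; in fact that argument does not even use the vector field $D$, so it shows unconditionally that $K_X^2=1$, $p_g(X)=2$, $h^1(\mathcal{O}_X)=1$ cannot occur for $p\ge 3$. The substantive case is therefore $\chi(\mathcal{O}_X)=3$, and here your proposal has a genuine gap.

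For the additive type you argue that a $\mathcal{L}_D$-invariant section of $\omega_X$ descends to $H^0(Y,\omega_Y)$ and that this contradicts $p_g(Y^\prime)=0$. Both steps fail as stated. First, one knows from Theorem~\ref{sec-all-p-prop-1} (or directly from the filtration argument you allude to) that in this situation $\Delta\neq 0$; then $\omega_X\cong\pi^\ast\omega_Y\otimes\mathcal{O}_X((p-1)\Delta)$, so an $\mathcal{L}_D$-invariant $2$-form need not be the pullback of a section of $\omega_Y$. Second, even if it were, $\alpha_p$-quotients can have non-rational singularities, so $p_g(Y)=p_g(Y^\prime)$ is not automatic. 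For the multiplicative type you explicitly leave the argument unfinished; the numerical constraints you list do not by themselves pin down the configuration of exceptional curves, and nothing analogous to Case~2 of Theorem~\ref{sec-all-p-prop-2} is carried out.

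The paper's proof takes a completely different route and does not split on $\chi(\mathcal{O}_X)$. It exploits the concrete geometry of the pencil $|K_X|$: since $K_X^2=1$ this system has a unique base point $P$, every member is integral and smooth at $P$, and after blowing up one obtains a genus-$2$ fibration over $\mathbb{P}^1$. The argument then proceeds by analyzing whether $P$ is a fixed point of $D$ and whether general members of $|K_X|$ are integral curves for $D$; in each branch one either produces a nontrivial vector field on a smooth curve of genus $\ge 2$ (impossible), or a fibration with singular generic fibre whose genus drop, by~\cite{Sch09}, forces $p\in\{2,3,5,7\}$. A second auxiliary linear system $|K_Y+C|$ on $Y$ and its pullback $|\pi^\ast Z_Y|$ are brought in to finish the remaining subcases. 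None of this machinery appears in your sketch, and the descent/Lie-derivative idea is not what makes the proof go through.
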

\begin{remark}
According to~\cite[Corollary 1.8]{Ek87},  if $X$ is any canonically polarized surface with $K_X^2=1$, then $p_g(X)\leq 2$ and $1\leq \chi(\mathcal{O}_X)\leq 3$. The previous theorem says that canonically polarized surfaces with $p_g(X)=2$ or $\chi(\mathcal{O}_X)=3$ do not have non trivial global vector fields if $p >7$.
\end{remark}

\begin{proof}[Proof of Theorem~\ref{sec-all-p-prop-3}]
Suppose that $X$ has a nontrivial global vector field $D$. According to Proposition~\ref{prop1} we may assume that $D$ is either or additive or multiplicative type. Note that from the proof of Propositions~\ref{sec-all-p-prop-1},~\ref{sec-all-p-prop-2} it follows that if $K_X^2=1$ and $X$ has nontrivial global vector fields, then $\kappa(Y^{\prime})=-\infty$. Moreover, ~\cite[Corollary 1.8]{Ek87} it follows that if $K_X^2=1$ then $p_g(X)\leq 2$. Therefore in order to prove the proposition it suffices to show that the case $p_g(X)=2$ is impossible. Suppose then that $p_g(X)=2$. Then the linear system $|K_X|$ is 2-dimensional. I will show that the linear system $|K_X|$ has a unique base point $P\in X$ and that every member $C\in |K_X|$ is an integral curve which is smooth at $P$.

Let $C$ be a member of $|K_X|$. Then $K_X\cdot C=K_X^2=1$. Therefore, since $K_X$ is ample, it follows that $C$ is irreducible and reduced. I will next show that $|K_X|$ has exactly one base point $P$.  Since $\dim|K_X|=2$ and $K_X^2=1$, it follows that $|K_X|$ has base points. Let $P_1,\ldots, P_n$ be its base points and let $C_1, C_2 $ be two different members of $|K_X|$. Since $C_1, C_2$ are integral curves, it follows that  $C_1\cdot C_2 \geq n$. However since $C_1\cdot C_2=K_X^2=1$ it follows that $n=1$ and therefore $|K_X|$ has a unique base point $P$. 

Next I will show that every member of $|K_X|$ is smooth at $P$. Let $f \colon W \rightarrow X$ be the blow up of $X$ at $P$ and  let $E$ be the $f$-exceptional curve. Then for any two different members $C_1$ and $C_2$ of $|K_X|$, 
\begin{gather*}
f^{\ast}C_1=C^{\prime}_1+m_1E\\
f^{\ast}C_2=C^{\prime}_2+m_2E
\end{gather*}
Hence
\[
1=f^{\ast}C_1\cdot f^{\ast}C_2=C_1^{\prime}\cdot f^{\ast}C_2+m_1E\cdot f^{\ast}C_2=C_1^{\prime}\cdot C_2^{\prime}+m_1m_2.
\]
Hence $C_1^{\prime}\cdot C_2^{\prime}=0$ and $m_1=m_2=1$. Therefore $C_1$ and $C_2$ are smooth at $P$ as claimed.

Consider now cases with respect to whether $P$ is an isolated fixed point of $D$ or not.

\textbf{Case 1:} Suppose that $P$ is an isolated fixed point of $D$. Let then $f \colon X^{\prime} \rightarrow X$ be the blow up of $P\in X$ and let $E$ be the $f$-exceptional curve. Let $C\in |K_X|$ be any member and $C^{\prime}$ be its birational transform in $X^{\prime}$. Then $C^{\prime}\in |K_{X^{\prime}}-2E|$ and $(C^{\prime})^2=0$. Then the linear system $|K_{X^{\prime}}-2E|$ defines a fibration $h\colon X^{\prime} \rightarrow \mathbb{P}^1$ whose fibers are the birational transforms of the fibers of $|K_X|$. 

\textbf{Case 1.1.} Suppose that every member of $|K_X|$ is singular, or equivalently every fiber of $h$ is singular. The general fiber of $h$ is a normal integral curve $C_{k(t)}$ defined over the function field $k(t)$ of 
$\mathbb{P}^1$. Since it is not smooth, there exists a purely inseparable extension $k(t)\subset K$ of $k(t)$ such that $C_K=C_{k(t)}\otimes_{k(t)}K$ is not normal. Let $\tilde{C}_K$ be its normalization. Then by~\cite{Sch09}, the difference $p_a(C_{k(t)})-p_a(\tilde{C}_K)$ is divisible by $(p-1)/2$. But $p_a(C_{k(t)})=2$ since for any $C\in |K_X|$, $p_a(C)=1/2(K_X\cdot C+C^2)+1=2$. Moreover, $p_a(\tilde{C}_K)\leq 1$. Therefore $p \in\{2,3,5\}$.  

\textbf{Case 1.2.} Suppose that the general member of $|K_X|$ is smooth. Consider now two cases with respect to whether the general member of $|K_X|$ is an integral curve for $D$ or not.

\textbf{Case 1.2.1.} Suppose that the general member of $|K_X|$ is an integral curve for $D$. Then for general $C\in |K_X|$, $C=\pi^{\ast}C_Y$, where $C_Y=\pi(C)$. Then $D$ restricts to a vector field $D_C$ on $C$. However, since the general $C$ is a smooth curve of genus 2, and smooth curves of genus $\geq 2$ do not have any non trivial global vector fields,  it follows that $D_C=0$. Hence $D(\mathcal{O}_X) \subset I_C$, where $I_C$ is the ideal sheaf of $C$. Hence $C$ is contained in the divisorial part of $D$. Since there are infinitely such $C\in |K_X|$, this is impossible.

\textbf{Case 1.2.2.} Suppose that  the general member of $|K_X|$ is not an integral curve for $D$. Then $\pi_{\ast}C\cong C$. I will show first that in fact every member of $|K_X|$ is not an integral curve of $D$. Suppose that there exists a curve $C_0\in |K_X|$ which is an integral curve of $D$. Then $\pi_{\ast}C_0=p\tilde{C}_0$. where $\tilde{C}_0=\pi(C_0)$. Let $C\in |K_X|$ be a general member. Then $\pi_{\ast}C\cong C$, and hence $\pi_{\ast}C$ is a smooth integral curve in $Y$. But also $\pi_{\ast}C \sim \pi_{\ast}(C_0)=p\tilde{C}_0$. Since the local class groups of $Y$ are $p$-torsion it follows that $\pi_{\ast}C$ is a Cartier divisor. Hence since $C$ is smooth and $Q=\pi(P)\in\pi_{\ast}C$, it follows that $Q\in Y$ is a smooth point. But this is impossible since by assumption $P\in X$   is an isolated fixed point of $D$ and therefore by Proposition~\ref{prop3}, $Q=\pi(P)$ is a singular point of $Y$. Therefore $\pi_{\ast}C\cong C$, for all $C \in |K_X|$.

Let $f_1 \colon X_1 \rightarrow X$ be the blow up of $X$ at $P$ and let $E$ be the $f_1$-exceptional curve. Since $P$ is a fixed point of $D$, $D$ lifts to a vector field $D_1$ on $X_1$. Let $\nu \colon X_1 \rightarrow Y_1$ be the quotient of $X_1$ by the $\alpha_p$ or $\mu_p$ action on $X_1$ induced by $D_1$. Let $F=\nu(E)$. then there exists a commutative diagram
\begin{gather}\label{sec-all-p-diagram-2}
\xymatrix{
X_1 \ar[r]^{f_1}\ar[d]_{\nu} & X \ar[d]^{\pi} \\
Y_1 \ar[r]^{g_1} & Y
}
\end{gather}
where $g_1$ is birational and $F$ is the $g_1$-exceptional curve.  

\textit{Claim:} $Y_1$ is smooth along $F$. 

Indeed. For any  $C \in |K_X|$, let $C^{\prime}$ denote its birational transform in $X_1$, $\hat{C}=\nu(C^{\prime})$ and $\tilde{C}=\pi(C)$.  Then since $P\in C$ is a smooth point and $C$ is not an integral curve of $D$, it follows that  $\hat{C}=\nu_{\ast}C^{\prime}\cong C^{\prime}$. Let $C_{i_1},\ldots C_{i_p}$ be $p$ different members of $|K_X|$. Then $\hat{C}_{i_1}+\cdots+\hat{C}_{i_p} \sim p\hat{C}_{i_1}$, and therefore since the local class groups of $Y_1$ are $p$-torsion, it follows that $\hat{C}_{i_1}+\cdots+\hat{C}_{i_p} $ is a Cartier divisor in $Y_1$. Since also $C^{\prime}_{i}\cdot C^{\prime}_j=0$, for any two different curves $C_i,C_j \in |K_X|$, it follows that $\hat{C}_i \cdot \hat{C}_j =0$ as well. Hence every curve $\hat{C}$ is Cartier in $Y$. Considering now that all $C\in |K_X|$ are smooth at $P$, it follows that $C^{\prime}$ is smooth at $E\cap C^{\prime}$ and hence $\hat{C}$ is smooth at $\hat{C} \cap F$. Since $\hat{C}$ cover $F$, $Y_1$ is smooth along $F$ as claimed.

Next I will show that $\nu^{\ast}F=pE$, or equivalently that $E$ is not an integral curve of $D_1$. Suppose otherwise that $E$ is an integral curve of $D_1$. Then $\nu^{\ast}F=E$. Then $E^2=pF^2$ and hence $F^2=-1/p \not\in\mathbb{Z}$. However this is impossible since $Y_1$ is smooth along $F$ and hence $F$ is a Cartier divisor in $Y_1$. Hence $E$ is not an integral curve of $D$ and therefore $\nu^{\ast}F=pE$. Hence $F^2=-p$. Now let $m>0$ such that $g_1^{\ast}\tilde{C}=\hat{C}+mF$.  Then from the diagram~\ref{sec-all-p-diagram-2} it follows that $f_1^{\ast}\pi^{\ast}\tilde{C}=\nu^{\ast}g_1^{\ast}\tilde{C}$. Then since $\pi^{\ast}\tilde{C}=pC$, $f_1^{\ast}C=C^{\prime}+E$, 
$\nu^{\ast}F=pE$ and $\nu^{\ast}\hat{C}=pC^{\prime}$ it follows that
\[
pC^{\prime}+pmE=pC^{\prime}+pE.
\]
Therefore $m=1$ and hence $g_1^{\ast}\tilde{C}=\hat{C}+F$. But since $F^2=-p$, this implies that $\hat{C} \cdot F=p$. But this is impossible because since $C$ is smooth at $P$ and $\tilde{C}\cong C$, $\tilde{C}$ is smooth at 
$Q=\pi(P)$ and hence the restriction map $g_1\colon \hat{C}\rightarrow \tilde{C}$ is an isomorphism.

\textbf{Case 2:} Suppose that $P$ is not an isolated fixed point of $D$. Therefore $Q=\pi(P)$ is a smooth point of $Y$. 

\textit{Claim 1:} There can be at most one member of $|K_X|$ that is an integral curve of $D$. 

Indeed. Suppose that there were are least two members of $|K_X|$ that were integral curves of $D$. Let $C_1, C_2$ be two such members. Then $C_i=\pi^{\ast}\tilde{C}_i$, $i=1.2$, where $\tilde{C}_i=\pi(C_i)$. Then 
\[
K_X^2=C_1\cdot C_2=\pi^{\ast}\tilde{C}_1 \cdot \pi^{\ast}\tilde{C}_2 =p\tilde{C}_1\cdot \tilde{C}_2\geq p,
\]
since $Q\in \tilde{C}_1\cap\tilde{C}_2$ and $Q\in Y$ is a smooth point. 

\textit{Claim 2:} $\Delta \not= 0$. 

In order to prove this I will consider two cases with respect to whether $D$ is of additive or multiplicative type.

Suppose that $D$ is of additive type, i.e., $D^p=0$. Then from Proposition~\ref{structure-of-pi} it follows that there exists a filtration of $\mathcal{O}_Y$-modules
\begin{gather}\label{sec-all-p-eq-22}
\mathcal{O}_Y =E_0 \underset{\not=}{\subset} E_1 \underset{\not=}{\subset} \cdots \underset{\not=}{\subset} E_{p-2} \underset{\not=}{\subset} E_{p-1}=\pi_{\ast}\mathcal{O}_X,
\end{gather}
with the following properties:
\begin{enumerate}
\item $E_k$ is a reflexive sheaf of rank $k+1$, for $k=1,\ldots, p-1$.
\item The quotient $E_k/E_{k-1}=L_{k}$ is a rank 1 torsion free sheaf on $Y$, for $k=1,\ldots, p-1$.
\item There are nonzero maps $\sigma_k \colon L_k \rightarrow \mathcal{O}_Y$ given locally by $\sigma_k(a)=D^k(a)$ which identify $L_k$ with ideal sheaves $I_{Z_k}$ of $\mathcal{O}_Y$. Moreover, these maps  are isomorphisms at every point not in the fixed locus of $D$. 
\end{enumerate}
If $\Delta =0$, then $D$ has only isolated singular points. Therefore $L_k\cong I_{Z_k}$, where $Z_k$ are zero-dimensional subschemes of $Y$ whose support is the singular locus of $Y$.

I will show that $p_g(Y)=p_g(X)=2$. This is a contradiction since from the proof of Propositions~\ref{sec-all-p-prop-1},~\ref{sec-all-p-prop-2}, $\kappa(Y^{\prime})=-\infty$ and therefore $p_g(Y)=p_g(Y^{\prime})=0$. 

Let $V$ be the smooth locus of $Y$ and $U=\pi^{-1}(V)$. Then since  $\Delta=0$, it follows that $\mathrm{codim}(X-U,X)\geq 2$ and $D$ has no fixed points in $U$. Then $I_{Z_k}|_V=\mathcal{O}_V$, for all $k=1,\ldots, p-1$, and  
 $\omega_U=\pi^{\ast}\omega_V$.  Moreover, since $\kappa(Y^{\prime})=-\infty$, it follows that $H^0(\omega_Y)=H^0(\omega_{Y^{\prime}})=0$. Then from the exact sequences
\[
0 \rightarrow E_{k-1}|_V \rightarrow E_k|_V  \rightarrow L_k|_V=\mathcal{O}_V \rightarrow 0,
\]
it follows that there are exact sequences
\[
0 \rightarrow (E_{k-1}\otimes \omega_Y)|_V \rightarrow (E_{k}\otimes \omega_Y)|_V \rightarrow \omega_V \rightarrow 0,
\]
for $k=1,\ldots,p-1$. Taking cohomology and using the fact that $H^0(\omega_V)=H^0(\omega_Y)=0$, it follows that 
\[
H^0(\omega_X)=H^0(\omega_U)=H^0(\pi^{\ast}\omega_V)=H^0(\omega_V\otimes \pi_{\ast}\mathcal{O}_U)=H^0(\omega_V)=H^0(\omega_Y),
\]
which is a contradiction since $H^0(\omega_X)=k^2$ and $H^0(\omega_Y)=0$.

Suppose now that $D$ is of multiplicative type, i.e., $D^p=D$. Then from Proposition~\ref{structure-of-pi} it follows that there exists a decomposition
\[
\pi_{\ast}\mathcal{O}_X=\mathcal{O}_Y\oplus L_1\oplus \cdots \oplus L_{p-2}\oplus L_{p-1},
\]
where $L_k$ is a rank 1 reflexive sheaf on $Y$, $k=1,\ldots,p-1$. Then since $\Delta=0$, $\omega_X=\pi^{\ast}\omega_Y$ and hence
\[
\pi_{\ast}\omega_X= \omega_Y \oplus (L_1\otimes \omega_Y)^{\ast\ast} \oplus \cdots \oplus (L_{p-2}\otimes \omega_Y)^{\ast\ast} \oplus( L_{p-1}\otimes \omega_Y)^{\ast\ast}.
\]
Therefore,
\begin{gather*}
H^0(\omega_X)=H^0(\pi^{\ast}\omega_Y)= H^0(\omega_Y\otimes\pi_{\ast}\mathcal{O}_X) =\\
H^0(\omega_Y) \oplus H^0((L_1\otimes \omega_Y)^{\ast\ast}) \oplus \cdots \oplus H^0((L_{p-2}\otimes \omega_Y)^{\ast\ast}) \oplus H^0(( L_{p-1}\otimes \omega_Y)^{\ast\ast}).
\end{gather*}
Then, since $H^0(\omega_X)=k^2$, there exists either one $\lambda \in\{1,2,\ldots,p-1\}$ such that $H^0((L_{\lambda}\otimes \omega_Y)^{\ast\ast})=k^2$ or there exists $1\leq \lambda_1< \lambda_2\leq p-1$, such that 
$H^0((L_{\lambda_1}\otimes \omega_Y)^{\ast\ast})=k$ and $H^0((L_{\lambda_2}\otimes \omega_Y)^{\ast\ast})=k$. Therefore in any case there exist two different $C_1,C_2 \in|K_X|$ such that $C_i=\pi^{\ast}\tilde{C}_i$, $i=1,2$. But then 
\[
K_X^2=C_1\cdot C_2=\pi^{\ast}\tilde{C}_1\cdot \tilde{C}_2=p\tilde{C}_1 \cdot \tilde{C}_2.
\]
Then $P$ must be an isolated singular point of $D$ because otherwise, $Q=\pi(P)$ is a smooth point of $Y$ and since $Q \in \tilde{C}_1 \cap \tilde{C}_2$, it would follow that $K_X^2\geq p$. Hence $P$ is an isolated fixed point of 
$D$. Now note that in the proof of the case when $P$ was an isolated fixed point of $D$ it was not important if $\Delta =0$ or $\Delta \not=0$. Therefore the same arguments can be used in this case to show that $p_g(X)=2$ is impossible if $\Delta =0$ and $D^p=D$. This concludes the proof of Claim 2.

So far it has been shown that $\Delta\not= 0$ and that for all but at most one $C \in |K_X|$, $C$ is not an integral curve of $D$ and hence $\tilde{C} =\pi_{\ast}C\cong C$, $\pi^{\ast}\tilde{C}=pC$. Hence for any such $C$, 
$\tilde{C}^2=pC^2=p$. Moreover, for any two $C_1, C_2 \in|K_X|$ which are not integral curves, $\tilde{C}_1\cdot \tilde{C}_2 =(1/p)\pi^{\ast}\tilde{C_1}\cdot \pi^{\ast}\tilde{C}_2=(1/p)p^2K_X^2=p$.

Since $P$ is not an isolated fixed point of $D$ it follows that $Q=\pi(P)$ is a smooth point of $Y$. It is not difficult to see now that after blowing up $Q$ p-times we get a map $h \colon W \rightarrow Y$, such that the birational transforms $\hat{C}$ of $\tilde{C}$ in $W$ form a 2-dimensional linear system and $\hat{C}^2=0$. Of course since $\tilde{C}$ is smooth at $Q$, $\hat{C}\cong \tilde{C}$.  Then there exists a fibration $\phi \colon W\rightarrow \mathbb{P}^1$ all whose fibers except possibly one are the birational transforms $\hat{C}$. This fiber  corresponds to the possible member of $|K_X|$ that is an integral curve of $D$. Then for any $C\in|K_X|$ which is not an integral curve of $D$, there exists an exact sequence,
\[
0 \rightarrow \phi^{\ast}\mathcal{O}_{\mathbb{P}^1}(-1)\rightarrow \mathcal{O}_W\rightarrow \mathcal{O}_{\hat{C}} \rightarrow 0.
\]
Since $\kappa(Y^{\prime})=-\infty$, it follows that $H^1(\mathcal{O}_W)=H^2(\mathcal{O}_W)=H^0(\omega_W)=0$. Taking now cohomology and using Serre duality in the above exact sequence, we get that 
\[
H^1(\mathcal{O}_{\hat{C}})=H^2(\phi^{\ast}\mathcal{O}_{\mathbb{P}^1}(-1))=H^0(\phi^{\ast}\mathcal{O}_{\mathbb{P}^1}(1)\otimes \omega_W).
\]
But since $\hat{C} \cong {C}$, it follows that $p_a(\hat{C})=2$ and therefore $H^1(\mathcal{O}_{\hat{C}})=k^2$. Hence $H^0(\phi^{\ast}\mathcal{O}_{\mathbb{P}^1}(1)\otimes \mathcal{O}_W)=k^2$. Hence the linear system $|K_W+\hat{C}|$ is 2-dimensional. Let $Z_W \in |K_W+\hat{C}|$ be a member of the linear system and $Z_Y=h_{\ast}Z_W\in|K_Y+\tilde{C}|$. I will show that $h^{\ast}Z_Y=Z_W$. In particular, if $|K_W+\hat{C}|$ does not have any base components, then $Z_W\cong Z_Y$. Indeed, since $h$ is $p$ successive blow ups of the base points of $|\tilde{C}|$, it is not hard to see that 
\begin{gather*}
K_W=h^{\ast}K_Y+F,\\
h^{\ast}\tilde{C}=\hat{C} +F,
\end{gather*}
where $F$ is an $h$-exceptional divisor supported, since $Q\in Y$ is smooth, on the full $h$-exceptional set. Let $F_i$ be any $h$-exceptional curve. Then 
\[
F_i\cdot Z_W=F_i\cdot K_W+F_i\cdot \hat{C}=F_i\cdot F-F_i\cdot F=0.
\]
Hence $Z_W =h^{\ast}Z_Y$. Hence if $|K_W+\hat{C}|$ does not  have any base components then  $Z_Y\cong Z_X$. Hence $K_Y=Z_Y-\tilde{C}$. Then from this and the 
equation~\ref{sec-all-p-eq-1} it follows that
\begin{gather}\label{sec-all-p-eq-23}
(p+1)K_X=\pi^{\ast}Z_Y+(p-1)\Delta.
\end{gather}
Since $K_X^2=1$ it follows from this equation that 
\[
p+1=K_X\cdot \pi^{\ast}Z_Y +(p-1)K_X \cdot \Delta. 
\]
Since $K_X$ is ample and $\Delta \not=0$, it follows then that $\pi^{\ast}Z_Y \cdot K_X=2$ and $K_X \cdot \Delta =1$. 
This says that $\Delta$ is irreducible, $Z_X=\pi^{\ast}Z_Y$ has at most two irreducible components and if $p\not= 2$, then every irreducible component of $Z_X$ is an integral curve of $D$. If there was an irreducible component $Z_1$ which was not an integral curve, then $Z_X=pZ_1+Z_1^{\prime}$, and hence $K_X\cdot Z_X\geq p>2$. 

Suppose that $|\pi^{\ast}Z_Y|$ does not have a base component.  Since $|\pi^{\ast}Z_Y|$ does not have any base components and since $\pi^{\ast}Z_Y\cdot K_X=2$, it follows from the equation~\ref{sec-all-p-eq-23} that 
\[
2p+2=(\pi^{\ast}Z_Y)^2+(p-1)\Delta \cdot \pi^{\ast}Z_Y.
\]
Then $\Delta \cdot \pi^{\ast}Z_Y\leq 2$. If $\Delta \cdot \pi^{\ast}Z_Y =1$, then from the equation~\ref{sec-all-p-eq-23}  and since $K_X \cdot \Delta =1$, it follows that $p+1=1+(p-1)\Delta^2$, which is impossible if $p\not=2$.  Hence 
$\Delta \cdot \pi^{\ast}Z_Y = 2$ and therefore $(\pi^{\ast}Z_Y)^2=4$.

Suppose that the general member $Z_X\in |\pi^{\ast}K_Y|$ is a smooth irreducible  curve.  Then as was mentioned earlier, $Z_X$ is an integral curve of $D$ and hence $D$ restricts to a vector field $D^{\prime}$ in $Z_X$. Since $|\pi^{\ast}Z_Y|$ has infinitely many members, $D^{\prime}$ is not zero for general $Z_X\in |\pi^{\ast}Z_Y|$. But $Z_X$ is a smooth curve of genus $p_a(Z_X)=(1/2)(Z_X^2+K_X\cdot Z_X)+1=4$ and such curves do not have non trivial global vector fields. 

Suppose that every $Z_X\in|\pi^{\ast}K_Y|$ is singular. In this case I will show that $p\in\{2,3,5,7\}$.  

Let $R\in X$ be a base point of $|\pi^{\ast}Z_Y|$. I will show that $R$ is a fixed point of $D$  and therefore $D$ lifts to a vector field $D_1$ on the blow up $X_1$ of $X$ at $R$. Let $Z_1, Z_2$ be two general members of $|\pi^{\ast}Z_Y|$. Then $Z_1 \cdot Z_2 =4$. Then $R \in Z_1\cap Z_2$ and, since $(\pi^{\ast}Z_Y)^2=4$,  $Z_1, Z_2$ intersect at $R$ with multiplicity at most 4.   The property that $R$ is a fixed point of $D$ is local at $R$. So suppose that $X=\mathrm{Spec} A$, where $(A,m)$ is a regular local ring, $Z_1$ is defined by the ideal $I_1$, $Z_2$ by the ideal $I_2$ and $R$ corresponds to the maximal ideal $m$. Then since $Z_1, Z_2$ intersect at $R$ with multiplicity at most 4, it follows that $\mathrm{length}(A/(I_1+I_2))\leq 4$. Let $x \in m$ be any element and let $s$ be the least positive integer such that  $x^s \in I_1+I_2$. Since $\mathrm{length}(A/(I_1+I_2))\leq 4$, it follows that $s \leq 4$. Then since $Z_1, Z_2$ are integral curves of $D$ $D(I_i)\subset I_i$. $i=1,2$. Then 
\[
D(x^s)\in D(I_1+I_2)=D(I_1)+D(I_2)\subset I_1+I_2 \subset m.
\]
But also $D(x^s)=sx^{s-1}Dx$. Hence $sx^{s-1}Dx \in I_1+I_2$. Suppose that $Dx \not\in m$. Then $Dx$ is a unit in $A$. If $p \geq 5$, then $s \not=0$ and hence $x^{s-1}Dx \in I_1+I_2$. Therefore since $Dx$ is a unit it follows that $x^{s-1} \in I_1+I_2$. But this is a contradiction since $s$ was the minimum integer such that $x^s \in I_1+I_2$. Hence the base point $R$ of $|\pi^{\ast}Z_Y|$ is a fixed point of $D$. 

There are now two cases with respect to the base point $R$. Either there are infinitely many members of $|\pi^{\ast}Z_Y|$ which are singular at $R$ or there are infinitely many which are smooth. 

Consider first the case when the general member of $|\pi^{\ast}Z_Y|$ is singular at $R$. In this case I will show that $R$ is the only base point of $|\pi^{\ast}Z_Y|$ and at this point  the general member $Z_X\in|\pi^{\ast}Z_Y|$ has a double point. Let $R\in X$ be a base point of $|\pi^{\ast}Z_Y|$ and let $f_1 \colon X_1 \rightarrow X$ be the blow up of $R$. let $Z_i \in|\pi^{\ast}Z_Y|$, $i=1,2$, be two distinct general members. Then $f_1^{\ast}Z_i=Z_i^{\prime}+m_i E$, where $E$ is the $f_1$-exceptional curve and $Z_i^{\prime}$ the birational transform of $Z_i$ in $X_1$, $i=1,2$. Then intersecting with $Z_2$, it follows that 
\[
4=Z_2^2=Z^{\prime}_1Z^{\prime}_2 +m_1m_2.
\]
Since $m_i\geq 2$, it follows that the only possibility is that $m_1=m_2=2$ and $  Z^{\prime}_1Z^{\prime}_2=0$. Hence $|\pi^{\ast}Z_Y|$ has exactly one base point which is resolved after blowing up $R$. Hence there exists a map  $\sigma \colon X_1 \rightarrow \mathbb{P}^1$ whose general fiber is the birational transform $Z_{X_1}$ of the general member $Z_X\in |\pi^{\ast}Z_Y|$. If this is a fibration, i.e., $\sigma_{\ast}\mathcal{O}_{\bar{X}}=\mathcal{O}_{\mathbb{P}^1}$, then the general fiber of $\sigma$ is a normal integral curve. If it is not a fibration, then let $\tau \colon X_1 \rightarrow B$ be its Stein factorization. Suppose that the map $B \rightarrow \mathbb{P}^1$ is not purely inseparable. Then the general fiber of $\sigma$ is disconnected. If $ B \rightarrow \mathbb{P}^1$ is purely inseparable, then $Z_X$ has a component of multiplicity $p^k$, for some $k>0$. But since $K_X \cdot Z_X=2$ and $p\not=2$ this is impossible. Hence the birational transform $Z_{X_1}$ of the general $Z_X\in |\pi^{\ast}K_Y|$ in $X_1$ is either an integral curve or (since $Z_X$ has at most two irreducible components) is the disjoint union of two integral curves. 

Suppose that the birational transform $Z_{X_1}$ of the general $Z_X\in|\pi^{\ast}Z_Y|$ is an integral curve. Then 
 \[
p_a(Z^{\prime}_X)=\frac{1}{2}\left(K_{X_1}\cdot Z^{\prime}_X+(Z^{\prime}_X\right)^2)+1=\frac{1}{2}(2+2+0)+1=3.
\]
Suppose that $Z_{X_1}$ is smooth. As was proved earlier, $D$ lifts to a vector field $D_1$ on $X_1$. Since $Z_X$ is an integral curve of $D$ it follows that $X_1$ is an integral curve of $D_1$. Therefore $D_1$ restricts to a vector field on $Z_{X_1}$. But smooth curves of genus $g\geq 2$ do not have any nontrivial global vector fields. Therefore the restriction of $D_1$ on $Z_{X_1}$ is zero. Since there are infinitely many such $Z_{X_1}$ it follows that $D_1=0$ and hence $D=0$.

Suppose that $Z_{X_1}$ is singular. The general curve $C=Z_{X_1}$ is a normal integral curve defined over $k(t)$. Since it is not smooth, there exists a purely inseparable extension $k(t)\subset K$ of $k(t)$ such that $C_K=C \otimes_{k(t)}K$ is not normal. Let $\tilde{C}_K$ be its normalization. Then by~\cite{Sch09}, the difference $p_a(C_{k(t)})-p_a(\tilde{C}_K)$ is divisible by $(p-1)/2$. But $p_a(C)=p_a(Z^{\prime}_X)=3$. Therefore $p \in\{2,3,5,7\}$.  

Suppose that the birational transform $Z_{X_1}$ of the general $Z_X\in|\pi^{\ast}Z_Y|$ is the disjoint union of two integral curves. In this case I will show that both components are integral curves of arithmetic genus 2. Then $D_1$ restricts to a vector field on each component. Hence if at least one component is smooth then the restriction of $D_1$ on this component is zero and since there are infinitely such curves we conclude that $D_1=0$ and hence $D=0$. If on the other hand both components are singular then arguing as in the previous case when $Z_{X_1}$ was integral, we conclude that $p\in \{2,3,5\}$.

Since $Z_{X_1}$ is disconnected, it follows that, since $K_X\cdot Z_X=2$, that $Z_X$ has exactly two irreducible components, say $Z_1, Z_2$ and $K_X\cdot Z_i=1$, $i=1,2$. 

Suppose that $R \not\in  Z_1\cap Z_2$. Then in fact $Z_X$ is the disjoint union of $Z_1$ and $Z_2$. I will show that this case is impossible. A consequence of equation~\ref{sec-all-p-eq-23} was that $K_X \cdot Z_X=2$, $Z_X^2=4$ and $Z_X\cdot \Delta =2$. Suppose that $Z_X=Z_1+Z_2$ with $Z_1\cdot Z_2=0$. Then $Z_1\cdot \Delta +Z_2\cdot \Delta =2$. Suppose that $Z_1 \cdot \Delta =0$. Then from the equation~\ref{sec-all-p-eq-23} it follows that $Z_1^2=p+1 $ and hence $Z_2^2=3-p$. But then 
\[
p_a(Z_2)=\frac{1}{2}(K_X\cdot Z_1+Z_1^2)+1=\frac{1}{2}(4-p)+1 \not\in \mathbb{Z},
\]
if $p\not= 2$. Suppose that $Z_1\cdot \Delta =1$. Then from the equation~\ref{sec-all-p-eq-23} it follows that $Z_1^2=2$ and hence
\[
p_a(Z_1)=\frac{1}{2}(K_X\cdot Z_1+Z_1^2)+1=\frac{1}{2}(1+2)+1 \not\in \mathbb{Z}.
\]
Finally, if $Z_1\cdot \Delta =2$ then $Z_2 \cdot \Delta =0$. But this case has been studied first and it was shown to be impossible. Therefore $R\in Z_1 \cap Z_2$ and in fact since the birational transform of $Z_X$ in $X_1$ is disconnected, it follows that $R=Z_1 \cap Z_2$. Moreover, it is easy to see that $Z_1$ and $Z_2$ are smooth at $R$ and $Z_1\cdot Z_2 =1$. Then similar arguments as before show that the only possibility is $Z_1^2=Z_2^2=1$. Since $K_X\cdot Z_i=1$ it follows  that $p_a(Z_i)=2$, $i=1,2$. Now since $R$ is a smooth point of $Z_i$ it follows that $h_{\ast}^{-1}Z_i\cong Z_i$ and therefore every connected component of $Z_{X_1}$ is an integral curve of arithmetic genus 2, as claimed earlier. 

It remains to consider the case when the general member $Z_X\in |\pi^{\ast}Z_Y|$ is smooth at $R$. This case can be studied in exactly the same way as the case when $Z_X$ was singular at $R$. Let $f_1\colon X_1 \rightarrow X$ be the blow up of $R$. Then the birational transforms $Z_{X_1}$ of $Z_X$ in $X_1$ form a two-dimensional linear system in $X_1$, $Z_{X_1}\cong Z_X$ and $D$ lifts to a vector field $D_1$ on $X_1$. The previous argument can be used in this case now in order to reach the same conclusion, i.e., that $p\in\{2,3,5,7\}$.

The final case that needs to be considered is when $|\pi^{\ast}Z_Y|$ has a base component, say $Z_0$. If $Z_0\not= \Delta$, then the conclusion from the equation~\ref{sec-all-p-eq-23} still holds and the whole argument that was used in the case when $Z_X$ was reducible applies in this case too. Suppose that $Z_0=\Delta$. Then $Z_X=Z+Z_0$, where $Z$ is an integral curve. Then the curves $Z$ form a 2-dimensional linear system without base components. Now the equation~\ref{sec-all-p-eq-23} gives that $(p+1)K_X=Z+p\Delta$ and therefore since $K_X\cdot Z=1$, it follows that $Z^2=1$ and $Z\cdot \Delta =1$. Hence $Z$ is an integral curve of arithmetic genus 2. 
In the beginning of this proof it was shown that the linear system $|K_X|$ has exactly one base point and that the base point is a smooth point of ever member $C \in |K_X|$. The proof of this was based only on the fact that for any $C\in |K_X|$, $C^2=K_X\cdot C=1$.  Since $Z^2=K_X\cdot Z=1$, the same argument shows that the linear system $|Z|$ has exactly one base point $R$ and each member of $|Z|$ is smooth at $R$.  After blowing up $R$ the birational transforms $Z_1$ of $Z$ in the blow up $X_1 \rightarrow X$ of $X$ at $R$ form a base point free linear system and hence define a map $X_1\rightarrow \mathbb{P}^1$, which this time, since $Z$ is integral, is a fibration of curves of arithmetic genus 2. Moreover, the same arguments as in the case when $|\pi^{\ast}Z_Y|$ had no base component show that the general $Z$ is an integral curve of $D$ and $D$ lifts to a vector field $D_1$ on $X_1$. Now if the general member of $|Z|$ is smooth then the restriction of $D_1$ to it is zero and hence again $D_1=0$ and hence $D=0$. If the general member of $|Z|$ is singular then $p\in\{2,3,5\}$.

\begin{remark}
The reason that the characteristics $p=2,3,5,7$ have been excluded is that for these characteristics there are fibrations $\phi \colon X \rightarrow \mathbb{P}^1$ such that the general fiber is a singular curve of arithmetic genus at most 3. In particular in characteristic 2 there are quasi-elliptic fibrations. However, the statement of the proposition is true in characteristic 2, something that will be proved in section~\ref{sec-5}. I believe that the method that was used in the case of characteristic 2 can in principle be used for $p=3,5,7$ as well. However, I am having  a few technical problems to extend it for $p=3,5,7$ and anyway the paper is already too long.
\end{remark}

\end{proof}


\section{Vector fields on surfaces in characteristic 2.}

The purpose of this and the following two sections is to study smooth canonically polarized surfaces defined over an algebraically closed field of characteristic 2 with non trivial global vector fields. Even though 2 is the smallest possible nonzero characteristic where many unusual situations appear (like the existence of quasi-elliptic fibrations) and in the view of Theorem~\ref{th1} this is the characteristic that it is most likely that the automorphism scheme of a surface with $K^2=1,2$ is not reduced, there are certain advantages over the higher characteristic case, as exhibited in Proposition~\ref{prop4}. 

The study of this case will be divided in two sub-cases. The case of surfaces with vector fields of multiplicative type and the case of surfaces with vector fields of additive type. This equivalent to the property that $\mu_2$ or $\alpha_2$ is a subgroup scheme of the automorphism scheme of the surface.

The following theorem is slightly stronger than Theorem~\ref{main-theorem} and is the combination of 
Theorems~\ref{mult-type},~\ref{additive-type}. 

\begin{theorem}\label{char-2}
Let $X$ be a smooth canonically polarized surface defined over an algebraically closed field of characteristic 2. Suppose that $\mathrm{Aut}(X)$ is not smooth. Then:
\begin{enumerate}
\item If $K_X^2=2$ then  $X$ is uniruled. If in addition $\chi(\mathcal{O}_X)\geq 2$, then $X$ is unirational and $\pi_1^{et}(X)=\{1\}$.
\item If $K_X^2=1$ then $\pi_1^{et}(X)=\{1\}$, $p_g(X) \leq 1$ and $X$ is unirational. 
\item $X$ is the quotient of a ruled or rational surface (maybe singular) by a rational vector field.\end{enumerate}
Moreover, suppose that $\mu_2$ is a subgroup scheme of $\mathrm{Aut}(X)$ and  $K_X^2 \leq 4$. Then $X$ is uniruled. In particular, if $K_X^2=1$ then $X$ is a simply connected supersingular Godeaux surface.
\end{theorem}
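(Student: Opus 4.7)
The plan is to assume $\mathrm{Aut}(X)$ is not smooth, use Proposition~\ref{prop1} to extract a non-trivial global vector field $D$ of either multiplicative or additive type, and then run the strategy of Section~\ref{preparation} in the diagram~(\ref{sec-all-p-diagram-1}) with the structural simplifications that characteristic~2 provides. I would split the argument according to the type of $D$, since by Proposition~\ref{prop4} and Proposition~\ref{structure-of-pi} the two quotient geometries are quite different, and correspondingly the proof divides into two theorems (\ref{mult-type} and~\ref{additive-type}) matching the two subsequent sections.

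For a vector field $D$ of \emph{multiplicative} type, Proposition~\ref{class-of-mult-quot} together with Proposition~\ref{prop4}.3 forces $Y$ to have only $A_1$ canonical singularities, so $g$ is crepant and $K_{Y'} = g^*K_Y$. Combining the adjunction $K_X = \pi^*K_Y + \Delta$ with the fact (Proposition~\ref{prop4}.5(b)) that $\Delta$ is not an integral curve of $D$, and using the push-forward/pull-back identities for the purely inseparable map $\pi$, yields numerical relations among $K_X^2$, $K_{Y'}^2$, $K_X\cdot\Delta$ and $\Delta^2$; Corollary~\ref{sec3-cor-1} then controls $K_X\cdot\Delta$ in terms of $\chi(\mathcal{O}_X)$ and $\chi(\mathcal{O}_{Y'})$. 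I would then run a case analysis on $\kappa(Y')$, eliminating $\kappa(Y')\geq 0$ whenever $K_X^2\leq 4$ in the spirit of Theorem~\ref{sec-all-p-prop-1} and Theorem~\ref{sec-all-p-prop-2}: if $\kappa(Y')=1$ or $2$, then $K_Z$ is numerically equivalent to an effective divisor with at least one non $g$-contracted component, so $K_X\cdot\pi^*K_Y > 0$ and $K_X^2$ exceeds the allowed bound; the case $\kappa(Y')=0$ is handled analogously via the $\phi$-exceptional divisor $B$. The remaining case $\kappa(Y')=-\infty$ forces $Y'$ to be ruled, hence $X$ is uniruled via the Frobenius factorization $F_X : X\to X^{(2)}$ through $\pi$.

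For a vector field $D$ of \emph{additive} type, the quotient $Y$ need no longer have rational singularities (see the discussion after Proposition~\ref{prop3}), but Proposition~\ref{structure-of-pi}.3 still provides the torsor exact sequence $0\to\mathcal{O}_Y\to\pi_*\mathcal{O}_X\to L\to 0$ together with $\omega_X = (\pi^*(\omega_Y\otimes L^{-1}))^{[1]}$. Pushing $\omega_X$ forward and using that $\kappa(Y')=-\infty$ implies $H^0(\omega_Y)=0$ controls $p_g(X)$ by $h^0((L^{-1}\otimes \omega_Y)^{[1]})$. The $\kappa(Y')$ case analysis proceeds as before, but now care is needed because $\Delta$ may itself be an integral curve of $D$; in this situation the numerical bound on $K_X\cdot\Delta$ is weaker and I would instead exploit the restricted list $A_1, D_{2n}, E_7, E_8$ of allowed canonical singularities from Proposition~\ref{prop4}.3 together with the constraints on the exceptional configurations of $g$ and $\phi$. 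Once $\kappa(Y')=-\infty$ is established, the conclusions $b_1(X)=0$, unirationality and $\pi_1^{et}(X)=\{1\}$ in the cases $K_X^2=1$, and $K_X^2=2$ with $\chi(\mathcal{O}_X)\geq 2$, follow from Lemma~\ref{b1} and the birational invariance of the algebraic fundamental group exactly as in the proof of Theorem~\ref{sec-all-p-prop-1}. The final assertion that $X$ is the purely inseparable quotient of a (possibly singular) ruled or rational surface by a rational vector field is then immediate from the existence of a ruled minimal model $Z$ of $Y'$ combined with the Frobenius factorization.

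The hardest step will be the analysis of $\kappa(Y')=1$ in the additive case, since in characteristic~2 the fibration $\psi: Y'\to B$ can be quasi-elliptic, and the enumeration of possible multiple fibres and their interaction with the $g$-exceptional configuration becomes delicate. This is analogous to, but strictly more involved than, the $\tilde E_6$/$\tilde E_7$ exclusion carried out in Case~1.2.2 of Theorem~\ref{sec-all-p-prop-2}, and is the source of the special place occupied by simply connected supersingular Godeaux surfaces in the statement of Theorem~\ref{char-2}: when $K_X^2=1$ and the numerical contradictions exclude everything else, what survives is precisely a supersingular Godeaux, whose simple-connectedness is then forced by combining Lemma~\ref{b1} with the vanishing of $H^1_{et}(X,\mathbb{Z}/2\mathbb{Z})$ coming from $F^*=0$ on $H^1(\mathcal{O}_X)$. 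The technicality of this last step is the reason the characteristic~2 case is handled in two dedicated sections for multiplicative and additive vector fields.
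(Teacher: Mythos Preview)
Your overall architecture is right — split by the type of $D$ and run the $\kappa(Y')$ case analysis — but you have the geometry of the individual cases backwards in several places, and you miss two key ideas that the paper actually relies on.

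First, in the multiplicative case you claim to \emph{eliminate} $\kappa(Y')\geq 0$ for $K_X^2\leq 4$. This is false: when $\kappa(Y')=0$ and $Y'$ is minimal, the case does occur and is precisely where the supersingular Godeaux arises. The paper shows $Y'$ is a K3 surface carrying $N_{\mathrm{fix}}=K_X^2+12\geq 13$ disjoint $(-2)$-curves (a ``special configuration'' of rank $\geq 13$), and then invokes Shepherd-Barron's theorem to conclude $Y'$ is unirational. You do not mention this input at all, and your ``$B$-divisor'' argument only handles the non-minimal subcase. Conversely, in the multiplicative case with $\kappa(Y')=-\infty$ you say only ``$X$ is uniruled,'' but this is not enough: one must rule out $K_X^2=1$ here and obtain the $\chi$-bounds for $K_X^2\leq 4$. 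The paper's mechanism is a lifting argument: when $b_1(X)=0$ one shows $Y'$ is rational, that the $\mu_2$-torsor data $(Y',M,s)$ lift to $W_2(k)$, hence $X'$ and then $X$ lift to $W_2(k)$, and Corollary~\ref{lifts-to-zero} gives a contradiction. Without this step your multiplicative argument does not close.

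Second, you misidentify the hard part of the additive case. In the paper, $\kappa(Y')=1$ or $2$ is dispatched in one line (``arguing similarly \ldots\ $K_X^2\geq 3$''); there is no delicate quasi-elliptic analysis there, and the Godeaux does \emph{not} arise from $\kappa(Y')=1$. The supersingular Godeaux again comes from $\kappa(Y')=0$ via the same K3/special-configuration route as above. The genuinely hard additive case is $\kappa(Y')=-\infty$ with $K_X^2=1$: one must exclude $p_g(X)=2$, and the paper does this by a long argument showing $|K_X|$ has a unique base point $P$, that $\pi(P)$ is the unique singular point of $Y$ and is of type $A_1$, and then deriving numerical contradictions from the resulting relations between $K_X\cdot\Delta$, $\Delta^2$ and $c_2(X)$. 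Your sketch of the additive case (controlling $p_g$ via the torsor sequence and $H^0(\omega_Y)=0$) is the right starting observation but does not by itself yield $p_g(X)\leq 1$; the substantial work is the $A_1$-claim and the ensuing case analysis on the cycle structure of $\Delta$.
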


\begin{remarks}
\begin{enumerate}
\item According to Corollary~\ref{subgroup-of-aut}, if $\mathrm{Aut}(X)$ is not smooth then it contains either $\mu_2$ or $\alpha_2$. Theorem~\ref{char-2} shows that the condition $\mu_2$ is a subgroup scheme of  $\mathrm{Aut}(X)$ is more restrictive than $\alpha_2$ is a subgroup scheme of  $\mathrm{Aut}(X)$. Equivalently, it is more rare that a surface has vector fields of multiplicative type than of additive type.
\item The proof of the statement of the theorem in the case when $\mu_2$ is a subgroup scheme of $\mathrm{Aut}(X)$ strongly uses the fact that the quotient map $\pi \colon X \rightarrow Y$ is a torsor in codimension 2, something that is not true in general for higher characteristics. This is the reason that I am unable at the time to generalize it in all characteristics.
\end{enumerate}
\end{remarks}
\section{Surfaces with vector fields of multiplicative type in characteristic 2.}\label{sec-4}
The purpose of this section is to study smooth canonically polarized surfaces defined over an algebraically closed field of characteristic 2 which admit nontrivial global vector fields of multiplicative type. As it was shown in section~\ref{sec-2} this is equivalent to the condition that $\mu_2$ is a subgroup scheme of $\mathrm{Aut}(X)$. The main result is the following.

\begin{theorem}\label{mult-type}
Let $X$ be a smooth canonically polarized surface defined over an algebraically closed field $k$ of characteristic 2. Suppose that $X$ has a nontrivial global vector field $D$ of multiplicative type. Then:
\begin{enumerate}
\item If $K_X^2=4$ then $X$ is uniruled and $-2\leq \chi(\mathcal{O}_X)\leq 2$.
\item If $K_X^2=3$ then  $X$ is uniruled and $-1\leq \chi(\mathcal{O}_X)\leq 1$.
\item If $K_X^2=2$ then $X$ is uniruled and $0\leq \chi(\mathcal{O}_X)\leq 1$.
\item If $K_X^2=1$, $X$ is an algebraically simply connected unirational supersingular Godeaux surface. 
\end{enumerate}
Moreover, $X$ is an inseparable quotient of degree 2 of a rational or ruled surface (possible singular) by a rational vector field. 
\end{theorem}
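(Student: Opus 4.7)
The strategy follows Section~\ref{preparation} and parallels the $p\geq 3$ arguments of Theorems~\ref{sec-all-p-prop-1}--\ref{sec-all-p-prop-3}, but exploits the three special features available in characteristic~$2$ for a multiplicative-type vector field, all recorded in Propositions~\ref{prop4} and~\ref{structure-of-pi}. By Propositions~\ref{prop1}--\ref{prop2} we may fix a non-trivial global vector field $D$ of multiplicative type inducing a $\mu_2$-action with quotient $\pi\colon X\to Y$, and form the diagram~\eqref{sec-all-p-diagram-1}. The multiplicative hypothesis gives: (a) $Y$ has only canonical $A_1$ singularities, so $g\colon Y'\to Y$ is crepant with $\chi(\mathcal{O}_{Y'})=\chi(\mathcal{O}_Y)$ and $K_{Y'}^2=K_Y^2$; (b) $f\colon X'\to X$ is a sequence of blow-ups of the isolated zeros of $D$, so $X'$ is smooth and $\chi(\mathcal{O}_{X'})=\chi(\mathcal{O}_X)$; and (c) $\pi'\colon X'\to Y'$ is a $\mu_2$-torsor. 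Corollary~\ref{sec3-cor-1} then supplies the master numerical identity
\[
K_{X'}\cdot\Delta'\;=\;4\bigl(\chi(\mathcal{O}_X)-2\chi(\mathcal{O}_{Y'})\bigr),\qquad K_{X'}\cdot\Delta'\;\leq\; K_X\cdot\Delta,
\]
which, together with the adjunction $K_X=\pi^\ast K_Y+\Delta$ (giving $K_X\cdot\Delta=K_X^2-K_X\cdot\pi^\ast K_Y$), will drive the whole proof.

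My plan is to split on $\kappa(Y')$ and first rule out $\kappa(Y')\geq 0$ under the hypothesis $K_X^2\leq 4$. If $\kappa(Y')\geq 0$ then $K_{Y'}$ is pseudo-effective, hence $K_X\cdot\pi^\ast K_Y\geq 0$ and $K_X\cdot\Delta\leq K_X^2\leq 4$. Running the sub-case analysis for $\kappa(Y')=0,1,2$ in the style of Theorem~\ref{sec-all-p-prop-2}, using the classification of surfaces in positive characteristic~\cite{BM76},\cite{BM77}, the adjunction for relatively minimal (quasi-)elliptic fibrations~\cite[Theorem~7.15]{Ba01}, and the fact that every $g$-exceptional curve is a $(-2)$-curve with $K_{Y'}\cdot F_i=0$, should force $K_X^2\geq 5$ in every case and eliminate $\kappa(Y')\geq 0$. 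I expect the main obstacle to be the quasi-elliptic sub-case $\kappa(Y')=1$, which is exactly the phenomenon responsible for the exclusion of $p=2$ in Section~\ref{p>2}; to close it I would combine Proposition~\ref{size-of-sing} (counting isolated zeros of $D$), the explicit $\mu_2$-torsor structure of $\pi'$, and the non-existence of non-trivial global vector fields on a smooth curve of genus $\geq 2$, so that a general fibre of the induced fibration on $X'$ cannot be an integral curve of $D'$.

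Once $\kappa(Y')=-\infty$ has been forced, $Y'$ is birationally ruled over a smooth curve $C$. Frobenius factorisation through $\pi$, exactly as in the proof of Theorem~\ref{sec-all-p-prop-1}, then yields that $X$ is uniruled. For the two-sided bounds on $\chi(\mathcal{O}_X)$ in parts~(1)--(3), substitute $\chi(\mathcal{O}_{Y'})=1-g(C)$ into the master identity: the upper bound comes from $K_{X'}\cdot\Delta'\leq K_X\cdot\Delta\leq K_X^2$, while the lower bound comes from Ekedahl's characteristic-$p$ bounds on canonically polarized surfaces with small $K^2$~\cite[Corollary~1.8]{Ek87} together with a bound on $g(C)$, which follows from the \'etale invariance $b_1(X)=b_1(Y)=b_1(Y')=2g(C)$ (since $\pi$ is purely inseparable and $g$ is birational) and the Albanese map of $X$.

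For assertion~(4), when $K_X^2=1$ Lemma~\ref{b1} forces $b_1(X)=0$, hence $g(C)=0$ and $Y'$ is rational, so $X$ is unirational and $\pi_1^{et}(X)=\pi_1^{et}(Y')=\{1\}$. Ekedahl's bound~\cite[Corollary~1.8]{Ek87} combined with the master identity then pins down $\chi(\mathcal{O}_X)=1$, so $X$ is a numerical Godeaux surface; simple connectedness rules out the classical and singular (reduced $\mathbb{Z}/p$-torsor) types, leaving only the supersingular case. The final assertion of the theorem, that $X$ is an inseparable degree-$2$ quotient of a ruled or rational surface by a rational vector field, is read off directly from the diagram: $X$ is the normalisation of $Y$ in $K(X)$, equivalently of $Y'$, which is ruled (rational in case~(4)), and the quotient morphism is the $\mu_2$-torsor $\pi'$ of Proposition~\ref{structure-of-pi}.3.
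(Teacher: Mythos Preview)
Your plan inverts the roles of the two surviving Kodaira-dimension cases, and this is a genuine gap. You aim to eliminate $\kappa(Y')\geq 0$ entirely and extract the $K_X^2=1$ Godeaux surface from $\kappa(Y')=-\infty$. In the paper the situation is exactly the opposite. The case $\kappa(Y')=0$ with $Y'$ minimal is \emph{not} eliminated: $Y'$ turns out to be a K3 surface, one computes $\chi(\mathcal{O}_X)=1$, and Corollary~\ref{no-of-sing-of-quot} gives $N_{\mathrm{fix}}=K_X^2+12\geq 13$, so $Y'$ carries at least $13$ disjoint $(-2)$-curves and is unirational by~\cite{SB96}. This K3 case is precisely where the supersingular Godeaux in part~(4) lives. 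Your master-identity bound $K_{X'}\cdot\Delta'\leq K_X\cdot\Delta\leq K_X^2$ only uses $K_X\cdot\pi^\ast K_Y\geq 0$, which holds here, but it does not force $K_X^2\geq 5$; you will not be able to close $\kappa(Y')=0$ this way.

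Conversely, the case $\kappa(Y')=-\infty$ with $b_1(X)=0$ (hence in particular $K_X^2=1$, by Lemma~\ref{b1}) is \emph{eliminated}, and not by the master identity but by a lifting argument you do not mention: when $Y'$ is rational, $H^2(T_{Y'})=H^1(\mathcal{O}_{Y'})=H^2(\mathcal{O}_{Y'})=0$, so $Y'$, the line bundle $M$, and the section $s\in H^0(M^{\otimes 2})$ defining the $\mu_2$-torsor $X'=\mathrm{Spec}(\mathcal{O}_{Y'}\oplus M^{-1})$ all lift compatibly to $W_2(k)$; pushing forward along $f$ lifts $X$ itself, and Corollary~\ref{lifts-to-zero} then kills all global vector fields on $X$, a contradiction. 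This same lifting argument, combined with Lemma~\ref{cor-b1}, is what produces the upper bounds on $\chi(\mathcal{O}_X)$ in parts~(1)--(3): if $\chi$ exceeds the stated bound then $b_1(X)=0$ and the $\kappa(Y')=-\infty$ case is impossible. Your proposed route via $K_X\cdot\Delta\leq K_X^2$ fails here because $K_X\cdot\pi^\ast K_Y$ can be negative when $\kappa(Y')=-\infty$. Finally, your argument that simple connectedness alone forces supersingular is incomplete: it rules out the singular type but not the classical type ($h^1(\mathcal{O}_X)=0$); one needs $p_g(X)\geq 1$, which the paper obtains from the K3 torsor structure via $H^0(\omega_X)\supset H^0(\mathcal{O}_Y)\neq 0$.
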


\begin{corollary}\label{sm-multi-type}
Let $X$ be a smooth canonically polarized surface defined over an algebraically closed field $k$ of characteristic 2. Suppose $K_X^2 <5$ and that one of the following happens.
\begin{enumerate}
\item $X$ is not uniruled.
\item $K_X^2\in\{2,3\}$ and $\chi(\mathcal{O}_X)\geq 2$, or $K_X^2=4$ and $\chi(\mathcal{O}_X)\geq 3$.
\item $K_X^2=1$ and either
\begin{enumerate}
\item $X$ is not simply connected, i.e., $\pi_1^{et}(X)\not=\{1\}$.\item $\chi(\mathcal{O}_X) \geq 2$, or
\item $\chi(\mathcal{O}_X) =1$ and $X$ is either a classical or singular Godeaux surface.
\end{enumerate}
\end{enumerate}
Then $X$ does not have any nontrivial global vector fields of multiplicative type. Equivalently $\mathrm{Aut}(X)$ does not have a subgroup scheme isomorphic to $\mu_2$.
\end{corollary}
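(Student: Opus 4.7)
The plan is to read the corollary as the contrapositive of Theorem~\ref{mult-type} and, for each clause of the hypothesis, extract the contradiction with a conclusion of that theorem. So the starting assumption throughout is that $X$ does admit a nontrivial global vector field of multiplicative type and $K_X^2<5$, and in each case I would derive a contradiction with one of the four numbered statements of Theorem~\ref{mult-type}, at which point the equivalence with ``\(\mathrm{Aut}(X)\) contains \(\mu_2\)'' follows from Corollary~\ref{subgroup-of-aut}.

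For hypothesis (1), I would note that parts (1)--(3) of Theorem~\ref{mult-type} force $X$ to be uniruled whenever $K_X^2\in\{2,3,4\}$, while part (4) gives unirationality (hence uniruledness) in the remaining case $K_X^2=1$. Thus uniruledness is unavoidable under our assumption, contradicting (1).

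For hypothesis (2), I would match the assumed lower bound on $\chi(\mathcal{O}_X)$ against the upper bound supplied by the appropriate part of Theorem~\ref{mult-type}: part (3) gives $\chi(\mathcal{O}_X)\leq 1$ when $K_X^2=2$, part (2) gives $\chi(\mathcal{O}_X)\leq 1$ when $K_X^2=3$, and part (1) gives $\chi(\mathcal{O}_X)\leq 2$ when $K_X^2=4$. Each case directly contradicts the corresponding hypothesis $\chi(\mathcal{O}_X)\geq 2$ or $\chi(\mathcal{O}_X)\geq 3$.

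For hypothesis (3), part (4) of Theorem~\ref{mult-type} pins down $X$ as an algebraically simply connected, unirational, supersingular Godeaux surface; in particular $\chi(\mathcal{O}_X)=1$ and $\pi_1^{et}(X)=\{1\}$. This contradicts (3a) by simple connectedness, (3b) by the fact that Godeaux surfaces have $\chi(\mathcal{O}_X)=1$, and (3c) because, as recalled in Section~\ref{sec-0}, the trichotomy classical/singular/supersingular is mutually exclusive (the reduced part and the infinitesimal parts $\mu_p$ and $\alpha_p$ of $\mathrm{Pic}^{\tau}(X)$ being distinguished by the action of Frobenius on $H^1(\mathcal{O}_X)$). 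Since the corollary is entirely a bookkeeping consequence of Theorem~\ref{mult-type}, no new technical obstacle arises; the only point to mention carefully is the mutual exclusivity used in case (3c).
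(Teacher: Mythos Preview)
Your proposal is correct and matches the paper's intent: the corollary is stated in the paper without proof, as it is meant to follow immediately from Theorem~\ref{mult-type} by taking the contrapositive, exactly as you have spelled out. Your case-by-case verification is precisely the unwritten argument the paper is relying on.
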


For the rest of this section fix terminology and notation as in Section~\ref{preparation}.

\begin{proof}[Proof of Theorem~\ref{mult-type}]
Let $\pi \colon X\rightarrow Y$ be the quotient of $X$ by the $\mu_2$-action induced by $D$. Since the characteristic of the base field is 2, it follows from Proposition~\ref{prop4}, that in addition to all the properties stated in Section~\ref{preparation} that $Y$ has, the following is also true:
\begin{enumerate}
\item The singularities of $Y$ are isolated surface singularities locally isomorphic to $xy+z^2=0$. In particular, $Y$ has Gorenstein canonical singularities of type $A_1$. Hence $K_Y$ is Cartier and moreover, since $g \colon Y^{\prime} \rightarrow Y$ is the minimal resolution of $Y$, $g$ is crepant, i.e., $K_{Y^{\prime}}=g^{\ast} K_Y $.
\item $X^{\prime}$ is smooth and $f$ is obtained by successively blowing up the isolated fixed points of $D$. In particular the lifting $D^{\prime}$ of $D$ in $X^{\prime}$ has only divisorial singularities.
\item The divisorial part $\Delta$ of the fixed locus of $D$ is smooth (perhaps disconnected), disjoint from the isolated singular points of $D$ and not an integral divisor of $D$. Therefore if $\Delta^{\prime}$ is the image of $\Delta$ in $Y$, $\Delta^{\prime}$ is in the smooth part of $Y$, $\pi_{\ast}\Delta =\Delta^{\prime}$ and $\pi^{\ast}\Delta^{\prime}=2\Delta$.
\item From Proposition~\ref{structure-of-pi}, $\pi$ is a torsor over a codimension 2 open subset of $Y$. In particular, there is a reflexive sheaf $L=\mathcal{O}_Y(C)$ on $Y$ such that $X=\mathrm{Spec}_Y \left( \mathcal{O}_Y \oplus L^{-1} \right)$, and 
\begin{gather}\label{sec4-eq2}
K_X=\pi^{\ast}(K_Y+C).
\end{gather}
From this it follows, since $K_X$ is ample and $\pi$ a finite morphism, that $K_Y+C$ is ample.
\end{enumerate}

Since $p=2$, the adjunction formula that appeared in equation~\ref{sec-all-p-eq-1} becomes
\begin{gather}\label{sec4-eq1}
K_X=\pi^{\ast}K_Y+\Delta.
\end{gather}
From this and the equation~\ref{sec4-eq2} it follows that $\pi^{\ast}C=\Delta$. Then since $\pi_{\ast}\Delta = \Delta^{\prime}$ it follows that $\Delta^{\prime} \sim 2C$.  Finally, from~\ref{sec4-eq2} and the fact that $\pi$ is finite of degree 2 it follows that
\begin{gather}\label{sec4-eq3}
K_X^2=2(K_Y+C)^2=2K_Y\cdot (K_Y+C) +2C\cdot (K_Y+C).
\end{gather}

\textbf{Claim: If $\Delta\not= 0$ then $K_X^2\geq 8$}. Therefore, unlike the case when $p\geq 3$, the possibility that $\Delta=0$ does not create any problems here.

I proceed to prove the claim. Suppose that $\Delta=0$. This implies that the fixed locus of $D$ does not have a divisorial part. This case can only happen if $k(Y)=2$. Indeed, since $\Delta=0$ it follows that $K_X=\pi^{\ast}K_Y$ and therefore since $\pi$ is finite, $K_Y$ is ample. Moreover, since $Y$ has singularities of type $A_1$, $K_{Y^{\prime}}=g^{\ast}K_Y$. Hence $K_{Y^{\prime}}$ is nef and big and hence $k(Y^{\prime})=2$.

Note that even though $D$ has no divisorial part it may  however have isolated singular points. Then from Proposition~\ref{size-of-sing}, $Y$ has exactly $c_2(X)$ singular points. If $c_2(x)=0$ then $K_X^2=12\chi(\mathcal{O}_X) \geq 12$. Suppose that $c_2(X)>0$. Then $c_2(X)=\chi_{et}(Y)$. Moreover,
\[
\chi_{et}(Y^{\prime})=\chi_{et}(Y)+\sum(\chi_{et}(E_i)-1)
\]
where $E_i$ are the reduced connected components of the $g$-exceptional locus. Since $Y$ has singularities of type $xy+z^2=0$, the $g$-exceptional curves are exactly $c_2(X)$ number isolated $(-2)$-curves. Hence $\chi_{et}(Y^{\prime})=2c_2(X)$.

Since $K_X=\pi^{\ast}K_Y$ it follows that $K_X^2=2K_Y^2=2K_{Y^{\prime}}^2$=2d. Then from Noether's formula $c_2(X)=12\chi(\mathcal{O}_X)-2d$. Moreover, again from Noether's formula on $Y^{\prime}$,  
\[
12\chi(\mathcal{O}_{Y^{\prime}}) =d+c_2(Y^{\prime})=d+2c_2(X)=-3d+24\chi(\mathcal{O}_X).
\]
 However, this relation is impossible for $d\leq 3$,or equivalently if $K_X^2<8$. Hence in the following, since the theorem only studies the cases $K_X^2 \leq 5$, we can assume that $\Delta \not= 0$.

The proof  of Thorem~\ref{mult-type} will be by considering cases with respect to the Kodaira dimension $\kappa(Y)$ of $Y$.

\textbf{Case 1.} Suppose $k(Y^{\prime})=2$. In this case I will show that $K_Y^2 \geq 5$.

In this case take $Z$ in the commutative diagram~\ref{sec-all-p-diagram-1} not to be the minimal model of $Y^{\prime}$ but its  canonical model instead.  Canonical models of smooth surfaces exist in any 
characteristic by~\cite{Art62}. 

Suppose that $Y^{\prime}$ is not a minimal surface. Then $B\not= 0$ and moreover $g_{\ast}B\not=0$. Indeed. Since $\phi$ is a composition of blow ups it follows that $B$ contains $-1$ curves. However,  $Y^{\prime}$ is the minimal resolution of $Y$ and hence $g$ does not contract $-1$ curves. Therefore $g_{\ast}B\not=0$. Moreover, since $ 2C\sim \Delta^{\prime}$ which is Cartier and effective, and $K_Y+C$ is ample, it follows that 
\begin{gather}\label{sec4-eq4}
2C\cdot (K_Y+C)\geq 1.
\end{gather}
 Also,
\begin{gather}\label{sec4-eq5}
K_Y\cdot (K_Y+C)=g^{\ast}K_Y \cdot g^{\ast}(K_Y+C)=\\
K_{Y^{\prime}}\cdot g^{\ast}(K_Y+C)=(\phi^{\ast}K_Z +B)\cdot g^{\ast}(K_Y+C).\nonumber
\end{gather}

Then $B\cdot g^{\ast}(K_Y+C)=g_{\ast}B \cdot (K_Y+C)>0$, since $K_Y+C$ is ample and $g_{\ast}B\not= 0$. 
Moreover, $\phi^{\ast}K_Z \cdot g^{\ast}(K_Y+C) >0$. Indeed, take $n>>0$ such that $n(K_Y+C)$ is very ample. Then there is a divisor $H\in |n(K_Y+C)|$ such that $g^{\ast}H$ is not contracted by $\phi$. Since $K_Z$ is ample then $\phi^{\ast}K_Z \cdot g^{\ast}(K_Y+C) >0$, as claimed. Hence $K_Y\cdot (K_Y+C) \geq 2$. Then from~\ref{sec4-eq3} we get that $K_X^2 \geq 5$, as claimed.

Suppose that $Y^{\prime}$ is a minimal surface and hence $B=0$. $K_Y^2=K_{Y^{\prime}}^2\geq 1$. Moreover $K_Y\cdot C=1/2(K_Y \cdot \Delta^{\prime} )\geq 0$. Suppose that $K_Y \cdot \Delta^{\prime}=0$. Since $\Delta^{\prime}$ is in the smooth part of $Y$ it follows that $\Delta^{\prime\prime}=g^{\ast}\Delta^{\prime}\cong \Delta^{\prime}$. Then 
$K_{Y^{\prime}}\cdot \Delta^{\prime\prime}=0$. Since $Y^{\prime}$ is minimal of general type it follows that every irreducible component of $\Delta^{\prime\prime}$ is a smooth rational curve of self intersection $-2$. Therefore the same holds for $\Delta^{\prime}$. Let now $W^{\prime}$ be an irreducible component of $\Delta^{\prime}$. Then $\pi^{\ast}W^{\prime}=2W$, where $W$ is an irreducible component of $\Delta$. Then $4W^2=2(W^{\prime})^2=-4$ and hence $W^2=-1$. But then $K_X\cdot W=-1 <0$, which is impossible since $K_X$ is ample. Therefore $K_Y \cdot C >0$, and in fact $\geq 1$ since $K_Y$ is Cartier. Then again from equation~\ref{sec4-eq3} it follows that $K_X^2\geq 5$.

\textbf{Case 2.} Suppose $k(Y^{\prime})=1$. In this case I will show again that $K_Y^2 \geq 5$.

The minimal model $Z$ of $Y^{\prime}$ is then a minimal surface of Kodaira dimension 1. Then by the classification of surfaces~\cite{BM76}~\cite{BM77}~\cite{Ba01}, $Z$ admits either an elliptic or quasi-elliptic fibration, i.e., there is a fibration $\psi \colon Z \rightarrow B$, where $B$ is a smooth curve and the fibers of $\psi$ have arithmetic genus 1. Also from the classification of surfaces, there is $n>>0$ such that $nK_Z=\psi^{\ast}(W)$, where $W$ is a positive divisor in $B$.

Next I will show that $\phi^{\ast}K_Z\cdot F =0$, where $F$ is any $g$-exceptional curve. Indeed,
\[
\phi^{\ast}K_Z \cdot F=K_{Y^{\prime}}\cdot F -B\cdot F=g^{\ast}K_Y \cdot F -B\cdot F=-B\cdot F.
\]
Since $K_Z$ is nef, it follows that $\phi^{\ast}K_Z \cdot F=K_Z\cdot \phi_{\ast}F \geq 0$. Therefore, $F\cdot B \leq 0$. If it is strictly negative, then $F\subset B$ and hence $F$ is $\phi$-exceptional. But then $\phi^{\ast}K_Z\cdot F=0$. Hence in any case $\phi^{\ast}K_Z \cdot F=0$ for any $g$-exceptional curve. Therefore $\phi^{\ast}K_Z=g^{\ast}H$, where $H$ is a Cartier divisor on $Y$. Moreover, since $nK_Z$ is positive for large enough $n$, it follows that $nH$ is positive too. Therefore from the equation~\ref{sec-all-p-eq-2} it follows that
\[
K_Y=H+g_{\ast}B.
\]
Since $K_Y$ and $H$ are Cartier, $g_{\ast}B$ is an effective Cartier divisor as well. Then from~\ref{sec4-eq2} it follows that
\begin{gather}\label{sec4-eq5}
K_X^2=2(K_Y+C)^2=2K_Y\cdot (K_Y+C) +2C\cdot (K_Y+C)=\\ 
2H \cdot (K_Y+C)+2g_{\ast}B\cdot (K_Y+C) +2C\cdot(K_Y+C).\nonumber
\end{gather}
Suppose that $g_{\ast}B\not= 0$, i.e., $Y^{\prime}$ is not a minimal surface. Then, since $2C$ is Cartier and equivalent to an effective divisor, the above equation implies that $K_X^2 \geq 5$, as claimed.

Suppose that $g_{\ast}B=0$, hence $Y^{\prime}$ is a minimal surface. Then in any case, equation~\ref{sec4-eq5} shows that $K_X^2 \geq 3$ and $K_X^2 \geq 5$ unless $K_Y \cdot (K_Y+C)=1$ and $C \cdot (K_Y+C)=1/2$. Suppose that this is the case. Then considering that $\pi^{\ast}C=\Delta$ and that $K_Y^2=K_{Y^{\prime}}^2=0$, it follows that $K_X\cdot \Delta =1$ and $\Delta^2 =-1$. 

Let $N_{fix}$ be the number of isolated fixed points of $D$. Then by Corollary~\ref{no-of-sing-of-quot}, 
\[
N_{fix}=K_X\cdot \Delta +\Delta^2 +c_2(X)=c_2(X).
\]
Hence $Y$ has exactly $c_2(X)$ singular points, all of them of type $A_1$. Such singularities are resolved by a single blow up. Hence $f$ is the composition of $c_2(X)$ blow ups. Hence from diagram~\ref{sec-all-p-diagram-1} it follows that
\[
c_2(Y^{\prime})=c_2(X^{\prime})=c_2(X)+c_2(X)=2c_2(X).
\]
Noether's formula (true also in characteristic 2) gives that
\[
12\chi(\mathcal{O}_{Y^{\prime}})=K_{Y^{\prime}}^2+c_2(Y^{\prime})=2c_2(X)
\]
and therefore $c_2(X)=6d$. However, Noether's formula for $X$ gives that
\[
12\chi(\mathcal{O}_X) =K_X^2 +c_2(X)=3+6d,
\]
which is clearly impossible.

\textbf{Case 2.} Suppose $k(Y^{\prime})=0$. In this case I will show that;
\begin{enumerate}
\item If $K_X^2 \leq 4$, then $X$ is algebraically simply connected, unirational and $\chi(\mathcal{O}_X)=1$. Moreover, $X$ is an inseparable quotient of degree 2 of a rational or ruled surface (possible singular) by a rational vector field. 
\item If $K_X^2=1$, then $X$ is an algebraically simply connected unirational supersingular Godeaux surface. 
\end{enumerate}

Suppose that $B\not=0$, i.e., $Y^{\prime}$ is not minimal. Its minimal model $Z$ is a minimal surface of Kodaira dimension zero. Therefore $12K_Z=0$. Hence 
\[
K_{Y^{\prime}}=\phi^{\ast}K_Z+B \equiv B
\]
and therefore $K_Y\equiv g_{\ast}B=B^{\prime}$, where $B^{\prime}$ is a nonzero effective Cartier divisor. Hence \[
K_Y\cdot (K_Y+C)=B^{\prime}\cdot (K_Y+C) \geq 1
\]
since $ K_Y+C$ is ample. Moreover, as explained in Case 1, $2C\cdot (K_Y+C)\geq 1$. Therefore from~\ref{sec4-eq2} we get that 
\[
K_X^2=2(K_Y+C)^2=2K_Y\cdot (K_Y+C) +2C\cdot (K_Y+C)\geq 3.
\]
In fact, $K_X^2 \geq 5$ unless 
\begin{gather}\label{sec4-eq6}
K_Y\cdot (K_Y+C)=1\\
C\cdot (K_Y+C)=1/2,\nonumber
\end{gather}
in which case $K_X^2=3$. I will show that this case is impossible. Suppose that this is the case. Then 

\textbf{Claim 1.} 
\begin{enumerate}
\item $\Delta \cong \mathbb{P}^1$.
\item $\Delta^2=-3$, $K_X \cdot \Delta = 1$ and $K_{Y^{\prime}}^2=-1$.
\end{enumerate}
Indeed. From~\ref{sec4-eq6} it follows that 
\[
K_X \cdot \Delta =\pi^{\ast}(K_Y+C)\cdot \pi^{\ast}C=2(K_Y+C)\cdot C =1.
\]
Therefore, since $K_X$ is ample, $\Delta$ is an irreducible and reduced curve. On the other hand the equation $K_Y\cdot (K_Y+C)=1$ gives that
\[
\pi^{\ast}K_Y \cdot \Delta = 2K_Y \cdot C=2(1-K_Y^2)=2(1-K_{Y^{\prime}}^2)=2(1-B^2)\geq 4,
\]
since $B^2<0$. Now again from~\ref{sec4-eq1} we get that
\[
\Delta^2=1-\Delta\cdot \pi^{\ast}K_Y \leq -3.
\]
However the genus formula for $\Delta$ gives that
\[
\mathrm{p}_a(\Delta)=\frac{1}{2}(2+K_X\cdot \Delta+\Delta^2)=\frac{1}{2}(3+\Delta^2)\leq 0,
\]
since $\Delta^2 \geq -3$. This implies that $\Delta^2=-3$ and $\mathrm{p}_a(\Delta)=0$. Hence $\Delta \cong \mathbb{P}^1$. Finally from the relation $K_X=\pi^{\ast}K_Y+\Delta$ it follows that \[
K_Y^2=\frac{1}{2}(K_X-\Delta)^2=K_X^2+\Delta^2-2K_X\cdot \Delta =-1,
\]
and the claim is proved.

\textbf{Claim 2.} The map $\phi$ is a single blow up. 

Indeed. From Claim 1 it follows that $K_{Y^{\prime}}^2=K_Y^2=-1$. On the other hand, $K_Z^2=0$ and $\phi$ is a composition of blow ups. Considering that $K^2$ is reduced by 1 after every blow up it follows that $\phi$ is a single blow up.

Let $N_{fix}$ be the number of isolated singular points of $D$. Then from Corollary~\ref{no-of-sing-of-quot} and Claim 1,
\[
N_{fix}=K_X\cdot \Delta +\Delta^2+c_2(X)=-2+c_2(X).
\]
$Y^{\prime}$ is the minimal resolution of $Y$ which has exactly $N_{fix}$ singular points, all of type $A_1$. Hence the $g$-exceptional curves are  exactly $N_{fix}$ isolated $-2$ curves. Then from diagram~\ref{sec-all-p-diagram-1} and Claim 2 we get that \[
c_2(Z)=c_2(Y^{\prime})-1=\chi_{et}(Y^{\prime})+N_{fix}-1=c_2(X)+N_{fix}-1=2c_2(X)-3.
\]
By the classification of surfaces~\cite{BM76}~\cite{BM77}, $c_2(Z)\in \{0,12,24\}$. Hence $2c_2(X)\in\{ 3, 15, 27\}$, which is clearly impossible. Hence the case $K_X^2=3$ is impossible and therefore, if $Y^{\prime}$ is not minimal, then $K_X^2 \geq 5$.

Assume now that $Y^{\prime}$ is minimal. Then $K_{Y^{\prime}}\equiv 0$ and hence $K_Y \equiv 0$ which implies that $K_X\equiv \Delta$. Then 
\begin{gather}\label{sec4-eq8}
N_{fix}=K_X\cdot \Delta +\Delta^2 +c_2(X)=2K_X^2+c_2(X).
\end{gather}
This implies that $Y$ is singular. Indeed. If $Y$ was smooth, then $Y=Y^{\prime}$ and $c_2(X)=c_2(Y^{\prime}) \in\{0,12,24\}$, since $Y^{\prime}$ is a minimal surface of Kodaira dimension zero. In particular, $c_2(X)\geq 0$ and hence $N_{fix}>0$. Hence $Y$ is singular and so $Y^{\prime}\not= Y$. Moreover,
\begin{gather}\label{sec4-eq7}
c_2(Y^{\prime})=\chi_{et}(Y)+N_{fix}=c_2(X)+N_{fix}=2(K_X^2+c_2(X))=24\chi(\mathcal{O}_X),
\end{gather}
from Noether's formula. Now since $c_2(Y^{\prime})\in \{0,12,24\}$ it follows from~\ref{sec4-eq7} that $c_2(Y^{\prime})=0$ or $24$ and consequently $\chi(\mathcal{O}_X)=0$ or $1$. Hence $Y^{\prime}$ is either an Abelian, a K3 or a quasi-hyperelliptic surface~\cite{BM76}~\cite{BM77}. I will show that $Y^{\prime}$ is a K3 surface. Suppose not. Then $\chi(\mathcal{O}_X)=0$, $c_2(Y^{\prime})=0$ and $Y^{\prime}$ is either an Abelian surface or a quasi-hyperelliptic surface. Since $\chi(\mathcal{O}_X)=0$, Noether's formula gives that $c_2(X)=-K_X^2 <0$, since $K_X$ is ample. Hence from~\cite{SB91} it follows that $X$ is uniruled and therefore so is $X^{\prime}$. But a uniruled surface cannot dominate an Abelian surface. Hence this case is impossible and therefore $Y^{\prime}$ is a quasi-hyperelliptic surface. Then there exists an elliptic or quasi-hyperelliptic fibration $\Phi \colon Y^{\prime}\rightarrow E$, where $E=\mathrm{Alb}(Y^{\prime})$ is a smooth elliptic curve. Moreover every fiber or $\Phi$ is 
irreducible~\cite{BM76}. if $Y$ is singular then the $g$-exceptional curves are isolated smooth rational $-2$ curves. Since $E$ is elliptic, every $g$-exceptional curve must contract to a point by $\Phi$. But this is impossible because every fiber of $\Phi$ is an irreducible curve or arithmetic genus 1. Hence $Y$ is a K3 surface. Therefore taking into consideration that the \'etale fundamental groupd is a birational invariant and that $\pi^{\prime}$ gives an equivalence between the \'etale sites of $X^{\prime}$ and $Y^{\prime}$ we get that 
\[
\pi_1^{et}(X)=\pi_1^{et}(X^{\prime})=\pi_1^{et}(Y^{\prime})=\{1\}
\]
and therefore $X$ is algebraically simply connected.

Next I will show that $Y^{\prime}$ is unirational. In order to show this I will show that $Y$ has at least 13 singular points of type $A_1$. Then, $Y^{\prime}$ has at least 13 isolated $-2$ curves and hence in the terminology of~\cite{SB96}, $Y^{\prime}$ has a special configuration $E$ of rank at least 13. Therefore it is unirational~\cite{SB96}. Considering that $\chi(\mathcal{O}_X)=1$, from~\ref{sec4-eq8} we get that
\[
N_{fix}=2K_X^2+c_2(X)=K_X^2+12\chi(\mathcal{O}_X)=K_X^2+12\geq 13,
\]
since $K_X^2>0$. Hence $Y$ has at least 13 singular points (all necessarily of type $A_1$), as claimed. Therefore $Y^{\prime}$ and hence $Y$ is unirational. Now the same argument that was used in the proof of 
Theorem~\ref{sec-all-p-prop-1} shows that $X$ is unirational and that it is the quotient of a rational surface by a rational vector field.

Suppose now that $K_X^2=1$. Since we already proved that $\chi(\mathcal{O}_X)=1$, $X$ is a numerical Godeaux surface. It remains to show that $X$ is a supersingular Godeaux. This means that $h^1(\mathcal{O}_X)=1$ and map $F^{\ast} \colon H^1(\mathcal{O}_X) \rightarrow H^1(\mathcal{O}_X)$ induced by the Frobenious is zero. Since $\chi(\mathcal{O}_X)=1$ it follows that $h^1(\mathcal{O}_X)=p_g(X)=h^0(\omega_X)$. However, since $Y^{\prime}$ is a K3 surface, $\omega_{Y^{\prime}}\cong \mathcal{O}_{Y^{\prime}}$ and therefore $\omega_Y\cong \mathcal{O}_Y$. Then from~\ref{sec4-eq2} it follows that $\omega_X=(\pi^{\ast}L)^{\ast\ast}$, and therefore 
\[
H^0(\omega_X)=H^0(\pi_{\ast}(\pi^{\ast}L)^{\ast\ast})=H^0((L\otimes (\mathcal{O}_Y\oplus L^{-1})^{\ast\ast})=H^0(\mathcal{O}_Y\oplus L)\not=0.
\]
Hence $X$ is either a singular or a supersingular Godeaux surface. If $X$ was singular, $F^{\ast}$ is injective and the exact sequence~\cite[Page 127]{Mi80}
\[
0 \rightarrow \mathbb{Z}/2\mathbb{Z} \rightarrow \mathcal{O}_X \stackrel{F-1}{\rightarrow} \mathcal{O}_X \rightarrow 0
\]
gives that $H^1_{et}(X,\mathbb{Z}/2\mathbb{Z})\not= 0$. Hence $X$ has \'etale $2$-covers. But this is impossible since $\pi_1^{et}(X)=\{1\}$. Hence $X$ is a supersingular Godeaux surface.

\textbf{Case 4.} Suppose that $k(Y^{\prime})=-\infty$. In this case I will show that $K_X^2 \geq 2$, $X$ is uniruled and moreover, 
\begin{enumerate}
\item If $K_X^2=4$, then $-2\leq \chi(\mathcal{O}_X)\leq 2$.
\item If $K_X^2=3$, then $-1\leq \chi(\mathcal{O}_X)\leq 1$.
\item If $K_X^2=2$, then $0\leq \chi(\mathcal{O}_X)\leq 1$.
\end{enumerate}

In order to prove this the following simple result is needed.

\begin{lemma}\label{cor-b1}
Let $X$ be a smooth surface of general type defined over an algebraically closed field $k$ such that one of the following happens.
\begin{enumerate}
\item $K_X^2=1$.
\item $K_X^2\in\{2,3\}$ and $\chi(\mathcal{O}_X) \geq 2$.
\item $K_X^2=4$ and $ \chi(\mathcal{O}_X)\geq 3$.
\end{enumerate}
Then $b_1(X)=0$. 
\end{lemma}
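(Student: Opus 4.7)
The plan is simply to reduce to Lemma~\ref{b1}, which already shows $b_1(X)=0$ under the numerical hypothesis $5c_1^2<c_2$. Using Noether's formula $\chi(\mathcal{O}_X)=\tfrac{1}{12}(K_X^2+c_2(X))$, the inequality $5K_X^2<c_2(X)$ is equivalent to $K_X^2<2\chi(\mathcal{O}_X)$, so I only need to verify this strict inequality in each of the three listed cases and then invoke Lemma~\ref{b1}.

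For case (1) with $K_X^2=1$, the Ekedahl bound~\cite[Corollary 1.8]{Ek87} (invoked repeatedly in the paper, see e.g.\ the proofs of Theorems~\ref{sec-all-p-prop-1} and~\ref{sec-all-p-prop-2}) gives $1\leq \chi(\mathcal{O}_X)\leq 3$, hence $2\chi(\mathcal{O}_X)\geq 2>1=K_X^2$; equivalently, one may appeal directly to~\cite{Li09}, which is how the case $K_X^2=1$ is handled at the start of the proof of Lemma~\ref{b1}. For case (2) with $K_X^2\in\{2,3\}$ and $\chi(\mathcal{O}_X)\geq 2$, one gets $2\chi(\mathcal{O}_X)\geq 4>3\geq K_X^2$. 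For case (3) with $K_X^2=4$ and $\chi(\mathcal{O}_X)\geq 3$, one gets $2\chi(\mathcal{O}_X)\geq 6>4=K_X^2$. In every case the hypothesis of Lemma~\ref{b1} is satisfied and the conclusion $b_1(X)=0$ follows.

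There is essentially no obstacle here: the entire content of the lemma is a numerical check. The only nontrivial ingredient is the translation of the condition $5c_1^2<c_2$ into $K_X^2<2\chi(\mathcal{O}_X)$ via Noether's formula, together with the Ekedahl inequality $\chi(\mathcal{O}_X)\geq 1$ which rules out the borderline possibility in case~(1). Both are standard tools already in use throughout the paper, so the proof can be written in just a few lines.
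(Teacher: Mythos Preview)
Your proposal is correct and follows essentially the same approach as the paper: both reduce to Lemma~\ref{b1} by verifying the numerical hypothesis $5c_1^2<c_2$, equivalently $K_X^2<2\chi(\mathcal{O}_X)$, using Ekedahl's bound $\chi(\mathcal{O}_X)\geq 1$ for case~(1) and the stated assumptions on $\chi(\mathcal{O}_X)$ for cases~(2) and~(3).
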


\begin{proof}
This is an immediate consequence of Lemma~\ref{b1}. By~\cite[Corollary 1.8]{Ek87}, $\chi(\mathcal{O}_X) \geq 2-K_X^2$. Hence if $K_X^2=1$, then $2\chi(\mathcal{O}_X)-K_X^2>0$. Also under all the other  hypotheses of the corollary,  $\chi(\mathcal{O}_X)-K_X^2>0$ as well. Therefore in all these cases it follows from Lemma~\ref{b1} that $\pi_1^{et}(X)$ is finite and therefore  $b_1(X)=0$. 
\end{proof}

Suppose then that $k(Y^{\prime})=-\infty$. Then $Z$ is a ruled surface over a smooth curve $B$ and therefore $Y$ is ruled and hence for the same reasons as in the proof of Proposition~\ref{sec-all-p-prop-1}, $X$ is uniruled. 

In order to conclude the proof of Case 4, it remains to show that the case $K_X^2=1$ does not happen and the claim about the euler characteristics. 

From Lemma~\ref{cor-b1} it follows that if $K_X^2=1$ then $b_1(X)=0$. Moreover  if the inequalities for the euler characteristics in the statement  of Case 4 do not hold, then $b_1(X)=0$ as well. Hence all the claims will follow if I show that if $b_1(X)=0$ then $X$ does not have nontrivial global vector fields of multiplicative type. 

Suppose then that $b_1(X)=0$. Then $B \cong \mathbb{P}^1_k$ and hence both $Y^{\prime}$, $Y$ are rational and therefore $X$ is unirational. 

\textbf{Claim:} $X$ lifts to $W_2(k)$, where $W_2(k)$ is the ring of second Witt vectors over $k$.

Suppose that the claim is true. Then from Corollary~\ref{lifts-to-zero}, $X$ has no nontrivial global vector fields.

It remains then to prove the claim. Recall that $Y^{\prime}$ is the quotient of $X^{\prime}$ by the $\mu_2$ action on $X^{\prime}$ induced by the lifting $D^{\prime}$ of $D$ on $X^{\prime}$. Since both $X^{\prime}$ and $Y^{\prime}$ are smooth it follows from Proposition~\ref{structure-of-pi} that 
\[
X^{\prime}=\mathrm{Spec}\left( \mathcal{O}_{Y^{\prime}}\oplus M^{-1} \right)
\]
where $M$ is an invertible sheaf on $Y^{\prime}$ and the ring structure on $\mathcal{O}_{Y^{\prime}}\oplus M^{-1}$ is induced by a section $s$ of $M^{\otimes 2}$. It is well known~\cite{Ha10} that $H^2(T_{Y^{\prime}})$ is an obstruction space for deformations of $X$ over local Artin rings and $H^1(\mathcal{O}_{Y^{\prime}})$, $H^2(\mathcal{O}_{Y^{\prime}})$ are obstruction spaces for deformations of line bundles and sections of line bundles on $Y^{\prime}$. Since $Y^{\prime}$ is rational, all these spaces are zero. Therefore, $Y^{\prime}$, $M$ and the section $s \in H^0(M^{\otimes 2})$ all lift compatively to $W_2(k)$. Let $Y^{\prime}_2$, $M_2$ and $s_2$ be the liftings of $Y^{\prime}$, $M$ and $s$, respectively. Then
\[
X^{\prime}_2=\mathrm{Spec}\left( \mathcal{O}_{Y_2^{\prime}}\oplus M_2^{-1}\right)
\]
is a lifting of $X^{\prime}$ over $W_2(k)$. Next I will show that $X$ lifts over $W_2(k)$ too. Let $X_2$ be the ringed space $(X,f_{\ast}\mathcal{O}_{X^{\prime}_2})$. Then $X_2$ is a deformation of $X$ over $W_2(k)$. Indeed. From the construction of second Witt vectors~\cite{EV92} there exists an exact sequence \[
0 \rightarrow k \stackrel{\sigma}{\rightarrow} W_2(k) \rightarrow k \rightarrow 0,
\]
where $\sigma(x)=x\cdot p$. Then tensoring with $\mathcal{O}_{X^{\prime}_2}$ we get the exact sequence
\[
0 \rightarrow \mathcal{O}_{X^{\prime}} \stackrel{\sigma}{\rightarrow} \mathcal{O}_{X^{\prime}_2} \rightarrow \mathcal{O}_{X^{\prime}} \rightarrow 0
\]
Applying $f_{\ast}$ we get the exact sequence
\[
0 \rightarrow f_{\ast}\mathcal{O}_{X^{\prime}} \rightarrow f_{\ast}\mathcal{O}_{X^{\prime}_2} \rightarrow f_{\ast}\mathcal{O}_{X^{\prime}} \rightarrow R^1f_{\ast}\mathcal{O}_{X^{\prime}}
\]
Considering that $f_{\ast}\mathcal{O}_{X^{\prime}}\cong \mathcal{O}_X$ and that $R^1f_{\ast}\mathcal{O}_X=0$, we get the following exact sequence
\[
0 \rightarrow \mathcal{O}_X \stackrel{p}{\rightarrow} f_{\ast}\mathcal{O}_{X_2^{\prime}} \rightarrow \mathcal{O}_X \rightarrow 0.
\]
Now tensoring with $k$ over $W_2(k)$ we get that $\mathcal{O}_{X^{\prime}_2}\otimes_{W_2(k)}k\cong \mathcal{O}_{X}$. Finally from the infinitesimal criterion of flatness, $X_2$ is flat over $W_2(k)$ and hence $X_2$ is a deformation of $X$ over $W_2(k)$, as claimed.

\end{proof}
\section{Surfaces with vector fields of additive type in characteristic 2.}\label{sec-5}

The purpose of this section is to study smooth canonically polarized surfaces defined over an algebraically closed field $k$ of characteristic 2 which admit nontrivial global vector fields of additive type. As it was shown in section~\ref{sec-2} this is equivalent to the condition that $\alpha_2$ is a subgroup scheme of $\mathrm{Aut}(X)$. 

This case is more complicated  from the multiplicative case essentially because $\alpha_2$ actions are harder to describe than $\mu_2$ actions. One of the  difficulties is that the singularities of the quotient of the surface with the induced $\alpha_2$ action are more complicated than those that appear in the multiplicative case. In fact, not only they are not necessarily canonical, but they may not even be rational. However, from Proposition~\ref{prop4}, they are Gorenstein. 

The main result of this section is the following.

\begin{theorem}\label{additive-type}
Let $X$ be a smooth canonically polarized surface defined over an algebraically closed field of characteristic 2. Suppose that $X$ has a nontrivial global vector field of additive type, or equivalently that $\alpha_2$ is a subgroup scheme of $\mathrm{Aut}(X)$. Then:
\begin{enumerate}
\item If $K_X^2=2$ then $X$ is uniruled. Moreover, if $\chi(\mathcal{O}_X)\geq 2$, then $X$ is unirational and $\pi_1^{et}(X)=\{1\}$.
\item If $K_X^2=1$ then $\pi_1^{et}(X)=\{1\}$, $p_g(X) \leq 1$ and $X$ is unirational. 
\end{enumerate}
Moreover, $X$ is the quotient of a ruled or rational surface (maybe singular) by a rational vector field.
\end{theorem}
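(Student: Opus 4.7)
Keeping notation as in Section~\ref{preparation}, let $D$ be the additive global vector field on $X$, $\Delta$ its divisorial part, $\pi\colon X\to Y$ the quotient by the induced $\alpha_2$-action, $g\colon Y'\to Y$ the minimal resolution, $\phi\colon Y'\to Z$ a morphism to a minimal model, and $f\colon X'\to X$ the corresponding partial resolution supplied by Proposition~\ref{prop4}, with lifted vector field $D'$ of divisorial part $\Delta'$. Since $p=2$, the adjunction formula~\eqref{sec-all-p-eq-1} reads $K_X=\pi^\ast K_Y+\Delta$, and by Proposition~\ref{structure-of-pi}.3 the map $\pi$ is an $\alpha_2$-torsor over a codimension-two open of $Y$. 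Because $\pi$ is purely inseparable and $g$, $\phi$ are birational, $\pi_1^{\mathrm{et}}(X)=\pi_1^{\mathrm{et}}(Y')=\pi_1^{\mathrm{et}}(Z)$ and the $\ell$-adic Betti numbers agree. The argument proceeds by cases on $\kappa(Y')$.

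\emph{Case $\kappa(Y')=-\infty$.} Then $Z$ is ruled, hence $Y$ is uniruled, and since Frobenius on $X$ factors through $\pi$ (Proposition~\ref{structure-of-pi}.3 identifies $\pi$ as a Frobenius pullback of an $\alpha_L$-torsor), $X^{(2)}$ is uniruled and so is $X$. If moreover $b_1(X)=0$ (which by Lemma~\ref{b1} happens automatically when $K_X^2=1$, and when $K_X^2=2$, $\chi(\mathcal{O}_X)\geq 2$), then $b_1(Y')=0$, so $Z$ and $Y'$ are rational, $X$ is unirational, and $\pi_1^{\mathrm{et}}(X)=\pi_1^{\mathrm{et}}(Y')=\{1\}$.

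\emph{Cases $\kappa(Y')\geq 0$.} Here the goal is to derive $K_X^2\geq 3$, contradicting the hypothesis. The decisive tool is Corollary~\ref{sec3-cor-1}, which in characteristic $2$ yields both $K_{X'}\cdot\Delta'\leq K_X\cdot\Delta$ and the identity $K_{X'}\cdot\Delta'=4(\chi(\mathcal{O}_{X'})-2\chi(\mathcal{O}_{Y'}))$. Combining with $K_X^2=K_X\cdot\pi^\ast K_Y+K_X\cdot\Delta$ and~\eqref{sec-all-p-eq-2}, Kodaira-dimensional information on $Y'$ translates into bounds on $K_X^2$. Following the template of Theorems~\ref{sec-all-p-prop-1} and~\ref{sec-all-p-prop-2}: when $\kappa(Y')\in\{1,2\}$, pluricanonical sections on $Z$ pull back to divisors whose $g$-images are nonzero effective, so $K_X\cdot\pi^\ast K_Y\geq 2$; if additionally $Y'\neq Z$, the $\phi$-exceptional divisor $B$ has $g_\ast B\neq 0$ (since $g$ contracts no $(-1)$-curve), adding another positive contribution. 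When $\kappa(Y')=0$, either $Y'=Z$ with $K_{Y'}\equiv 0$ (forcing $K_X\equiv\Delta$), or $g_\ast B\neq 0$ so $K_Y$ is numerically effective and nonzero; in the first subcase Noether's formula on $X'$ and $Y'$, combined with the identity from Corollary~\ref{sec3-cor-1} and the classification values $c_2(Z)\in\{0,12,24\}$, force $\chi(\mathcal{O}_X)\leq 0$ or $K_X^2\geq 3$. A careful bookkeeping of all intermediate singularities of $Y$ (noting that by Proposition~\ref{prop3}.4 their local class groups are $2$-torsion, hence $A_1$, $D_{2n}$, $E_7$, $E_8$ by Proposition~\ref{prop4}.3 when canonical) eliminates the remaining numerical possibilities.

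\emph{The bound $p_g(X)\leq 1$ when $K_X^2=1$.} Suppose $p_g(X)=2$; I adapt the argument of Theorem~\ref{sec-all-p-prop-3}. As there, $|K_X|$ has a unique base point $P$ with every member integral, smooth at $P$, and of arithmetic genus $2$. Using the $\alpha_2$-filtration of $\pi_\ast\mathcal{O}_X$ from Proposition~\ref{structure-of-pi}.2, one shows that $\Delta\neq 0$ (otherwise $H^0(\omega_X)$ would inject into $H^0(\omega_Y)=H^0(\omega_{Y'})=0$ since $\kappa(Y')=-\infty$ in this regime). Then one analyses whether $P$ is an isolated or divisorial singularity of $D$, lifts $D$ through the blowup of $P$ (permissible only when $P$ is fixed), and produces a pencil of curves of arithmetic genus at most $3$ on the blowup. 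Since generic smoothness of such a pencil is forbidden (smooth curves of genus $\geq 2$ carry no global vector fields and infinitely many integral leaves would give $D=0$), Schr\"oer's divisibility of the geometric-genus drop by $(p-1)/2$ forces $p\in\{2,3,5,7\}$, which is compatible with $p=2$; but then the intersection-theoretic relations coming from $\pi^\ast Z_Y$ in~\eqref{sec-all-p-eq-23} and $\pi^\ast C=\Delta$ yield a numerical contradiction in characteristic $2$ similar to the one that ruled out the higher characteristics.

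\emph{The ruled/rational quotient statement.} Since in all surviving cases $\kappa(Y')=-\infty$, $Y'$ is ruled or rational; the rational vector field on $Y'$ obtained by composing $D'$ with the birational map $X'\dashrightarrow X$ and the quotient structure $\pi\colon X\to Y$ realize $X$ as the inseparable quotient claimed.

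\emph{Main obstacle.} The essential difficulty, compared with the $\mu_2$ case treated in Theorem~\ref{mult-type}, is that the quotient $Y$ need not have rational, let alone canonical, singularities (Proposition~\ref{prop4}.3 applies only to the multiplicative case), so the adjunction coefficients $a_i$ in~\eqref{sec-all-p-eq-2} are no longer constrained to a small list and direct intersection computations on $Y^{\prime}$ become inconclusive. The argument must therefore systematically route the numerical information through Corollary~\ref{sec3-cor-1}, which captures the torsor structure of $\pi$ in characteristic $2$ and provides the replacement for the crepancy of $g$ that was available for $\mu_2$. The most delicate subcase is $\kappa(Y')=0$ with $Y'$ minimal and the bookkeeping around $p_g(X)=2$, where the available purely numerical bounds are tight and one must exploit the additional geometric structure of $|K_X|$ and $\alpha_2$-invariants.
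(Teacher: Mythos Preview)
The proposal has two substantive gaps.

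First, the case $\kappa(Y')=0$ is \emph{not} eliminated when $K_X^2\leq 2$. You assert that for all $\kappa(Y')\geq 0$ the goal is to derive $K_X^2\geq 3$, and that when $Y'$ is minimal with $K_{Y'}\equiv 0$, Noether's formula together with Corollary~\ref{sec3-cor-1} force $\chi(\mathcal{O}_X)\leq 0$ or $K_X^2\geq 3$. This is false. In the paper's proof, when $K_X^2=1$ the subcase $\kappa(Y')=0$ with $Y'$ minimal genuinely occurs: one rules out $\chi(\mathcal{O}_X)\in\{2,3\}$ by a delicate analysis of the $f$- and $g$-exceptional curves (using that $K_{X'}$ is Cartier and computing $K_{X'}^2$ via the torsor exact sequence), and then for $\chi(\mathcal{O}_X)=1$ one finds that $Y'$ is a K3 surface with exactly $13$ $g$-exceptional curves forming a special configuration of rank $13$ in the sense of \cite{SB96}, hence unirational. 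The conclusions of the theorem (unirationality, $\pi_1^{\mathrm{et}}(X)=\{1\}$, and $p_g(X)=1$ with $X$ a supersingular Godeaux) are then \emph{deduced from this structure}, not by excluding the case. Your plan omits this entire branch.

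Second, your argument for $p_g(X)\leq 1$ when $K_X^2=1$ and $\kappa(Y')=-\infty$ does not survive in characteristic~$2$. Schr\"oer's divisibility by $(p-1)/2$ is vacuous for $p=2$, and the intersection relations of Theorem~\ref{sec-all-p-prop-3} were obtained under the standing hypothesis $p\geq 3$; the ``similar numerical contradiction'' you invoke simply does not materialize. The paper's argument here is of a different nature: using the torsor exact sequence and rationality of $Y''$, one shows $H^0(\omega_X)\cong H^0(\omega_{Y''}\otimes M)$, so every $W\in|K_X|$ is the pullback of some $\tilde W\in|K_Y+C|$; a cohomological computation (Serre duality on $Y$ and the vanishing $H^0(\mathcal{O}_Y(2K_Y+C))=0$, itself established via Corollary~\ref{sec3-cor-1} and a genus-formula contradiction) shows $\tilde W\cong\mathbb{P}^1$. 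This forces $Y$ to have a unique singular point $Q=\pi(P)$, and a blow-up analysis at $P$ proves $Q$ is of type $A_1$. Equation~\eqref{sec5-eq-17} and Noether's formula then yield explicit values of $K_X\cdot\Delta$ and $\Delta^2$ for each of $\chi(\mathcal{O}_X)\in\{2,3\}$, which are eliminated by the genus formula on the irreducible components of $\Delta$ (with a nontrivial combinatorial case analysis of the cycle structure of $\Delta$ when $\chi(\mathcal{O}_X)=3$). None of this is recoverable by adapting the $p\geq 3$ template.
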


From the following theorem it immediately follows that.

\begin{corollary}\label{cor-additive-type}
Let $X$ be a smooth canonically polarized surface defined over an algebraically closed field of characteristic 2. Suppose that either $K_X^2=2$ and $X$ is not uniruled or $K_X^2=1$ and one of the following happens
\begin{enumerate}
\item $p_g(X)=2$.
\item $\chi(\mathcal{O}_X) =3$.
\item $\pi_1^{et}(X) \not= \{1\}$.
\item $X$ is not unirational.
\end{enumerate}
Then $X$ has no nontrivial global vector field of additive type. In particular, $\alpha_2$ is not a subgroup scheme of $\mathrm{Aut}(X)$.
\end{corollary}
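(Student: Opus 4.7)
The corollary is the contrapositive of Theorem~\ref{additive-type}, so the plan is simply to assume that $X$ admits a nontrivial global vector field of additive type and derive a contradiction with each of the listed hypotheses using the conclusions of that theorem. Thus I would begin by supposing, for contradiction, that $X$ carries a nontrivial global derivation $D$ with $D^2=0$, equivalently that $\alpha_2 \hookrightarrow \mathrm{Aut}(X)$ by Corollary~\ref{subgroup-of-aut}.

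In the case $K_X^2=2$, Theorem~\ref{additive-type}.(1) directly gives that $X$ is uniruled, contradicting the hypothesis. In the case $K_X^2=1$, Theorem~\ref{additive-type}.(2) yields simultaneously that $\pi_1^{et}(X)=\{1\}$, that $p_g(X)\leq 1$, and that $X$ is unirational. This is immediately incompatible with hypotheses (3) and (4). It also rules out (1), since $p_g(X)=2$ violates $p_g(X)\leq 1$.

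For hypothesis (2), I would translate $\chi(\mathcal{O}_X)=3$ into a statement about $p_g$. Since $\chi(\mathcal{O}_X)=1-h^1(\mathcal{O}_X)+p_g(X)$, the assumption $\chi(\mathcal{O}_X)=3$ forces
\[
p_g(X)=2+h^1(\mathcal{O}_X)\geq 2,
\]
again contradicting $p_g(X)\leq 1$. Finally, the ``in particular'' clause about $\alpha_2$ not being a subgroup scheme of $\mathrm{Aut}(X)$ is just the reformulation from Proposition~\ref{prop2} / Corollary~\ref{subgroup-of-aut} of the nonexistence of a nontrivial additive global vector field. Since the entire argument is a direct appeal to Theorem~\ref{additive-type} together with Riemann--Roch on the surface, there is no substantive obstacle; the only minor step worth writing out carefully is the elementary numerical implication $\chi(\mathcal{O}_X)=3 \Rightarrow p_g(X)\geq 2$.
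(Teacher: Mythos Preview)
Your proof is correct and matches the paper's approach: the corollary is stated to follow immediately from Theorem~\ref{additive-type}, and you have simply spelled out the contrapositive case by case. The only step the paper leaves implicit is your numerical observation that $\chi(\mathcal{O}_X)=3$ forces $p_g(X)\geq 2$, which you handle correctly.
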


\begin{proof}[Proof of Theorem~\ref{additive-type}]
The proof will be along the lines of the proof of Theorem~\ref{mult-type}. However, there are many differences between the two cases that complicate things. Suppose that $X$ admits a nontrivial global vector field $D$ of additive type.  Then $D$ induces a nontrivial $\alpha_2$ action on $X$. Let $\pi \colon X \rightarrow Y$ be the quotient. Then by Proposition~\ref{prop4}, $Y$ is normal and its local class groups are 2-torsion. Moreover, there are commutative diagrams
\begin{equation}\label{sec5-diagram-1}
\begin{tabular}{lllrrr}
\xymatrix{
 & X^{\prime\prime}\ar[r]^{f^{\prime\prime}} \ar[d]_{\pi^{\prime\prime}} & X \ar[d]^{\pi} \\
 & Y^{\prime\prime} \ar[r]^{g^{\prime\prime}} & Y \\
}
&&&&&
\xymatrix{
 & X^{\prime}\ar[r]^f \ar[d]^{\pi^{\prime}} & X \ar[d]^{\pi} \\
Z & Y^{\prime} \ar[l]_{\phi}\ar[r]^g & Y \\
}
\end{tabular}
\end{equation}
such that: 
\begin{enumerate}
\item $f^{\prime\prime}$ is a resolution of the isolated singularities of $D$ through successive blow ups of its isolated singular points. $X^{\prime\prime}$ is smooth, $D$ lifts to a vector field $D^{\prime\prime}$ in $X^{\prime\prime}$ with only divisorial singularities and $Y^{\prime\prime}$ is the quotient of $X^{\prime\prime}$ by the corresponding action of $\alpha_2$. However, unlike the multiplicative case the singularities of $Y$ are not necessarily canonical and $Y^{\prime\prime}$ may not be the minimal resolution of $Y$. 
\item $Y^{\prime}$ is the minimal resolution of $Y$ and $Z$ its minimal model. However, $X^{\prime}$ may now be singular. In any case, $X^{\prime}$ has rational singularities and the $f$ and $g$ exceptional sets are trees of smooth rational curves.
\end{enumerate}

 Suppose that
\begin{gather}\label{sec5-eq-1}
K_{Y^{\prime}}=g^{\ast} K_Y -F\\
K_{Y^{\prime}}=\phi^{\ast}K_Z+B \nonumber
\end{gather}
where $F$, $B$ are effective $g$ and $\phi$-exceptional divisors, respectively. Let $\Delta$ be the divisorial part of $D$. Unlike the multiplicative case, $\Delta$ may be singular, nonreduced and it may even contain isolated fixed points of $D$. By adjunction for purely inseparable morphisms~\cite{Ek87}~\cite{R-S76},
\begin{gather}\label{sec5-eq-2}
K_X=\pi^{\ast}K_Y+\Delta.
\end{gather}
Moreover, from Proposition~\ref{structure-of-pi}, $\pi$ is a torsor over a codimension 2 open subset of $Y$. Moreover, since $Y^{\prime}$ is smooth and $X^{\prime}$ is normal, $\pi^{\prime}$ is a torsor too. In particular $X^{\prime}$ has hypersurface singularities and hence $K_{X^{\prime}}$ is Cartier. Moreover,  since $\pi$ and $\pi^{\prime}$ are torsors, it follows from Proposition~\ref{structure-of-pi} that there are exact sequences
\begin{gather}\label{sec5-eq-3}
0 \rightarrow \mathcal{O}_Y \rightarrow E \rightarrow L^{-1} \rightarrow 0 \\
0 \rightarrow \mathcal{O}_{Y^{\prime}} \rightarrow E^{\prime} \rightarrow M^{-1} \rightarrow 0 \nonumber
\end{gather}
where $E=\pi_{\ast}\mathcal{O}_X$, $E^{\prime}=\pi^{\prime}_{\ast}\mathcal{O}_{X^{\prime}}$, $L=\mathcal{O}(C)$ is a reflexive sheaf on $Y$ and $M=\mathcal{O}_{Y^{\prime}}(C^{\prime})$ is an invertible sheaf on $Y^{\prime}$. Moreover,
\begin{gather}\label{sec5-eq-4}
K_X=\pi^{\ast}(K_Y+C)\\
K_{X^{\prime}}=(\pi^{\prime})^{\ast}(K_{Y^{\prime}}+C^{\prime})
\end{gather}
From this and~\ref{sec5-eq-2} it follows that $\pi^{\ast}C=\Delta$. Moreover, since $K_X$ is ample and $\pi$ a finite morphism, it follows that $K_Y+C$ is ample too.

Finally, from~\ref{sec5-eq-4} and the fact that $\pi$ is finite of degree 2 it follows that
\begin{gather}\label{sec4-eq-5}
K_X^2=2(K_Y+C)^2=2K_Y\cdot (K_Y+C) +2C\cdot (K_Y+C).
\end{gather}

\textbf{Claim:} If $\Delta =0$, then $K_X^2 \geq 4$. Hence, unlike the case when $p \geq 3$, the possibility that $\Delta=0$ does not create any problems here. 

I proceed to prove the claim. Suppose that $\Delta=0$. Then $K_X=\pi^{\ast}K_Y$ and hence $K_Y$ is ample. In the case of vector fields of multiplicative type, if this happened then $k(Y)=2$. This happened because $Y$ had singularities of type $A_1$, in particular rational. If we knew that $Y$ had rational singularities it would be possible to show that $k(Y)=2$ again by comparing $H^0(\omega_Y^{[n]})$ and $H^0(\omega_{Y^{\prime}}^n)$. However, $Y$ may have non rational singularities in the additive case so in principle it could happen that $\Delta=0$ and $k(Y)< k(X)$.

By its construction, $\pi$ factors through the geometric Frobenious $F \colon X \rightarrow X^{(2)}$. In fact there is a commutative diagram
\[
\xymatrix{
     & Y \ar[dr]^{\nu} \\
X \ar[ur]^{\pi}\ar[rr]^F & & X^{(2)}
}
\]
Since $X^{(2)}$ is smooth and $Y$ is normal, then $\nu$ is a torsor over $X^{(2)}$~\cite{Ek87}. Therefore, \[
K_Y=\nu^{\ast}(K_{X^{(2)}}+W^{(2)})
\]
where $W^{(2)}$ is a divisor on $X^{(2)}$. Recall that the geometric Frobenious is constructed from the next commutative diagram
\[
\xymatrix{
X  \ar[drr]^{F_{ab}}\ar[ddr] \ar[dr]^F     &                                 &        \\     
         &   X^{(2)} \ar[r]^{pr_1}\ar[d]^{pr_2}                      &       X\ar[d]^{\pi}   \\
         &   \mathrm{Spec}(k) \ar[r]^{F_{ab}}             &   \mathrm{Spec}(k)
}
\]
where $F_{ab}$ is the absolute Frobenious. Since $k$ is algebraically closed, $pr_1$ is an isomorphism. Hence $W^{(2)}=pr_1^{\ast}W$, where $W$ is a divisor on $X$. Then 
\[
K_X=\pi^{\ast}K_Y=\pi^{\ast}\nu^{\ast}(K_{X^{(2)}}+W^{(2)})=F^{\ast}(K_{X^{(2)}}+W^{(2)})=F^{\ast}_{ab}(K_X+W)=2K_X+2W.
\]
Therefore $K_X=-2W$ and hence $K_X^2 =4W^2 \geq 4$, as claimed. Hence in the following we can assume the $\Delta\not= 0$.

As in the multiplicative case, the proof of the Theorem~\ref{additive-type} will be in several steps, according to the Kodaira dimension $\kappa(Y^{\prime})$ of $Y^{\prime}$.

\textbf{Case 1.} Suppose $\kappa(Y^{\prime})=2$ or $\kappa(Y^{\prime})=1$. Then arguing similarly as in Cases 1 and 2 in the proof of Theorem~\ref{mult-type}, we get that $K_X^2 \geq 3$.

\textbf{Case 2.} Suppose that $\kappa(Y^{\prime})=0$. In this I will show that if $1\leq K_X^2\leq 2$, then $X$ is unirational and $\pi_1^{et}(X)=\{1\}$. In particular, if $K_X^2=1$, then $X$ is a simply 
connected supersingular Godeaux surface. 

I will only prove the statement for the case when $K_X^2=1$. The proof for case $K_X^2=2$ is similar with minor differences and it is left to the reader.

Suppose then that $K_X^2=1$. Similar arguments as in the cases $k(Y)=2$ and $k(Y)=1$ show that if $\Delta=0$ then $K_X^2 \geq 4$.

Suppose that $Y^{\prime}$, in diagram~\ref{sec5-diagram-1} is not minimal. Then similar arguments as in the multiplicative case give that $K_X^2 \geq 3$. 

Suppose now that $Y^{\prime}$ is minimal. The argument of the multiplicative case used in an essential way the fact that $Y$ has singularities of type $A_1$ and cannot be used in this case directly.  

Suppose that $K_X^2=1$. Then it is known~\cite{Li09} that $1\leq \chi(\mathcal{O}_X) \leq 3$ ($0\leq \chi(\mathcal{O}_X) \leq 4$, if $K_X^2=2$~\cite[Corollary1.8]{Ek87}). Hence in order to show the claim it suffices to show that the cases $\chi(\mathcal{O}_X)\in\{2,3\}$ is impossible, that $\pi_1^{et}(X)=\{1\}$ and that $X$ is unirational and not singular. FromLemma~\ref{b1} it follows that $b_1(X)=0$ and hence $b_1(Y^{\prime})=b_1(Y)=b_1(X)=0$. Hence $Y^{\prime}$ is either an Enriques or a K3 surface and hence $c_2(Y^{\prime})=12$, if $Y^{\prime}$ is Enriques, and 24 if it is K3.

From diagram~\ref{sec5-diagram-1} it follows that 
\begin{gather}\label{sec5-eq-6}
c_2(Y^{\prime})=\chi_{et}(Y^{\prime})=\chi_{et}(X^{\prime})=c_2(X)+k,
\end{gather}
where $k$ is the number of $f$-exceptional curves.

Suppose that $\chi(\mathcal{O}_X)=3$. Then from Noethers formula we get that $c_2(X)=35$ and from~\ref{sec5-eq-6} that $c_2(Y^{\prime})=35+k>24$. Hence this case is impossible.

Suppose that $\chi(\mathcal{O}_X)=2$. Then from Noethers formula we get that $c_2(X)=23$. Then $c_2(Y^{\prime})=c_2(X)+k=23+k$. Hence the only possibility is that $Y^{\prime}$ is a K3 and $k=1$. Then I claim that $Y^{\prime}$ has exactly one singular point which must be canonical of type $A_1$. $Y$ has canonical singularities since $K_{Y^{\prime}}=0$. Let $E$ and $F$ be the $f$ and $g$-exceptional curves, respectively. Both are smooth rational curves. Then since $K_{Y^{\prime}}=0$, it follows that $F^2=-2$ and so $Y$ has exactly one $A_1$ singular point (if $K_X^2=2$ then $k=2$ and $Y$ has canonical singularities whose minimal resolution has two exceptional curves. Then by Proposition~\ref{prop4}, $Y$ has exactly two $A_1$ singular points).

Let $\Delta$ be the divisorial part of $D$ and $\Delta^{\prime}$ the divisorial part of $D^{\prime}$, the lifting of $D$ on $X^{\prime}$. Then since $Y^{\prime}$ is K3, $K_X=\Delta$ and $K_{X^{\prime}}=\Delta^{\prime}$. Moreover $\Delta^{\prime}=(\pi^{\prime})^{\ast}M$, where $M$ is as in equations~\ref{sec5-eq-3}. From~\ref{sec5-eq-3} it follows that
\[
\chi(M^{-1})=\chi(E^{\prime})-\chi(\mathcal{O}_{Y^{\prime}})=\chi(\mathcal{O}_{X^{\prime}})-\chi(\mathcal{O}_{Y^{\prime}})=2-2=0.
\]
Now Riemann-Roch gives that
\[
0=\chi(M^{-1})=\chi(\mathcal{O}_{Y^{\prime}})+\frac{1}{2}(M^2+M\cdot K_{Y^{\prime}})=2+\frac{1}{2}M^2.
\]
Hence $M^2=-4$ and therefore, since $\Delta^{\prime}=(\pi^{\prime})^{\ast}M$, $K_{X^{\prime}}^2=(\Delta^{\prime})^2=-8$. Since $X$ is smooth and $K_{X^{\prime}}$ Cartier, there is a positive $a\in \mathbb{Z}$ such that 
\begin{gather}\label{sec5-eq-7}
K_{X^{\prime}}=f^{\ast}K_X+aE.
\end{gather}
Now $E$ may or may not be an integral curve for $D^{\prime}$. If it is an integral curve, then $(\pi^{\prime})^{\ast}F=E$ and hence $E^2=-4$. Then from~\ref{sec5-eq-7} we get that 
\begin{gather}\label{sec5-eq-8}
-8=K_{X^{\prime}}^2=K_X^2-4a^2=1-4a^2,
\end{gather}
which is impossible. Suppose that $E$ is not an integral curve for $D^{\prime}$. Then $2E=(\pi^{\prime})^{\ast}F$ and hence $E^2=-1$. Then from~\ref{sec5-eq-8} it follows that $a=3$ and hence $K_{X^{\prime}}=f^{\ast}K_X+3E$. Hence $3E$ is Cartier. But since $2E=(\pi^{\prime})^{\ast}F$ and $Y^{\prime}$ is smooth, it follows that $2E$ is Cartier as well. Hence $E$ is Cartier. But since $E=\mathbb{P}^1$ it follows that $X^{\prime}$ is in fact smooth and $f$ is the contraction of a $-1$ curve. But then $K_{X^{\prime}}=f^{\ast}K_X+E$, a contradiction. Hence the case $\chi(\mathcal{O}_X)=2$ is impossible too. Hence $\chi(\mathcal{O}_X)=1$ and therefore $X$ is a Godeaux surface. 

Next I will show that $Y^{\prime}$ is a K3 surface with a special configuration~\cite{SB96} of rank 13. 

Again as before we find that
\begin{gather}\label{sec5-eq-9}
c_2(Y^{\prime})=c_2(X)+k=11+k,
\end{gather}
where $k$ is the number of $f$-exceptional curves. If $Y^{\prime}$ was Enriques, then $c_2(Y^{\prime})=12$ and hence $k=1$. I now repeat the previous argument. Exactly as before we get that $K_{X^{\prime}}^2=-4$. Suppose that $(\pi^{\prime})^{\ast}F=E$. Then $E^2=-4$ and hence from~\ref{sec5-eq-8} we get that $-4=1-4a^2$, which is impossible. If on the other hand $(\pi^{\prime})^{\ast}F=2E$, then $E^2=-1$ and hence again from~\ref{sec5-eq-8} we get that $-4=1-a^2$ and hence $a^2=5$, which is again impossible since $a\in \mathbb{Z}$. Hence $Y^{\prime}$ is a $K3$ surface and therefore $c_2(Y^{\prime})=24$. Then from~\ref{sec5-eq-9} it follows that $k=13$ and hence $g$ has exactly 13 exceptional curves. Moreover, $Y$ has canonical singularities and therefore by~\ref{prop4} they must be of type either $A_1$ or $D_{2n}$. Hence by~\cite{SB96}, $Y^{\prime}$ has a special configuration of rank 13. Then $Y^{\prime}$ is unirational~\cite{SB96} and hence $X$  is unirational as well. Moreover considering that the \'
etale fundamental group is a birational invariant between smooth varieties and also invariant under purely inseparable finite maps, we get from~\ref{sec5-diagram-1} that 
\[
\pi_1^{et}(X)=\pi_1^{et}(X^{\prime\prime})=\pi_1^{et}(Y^{\prime\prime})=\pi_1^{et}(Y^{\prime})=\{1\}.
\]
Finally, I will show that $X$ is supersingular. This means that $\mathrm{p}_g(X)=h^1(\mathcal{O}_X)=1$ and the induced map $F^{\ast}$ of the Frobenious on $H^1(\mathcal{O}_X)$ is zero. Since $\omega_Y\cong \mathcal{O}_Y$ we get from duality for finite morphisms~\cite{Ha77} that $\omega_X=\pi^{!}\mathcal{O}_Y$. Hence
\[
H^0(\omega_X)=\mathrm{Hom}_Y(\pi_{\ast}\mathcal{O}_X,\mathcal{O}_Y).
\]
But this is nonzero since the map $\phi \colon \pi_{\ast}\mathcal{O}_X \rightarrow \mathcal{O}_Y$ defined by $\phi(a)=Da$ is nonzero and $\mathcal{O}_Y$-linear. Then from~\cite[Corollary 1.8]{Ek87} it follows that $\mathrm{p}_g(X)=h^1(\mathcal{O}_X)=1$. Finally for exactly the same reasons as in the multiplicative case, $F^{\ast}$ is zero and hence $X$ is supersingular.

\textbf{Case 3.} Suppose that $\kappa(Y^{\prime})=-\infty$. In this case I will show the following.
\begin{enumerate}
\item If $K_X^2=2$, then $X$ is uniruled. Moreover, if $\chi(\mathcal{O}_X)\geq 2$, then $X$ is unirational and $\pi_1^{et}(X)=\{1\}$.
\item If $K_X^2=1$ then $X$ is unirational, simply connected and $p_g(X) \leq 1$. In particular 
$1 \leq \chi(\mathcal{O}_X) \leq 2$.
\end{enumerate}

This case is very different from the corresponding multiplicative case where the core of the argument was that if $K_X^2 \leq 2$ then $X$ lifts to characteristic zero and hence Kodaira-Nakano vanishing holds which implies that $X$ does not have any nonzero global vector fields. The proof that $X$ lifts to characteristic zero was based on showing that every space and map that appears in~\ref{sec-all-p-diagram-1} lifts to charracteristic zero.  However, even though in this case too $Y^{\prime}$ lifts to characteristic zero, the construction of $X^{\prime}$ as a torsor over $Y^{\prime}$ depends heavily on being in positive characteristic and does not necessarily lift to characteristic zero.

The first part of the claim is obvious. Only the statement that if $\chi(\mathcal{O}_X)\geq 2$, then $X$ is unirational and $\pi_1^{et}(X)=\{1\}$ needs some justification. From Lemma~\ref{b1} it follows that if $\chi(\mathcal{O}_X)\geq 2$, then $b_1(X)=0$. Hence $b_1(X^{\prime})=0$ and therefore $X^{\prime}$ is rational and hence $X$ unirational and simply connected.

Suppose in the following that $K_X^2=1$. Fix notation as in diagram~\ref{sec5-diagram-1}. Since $k(Y^{\prime})=-\infty$, then $Z$ is ruled over a smooth curve $B$. From Lemma~\ref{b1} it follows that $b_1(X)=0$ and hence $B\cong \mathbb{P}^1$. Hence $X$ is unirational and for the same reasons as in the previous cases, $\pi_1^{et}(X)=\pi_1^{et}(Z)=\{1\}$. Hence $X$ is simply connected. It remains to show that $p_g(X) \leq 1$ and that $1 \leq \chi(\mathcal{O}_X) \leq 2$. Since $X$ is canonically polarized, Noether's inequality gives that $K_X^2\geq 2\mathrm{p}_g(X)-4$. Hence if $K_X^2=1$ then $\mathrm{p}_g(X) \leq 2$. Moreover 
from~\cite[Corollary 1.8]{Ek87}, $1\leq \chi(\mathcal{O}_X) \leq 3$. Hence to show the claim it suffices to show that the case $\mathrm{p}_g(X)=2$ is impossible. If this happens, then by~\cite[Corollary 1.8]{Ek87}, then $\chi(\mathcal{O}_X)\in \{2,3\}$.

Suppose then that $\mathrm{p}_g(X)=2$.  For the same reasons as in Case 1., if $\Delta=0$ then $K_X^2\geq 4$. So we may assume that $\Delta\not=0$.

Next I will study some properties of the linear system $|K_X|$. 

\textbf{Claim.} 
\begin{enumerate}
\item The linear system $|K_X|$ has a unique base point $P$. Moreover every member of $|K_X|$ is smooth at $P$. 
\item For any $W \in |K_X|$ there exists a smooth rational curve $\tilde{W} \in |K_Y+C|$ such that $\pi^{\ast}\tilde{W}=W$.
\item $Q=\pi(P)$ is the unique singular point of $Y$. Moreover, it is of type $A_1$.
\end{enumerate}

Before I proceed to prove the claim I would like to make the following  remarks. 

$Y$ is certainly singular since $(K_Y+C)^2=1/2K_X^2=1/2$.

Let $W \in |K_X|$ be any member. Then since $K_X$ is ample and $W \cdot K_X=1$ it follows that $W$ is reduced and irreducible. Moreover, for any two distinct $C_1, C_2 \in |K_X|$, $C_1 \cdot C_2=1$ and their point of intersection is the base point $P$ of $|K_X|$.

The proof of the first part of the claim is exactly the same as in the case when $p\geq 3$ in Proposition~\ref{sec-all-p-prop-3} and for this reason it is omitted. 

Next I will show the second part of the claim. So let $W\in |K_X|$. I will show that there exists  $\tilde{W} \in |K_Y+C|$ such that $\pi^{\ast}\tilde{W}=W$. In the notation of diagram~\ref{sec5-diagram-1}, $H^0(\omega_{X^{\prime\prime}})=H^0(\omega_X)$. Let then $W^{\prime\prime}\in|K_{X^{\prime\prime}}|$ be a lifting of $W$ in $X^{\prime\prime}$. Then I will show that there exists $\bar{W}\in |K_{Y^{\prime\prime}}+M|$ such that $(\pi^{\prime\prime})^{\ast}\bar{W}=W^{\prime\prime}$. Then $\tilde{W}=h_{\ast}\bar{W} \in |K_Y+C|$ and $\pi^{\ast}\tilde{W}=W$. This is because all the previous equations hold over a codimension 2 open subset of $Y$ (its smooth part) and so everywhere. Now from the equation~\ref{sec5-eq-4} it follows that,
\[
H^0(\omega_{X^{\prime\prime}})=H^0((\pi^{\prime\prime})^{\ast}(\omega_{Y^{\prime\prime}}\otimes M))=H^0(\omega_{Y^{\prime\prime}}\otimes M\otimes \pi^{\prime\prime}_{\ast}\mathcal{O}_{X^{\prime\prime}})
\]
Then from the equation~\ref{sec5-eq-3} we get the exact sequence
\[
0 \rightarrow \omega_{Y^{\prime\prime}}\otimes M \rightarrow \pi^{\prime\prime}_{\ast}\mathcal{O}_{X^{\prime\prime}}\otimes \omega_{Y^{\prime\prime}}\otimes M \rightarrow  \omega_{Y^{\prime\prime}} \rightarrow 0.
\]
This gives an exact sequence in cohomology
\[
0 \rightarrow H^0(\omega_{Y^{\prime\prime}}\otimes M) \rightarrow H^0(\pi^{\prime\prime}_{\ast}\mathcal{O}_{X^{\prime\prime}}\otimes \omega_{Y^{\prime\prime}}\otimes M) \rightarrow  H^0(\omega_{Y^{\prime\prime}}) \rightarrow \cdots
\]
Now since $Y^{\prime\prime}$ is rational, it follows that $H^0(\omega_{Y^{\prime\prime}})=0$. Hence 
\[
H^0(\omega_{Y^{\prime\prime}}\otimes M) = H^0(\pi^{\prime\prime}_{\ast}\mathcal{O}_{X^{\prime\prime}}\otimes \omega_{Y^{\prime\prime}}\otimes M)
\]
and therefore there exists $\bar{W}\in |K_{Y^{\prime\prime}}+M|$ such that $(\pi^{\prime\prime})^{\ast}\bar{W}=W^{\prime\prime}$. This concludes the proof of the second part of the claim.

I will next show that $\tilde{W}\cong \mathbb{P}^1$. Since $\tilde{W} \in |K_Y+C|$, there exists an exact sequence
\[
0 \rightarrow \mathcal{O}_Y(-K_Y-C) \rightarrow \mathcal{O}_Y \rightarrow \mathcal{O}_{\tilde{W}} \rightarrow 0.
\]
This gives an exact sequence in cohomology
\[
\cdots \rightarrow H^1(\mathcal{O}_Y(-K_Y-C)) \rightarrow H^1(\mathcal{O}_Y) \rightarrow H^1(\mathcal{O}_{\tilde{W}}) \rightarrow H^2(\mathcal{O}_Y(-K_Y-C)) \cdots 
\]
I will show that $H^1(\mathcal{O}_Y)=H^2(\mathcal{O}_Y(-K_Y-C))=0$. Therefore, $H^1(\mathcal{O}_{\tilde{W}})=0$ and hence $\tilde{W}\cong \mathbb{P}^1$. Since $Y^{\prime}$ is rational it follows that 
$H^i(\mathcal{O}_{Y^{\prime}})=0$, $i=1,2$. If $Y$ had rational singularities then the same would hold for $Y$. But $Y$ may have nonrational singularities. 
However, the Leray spectral sequence we get the exact sequence
\[
0 \rightarrow H^1(g_{\ast}\mathcal{O}_{Y^{\prime}}) \rightarrow H^1(\mathcal{O}_{Y^{\prime}}) \rightarrow H^0(R^1g_{\ast}\mathcal{O}_{Y^{\prime}}) \rightarrow H^2(\mathcal{O}_Y^{\prime})
\]
Since $g_{\ast}\mathcal{O}_{Y^{\prime}}=\mathcal{O}_Y$ and $ H^1(\mathcal{O}_{Y^{\prime}})=0$, it follows that $H^1(\mathcal{O}_Y)=0$ as well. Next I will show that $H^2(\mathcal{O}_Y(-K_Y-C))=0$. By Serre duality for Cohen-Macauley sheaves~\cite{KM98}, we get that
\[
H^2(\mathcal{O}_Y(-K_Y-C))=H^0(\mathcal{O}_Y(2K_Y+C)).
\]
Suppose that $H^0(\mathcal{O}_Y(2K_Y+C))\not= 0$. Then there exists a nonzero effective divisor $Z \in |2K_Y+C|$. Then $K_Y+C = Z-K_Y$. Therefore from~\ref{sec5-eq-4} we get that 
\begin{gather}
K_X^2=2(K_Y+C)^2=2K_Y\cdot(K_Y+C) +2C\cdot (K_Y+C) = \label{sec5-eq-10}\\
2Z\cdot (K_Y+C)-2K_Y\cdot (K_Y+C).\nonumber
\end{gather}
Since $K_Y+C$ is ample and $2Z, 2C$ are Cartier, $2Z\cdot (K_Y+C)\geq 1$ and $2C\cdot (K_Y+C) \geq 1$ (since $2C$ is equivalent to $\pi_{\ast}\Delta$, which is effective). 

Suppose that $K_Y \cdot (K_Y+C) <0$. Then from the second equality of~\ref{sec5-eq-10} it follows that $K_X^2 \geq 2$. Suppose that $K_Y \cdot (K_Y+C) >0$. Then from the first equality of~\ref{sec5-eq-10} it follows that $K_X^2\geq 3$. 

Suppose now that $K_Y \cdot (K_Y+C)=0$. Then $\pi^{\ast}K_Y \cdot K_X=0$ and hence from the adjunction $K_X=\pi^{\ast}K_Y+\Delta$ we get that $K_X \cdot \Delta=K_X^2=1$.

Fix notation as in diagram~\ref{sec5-diagram-1}. Then from Corollary~\ref{sec3-cor-1} we get that
\begin{gather*}
K_{X^{\prime\prime}}\cdot \Delta^{\prime\prime}=4\left(\chi(\mathcal{O}_{X^{\prime\prime}})-2\chi(\mathcal{O}_{Y^{\prime\prime}})\right).
\end{gather*}
Since $Y^{\prime\prime}$ is rational, $\chi(\mathcal{O}_{Y^{\prime\prime}})=1$ and hence
\begin{gather}\label{sec5-eq-11}
K_{X^{\prime\prime}}\cdot \Delta^{\prime\prime}=4\left(\chi(\mathcal{O}_{X^{\prime\prime}})-2\right)
\end{gather}
If $\mathrm{p}_g(X)=2$, then either $\chi(\mathcal{O}_X)=3$ or $\chi(\mathcal{O}_X)=2$. Suppose that $\chi(\mathcal{O}_X)=3$. Then from~\ref{sec5-eq-11} it follows that $K_{X^{\prime\prime}}\cdot \Delta^{\prime\prime}=4>1=K_X\cdot \Delta$, which is impossible by Corollary~\ref{sec3-cor-1}.

Suppose that $\chi(\mathcal{O}_X)=2$. In this case, $K_{X^{\prime\prime}}\cdot \Delta^{\prime\prime}=0$.

\begin{claim}\label{sec5-claim} $Y$ has exactly one singular point which must be of type $A_1$. 
\end{claim}

Indeed. From Corollary~\ref{sec3-cor-1}, $K_X \cdot \Delta$ decreases from $X$ to $X^{\prime\prime}$. In order to show the claim I will study how exactly $K_X \cdot \Delta$ decreases. Since $f$ is a composition of blow ups of isolated singular points of $D$, it suffices to examine what happens after a single blow up. So let $f_1 \colon X_1 \rightarrow X$ be the blow up of an isolated singular point of $D$. Let $D_1$ be the lifting of $D$ on $X_1$,  $\Delta_1$ its divisorial part and $Y_1$ the quotient of $X_1$ by $D_1$. Then there exists a commutative diagram
\begin{gather}\label{sec5-diagram-4}
\xymatrix{
 & X_1\ar[r]^{f_1} \ar[d]_{\pi_1} & X \ar[d]^{\pi} \\
 & Y_1 \ar[r]^{g_1} & Y \\
}
\end{gather}
Let $E$ be the $f_1$-exceptional curve and $F=\pi_1(E)$ the $g$-exceptional curve. Then, since $K_Y$ is Cartier, there is $a \in \mathbb{Z}$ such that
\[
K_{Y_1}=g_1^{\ast}K_Y+aF.
\]
Moreover, $K_X=\pi^{\ast}K_Y+\Delta$. Hence from~\ref{sec5-diagram-4} we that
\[
K_{X_1}=f_1^{\ast}K_X+E=f_1^{\ast}\pi^{\ast}K_Y+f_1^{\ast}\Delta+E=\pi_1^{\ast}K_{Y_1}+f_1^{\ast}\Delta+(1-ka)E
\]
where $k\in\{1,2\}$ is such that $\pi_1^{\ast}F=kE$. If $E$ is an integral curve for $D^{\prime}$, then $k=1$. Otherwise $k=2$. Therefore,
\begin{gather}\label{sec5-eq-16}
\Delta_1=f_1^{\ast}\Delta+(1-ka)E.
\end{gather}
From the proof of Proposition~\ref{K-decreases} it follows that $1-ka\geq 0$. If $1-ka>1$, then 
\[
K_{X_1}\cdot \Delta_1=K_X\cdot \Delta -(1-ka)\leq  K_X\cdot \Delta -2.
\]
But considering that $K_X\cdot \Delta =1$ and $K_{X^{\prime\prime}}\cdot \Delta^{\prime\prime}=0$, this cannot happen. Hence $1-ka\in \{0,1\}$.

Suppose that $1-ka=1$. Then $a=0$ and $K_{X_1}\cdot \Delta_1=K_X\cdot \Delta -1$, hence $K_X\cdot \Delta$ drops by one. Again considering that $K_X\cdot \Delta =1$ and $K_{X^{\prime\prime}}\cdot \Delta^{\prime\prime}=0$, this case can happen only once. In all other cases, $K_X\cdot \Delta$ stays the same and hence $1-ka=0$, i.e, $a=1/k$ (and hence $k=1$ since $a\in \mathbb{Z}$). 

In the notation then of~\ref{sec5-diagram-1}, one can write
\[
K_{Y^{\prime\prime}}=h^{\ast}K_Y+\sum_ia_iF_i,
\]
such that $a_i \in \mathbb{Z}$ and there is exactly one $i$ such that $a_i=0$ and $a_j >0$ for all $j\not=i$. Hence $Y$ has canonical singularities. Now consider the commutative diagram
\[
\xymatrix{
                & Y^{\prime} \ar[dr]^{g}      &\\
Y^{\prime\prime} \ar[ur]^{\phi} \ar[rr]^{h} & & Y \\
}
\]  
where $g \colon Y^{\prime} \rightarrow Y$ is the minimal resolution. Then since $h$ has exactly one crepant divisor, $g$ has exactly one crepant divisor too. Therefore since $Y$ has canonical singularities, it follows that $K_Z=g^{\ast}K_Y$ and $g$ contracts exactly one smooth rational curve of self intersection -2. Hence $Y$ has exactly 1 singular point which must be of type $A_1$, as claimed.

Then \[
c_2(Y^{\prime})=\chi_{et}(Y)+1=c_2(X)+1.
\]
If $\chi(\mathcal{O}_X)=2$, then $c_2(X)=23$. Therefore, $c_2(Y^{\prime})=24$. However, since $Y^{\prime}$ is rational, $\chi(\mathcal{O}_{Y^{\prime}})=1$ and hence from Noethers formula
\[
K_Y^2=K_{Y^{\prime}}^2=12-24=-12.
\]
Moreover, since $K_X=\pi^{\ast}K_Y+\Delta$ it follows that
\[
-24=2K_Y^2=(K_X-\Delta)^2=K_X^2+\Delta^2-2K_X\cdot \Delta =-1+\Delta^2
\]
and hence $\Delta^2=-23$. However, since $K_X \cdot \Delta =1$ and $K_X$ is ample, it follows that $\Delta $ is irreducible and reduced. But then from the genus formula
\begin{gather}\label{sec5-eq-222}
2p_a(\Delta)-2=K_X\cdot \Delta +\Delta^2=1-23=-22.
\end{gather}
But this is impossible. Therefore $K_Y\cdot (K_Y+C) \not=0$ and hence
\[
H^0(\mathcal{O}_Y(2K_Y+C))\not= 0,
\]
as claimed. Hence $\tilde{W}\cong \mathbb{P}^1$. In particular, $\tilde{W}$ is smooth.

Next I will show that $Q=\pi(P)$, where $P=C_1\cap C_2$ is the only singular point of $Y$. Indeed. $P$ is the only base point of the 2-dimensional linear system $|K_X|$. Hence $Q$ is the only base point of $|K_Y+C|$. Let $W_1, W_2\in |K_X|$ be two distinct members. Then as was shown earlier, there are $\tilde{W}_i\in|K_Y+C|$ such that $W_i=\pi^{\ast}\tilde{W}_i$, $i=1,2$. Since $\tilde{W}_1+\tilde{W}_2 \in |2K_Y+2C|$, and the local class groups of $Y$ are 2-torsion, it follows that $\tilde{W}_1+\tilde{W}_2$ is Cartier. Moreover, since both $\tilde{W}_1$ and $\tilde{W}_2$ are smooth, it follows that $Y$ is smooth everywhere except $Q$ (as explained earlier $Y$ is singular and therefore cannot be smooth at $Q$). Hence $Y$ has exactly one singular point.

Next I will show that $Q\in Y$ is an $A_1$ point. Let $f_1 \colon X_1 \rightarrow X$ be the blow up of $P$. Let $D_1$ be the lifting of $D$ on $X_1$,  $\Delta_1$ its divisorial part and $Y_1$ the quotient of $X_1$ by $D_1$. Then there exists a commutative diagram
\begin{gather}\label{sec5-diagram-5}
\xymatrix{
 & X_1\ar[r]^{f_1} \ar[d]_{\pi_1} & X \ar[d]^{\pi} \\
 & Y_1 \ar[r]^{g_1} & Y \\
}
\end{gather}
Let $E$ be the $f$-exceptional curve and $F=\pi_1(E)$ the $g$-exceptional curve. Then there is $a \in \mathbb{Z}$ such that $K_{Y_1}=g_1^{\ast}K_Y+aF$. I will show that $Y_1$ is smooth, $a=0$ and $F^2=-2$.

Let $\bar{W}_i$ be the birational transforms of $W_i$ in $Y_1$, $i=1,2$. Then I claim that
\begin{gather}\label{sec5-eq-14}
g^{\ast}\tilde{W}_i=\bar{W}_i+\frac{1}{2}F,
\end{gather}
for $i=1,2$. Indeed. Suppose that $g^{\ast}\tilde{W}_i=\bar{W}_i+m_iF$. Then clearly $\pi_1^{\ast}\bar{W}_i=W_i^{\prime}$. Hence,
\[
W_i^{\prime}+E=f^{\ast}_1W_i=f_1^{\ast}\pi^{\ast}\tilde{W}_i=\pi_1^{\ast}g^{\ast}\tilde{W}_i=W_i^{\prime}+m_i\pi_1^{\ast}F.
\]
Therefore, $m_i\pi_1^{\ast}F=E$. This implies that $m_1=m_2$. Moreover, if $\pi_1^{\ast}F=E$, then $m_1=m_2=1$. If on the other hand, $\pi_1^{\ast}F=2E$, then $m_1=m_2=1/2$. Suppose that $\pi^{\ast}F=E$. Then $F^2=-1/2$. Also, 
\begin{gather}\label{sec5-eq-15}
g^{\ast}\tilde{W}_i=\bar{W}_i+F.
\end{gather}
Since $\tilde{W}_1\sim \tilde{W}_2$, it follows that $\bar{W}_1+F\sim \bar{W}_2+F$ and hence $\bar{W}_1 \sim \bar{W}_2$. Therefore, $\mathcal{O}_{Y_1}(\bar{W}_1)\cong \mathcal{O}_{Y_1}(\bar{W}_1)$. Considering now that $\bar{W}_1\cap \bar{W}_2=\emptyset$ it follows that both $\mathcal{O}_{Y_1}(\bar{W}_i)$, $i=1,2$, are invertible and hence $\bar{W}_1$ and $\bar{W}_2$ are Cartier. In addition. $\bar{W}_i^2=0$, $i=1,2$. But then from~\ref{sec5-eq-15} it follows that
\[
\bar{W}_i \cdot F=-F^2=1/2
\]
which is impossible since $\bar{W}_i$ are Cartier. Hence $\pi_1^{\ast}F=2E$ and $m_1=m_2=1/2$ and~\ref{sec5-eq-14} holds.

Next I will show that $Y_1$ is smooth. From~\ref{sec5-eq-14} it follows that
\[
g^{\ast}(\tilde{W}_1+\tilde{W}_2)=\bar{W}_1+\bar{W}_2+F.
\]
Since $\tilde{W}_1+\tilde{W}_2\in|2K_Y+2C|$, $\tilde{W}_1+\tilde{W}_2$ is Cartier and hence $\bar{W}_1+\bar{W}_2+F$ is Cartier as well. Since $\bar{W}_1$, $\bar{W}_2$ and $F$ are smooth, then the only possible singularities of $Y_1$ are at $\bar{W}_1\cap F$ and $\bar{W}_2\cap F$. I will show however that $\bar{W}_i$, $i=1,2$, are both Cartier and hence $Y_1$ is smooth as claimed. Since $2\tilde{W}_1 \sim \tilde{W}_1+\tilde{W}_2$, and are both Cartier, it follows that $g^{\ast}(2\tilde{W}_1)\sim g^{\ast}(\tilde{W}_1+\tilde{W}_2)$. Hence
\[
2\bar{W}_1+F \sim \bar{W}_1+\bar{W}_2+F
\]
and therefore $\bar{W}_1 \sim \bar{W}_2$. Hence $\mathcal{O}_{Y_1}(\bar{W}_1)\cong \mathcal{O}_{Y_1}(\bar{W}_2)$. But $\bar{W}_1 \cap \bar{W}_2 =\emptyset$ (because $W^{\prime}_1 \cap W^{\prime}_2 =\emptyset$, where $W_1^{\prime}, W_2^{\prime}$ are the birational transforms of $W_1, W_2$ in $X_1$). Hence $\mathcal{O}_{Y_1}(\bar{W}_i)$ are invertible, $i=1,2$, and therefore $\bar{W}_1$ and $\bar{W}_2$ are both Cartier as claimed and hence $Y_1$ is smooth. Therefore $Y$ has exactly one singular point which is of type $A_1$. Moreover the diagram~\ref{sec5-diagram-5} is its resolution.

Next I claim that 
\begin{gather}\label{sec5-eq-17}
K_X\cdot \Delta +\Delta^2 +c_2(X)=1.
\end{gather}
Indeed. With notation as in~\ref{sec5-diagram-5}, since $Y_1$ is smooth, $D_1$ has no isolated fixed points. Hence if $\Delta_1$ is its divisorial part, then from Proposition~\ref{size-of-sing}
\[
K_{X_1}\cdot \Delta_1 +\Delta_1^2+c_2(X_1)=0.
\]
But from~\ref{sec5-eq-16} we get that $\Delta_1=f_1^{\ast}\Delta +E$. Now from this, the fact that $c_2(X_1)=c_2(X)+1$, the previous equation and some straightforward calculations we get~\ref{sec5-eq-17}.

If $\mathrm{p}_g(X)=2$ then $\chi(\mathcal{O}_X)\in\{2,3\}$. 

Suppose that $\chi(\mathcal{O}_X)=2$. Then since $K_X^2=1$, from Noethers formula we get that $c_2(X)=23$. Hence $c_2(Y_1)=\chi_{et}(Y)+1=c_2(X)+1=24$. Then since $Y_1$ is rational, from Noethers formula for $Y_1$ it follows that 
\[
K_{Y_1}^2=12\chi(\mathcal{O}_{Y_1})-24=1-24=-12
\]
Hence since $f_1$ is crepant, $K_Y^2=K_{Y_1}^2=-12$. From the adjunction formula $K_Y=\pi^{\ast}K_Y+\Delta$ we get that
\[
-24=2K_Y^2=(K_X-\Delta)^2=1+\Delta^2-2K_X\cdot \Delta,
\]
and hence
\begin{gather}\label{sec5-eq-20}
\Delta^2-2K_X \cdot \Delta =-25.
\end{gather}
However, since $c_2(X)=23$,~\ref{sec5-eq-17} gives also that
\begin{gather}\label{sec5-eq-21}
K_X\cdot \Delta +\Delta^2=-22.
\end{gather}
Now from~\ref{sec5-eq-20} and~\ref{sec5-eq-21} it follows that $K_X \cdot \Delta =1$ and $\Delta^2=-23$. But now for the exactly the same reasons as in~\ref{sec5-eq-222}, this is impossible. Hence the case $\chi(\mathcal{O}_X)=2$ is impossible.

Suppose that $\chi(\mathcal{O}_X)=3$. Arguing similarly as before we get that
\begin{gather}\label{sec5-eq-22}
K_X\cdot \Delta =5\\
\Delta^2=-39. \nonumber
\end{gather}
I will show that these relations are impossible by examining all possible cases for the structure of $\Delta$ as a cycle. Suppose that 
\[
\Delta=\sum_{i=1}^kn_i \Delta_i,
\]
where $\Delta_i$ are distinct prime divisors. Since $K_X$ is ample and $K_X \cdot \Delta =5$, it follows that $k\leq 5$ and there are the following possibilities.
\begin{enumerate}
\item $\Delta$ is reduced. Then $\Delta $ has at most 5 irreducible components.
\item $\Delta$ is not reduced. Then there are the following possibilities
\begin{enumerate}
\item $\Delta =2\Delta_1+\Delta_2+\Delta_3+\Delta_4$, and $K_X\cdot \Delta_1=1$, for all $i$.
\item $\Delta =2\Delta_1+\Delta_2+\Delta_3$, and $K_X\cdot \Delta_1=K_X\cdot \Delta_2=1$, $K_X\cdot \Delta_3=2$.
\item $\Delta =2\Delta_1+\Delta_2$, and $K_X\cdot \Delta _1 =1$, $K_X\cdot \Delta_2=3$, or $K_X\cdot \Delta _1 =2$, $K_X\cdot \Delta_2=1$.
\item $\Delta=2\Delta_1+2\Delta_2+\Delta_3$, and $K_X \cdot \Delta_i=1$, for all $i$.
\item $\Delta=4\Delta_1+\Delta_2$, and $K_X\cdot \Delta_1=K_X\cdot \Delta_2=1$.
\item $\Delta = 5\Delta_1$, $K_X \cdot \Delta_1=1$.
\item $\Delta=3\Delta_1 +\Delta_2+\Delta_3$, and $K_X\cdot \Delta_i=1$, for all $i$.
\item $\Delta=3\Delta_1+2\Delta_2$, and $K_X\cdot \Delta_i=1$, for all $i$. 
\end{enumerate}
\end{enumerate}
The proof that the equations~\ref{sec5-eq-22} are impossible will be by studying each one of the above cases separately. In all cases except 2.h, there are no solutions to~\ref{sec5-eq-22} under the restriction coming from the genus formula $2\mathrm{p}_a(\Delta_i)-2=K_X\cdot \Delta_i+\Delta_i^2$. Next I will work the cases 1. and 2.h. The cases 2.a to 2.g are treated in exactly the same way as 1. but 2.h needs some deeper geometric argument since there is a numerical solution to~\ref{sec5-eq-22}.

Suppose then that $\Delta=\sum_{i=1}^k\Delta_i$, $i\leq 5$ is reduced. Then
\begin{gather*}
K_X\cdot \Delta +\Delta^2=\sum_{i=1}^k(K_X\cdot \Delta_i+\Delta_i^2)+2\sum_{1\leq i<j\leq k}\Delta_i \cdot \Delta_j \geq \\
\sum_{i=1}^k(K_X\cdot \Delta_i+\Delta_i^2)=\sum_{i=1}^k(2\mathrm{p}_a(\Delta_i)-2) \geq -2k \geq -10,
\end{gather*}
which is impossible since from~\ref{sec5-eq-22}, $K_X\cdot \Delta +\Delta^2=-34$.

Suppose now that $\Delta=3\Delta_1+2\Delta_2$, $K_X\cdot \Delta_1 =K_X\cdot \Delta_2=1$. Then from the genus formula it follows that $K_X\cdot \Delta_i +\Delta_i^2 \geq -2$ and hence $\Delta_i^2\geq -3$. Then it is easy to see that the only solutions to~\ref{sec5-eq-22} are $\Delta^2_i=-3$, $i=1,2$, and $\Delta_1\cdot \Delta_2 =0$. In particular $\mathrm{p}_a(\Delta_i)=0$ and hence $\Delta_i=\mathbb{P}^1$, $i=1,2$. 

Fix notation as in~\ref{sec5-diagram-2} and let $\tilde{\Delta_i}$, $i=1,2$, be the images of $\Delta_i$ in $Y$, with reduced structure. From the previous discussion, $X^{\prime}$ is the blow up of the unique isolated fixed point $P$ of $D$ and $Y^{\prime}$ the minimal resolution of the singularity $Q=\pi(P)\in Y$. Moreover, $Q\in Y$ is an $A_1$ singular point and in particular $K_Y$ is Cartier. Then
\[
K_X=\pi^{\ast}K_Y+3\Delta_1+2\Delta_2.
\]
From this it follows that 
\begin{gather}\label{sec5-eq-23}
\pi^{\ast}K_Y\cdot \Delta_1=10\\
\pi^{\ast}K_Y\cdot \Delta_2 =7\nonumber
\end{gather}
From the projection formula we get that
\begin{gather*}
K_Y \cdot \pi_{\ast}\Delta_1=10\\
K_Y\cdot \pi_{\ast}\Delta_2 =7.
\end{gather*}
Moreover, $\pi_{\ast}\Delta_i$ is equal to either $\tilde{\Delta}_i$ or $2\tilde{\Delta}_i$, depending on whether $\Delta_i$ is an integral curve for $D$ or not. From the second equation and since $K_Y$ is Cartier, it follows that $\pi_{\ast}\Delta_2=\tilde{\Delta}_2$ and hence $\Delta_2$ is not an integral curve for $D$. Hence $\pi^{\ast}\tilde{\Delta}_2=2\Delta_2$ and therefore, $\tilde{\Delta}_2^2=-6$ and $K_Y \cdot \tilde{\Delta}_2 = 7$. I will now show that $Q\in \tilde{\Delta}_2$. If $Q\not\in\tilde{\Delta}_2$, then $\tilde{\Delta}_2$ is in the smooth part of $Y$. Hence from the genus formula again it follows that 
\[
2\mathrm{p}_a(\tilde{\Delta}_2)-2=K_Y\cdot \tilde{\Delta}_2+\tilde{\Delta}_2^2=7-6=1,
\]
which is impossible. Hence $Q \in \tilde{\Delta}_2$. Let $\Delta^{\prime}_2$ be the birational transform of $\tilde{\Delta}_2$ in $Y^{\prime}$. Then
\[
g^{\ast}\tilde{\Delta}_2=\Delta_2^{\prime} +mF
\]
for some $m \in \mathbb{Z}$. Since $F^2=-2$ and $\tilde{\Delta}_2$ is smooth, it follows that $m=1/2$. But then
\[
(\Delta^{\prime}_2)^2=\tilde{\Delta}_2^2-\frac{1}{2}=-6-\frac{1}{2}\not\in \mathbb{Z},
\]
which is impossible since $Y^{\prime}$ is smooth. Therefore the case 2.h is impossible too.

\end{proof}

\section{Examples.}\label{examples}
As mentioned in the introduction, there are several examples by now of canonically polarized surfaces with or without non-trivial global vector fields. The most common method to obtain them is as quotients of a rational surface by a rational vector field. By this method one always obtains uniruled surfaces, but nonuniruled canonically polarized surfaces with vector also exist~\cite{SB96}. However, In characteristics $p\not= 2$, it is not known if non uniruled examples exist.

\begin{problem}
Are there non uniruled smooth canonically polarised surfaces with non trivial global vector fields (equivalently with non reduced automorphism scheme) in characteristic $p>2$?
\end{problem}

Smooth hypersurfaces in $\mathbb{P}^3_k$ of degree $\geq 5$ have no vector fields~\cite{MO67}. The proof given in~\cite{MO67} shows that $H^0(T_X)=0$ by using standard exact sequences of $\mathbb{P}^3_k$. However, like Case 4. of the proof of Theorem~\ref{mult-type} it also follows from the Kodaira-Nakano vanishing which holds in this case since any smooth hypersurface lifts to $W_2(k)$.

A Godaux surface $X$ with $\pi_1^{et}(X)=\{1\}$ has no vector fields. This follows from Theorem~\ref{main-theorem}. In particular Godeaux surfaces $\pi_1^{et}(X)=\mathbb{Z}/5\mathbb{Z}$ do not have vector fields. It is known~\cite{La81} that a general Godeaux surface with $\pi_1^{et}(X)=\mathbb{Z}/5\mathbb{Z}$ in any characteristic $p\not= 5$ is the quotient of a smooth quintic in $\mathbb{P}^3$ by a free action of $\mathbb{Z}/5\mathbb{Z}$. Then the fact that $X$ has no global vector fields follows also from the fact that smooth hypersurfaces have no global vector fields. However, this proof only works for general $X$ while Theorem~\ref{main-theorem} gives the result for
 any $X$.

Smooth examples of canonically polarized surfaces were given W. Lang~\cite{La83} and N.I. Shepherd-Barron~\cite{SB96}. In particular for any Del Pezzo surface $Y$  defined over an algebraically closed field of characteristic 2 and $H \in |-K_Y|$, N.I. Shepherd-Barron~\cite[Theorem 5.2]{SB96} constructed a surface $X$ which admits maps
\[
X \stackrel{\alpha}{\rightarrow} Z \stackrel{\beta}{\rightarrow} Y,
\]
such that $\alpha$, $\beta$ are purely inseparable of degree 2,  $K_X^2=s$, $Z$ is a K3 surface with $12+s$ nodes and $\alpha^{\ast}\beta^{\ast}(\frac{1}{2}H)\sim K_X$. If $Y=\mathbb{P}^1 \times \mathbb{P}^1$, then $K_Y=-2G$ and hence $K_X=\alpha^{\ast}\beta^{\ast}G$. Since $K_Z=0$, the adjunction for purely inseparable morphisms~\cite{Ek87} shows that $\alpha $ is a foliation over $Z$ defined by the subsheaf 
$L=\mathcal{O}_X(\alpha^{\ast}\beta^{\ast}G)$ of $T_X$. Since $L=\mathcal{O}_X(\alpha^{\ast}\beta^{\ast}G)$ has sections, $X$ has nontrivial global vector fields. Moreover, it is unirational and $K_X^2=8$. In fact I do not know of any examples of smooth canonically polarized surfaces with vector fields and $K^2<8$. 

Finally, Liedtke~\cite{Li08} has constructed a series of examples of uniruled surfaces of general type in characteristic 2 with arbitrary high $K^2$. These examples are quotients of $\mathbb{P}^1 \times \mathbb{P}^1$ by rational vector fields. However the resulting surfaces are only of general type and not canonically polarized. However, taking their canonical models one obtains examples of canonically polarized surfaces with canonical singularities and $K^2$ arbitrary high. 

This is what happens in my knowledge for smooth canonically polarized surfaces. However, singular surfaces should be studied too. Especially because they are important in the moduli problem of canonically polarized surfaces and in particular its compactification. 

If $X$ is allowed to be singular and there are no restrictions on the singularities, then $K_X^2$ can take any value. Next I will present two examples of singular surfaces with vector fields and low $K^2$. The first example shows that unlike the smooth case (Theorem~\ref{lifts-to-zero}), the property \textit{Lifts to characteristic zero} does not imply smoothness of the automorphism scheme in the singular case. The second example shows that even in the presence of the mildest possible singularities (like canonical), the property \textit{Smooth automorphism scheme} is not deformation invariant and cannot be used to construct proper Deligne-Mumford moduli stacks in positive characteristic. 

\begin{example}\label{ex1}
In this example  I will construct a singular canonically polarized surface $X$ with $K_X^2=1$ defined over an algebraically closed field of characteristic 2. Moreover this surface lifts to characteristic zero and nevertheless, unlike smooth surfaces that lift to characteristic zero, it has vector fields.

Let $k$ be an algebraically closed field of characteristic 2 and $Y$ be the weighted projective space $\mathbb{P}_k(2,1,1)$. It is singular with one singularity locally isomorphic to $xy+z^2=0$. Let 
\[
\pi \colon X=\mathrm{Spec} \left( \mathcal{O}_Y \oplus \mathcal{O}_Y(-5) \right) \rightarrow Y
\]
be the 2-cyclic cover defined by $\mathcal{O}_Y(-5)$ and a general section $s$ of $\mathcal{O}_Y(5)^{[2]}=\mathcal{O}_Y(10)$. Then $X$ is normal, $K_X$ is ample and $K_X^2=1$. Moreover, $X$ is liftable to characteristic zero and has global vector fields of multiplicative type.

From the theory of weighted projective spaces it follows that
\[
H^0(L^{[2]})=H^0(\mathcal{O}_Y(10))=k[x_0,x_1,x_2]_{(10)},
\]
the space of homogeneous polynomials of degree 10 with weights, $w(x_0)=2$, $w(x_i)=1$, $i=1,2,3$. Let $s=x_0^5+f(x_1,x_2) \in k[x_0,x_1,x_2]_{(10)}$. Then I claim that for general choice of $f(x_1,x_2)$, $X$ is normal. Indeed, locally over a smooth point of $Y$, 
\[
\mathcal{O}_X=\frac{\mathcal{O}_Y[t]}{(t^2-s)}.
\]
It is now straightforward to check that for general choice of $f(x_1,x_2)$, $X$ is smooth. Hence $X$ is normal. 

Next I will show that $K_X$ is ample and $K_X^2=1$. Indeed, by the construction of $X$ as a 2-cyclic cover over $Y$,
\[
\omega_X=\pi^{\ast}(\omega_Y\otimes \mathcal{O}_Y(5))^{\ast\ast}=\pi^{\ast}\mathcal{O}_Y(1)^{\ast\ast}.
\]
From this it follows that $\omega_X^{[2]}=\pi^{\ast}\mathcal{O}_Y(2)$ and hence $K_X$ is ample and has index 2. Moreover,
\[
K_X^2=\frac{1}{4}c^2_1(\omega_X^{[2]})=\frac{1}{4}2c_1^2(\omega_Y^{[2]}\otimes \mathcal{O}_Y(10))=\frac{1}{2}c_1^2(\mathcal{O}_Y(2))=\frac{1}{2}4c_1^2(\mathcal{O}_Y(1))=\frac{1}{2}\cdot 4 \cdot \frac{1}{2}=1.
\]
Finally since $Y$, $\mathcal{O}_Y(5)$ and $s$ lift to characteristic zero, the construction of $X$ as a 2-cyclic cover lifts also to characteristic zero. However, by its construction as a 2-cyclic cover, $X$ has nontrivial global vector fields of multiplicative type.
\end{example}

\begin{example}\label{ex2}

In this example I will construct a singular surface $X$ defined over an algebraically closed field of characteristic 2 which has canonical singularities of type $A_n$ such that $K_X^2=5$, $X$ has nonzero global vector fields and moreover there is a flat morphism $f \colon \mathcal{X} \rightarrow C$, where $C$ is a curve of finite type over $k$ such that $X=f^{-1}(s)$ for some $s\in C$ and whose general fiber has no vector fields. Therefore the property \textit{Smooth automorphism scheme} is not deformation invariant (even in the presence of the mildest possible singularities)
and cannot be used to construct proper moduli stacks in positive characteristic. 

Let $k$ be an algebraically closed field of characteristic 2 and $X \subset \mathbb{P}^3_k$ be the quintic surface given by
\[
xy(x^3+y^3+z^3)+zw^4=0.
\]
It is not difficult to check that its singularities are locally isomorphic to $xy+zf(x,y,z)=0$ and therefore they are canonical of type $A_n$. Moreover, the equation of $X$ 
is invariant under the homogeneous derivation $D=z\frac{\partial}{\partial w}$ of $k[x,y,z,w]$. which therefore induces a nonzero global vector field on $X$. $X$ is smoothable by $X_t$ given by 
\[
(1-t)\left( xy(x^3+y^3+z^3)+zw^4\right) +t\left( x^5+y^5+z^5+w^5\right)=0,
\]
$t \in k$. For $t \not=0$, $X_t$ is a smooth quintic surface in $\mathbb{P}^3_k$ and hence it has no global vector fields and therefore $\mathrm{Aut}(X_t)$ is smooth for $t\not=0$.

\end{example}


\end{document}